\newcommand{\R}{\mathbb{R}}
\newcommand{\D}{\mathcal{D}}
\newcommand{\E}{\mathcal{E}}
\newcommand{\sgn}{\mathrm{sgn}}
\newtheorem{theorem}{Theorem}[section]
\newtheorem{lemma}[theorem]{Lemma}
\newtheorem{proposition}[theorem]{Proposition}
\newtheorem{corollary}[theorem]{Corollary}
\theoremstyle{definition}
\theoremstyle{remark}
\newtheorem{remark}[theorem]{Remark}
\numberwithin{equation}{section}
\title{Quantitative Hydrodynamic Stability for Couette Flow on Unbounded Domains with Navier Boundary Conditions}
\author[1]{Ryan Arbon}
\author[1]{Jacob Bedrossian}
\affil[1]{Department of Mathematics, University of California, Los Angeles, 90095, CA. 

Email: rarbon@math.ucla.edu, jacob@math.ucla.edu}
\begin{document}
\maketitle

\begin{abstract}
We prove a stability threshold theorem for 2D Navier-Stokes on three unbounded domains: the whole plane $\R \times \R$,  the half plane $\R \times [0,\infty)$ with Navier boundary conditions, and the infinite channel $\R \times [-1, 1]$ with Navier boundary conditions. Starting with the Couette shear flow, we consider initial perturbations $\omega_{in}$ which are of size $\nu^{1/2}(1+\ln(1/\nu)^{1/2})^{-1}$ in an anisotropic Sobolev space with an additional low frequency control condition for the planar cases. We then demonstrate that such perturbations exhibit inviscid damping of the velocity, as well as enhanced dissipation at $x$-frequencies $|k| \gg \nu$ with decay time-scale $O(\nu^{-1/3}|k|^{-2/3})$. On the plane and half-plane, we show Taylor dispersion for $x$-frequencies $|k| \ll \nu$ with decay time-scale $O(\nu |k|^{-2})$, while on the channel we show low frequency dispersion for $|k| \ll \nu$ with decay time-scale $O(\nu^{-1})$. Generalizing the work of \cite{bedrossian2023stability}  done on $\mathbb{T} \times [-1,1]$, the key contribution of this paper is to perform new nonlinear computations at low frequenc ies with wave number $|k| \lesssim \nu$ and at intermediate frequencies with wave number $\nu \lesssim |k| \leq 1$, and to provide the first enhanced dissipation result for a fully-nonlinear shear flow on an unbounded $x$-domain. Additionally, we demonstrate that the results of this paper apply equally to solutions of the perturbed $\beta$-plane equations from atmospheric dynamics.
\end{abstract}

\maketitle

\pagestyle{plain}

\setcounter{tocdepth}{1}
\tableofcontents

\section{Introduction}

Consider the incompressible 2D Navier-Stokes equations on $(x,y) \in \R \times D$, where $D = \R$, $D = [0,\infty)$, or $D = [-1,1]$. These correspond to the plane, the half-plane, and the infinite channel respectively. On the half-plane and on the infinite channel we impose inhomogeneous Navier boundary conditions. The governing equations are
\begin{equation}\label{velocity_equations}
    \begin{cases}
        \partial_t v + (v \cdot \nabla) v + \nabla P = \nu \Delta v,\\
        \nabla \cdot v =  0, \; \; v_2(t,x,y \in \partial D) = 0, \; \; \partial_y v_1(t,x, y \in \partial D) = 1, \; \\
        v(t=0, x,y) = v_{in}(x,y), \;\; (x,y) \in \R \times D,
    \end{cases}
\end{equation}
where $v = (v_1, v_2)$ is the fluid velocity, $P$ is the scalar pressure, and $\nu > 0$ is the kinematic viscosity. The boundary conditions arise physically from considering a fluid with Navier boundary conditions, and then applying a constant force to the (components of) the boundary $\partial D$. See \cite{masmoudi2003} for a derivations of Navier boundary conditions from kinetic theory.\\

We may re-cast \eqref{velocity_equations} into the vorticity formulation by defining $\nabla^{\perp} = ( -\partial_y, \partial_x)$ and setting $\Omega = \nabla^\perp v$, which we term the vorticity. In this formulation, the Navier boundary conditions on $v$ become inhomogeneous Dirichlet boundary conditions on $\Omega$. The full vorticity formulation is:
\begin{equation}\label{couette_first_system}
\begin{cases}
    \partial_t \Omega + v \cdot \nabla \Omega = \nu \Delta \Omega,\\
    v = \nabla^{\perp} (\Delta)^{-1} \Omega, \; \;  \Omega(t,x,y \in \partial D)=1,\\
    \Omega(t = 0,x,y)= \Omega_{in}(x,y), \; \; (x,y) \in \R \times D,
\end{cases}
\end{equation}
where  $\Delta^{-1}$ has homogeneous Dirichlet conditions on $y = 0$ when $D =[0,\infty)$ and $\Delta^{-1}$ has homogeneous Dirichlet conditions on $y = \pm 1$ when $D = [-1,1]$.\\

Physically, \eqref{couette_first_system} when $D = [-1,1]$ describes the flow of a two-dimensional fluid trapped between upper and lower flat boundaries which experience a constant force in opposite directions. The corresponding three-dimensional problem involves the movement of two plates which contain a slab of fluid. Although the three-dimensional problem is of much more immediate physical interest, the two-dimensional infinite channel is mathematically relevant and may prove an important stepping stone to analyzing the three-dimensional case \cite{stabilityoverview}. Correspondingly, $D = [0,\infty)$ models a single boundary moving according to a constant force. Meanwhile, the two-dimensional planar system $D = \R$ has potential physical relevance in atmospheric modeling. In geophysical contexts, one common approximation to the full spherical fluid equations in a rotating reference frame is the $\beta$-plane approximation, where one models the fluid dynamics by a tangent plane approximation, with additional consideration for the Coriolis force arising from the rotation \cite{vallis2017}. The resulting vorticity formulation is expressed as
\begin{equation}\label{beta_plane_full_system}
\begin{cases}
    \partial_t \Omega + v \cdot \nabla \Omega +  \mathfrak{b}v_2 = \nu \Delta \Omega,\\
        v = \nabla^{\perp} (\Delta)^{-1} \Omega,\\
    \Omega(t = 0,x,y)= \Omega_{in}(x,y), \; \; (x,y) \in \R \times \R,
\end{cases}
\end{equation}
where $\mathfrak{b} \neq 0$ is the Coriolis parameter. Note that we break from convention here, using the symbol $\mathfrak{b}$ rather than $\beta$, since $\beta$ will be used throughout the paper in a different context. We therefore term \eqref{beta_plane_full_system} the $\mathfrak{b}$-plane system. See \cite{stabilityoverview} for additional discussion of physical relevance.\\

The Couette flow $\Omega_0 = 1$ with velocity $v_0= (y,0)$ is a steady state solution to \eqref{couette_first_system} in all cases (and is a steady state solution to \eqref{beta_plane_full_system}). We shall be chiefly concerned with examining the \textit{quantitative asymptotic stability} of $\Omega_0$. As described in \cite{stabilityoverview}, given two normed function spaces $X$ and $Y$, a solution (traditionally an equilibrium solution) $v_E$ to the Navier-Stokes equations on a given domain is asymptotically stable with exponent $\gamma$ if, for any initial data $v_{in}$, we have that $||v_{in} - v_E||_X \ll \nu^\gamma$ implies $||v(t) - v_E||_Y \ll 1$ for all times $t>0$ and $||v(t) - v_E||_Y \to 0$ as $t \to \infty$.  See \cite{drazin1982} for an overview of hydrodynamic stability in a broad sense, and see \cite{stabilityoverview} for a discussion about shear flows in particular. However, in each of the previous references, the $x$-domain is taken to be periodic, rather than the unbounded $x$-domain considered in this paper.\\

In general, one has more freedom to choose the $||\cdot||_X$ norm than the $||\cdot||_Y$ norm, since the $||\cdot||_Y$ norm is constrained by the linearized problem \cite{stabilityoverview}. For shear flows in particular, one wants quantitative stability thresholds to preserve \textit{inviscid damping}, \textit{enhanced dissipation}, and at low enough frequencies \textit{Taylor dispersion}. Quantitative statements about convergence, and in particular proving enhanced dissipation and inviscid damping for the Navier-Stokes and Euler equations, have been a large topic of recent research, both in 2D \cites{bedrossian2016enhanced, bedrossian2018, chen2020transition2d, cotizelati2020, gallay2018, ionescu2020, ionescu2023, li2023, masmoudi2019} and 3D \cites{bedrossian2020dynamic, bedrossian2022dynamicsabove, bedrossian2017sobolev, chen2020transition3d}. The introduction of \cite{li2023} contains a table summarizing the currently known threshold values $\gamma$ corresponding to different regularity classes and boundary conditions in the context of 2D Couette flow.\\

Inviscid damping refers to time-decay of the perturbed velocity in the Euler equations (either linearized or nonlinear). See \cites{bedrossian2019vortex, ionescu2024linearinv, jia2020gevrey, jia2023monotone} for discussion of inviscid damping in the linearized case. See \cite{wei_zhang_zhu_2020} for a discussion of linearized inviscid damping for the $\mathfrak{b}$-plane system \eqref{beta_plane_full_system}, and see \cite{wang_zhang_zhu_2023} for nonlinear inviscid damping of 2D Couette flow in Gevrey class on $\mathbb{T} \times \R$ for \eqref{beta_plane_full_system}. Enhanced dissipation refers to the increased dissipation exhibited by a passive scalar advected by a shear flow, causing rapid decay of frequencies in the $x$ direction. For linear passive scalars in 2D and for $x$-freqencies $|k| \gg \nu$,  the decay is like $\exp(\nu^{1/3} |k|^{2/3} t)$ \cite{cotizelati2023}. In the context of linearized periodic Couette flow, enhanced dissipation was observed by Kelvin with dissipation time scale $\nu^{-1/3}$ \cite{kelvin1887}. See \cites{bedrossiancotizelati2017, stabilityoverview, constantin2008, zlatos2010} for mathematical discussions and \cites{bernoff1994, lundgren1982, rhines1983} for physics-based discussion. Taylor dispersion is a low-frequency phenomenon also observed in passive scalars, where frequencies $|k| \ll \nu$ disperse (in 2D) as $\exp(\nu^{-1} |k|^2 t)$. See \cites{aris1956, taylor1953, taylor1954} for analyses of Taylor dispersion in long tubes or pipes from a physical perspective. See for instance \cite{cotizelati2023} for a unified mathematical treatment of enhanced dissipation and Taylor dispersion for linear passive scalar advection-diffusion equations in infinite cylinders of arbitrary dimension. As noted in \cite{cotizelati2023}, when a PDE system on a bounded domain has Dirichlet boundary conditions, it is often possible through the Poincar\'e  inequality to obtain low-frequency dispersion on time scales like $\nu^{-1}$ in-line with the heat equation, rather than the standard Taylor dispersion rates. 
For Couette flow, this enhanced dissipation is captured by showing decay of frequencies $k$ with $|k| \gg \nu$ on time scales like $k^{-2/3}\nu^{-1/3}$. We shall see Taylor dispersion effects on the plane and the half-plane, while on the infinite channel we shall see that the Dirichlet boundary conditions of the perturbation equation yield decay on time scales like $\nu^{-1}$.

In this paper, we shall prove inviscid damping, enhanced dissipation, and low frequency dispersion for Couette flow in the following sense. Consider an initial vorticity $\Omega_{in} = \Omega_0 + \omega_{in}$, where on the half-plane we assume $\omega_{in}(x,y = 0) = 0$ and on the infinite channel we assume that $\omega_{in}(x,y = \pm 1) = 0$. Then the corresponding solution to \eqref{couette_first_system} can be written as $\Omega = 1 + \omega(t,x,y)$, where $\omega$ satisfies the respective perturbation equation system:
\begin{equation}\label{couette_system}
\begin{cases}
        \partial_t \omega + y \partial_x \omega + u \cdot \nabla \omega = \nu \Delta \omega,\\
    u = \nabla^\perp (\Delta)^{-1} \omega,\\
    \omega(0), = \omega_{\textrm{in}}, \; \omega(t,x,y \in \partial D) = 0,
\end{cases}
\end{equation}
where $\phi = \Delta^{-1} \omega$. The perturbation system in the $\mathfrak{b}$-plane case only differs slightly, and is given by
\begin{equation}\label{beta_system}
\begin{cases}
        \partial_t \omega + y \partial_x \omega + u \cdot \nabla \omega + \mathfrak{b} \partial_x \phi = \nu \Delta \omega,\\
    u = \nabla^\perp (\Delta)^{-1} \omega,\\
    \omega(0) = \omega_{\textrm{in}}, \; \omega(t,x,y \in \partial D) = 0.
\end{cases}
\end{equation}
We prove a quantitative stability threshold for the initial perturbation $\omega_{in}$. Namely, we show that for a suitable anisotropic Sobolev norm $|| \cdot ||_X$, that if $||\omega_{in}||_X \lesssim \nu^{1/2}(1+\ln(1/\nu)^{1/2})^{-1}$, then the corresponding solution $\omega$ of \eqref{couette_system} or \eqref{beta_system} exhibits enhanced dissipation, and the perturbation velocity $u = \nabla^{\perp} (\Delta)^{-1} \omega$ undergoes inviscid damping. We show that if $D = \R$ or $[0,\infty)$, then $\omega$ undergoes Taylor dispersion, and if $D = [-1,1]$, then $\omega$ undergoes dispersion inline with the heat equation. The more  technically interesting result is the planar case, which we state first. Given a function $f$, we denote its Fourier transform by
$$f_k(t,y) \coloneqq \frac{1}{2\pi}\int_{\R} f(t,x,y) e^{-ikx}dx.$$
We use similar notation for the Fourier transform of the intial datum $\omega_{in}$, writing $\omega_{in, \, k}$. 
\begin{theorem}\label{main_theorem}
     Suppose $\omega_{in}$ is initial datum for \eqref{couette_system} with $D = \R$. Then for all $m \in (1/2, 1)$ and $J \in [1,\infty)$, there exists a constant $\delta_{1}$$ >0$ independent of $\nu$ such that if
    $$\sum_{0 \leq j \leq 1}|| \langle \partial_x \rangle ^m \langle \frac{\partial_x}{\nu}\rangle^{-j/3}\partial_y^j \omega_{in}||_{L_{x,y}^2} + || \omega_{in, \, k} ||_{L_k^\infty L_y^2}= \epsilon \leq \delta_1 \frac{\nu^{1/2}}{1+\ln(1/\nu)^{1/2}},$$
    then for all $c >0$ sufficiently small (independent of $\nu$ and $\delta_1$) and all $\nu \in (0,1)$, the corresponding solution $\omega$ to \eqref{couette_system} satisfies
\begin{equation}
    \begin{split}
        &\sum_{0 \leq j \leq 1}|| \langle c \lambda^{pl}(\nu, \partial_x) t \rangle^J \langle \partial_x \rangle ^m \langle \frac{\partial_x}{\nu}\rangle^{-j/3} \partial_y^j \omega||_{L_{x,y}^2} + || \omega_k ||_{L_k^\infty L_y^2} \leq 2\epsilon, \quad \forall t\in [0,\infty); \\
        &\sum_{0 \leq j \leq 1}|| \langle c \lambda^{pl}(\nu, \partial_x) t \rangle^J \langle \partial_x \rangle^m \langle \frac{\partial_x}{\nu}\rangle^{-j/3} \partial_y^j \partial_x u||_{L_t^2 L_{x,y}^2} + ||ku_k||_{L_k^\infty L_{t,y}^2}\leq 2\epsilon.
    \end{split}
\end{equation}
    Here $\lambda^{pl}(\nu,\partial_x)$ is a Fourier multiplier defined on the Fourier side as
    $$\lambda^{pl}(\nu, k) = \begin{cases}
        \nu^{1/3}|k|^{2/3}, & |k| \geq \nu\\
        \frac{|k|^2}{\nu}, & |k| \leq \nu.
    \end{cases}$$
\end{theorem}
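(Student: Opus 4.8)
The proof will be a standard continuity/bootstrap argument built on the weighted norms appearing in the conclusion. Let $T^\ast$ be the largest time on which both displayed inequalities hold with the right-hand side $2\epsilon$ relaxed to $4\epsilon$; local well-posedness of \eqref{couette_system} in the relevant space gives $T^\ast>0$, and it suffices to improve the bounds on $[0,T^\ast)$ back to constant $2\epsilon$, whence $T^\ast=\infty$ by continuity. All estimates are carried out on the $x$-Fourier side: for each $k$, \eqref{couette_system} reads $\partial_t\omega_k + iky\,\omega_k = \nu(\partial_y^2-k^2)\omega_k + \mathcal N_k$ with $\mathcal N_k$ the $k$-th mode of $-\nabla\cdot(u\omega)$, and it is convenient to also pass to the shear-adapted coordinates $(z,v)=(x-ty,y)$, which removes the term $iky\,\omega_k$ at the cost of replacing $\partial_y$ by $\partial_v-ikt$ inside the viscous term. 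The two structural facts used everywhere are that $u_2=-\partial_x\phi$ carries an extra $x$-derivative (the Orr/inviscid-damping gain), and that the Biot--Savart relation $\phi_k=(\partial_y^2-k^2)^{-1}\omega_k$ must be estimated differently for $|k|\gtrsim\nu$ than for $|k|\lesssim\nu$, since at low $x$-frequency $u$ is a factor $\sim|k|^{-1}$ larger than $\omega$.

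The energy estimates are organized into three $x$-frequency bands, with a distinct functional in each. For $|k|\geq 1$ I would essentially import the scheme of \cite{bedrossian2023stability}: pair the equation with $M_k f_k$, where $M_k(t,\cdot)$ is a hypocoercivity multiplier of the now-standard type whose logarithmic derivative furnishes a coercive gain $\gtrsim\nu^{1/3}|k|^{2/3}$, combine it with the viscous dissipation to control $\tfrac{d}{dt}$ of $\|\langle\partial_x\rangle^m\langle\partial_x/\nu\rangle^{-j/3}(\partial_v-ikt)^j(M_k f_k)\|_{L^2}^2$ for $j=0,1$ modulo cubic nonlinear errors, and only at the end insert the time weight $\langle c\lambda^{pl}t\rangle^J$: its logarithmic derivative is $\lesssim Jc\,\lambda^{pl}\langle c\lambda^{pl}t\rangle^{J-1}$, and with $\lambda^{pl}=\nu^{1/3}|k|^{2/3}$ in this band it is absorbed by the hypocoercive gain once $c$ is small compared to the universal hypocoercivity constant --- this is exactly where the hypothesis on $c$ is used. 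For $\nu\lesssim|k|\leq 1$ the same multiplier still gives a gain $\gtrsim\nu^{1/3}|k|^{2/3}\geq\nu$, but the bilinear estimates for $\mathcal N_k$ must be redone, since $u$ is no longer small relative to $\omega$; this is handled by tracking the $\langle k/\nu\rangle^{-j/3}$ weights and splitting the convolution defining $\mathcal N_k$ according to the high/low frequency of the two factors. For $|k|\lesssim\nu$ the enhanced-dissipation weight is replaced by the Taylor weight $\langle c(k^2/\nu)t\rangle^J$, with the gain $\gtrsim k^2/\nu$ coming from the interplay of the dissipation $\nu\|(\partial_v-ikt)f_k\|^2$ with the shear; the full phase-mixing machinery cannot be afforded here because the low-frequency velocity is too large in the nonlinearity. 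This is compensated by carrying the auxiliary low-frequency norms $\|\omega_k\|_{L^\infty_k L^2_y}$ and $\|ku_k\|_{L^\infty_k L^2_{t,y}}$, propagated by a separate frequency-uniform $L^2_y$ estimate that uses the Biot--Savart bound $\|ku_k\|_{L^2_y}\lesssim\|\omega_k\|_{L^2_y}$ together with the viscous dissipation to produce the space-time bound; these are precisely what is needed to close the bilinear terms whenever one interacting mode has $|k|\lesssim\nu$. The $\mathfrak b$-plane case merely adds the lower-order linear term $\mathfrak b\,ik\,\phi_k$, which fits into the same linear estimates (cf.\ the linear inviscid damping of \cite{wei_zhang_zhu_2020}) and leaves the nonlinear analysis unchanged.

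Putting the three bands together, each a-priori $4\epsilon$ bound is shown to be $\leq \epsilon + C\epsilon\cdot\bigl(\epsilon\,\nu^{-1/2}(1+\ln(1/\nu)^{1/2})\bigr)\leq 2\epsilon$, the last inequality holding once $\delta_1$ is taken small; here the $\nu^{-1/2}$ is produced by the time integrals of the dissipative and enhanced-dissipative gains at the critical frequency, and the logarithm is the residue of a single borderline frequency interaction that the weights just fail to control, which is the origin of the $(1+\ln(1/\nu)^{1/2})^{-1}$ loss in the threshold. The main obstacle, and the genuinely new content relative to \cite{bedrossian2023stability}, is the low band $|k|\lesssim\nu$: such frequencies are absent when $x\in\mathbb T$, and there the Biot--Savart law produces a velocity a factor $|k|^{-1}$ larger than the vorticity, so $\nabla\cdot(u\omega)$ cannot be closed by the naive energy estimate. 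Isolating the pieces that genuinely require the $L^\infty_k$ control, estimating the low--high interactions where a tiny-$k$ factor meets an $O(1)$-frequency factor, and accepting the Taylor rate $k^2/\nu$ rather than the (linearly available but nonlinearly unreachable) enhanced rate $\nu^{1/3}|k|^{2/3}$ --- all while keeping the viscosity loss down to $\nu^{-1/2}(1+\ln(1/\nu)^{1/2})$ --- is where the bulk of the work will lie.
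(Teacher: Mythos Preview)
Your high-level architecture is right and largely mirrors the paper: a bootstrap built on a $k$-by-$k$ energy with a hypocoercive gain comparable to $\lambda^{pl}(\nu,k)$, a polynomial time weight $\langle c\lambda^{pl}t\rangle^{J}$ inserted with $c$ small enough to be absorbed by that gain, the three-band split $|k|\geq 1$, $\nu\lesssim|k|\leq 1$, $|k|\lesssim\nu$, and the auxiliary $L^\infty_k$ control $\|\omega_k\|_{L^\infty_kL^2_y}$ and $\|k u_k\|_{L^\infty_kL^2_{t,y}}$ to close interactions with a factor at frequency $|k|\lesssim\nu$. You also correctly locate the logarithmic loss in a borderline interaction at intermediate $k$ (in the paper these are the $HL$ terms with $\nu\leq|k|<1$ in the $\alpha,\tau\alpha$ and $\beta$ estimates), and your remark on the $\mathfrak b$-plane matches Section~\ref{lin_beta_plane}.

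The concrete machinery, however, differs. You propose the moving frame $(z,v)=(x-ty,y)$ together with a Fourier-side hypocoercivity/ghost multiplier $M_k$; the paper stays in the original coordinates and builds $E_k$ from the usual $\|\omega_k\|_2^2+c_\alpha\alpha\|\partial_y\omega_k\|_2^2-c_\beta\beta\,\mathrm{Re}\langle ik\omega_k,\partial_y\omega_k\rangle$ block \emph{plus} the physical-side singular integral operator $\mathfrak J_k$ (the kernel $\tfrac{k}{iy}G_k$, i.e.\ the $\arctan(\eta/k)$ multiplier on the plane), whose commutator with $iky$ is exactly $-\tfrac12|k|^2\Delta_k^{-1}$ and therefore produces the inviscid-damping dissipation $|k|^2\|\nabla_k\phi_k\|_2^2$ directly (Lemma~\ref{disip_lin}). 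The time weight is carried from the outset, and the term generated when $\partial_t$ hits it is removed not ``at the end'' but by an explicit bounded auxiliary multiplier $M_k(t)$ solving the ODE \eqref{definition_of_Mk(t)}. On $\R\times\R$ your Fourier-side route is perfectly viable---indeed $\mathfrak J_k$ \emph{is} a Fourier multiplier there---but the paper's choice is what lets the same proof run on the half-plane and channel, where $y$-Fourier is unavailable and $\mathfrak J_k$ survives as a genuine singular integral with controlled commutator $[\partial_y,\mathfrak J_k]$.

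Two small corrections to your sketch. First, the inviscid-damping functional is \emph{not} dropped at low frequency: the paper keeps the $\mathfrak J_k$ terms in $E_k$ for all $k$ (see \eqref{kEnergy}); only the values of $\alpha,\beta$ change across $|k|=\nu$. Second, the real bulk of the work---and what your outline leaves unspecified---is the nonlinear estimate (Lemma~\ref{nonlinear_lemma}): the terms $T_{\gamma,\tau}$, $T_{\alpha,\tau\alpha}$, $T_\beta$, $T_2$ are each split into many $LH/HL\times\{H,M,L\}^3$ cases in $(k-k',k,k')$, with different placements into $L^1_k$ vs.\ $L^2_k$ and different uses of the $L^\infty_k$ dissipation $\mathcal D_2$ in each; the paper's Section~\ref{nonlin} is essentially a case analysis of these interactions, and your proposal would need the analogous case-work to actually close.
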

The statement of the theorem for the case of the half-plane is virtually identical:
\begin{theorem}\label{half_theorem}
     Suppose $\omega_{in}$ is initial datum for \eqref{couette_system} with $D = [0,\infty)$, such that $\omega_{in}(y = 0) = 0$. Then for all $m \in (1/2, 1)$ and $J \in [1,\infty)$, there exists a constant $\delta_{1}$$ >0$ independent of $\nu$ such that if
    $$\sum_{0 \leq j \leq 1}|| \langle \partial_x \rangle ^m \langle \frac{\partial_x}{\nu}\rangle^{-j/3}\partial_y^j \omega_{in}||_{L_{x,y}^2} + || \omega_{in,k} ||_{L_k^\infty L_y^2}= \epsilon \leq \delta_1 \frac{\nu^{1/2}}{1+\ln(1/\nu)^{1/2}},$$
    then for all $c >0$ sufficiently small (independent of $\nu$ and $\delta_1$) and all $\nu \in (0,1)$, the corresponding solution $\omega$ to \eqref{couette_system} satisfies
\begin{equation}
    \begin{split}
        &\sum_{0 \leq j \leq 1}|| \langle c \lambda^{pl}(\nu, \partial_x) t \rangle^J \langle \partial_x \rangle ^m \langle \frac{\partial_x}{\nu}\rangle^{-j/3} \partial_y^j \omega||_{L_{x,y}^2} + || \omega_k ||_{L_k^\infty L_y^2} \leq 2\epsilon, \quad \forall t\in [0,\infty); \\
        &\sum_{0 \leq j \leq 1}|| \langle c \lambda^{pl}(\nu, \partial_x) t \rangle^J \langle \partial_x \rangle^m \langle \frac{\partial_x}{\nu}\rangle^{-j/3} \partial_y^j \partial_x u||_{L_t^2 L_{x,y}^2} + ||ku_k||_{L_k^\infty L_{t,y}^2}\leq 2\epsilon.
    \end{split}
\end{equation}
\end{theorem}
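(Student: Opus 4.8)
The plan is to run a bootstrap (continuity) argument on a carefully constructed energy functional, adapting the scheme of \cite{bedrossian2023stability} from $\mathbb{T} \times [-1,1]$ to the half-plane $\R \times [0,\infty)$. First I would pass to the Fourier transform in $x$, so that \eqref{couette_system} becomes, for each frequency $k \in \R$, a one-dimensional (in $y$) advection-diffusion equation $\partial_t \omega_k + iky\, \omega_k + (\text{nonlinear terms}) = \nu(\partial_y^2 - k^2)\omega_k$ with homogeneous Dirichlet data at $y=0$; the main linear mechanisms (inviscid damping, enhanced dissipation for $|k| \gtrsim \nu$, heat-like/Taylor dispersion for $|k| \lesssim \nu$) live at the level of this family. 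The decay rate $\lambda^{pl}(\nu,k)$ is exactly the spectral gap one expects from the hypocoercivity structure: for $|k| \ge \nu$ the shear-plus-diffusion operator has enhanced dissipation rate $\nu^{1/3}|k|^{2/3}$, and for $|k| \le \nu$ the diffusion $-\nu\partial_y^2$ on the half-line dominates but is degenerate in the unbounded $y$-direction, giving the Taylor rate $|k|^2/\nu$.

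Next I would set up the functional framework. The quantity to be propagated is roughly $\|\langle c\lambda^{pl}t\rangle^J \langle\partial_x\rangle^m \langle\partial_x/\nu\rangle^{-j/3}\partial_y^j \omega\|_{L^2}$ for $j=0,1$, together with the low-frequency bound $\|\omega_k\|_{L^\infty_k L^2_y}$. The growing weight $\langle c\lambda^{pl}t\rangle^J$ is the device that converts the linear decay into a norm that is merely bounded in time; differentiating it in $t$ produces a good term $-J c\lambda^{pl}\langle c\lambda^{pl}t\rangle^{J-1}$ times a lower power, which must be dominated by the enhanced-dissipation/Taylor gain coming from $\nu\|\partial_y\omega\|^2 + \nu k^2\|\omega\|^2$ after a hypocoercivity (Gearhart--Prüss or energy-method / time-dependent multiplier) argument. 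This is where the split into the three frequency regimes $|k|\lesssim\nu$, $\nu\lesssim|k|\le 1$, $|k|\ge 1$ matters, since the commutator structure controlling the transient growth (the ``$\partial_y \to \partial_y - ikt$'' tilt) behaves differently in each; I would handle the anisotropic weight $\langle\partial_x/\nu\rangle^{-j/3}$ (which is $\sim 1$ for $|k|\le\nu$ and $\sim (\nu/|k|)^{j/3}$ for $|k|\ge\nu$) exactly so that the $j=1$ energy is compatible with the $j=0$ energy under the diffusive gain. For the inviscid damping / velocity bounds I would use the standard Couette Fourier-multiplier estimates: $\|k u_k\|_{L^2_t}$ is controlled by $\|\omega_k\|$ via the smoothing $\widehat{\Delta^{-1}}$ together with the stationary-phase-type gain in time from $e^{-ikty}$, and on the half-plane the odd reflection across $y=0$ (using $\omega_k(y=0)=0$) lets the half-line Biot--Savart be compared to the whole-line one.

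The nonlinear terms $u\cdot\nabla\omega = u_1\partial_x\omega + u_2\partial_y\omega$ are then estimated by product/commutator inequalities in the anisotropic Sobolev space; the key points are that $u_2 = \partial_x\phi$ carries an extra $\partial_x$ and hence an extra power of the inviscid-damping decay, while $u_1\partial_x\omega$ needs the $\langle\partial_x\rangle^m$ regularity with $m>1/2$ (so that the 1D-in-$x$ algebra/trace estimates close) together with the low-frequency $L^\infty_k$ control to handle the frequency-zero-in-$x$ piece (which is genuinely unbounded-domain behavior and absent on $\mathbb{T}$). Summing over the frequency regimes and absorbing everything into the diffusive and growing-weight gains, the smallness $\epsilon \lesssim \nu^{1/2}(1+\ln(1/\nu)^{1/2})^{-1}$ is exactly what makes the quadratic nonlinear contribution beat the linear terms — the $\nu^{1/2}$ is the usual enhanced-dissipation threshold scaling and the logarithm is the standard loss from the intermediate-frequency regime $\nu\lesssim|k|\le1$. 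A final continuity argument upgrades the bootstrap bound ``$\le 2\epsilon$ for all $t$ in the maximal existence interval'' to ``$\le 2\epsilon$ for all $t\in[0,\infty)$,'' also yielding global existence.

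The main obstacle I expect is the intermediate- and low-frequency nonlinear analysis on the unbounded $x$-domain: on $\mathbb{T}\times[-1,1]$ the frequency $k=0$ mode is handled separately and all nonzero modes satisfy $|k|\ge 1$, whereas here one must treat a continuum of arbitrarily small $|k|$, including the coupling between $|k|\ll\nu$ Taylor-dispersive modes and $O(\nu)$ modes, where neither the enhanced-dissipation gain nor the $\langle\partial_x\rangle^m$ regularity is strong. Making the $L^\infty_k L^2_y$ low-frequency norm close under the bilinear interaction — since a convolution in $k$ of two $L^\infty_k$ functions is not $L^\infty_k$ without an extra integrable factor — is precisely the ``new nonlinear computations at low frequencies'' the abstract advertises, and is where the proof is genuinely more delicate than the periodic case.
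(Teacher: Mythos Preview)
Your broad strategy---bootstrap on a hypocoercive energy, mode-by-mode in $k$, with a frequency trichotomy $|k|\lesssim\nu$, $\nu\lesssim|k|\le 1$, $|k|\ge 1$ and an $L^\infty_k$ norm to close the low-frequency interactions---matches the paper's. The paper in fact states that the half-plane proof is ``almost identical'' to the whole-plane one (Theorem~\ref{main_theorem}), so the bulk of what you describe is carried over verbatim.

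The gap is your treatment of inviscid damping. You write that you would use ``standard Couette Fourier-multiplier estimates'' with the ``stationary-phase-type gain from $e^{-ikty}$,'' and that odd reflection compares the half-line Biot--Savart to the whole-line one. But on $[0,\infty)$ there is no $y$-Fourier transform, so the ghost multiplier $\arctan(\eta/k)$ is not directly available; and while odd reflection does realize the Dirichlet Green's function, it does \emph{not} reduce the nonlinear problem to the plane (the Couette shear $y\partial_x$ is odd in $y$, so an odd extension of $\omega$ fails to solve the extended equation). The paper's actual device is the singular integral operator
\[
\mathfrak{J}_k[f](y) = |k|\,\mathrm{p.v.}\,\frac{k}{|k|}\int_0^\infty \frac{G_k(y,y')}{2i(y-y')}f(y')\,dy',
\]
with $G_k$ the Green's function of $\partial_y^2-k^2$ on $[0,\infty)$ with Dirichlet data; the terms $c_\tau\mathrm{Re}\langle \mathfrak{J}_k\omega_k,\omega_k\rangle$ and $c_\tau c_\alpha\alpha\,\mathrm{Re}\langle \mathfrak{J}_k\partial_y\omega_k,\partial_y\omega_k\rangle$ are built into the energy $E_k$, and their time derivatives produce the damping terms $D_{k,\tau}$, $D_{k,\tau\alpha}$. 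The only genuinely new work on the half-plane, and the entire content of Section~\ref{half_modifications}, is proving (i) $\|\mathfrak{J}_k\|_{L^2\to L^2}\lesssim 1$ uniformly in $k$ via a $\mathcal{T}_1+\mathcal{T}_2+\mathcal{T}_3$ decomposition and Schur's test, and (ii) the commutator bound $\|[\partial_y,\mathfrak{J}_k]\|_{L^2\to L^2}\lesssim |k|$ (nonzero here, unlike on $\R$, but harmless since it is absorbed by $\|\nabla_k\omega_k\|_2$). Once these two lemmas are in hand, every linear and nonlinear estimate from the planar case goes through unchanged. Your proposal does not identify this operator or these estimates, and without them the inviscid-damping piece of the energy scheme has no concrete mechanism.
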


The theorem in the case of the infinite channel is slightly simpler, and is as follows:

\begin{theorem}\label{channel_theorem}
Suppose that $\omega_{in}$ is initial datum for \eqref{couette_system} with $D = [-1,1]$, such that $\omega_{in}(y = \pm 1) = 0$. Then for all $m \in (1/2, 1)$, there exists a constant $\delta_2$ independent of $\nu$ such that if
    $$\sum_{0 \leq j \leq 1}|| \langle \partial_x \rangle ^m \langle \frac{\partial_x}{\nu}\rangle^{-j/3} \partial_y^j \omega_{in}||_{L^2} = \epsilon \leq \delta_2 \frac{\nu^{1/2}}{1+\ln(1/\nu)^{1/2}},$$
    then for all $c >0$ sufficiently small (independent of $\nu$ and $\delta_2$) and all $\nu \in (0,1)$, the corresponding solution $\omega$ to \eqref{couette_system} satisfies
    \begin{equation}
        \begin{split}
            &\sum_{0 \leq j \leq 1}|| e^{c \lambda^{ch}(\nu, \partial_x) t } \langle \partial_x \rangle ^m \langle \frac{\partial_x}{\nu}\rangle^{-j/3} \partial_y^j \omega||_{L_{x,y}^2} \leq \epsilon, \quad \forall t\in [0,\infty);\\
            & \sum_{0 \leq j \leq 1}|| e^{ c \lambda^{ch}(\nu,\partial_x) t} \langle \partial_x \rangle^m \langle \frac{\partial_x}{\nu}\rangle^{-j/3} \partial_y^j \partial_x u||_{L_t^2 L_{x,y}^2} \leq \epsilon.
        \end{split}
    \end{equation}
    Here $\lambda^{ch}(\nu,\partial_x)$ is a Fourier multiplier defined on the Fourier side as
    $$\lambda^{ch}(\nu, k) = \begin{cases}
        \nu^{1/3}|k|^{2/3}, & |k| \geq \nu\\
        \nu, & |k| \leq \nu.
    \end{cases}$$
\end{theorem}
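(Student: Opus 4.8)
The plan is to run a bootstrap (continuity) argument on a weighted energy built at the level of the $x$-Fourier-transformed vorticity $\omega_k(t,y)$. For each frequency $k$ the linearized perturbation operator is $\mathcal L_k = -iky + \nu(\partial_y^2 - k^2)$ with homogeneous Dirichlet data $\omega_k(\pm 1)=0$, and the target decay rate is exactly the multiplier $\lambda^{ch}(\nu,k)$. I would therefore take an energy of the form
\begin{equation}
E(t) \;:=\; \sum_{j\in\{0,1\}}\int_{\R} e^{2c\lambda^{ch}(\nu,k)\,t}\,\big\|\langle k\rangle^{m}\langle k/\nu\rangle^{-j/3}\partial_y^{j}\omega_k(t)\big\|_{L^2_y}^2\,dk \;+\; \mathrm{Corr}(t),
\end{equation}
where $\mathrm{Corr}(t)$ is a Villani-type hypocoercivity correction, a frequency- and time-modulated multiple of $\mathrm{Re}\langle \partial_y\omega_k,\, ik\,\omega_k\rangle_{L^2_y}$ with coefficient $\sim\min(1,(\nu/|k|)^{1/3})$ (equivalently, one passes to the shear-adapted frame $(z,y)=(x-ty,y)$, which maps $\R\times[-1,1]$ to itself and removes the $y\partial_x$ transport term at the cost of the $t$-dependent Biot--Savart operator $\partial_z^2+(\partial_y-t\partial_z)^2$). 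The correction is an $o(1)$ perturbation, so $E(t)$ is comparable to the square of the norm appearing in the theorem, and $E(0)\lesssim \epsilon^2$. The restriction $m\in(1/2,1)$ enters twice: $m>1/2$ makes $H^m$ in the $x$-frequency an algebra against the $\partial_y$-control in the nonlinear step, while $m<1$ keeps us below the regularity at which boundary compatibility of $\Delta\omega$ would be needed.

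Step one is the linear dissipation estimate: differentiating $E$ and using only $\partial_t\omega_k=\mathcal L_k\omega_k$ with Dirichlet data, I would show in the three regimes $|k|\gtrsim 1$, $\nu\lesssim |k|\lesssim 1$, and $|k|\lesssim\nu$ that the dissipation dominates $\tfrac{d}{dt}e^{2c\lambda^{ch}t}$ once $c$ is a small absolute constant, so the linear part of $\tfrac{d}{dt}E$ is $\le 0$ and in fact produces good terms $\nu\int\|\partial_y(\text{weighted }\omega_k)\|^2$ and $c\int\lambda^{ch}\|(\text{weighted }\omega_k)\|^2$. For $|k|\gtrsim\nu$ this is the standard enhanced-dissipation computation at the natural length scale $(\nu/|k|)^{1/3}$. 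The regime $|k|\lesssim\nu$ is where the channel is genuinely simpler than the plane or half-plane: one discards the shear entirely and uses the Poincar\'e inequality on $[-1,1]$ with Dirichlet data, giving plain heat decay at rate $\gtrsim\nu$, which matches $\lambda^{ch}(\nu,k)=\nu$. Since $\lambda^{ch}\ge\nu$ uniformly in $k$, this is why the conclusion is genuinely exponential ($e^{c\lambda^{ch}t}$ rather than the polynomial $\langle c\lambda^{pl}t\rangle^{J}$ of the planar theorems), and also why the hypothesis needs no extra low-frequency $L^\infty_k$ norm, in contrast with Theorems~\ref{main_theorem}--\ref{half_theorem}.

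Step two is the bootstrap. Assuming $E(t)\le 4\epsilon^2$ on a maximal interval $[0,T^\ast)$, the nonlinear contribution to $\tfrac{d}{dt}E$ comes from $\widehat{(u\cdot\nabla\omega)}_k=\int_\R (u_{k-\ell}\cdot\nabla)\omega_\ell\,d\ell$, which I would split into paraproduct pieces (low frequency transporting high, high transporting low, comparable frequencies) and bound using: the elliptic estimates $\|\partial_y\phi_k\|\lesssim |k|^{-1}\|\omega_k\|$, $\|k^2\phi_k\|\lesssim\|\omega_k\|$ together with the $(\partial_y-ikt)$-Poincar\'e inequality (supplying the inviscid-damping gain in $t$), commutator estimates for the weights $\langle k\rangle^m\langle k/\nu\rangle^{-j/3}$, and the time-integrability furnished by the good terms from Step one. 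The net bound has the form $\big|\tfrac{d}{dt}E\big|\lesssim \nu^{-1/2}\big(1+\ln(1/\nu)^{1/2}\big)\,E^{3/2}$, where the logarithm is the cost of a frequency integral that is logarithmically divergent across the transition $|k|\sim\nu$. Because $\epsilon\le\delta_2\,\nu^{1/2}(1+\ln(1/\nu)^{1/2})^{-1}$, taking $\delta_2$ small makes the nonlinear term absorbable, yields $\tfrac{d}{dt}E\le 0$ hence $E(t)\le E(0)\lesssim\epsilon^2$, and taking $\delta_2$ smaller still improves $4\epsilon^2$ to the claimed bound; continuity then gives $T^\ast=\infty$. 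The velocity estimates follow from Biot--Savart, $\|ik\,u_k\|_{L^2_y}\lesssim |k|\|\partial_y\phi_k\|+k^2\|\phi_k\|\lesssim\|\omega_k\|$, and the $L^2_t$-in-time control already contained in the good terms.

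The main obstacle, and (per the introduction) the principal novelty over \cite{bedrossian2023stability}, is Step two at the low ($|k|\lesssim\nu$) and intermediate ($\nu\lesssim|k|\le 1$) frequencies. On the unbounded $x$-domain the frequency is continuous, so the convolution cannot be summed geometrically and the weights $\langle k\rangle^m\langle k/\nu\rangle^{-j/3}$ do not factor across a convolution; one must track carefully how frequency mass near and below $\nu$ feeds into the enhanced-dissipation regime, which is precisely what produces the logarithmic loss and the sharp $\nu^{1/2}(1+\ln(1/\nu)^{1/2})^{-1}$ threshold. Secondary difficulties are that in the shear-adapted frame the Biot--Savart operator is $t$-dependent yet must still be controlled up to the Dirichlet boundary while extracting the inviscid-damping decay, and that the unmixed mode $k=0$ (the $x$-average of the perturbation), being subject to no enhanced dissipation, has to be closed purely through the Dirichlet heat decay at rate $\nu$ together with the nonlinear structure.
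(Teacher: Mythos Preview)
Your overall architecture (hypocoercive $k$-by-$k$ energy, exponential weight $e^{2c\lambda^{ch}t}$, Poincar\'e at $|k|\le\nu$, bootstrap on the nonlinearity) matches the paper, and you correctly identify why the channel is simpler than the plane (subadditivity of $\lambda^{ch}$ lets the exponential weight factor across the convolution; Poincar\'e replaces the $L^\infty_k$ control). However, your energy is missing a crucial piece. The Villani cross-term $\mathrm{Re}\langle\partial_y\omega_k, ik\omega_k\rangle$ only produces the enhanced-dissipation damping $\lambda_k\|\omega_k\|_2^2$; it does \emph{not} produce the inviscid-damping dissipation $|k|^2\|\nabla_k\phi_k\|_2^2$. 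The paper obtains the latter by including in $E_k$ the singular-integral term $c_\tau\,\mathrm{Re}\langle\mathfrak{J}_k\omega_k,\omega_k\rangle$ (with $\mathfrak{J}_k$ built from the Dirichlet Green's function of $\partial_y^2-k^2$ on $[-1,1]$), whose time derivative under the linear flow is exactly $-\tfrac12|k|^2\|\nabla_k\phi_k\|_2^2$ plus lower order. This $\mathcal{D}_\tau$ term is used pervasively in the nonlinear closure (e.g.\ every estimate that places $\|(k-k')\phi_{k-k'}\|_\infty$ or $\|\partial_y\phi_{k-k'}\|_\infty$ into $L^2_t$), and it is what gives the velocity conclusion $\|e^{c\lambda^{ch}t}\langle\partial_x\rangle^m\partial_x u\|_{L^2_tL^2_{x,y}}\le\epsilon$ with no loss in $\nu$. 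From your dissipation alone you would only get $|k|^2\|\nabla_k\phi_k\|_2^2\le\|\omega_k\|_2^2\lesssim\|\partial_y\omega_k\|_2^2\le\nu^{-1}D_{k,\gamma}$, which is a $\nu^{-1}$ loss that breaks both the velocity estimate and the nonlinear absorption. Your parenthetical about the shear-adapted frame is not an ``equivalent'' shortcut here: that is a genuinely different setup (time-dependent Biot--Savart on a domain with boundary) which the paper deliberately avoids, and it cannot simply be bolted onto the stationary-frame hypocoercive energy you wrote down.

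A second, related point: the nonlinear bound must have the form $\nu^{-1/2}(1+\ln(1/\nu)^{1/2})\,\mathcal{E}^{1/2}\mathcal{D}$, not $\mathcal{E}^{3/2}$. The absorption step is $-4c\mathcal{D}+C\nu^{-1/2}(1+\ln(1/\nu)^{1/2})\mathcal{E}^{1/2}\mathcal{D}\le 0$ once $\mathcal{E}^{1/2}\le 2\epsilon\le 2\delta_2\nu^{1/2}(1+\ln(1/\nu)^{1/2})^{-1}$; a bare $\mathcal{E}^{3/2}$ on the right cannot be absorbed into the dissipation because $\mathcal{D}$ and $\mathcal{E}$ are not comparable. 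Getting each nonlinear piece into the form (something in $\mathcal{E}$)$\times$(something in $\mathcal{D}$) is precisely where $\mathcal{D}_\tau$ and $\mathcal{D}_{\tau\alpha}$ are indispensable.
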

The super-scripts in $\lambda^{pl}$ and $\lambda^{ch}$ denote \textit{planar} and \textit{channel}, respectively.\\

We will see that a minor adjustment to the proof of Theorem \ref{main_theorem} yields the following corollary concerning the $\mathfrak{b}$-plane.
\begin{corollary}\label{beta_corollary}
Let $\omega_{in}$ be initial data for \eqref{beta_system}. Then the statement of Theorem \ref{main_theorem} applies to $\omega$ solving \eqref{beta_system}.
\end{corollary}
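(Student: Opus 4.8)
The plan is to re-run the proof of Theorem~\ref{main_theorem} for \eqref{beta_system}, carrying along the one additional term $\mathfrak{b}\partial_x\phi = \mathfrak{b}\partial_x\Delta^{-1}\omega$. First I would note that, since $v_{0,2}\equiv 0$, the Couette state $\Omega_0 = 1$, $v_0 = (y,0)$ remains a steady solution of \eqref{beta_plane_full_system}, so \eqref{beta_system} is the correct perturbation equation and the bootstrap set-up is unchanged; the term $\mathfrak{b}\partial_x\Delta^{-1}$ is moreover a bounded perturbation relative to the dissipation, so local well-posedness and propagation of regularity are unaffected. In particular $u = \nabla^\perp\Delta^{-1}\omega$ is recovered from $\omega$ exactly as before, so all velocity bounds — the inviscid damping estimates — will follow from the corresponding estimates on $\omega$ (or on its sheared profile) with no modification; only the $\omega$-estimates need revisiting.

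The heart of the matter is structural: $\mathfrak{b}\partial_x\Delta^{-1}$ is a skew-adjoint Fourier multiplier on $L^2_{x,y}$, its symbol $-i\mathfrak{b}k(k^2+\eta^2)^{-1}$ being purely imaginary (and in whatever sheared profile variables the proof uses to neutralize the transport $y\partial_x$, it becomes $\mathfrak{b}\partial_X\Delta_t^{-1}$ with symbol $-i\mathfrak{b}k(k^2+(\eta-kt)^2)^{-1}$, still skew-adjoint, since $\Delta_t = \partial_X^2 + (\partial_Y - t\partial_X)^2$ is self-adjoint). It also commutes with every Fourier multiplier entering the norm of Theorem~\ref{main_theorem} — the time weight $\langle c\lambda^{pl}(\nu,\partial_x)t\rangle^J$, the weights $\langle\partial_x\rangle^m$ and $\langle\partial_x/\nu\rangle^{-j/3}$, and $\partial_y^j$. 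Hence in each weighted $L^2$ energy identity of the proof, say $\frac{d}{dt}\|M\omega\|_{L^2}^2 = \ldots$ for a weight $M$ built from these multipliers, and in the frequency-by-frequency identity controlling $\|\omega_k\|_{L^\infty_k L^2_y}$, the $\mathfrak{b}$-term contributes — after setting $g = M\omega$ (resp.\ $g = \omega_k$) and commuting $M$ past $\mathfrak{b}\partial_x\Delta^{-1}$ — only a quantity of the form $\mp 2\mathfrak{b}\,\mathrm{Re}\,\langle\partial_x\Delta^{-1}g,g\rangle$, which vanishes because $\partial_x\Delta^{-1}$ is skew-adjoint. Concretely, at each $x$-frequency $k$ one has $\langle\phi_k,\omega_k\rangle_{L^2_y} = -\|\partial_y\phi_k\|_{L^2_y}^2 - k^2\|\phi_k\|_{L^2_y}^2 \in \R$, whence $\mathrm{Re}\,\langle ik\,\phi_k,\omega_k\rangle_{L^2_y} = 0$; the $\partial_y$-weighted ($j=1$) piece is identical after an integration by parts in $y$, the boundary terms at $y=\pm\infty$ vanishing because the relevant functions lie in $H^1(\R)$, and the mode $k=0$ is trivial since the term carries a factor $k$. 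Thus the $\mathfrak{b}$-term drops out of every differential inequality, $\lambda^{pl}$ is untouched, and the bootstrap closes exactly as before with threshold $\epsilon \lesssim \nu^{1/2}(1+\ln(1/\nu)^{1/2})^{-1}$; the only genuine change to the write-up is recording that this term is purely imaginary.

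The step I expect to demand the most care — the one (mild) obstacle — is verifying that no part of the proof of Theorem~\ref{main_theorem} uses the $\omega$-equation in a way sensitive to adding a skew-adjoint term. The energy estimates are not (as above), the recovery of $u$ is not, and the well-posedness theory is not. The subtlest case is if the enhanced-dissipation/dispersion rate $\lambda^{pl}$ is extracted not from the bare weighted norm $\|M\omega\|_{L^2}^2$ but from an augmented hypocoercive functional obtained by adding small $\nu$- and $k$-dependent multiples of cross terms such as $\mathrm{Re}\,\langle\partial_y\omega_k, ik\,\omega_k\rangle$ to cope with the momentary degeneracy of the dissipation near $\eta = kt$; then the $\mathfrak{b}$-term produces additional cross contributions that are no longer identically zero. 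These are nonetheless absorbed, since $\mathfrak{b}\partial_x\Delta^{-1}$ gains an $x$-derivative ($\|ik(\partial_y^2-k^2)^{-1}\omega_k\|_{L^2_y}\le |k|^{-1}\|\omega_k\|_{L^2_y}$) and hence is relatively bounded, so in the frequency ranges where such cross terms are used its contributions are dominated by the dissipative and hypocoercive gains already present once the coupling coefficients are shrunk by a factor depending only on the fixed constant $\mathfrak{b}$ — this is precisely the advertised minor adjustment. In all cases the conclusions of Theorem~\ref{main_theorem} persist verbatim for \eqref{beta_system}, with $\delta_1$ now permitted to depend on $\mathfrak{b}$.
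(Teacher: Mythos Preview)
Your approach is essentially the paper's: verify that the extra term $\mathfrak{b}\,ik\phi_k$ contributes nothing to any piece of the energy $E_k$ by skew-adjointness, so that Proposition~\ref{linear_proposition} is unchanged and the nonlinear bootstrap (which sees only the Euler nonlinearity) goes through verbatim.

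There is one point where you diverge from the paper, and it is worth flagging. You assert that for the hypocoercive cross term $-c_\beta\beta\,\mathrm{Re}\langle ik\omega_k,\partial_y\omega_k\rangle$ the $\mathfrak{b}$-contribution is ``no longer identically zero'' and must be absorbed, with $\delta_1$ then allowed to depend on $\mathfrak{b}$. In fact the paper shows by direct integration by parts (writing $\omega_k=\Delta_k\phi_k$) that
\[
\mathrm{Re}\langle ik\omega_k, ik\partial_y\phi_k\rangle + \mathrm{Re}\langle (ik)^2\phi_k, \partial_y\omega_k\rangle = 0,
\]
so this contribution also vanishes identically. Your own framework already yields this: since $ik$, $\partial_y$, and $N\coloneqq ik\Delta_k^{-1}$ are commuting Fourier multipliers on $\R^2$ with $N^*=-N$, for any such $A,B$ one has $\langle ANf,Bf\rangle+\langle Af,BNf\rangle=\langle ANf,Bf\rangle-\langle ANf,Bf\rangle=0$. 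The same commutation argument handles the $\mathfrak{J}_k$ terms (which the paper also checks explicitly), since in the planar case $\mathfrak{J}_k$ is the Fourier multiplier $\arctan(\eta/k)$. Consequently no absorption is needed and $\delta_1$ is independent of $\mathfrak{b}$; this is fortunate, because your proposed fallback bound $\|ik\Delta_k^{-1}\omega_k\|_2\le|k|^{-1}\|\omega_k\|_2$ would leave a contribution of size $\beta\,\|\omega_k\|_2\|\partial_y\omega_k\|_2$, and at intermediate frequencies $\nu\le|k|\ll 1$ the factor $\beta=\nu^{1/3}|k|^{-4/3}$ is not dominated by any combination of $\lambda_k$, $D_{k,\gamma}$, $D_{k,\alpha}$ uniformly in $\nu$.
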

\begin{remark}
    The proofs of Theorem \ref{main_theorem} and Corollary \ref{beta_corollary} differ only at the linear level. We will therefore only discuss the differences in Section \ref{lin_beta_plane}
\end{remark}

We shall prove Theorems \ref{main_theorem}, \ref{half_theorem}, and \ref{channel_theorem} using an energy method similar to that used in proving Theorem 1.1 in \cite{bedrossian2023stability}, with adjustments to treat the low-frequency Taylor dispersion regime coming from \cite{cotizelati2023}. Just as \cite{bedrossian2023stability}, we will first perform linear estimates based on a modified hypocoercive scheme \cite{cotizelati2023} and then prove a bootstrap estimate to propagate these linear estimate to the non-linear level. The key contribution of this paper is in the handling of frequencies with $|k| < 1$, including the interactions between low ($|k| \leq \nu$), intermediate ($\nu \leq |k| \leq 1$), and high ($|k| \geq 1$) frequencies arising in the non-linearity. The intermediate frequencies require particular attention. A singular integral operator, denoted $\mathfrak{J}_k$, was used in \cite{bedrossian2023stability} to prove inviscid damping estimated on $\mathbb{T} \times [-1,1]$. We are able to adapt this in order to obtain planar inviscid damping estimates. Furthermore, we note that \cite{bedrossian2023stability} considered perturbations of \textit{near}-Couette flow on the periodic channel, while we are concerned with perturbations of Couette flow. We are at present unable to extend the result to \textit{near}-Couette flow on the infinite channel, since estimates for the inviscid damping operator lead to blow-up at low frequencies at even the linear level (contrast Lemma \ref{basic_hypo_est} with Lemma 4.1 of \cite{bedrossian2023stability}). These low freqencies, or correspondingly large wavelengths, are known to produce instabilities in general classes of shear flows which do not include Couette flow \cites{friedlander, grenier}.\\

To the authors' knowledge, this is the first quantitative stability result giving enhanced dissipation and inviscid damping for fully nonlinear Navier or Euler shear flows where the $x$ variable is unbounded and non-periodic, in contrast with previous results which have domains such as $\mathbb{T} \times \R$ \cites{bedrossian2016enhanced, bedrossian2018, cotizelati2020, li2023, masmoudi2019}  and $\mathbb{T} \times [-1,1]$ modulo rescaling \cites{bedrossian2023stability, chen2020transition2d, ionescu2020, ionescu2023}. There do exist stability results for Couette flow in the 3D infinite channel, but they are either not fully quantitative \cite{romanov1974}, or are quantitative with exponent $\gamma = 3$ or $4$ (depending on the velocity component); however, these results do not consider enhanced dissipation, inviscid damping, or Taylor dispersion \cites{kreiss1994, liefvendahl2002}. There are comparable non-linear stability results in kinetic theory which prove enhanced dissipation, Taylor dispersion as applicable, and Landau-type damping (similar to inviscid damping). See for instance the recent works \cites{albritton2023, bedrossian2022boltzmann, dietert2023}. Additionally, we note the difference in decay rates between Theorems \ref{main_theorem} and \ref{half_theorem} and Theorem \ref{channel_theorem}. While in Theorem \ref{channel_theorem}, we are able to obtain the exponential decay implied by the linearized problem, the decay in Theorems \ref{main_theorem} and \ref{half_theorem} is a polynomial rate of arbitrarily high order $J$, with the constant $\delta_1$ depending on $J$. This is similar to the Taylor dispersion and enhanced dissipation results obtained in \cite{bedrossian2022boltzmann} for the Boltzmann equations posed on the whole space. While the infinite channel has an unbounded $x$ domain, the ability to use the Poincar\'e inequality causes $\lambda^{ch} = \nu$ at low frequencies, rather than $\nu^{-1} k^{2}$. Sub-additivity of $k \mapsto \lambda^{ch}(\nu,k)$ enables us to propagate the exponential decay even at the non-linear level.\\

Lastly, we note that Theorems \ref{main_theorem} and \ref{half_theorem} assume a smallness condition on $||\omega_{in, \, k}||_{L_k^\infty L_y^2}$. This is used to control non-linear terms where two frequencies $k$ and $k'$ are very close ($|k-k'| \lesssim \nu$). In the case of the infinite channel, the Poincar\'e inequality is able to replace this control. Indeed, the Poincar\'e inequality simplifies several steps in the proof of Theorem \ref{channel_theorem}. Furthermore, the proof of Theorem \ref{half_theorem} is virtually identical to the proof of Theorem \ref{main_theorem}, modulo certain estimates on the operator $\mathfrak{J}_k$. For this reason, we will primarily focus on Theorem \ref{main_theorem} in Sections \ref{outline_section} -- \ref{nonlin}. In Section \ref{half_modifications}, we will discuss the required estimates to finish the proof of Theorem \ref{half_theorem}. In Section \ref{channel_modifications}, we will sketch the proof of Theorem \ref{channel_theorem}, noting the differences from \ref{main_theorem}.

\subsection{Notation}

We use the standard notation $\langle \cdot \rangle$ to denote $\langle a \rangle = \sqrt{1 + a^2}$ for any quantity $a$. Given two quantities $A, B \geq 0$, we write $A \lesssim B$ to indicate that there exists a constant $C > 0$ such that $A \leq C B$. If the constant $C$ depends on a parameter $\theta$, we shall write $A \lesssim_\theta B$. If $A \lesssim B$ and $B \lesssim A$, we write $A \approx B$.

For $D = \R$, $[0,\infty)$, or $[-1,1]$, we will use the notation $||\cdot||_2$ and $||\cdot||_\infty$ to denote the $L^2(D)$ and $L^\infty(D)$ norms, respectively. The notation $\langle \cdot, \cdot\rangle$ will denote the inner product on $L^2(D)$. We will use $||\cdot||_{L^p \to L^p}$ to denote the operator norm of an operator from $L^p(D)$ to itself. In Section \ref{half_modifications}, we will have occasion to use norms $||\cdot||_{L^p_{a}L^q_{b}}$ which denote the mixed Lebesgue norm with $(a,b) \in D \times D$. In general, if there is some ambiguity as to which variable is associated with a given norm, we will use a subscript, e.g. $L^2_k$ would refer to the $L^2$ norm in the $k$-variable.

\section{Outline}\label{outline_section}

We recall that we focus on the proof of Theorem \ref{main_theorem}, rather than Theorem \ref{half_theorem} or \ref{channel_theorem}. Hence our statements will be focused on the planar case, and throughout Sections \ref{outline_section} -- \ref{nonlin} we will assume that $D = \R$. Corresponding statements apply to the infinite channel with obvious modifications. Non-obvious modifications will be discussed in Section \ref{channel_modifications}. In the linearized version of \eqref{couette_system}, taking the Fourier transform in the $(x,k)$ Fourier pair decouples the problem into $k$-mode by $k$-mode. Hence, we will build our energy functional $\mathcal{E}$ (to be defined later in \eqref{introduce_energy}) out of a $k$-by-$k$ functional $E_k$ adapted to the linearized problem. If $\omega_k$ solves \eqref{couette_system} (or the linearized system \eqref{linearized_system}), define
\begin{equation}\label{kEnergy}
    E_k[\omega_k] \coloneqq ||\omega_k||_2^2 + c_\alpha \alpha ||\partial_y \omega_k||_2^2 - c_\beta \beta \textrm{Re} \langle ik \omega_k, \partial_y \omega_k \rangle + c_\tau  \textrm{Re}\langle \mathfrak{J}_k \omega_k, \omega_k \rangle +  c_\tau c_\alpha \alpha\textrm{Re}\langle \mathfrak{J}_k \partial_y \omega_k, \partial_y \omega_k\rangle,
\end{equation}
where
\begin{equation}
    \alpha \coloneqq
    \begin{cases}
        1, & |k| <  \nu,\\
        \nu^{2/3} |k|^{-2/3}, & |k| \geq  \nu,
    \end{cases}
\end{equation}
\begin{equation}
    \beta \coloneqq
    \begin{cases}
         \nu^{-1}, & |k| < \nu,\\
         \nu^{1/3} |k|^{-4/3}, & |k| \geq \nu, 
    \end{cases}
\end{equation}
and $c_\alpha$, $c_\beta$, $c_\tau$ are positive constants to be determined later. The operator $\mathfrak{J}_k$ is a convolution operator, which is a modification of the inviscid damping operator $\mathfrak{J}_k$ introduced in \cite{bedrossian2023stability}. Although at the linear level, distinct wave numbers are decoupled, so that one views $k$, and hence $\alpha$ and $\beta$, as constants, at the nonlinear level we shall consider $\alpha$ and $\beta$ as Fourier multipliers, and hence we define
\begin{equation}\label{definitions_of_A_and_B}
    A(k) \coloneqq
    \begin{cases}
        1, & |k| <  \nu,\\
        \nu^{1/3} |k|^{-1/3}, & |k| \geq  \nu,
    \end{cases} \; \; \; B(k) \coloneqq
    \begin{cases}
         \nu^{-1/2}, & |k| < \nu,\\
         \nu^{1/6} |k|^{-2/3}, & |k| \geq \nu. 
    \end{cases}
\end{equation}
The energy $E_k$ is a modification of the now-standard hypocoercive energy function found in, for example, \cites{bedrossiancotizelati2017, cotizelati2023}, with the addition of terms corresponding to the inviscid damping operator $\mathfrak{J}_k$ as introduced in \cite{bedrossian2023stability}. On the plane, the convolution operator $\mathfrak{J}_k : L^2(\R) \to L^2(\R)$ is defined on Schwartz functions $f\in \mathcal{S}(\R)$ by
\begin{equation}
\begin{split}
        \mathfrak{J}_k[f](y) \coloneqq |k| \textrm{p.v.} \frac{k}{|k|} \int_{-1}^1 \frac{1}{2i(y-y')} G_k(y,y') f(y') dy',
\end{split}
\end{equation}
where
\begin{equation}
\begin{split}
        G_k(y,y') &= -\frac{1}{|k|}e^{-|k(y-y')|}.
\end{split}
\end{equation}
 As shown in Lemma \ref{boundedness_of_J_k}, the operator $\mathfrak{J}_k$ is a linear operator $L^2\to L^2$ which is uniformly bounded in $k$. The origin of $\mathfrak{J}_k$ is discussed in \cite{bedrossian2023stability}, and arises from a physical side representation of a series expansion of the ``ghost multiplier" $\mathcal{M}$ employed in \cites{bedrossian2020dynamic, bedrossian2016enhanced, zillinger2014}. In particular, we note that while the original $\mathfrak{J}_k$ was adapted to the channel (and this is the version of $\mathfrak{J}_k$ which we will use in Section \ref{channel_modifications} to prove Theorem \ref{channel_theorem}), the deriviation of this $\mathfrak{J}_k$ actually passes through the definition which we will use. In particular, the kernel $\frac{k}{iy}G_k$ (abusing notation slightly) was found, and then it was noticed that $G_k$ is the fundamental solution of $\Delta_k =\partial_y^2 - k^2$ on $\R$. The operator $\mathfrak{J}_k$ in \cite{bedrossian2023stability} and Section \ref{channel_modifications} arises by replacing $G_k$ with the fundamental solution of $\partial_y^2 - k^2$ on $[-1,1]$ subject to Dirichlet boundary conditions, while the operator used in Section \ref{half_modifications} on the half-plane corresponds to letting $G_k$ be the fundamental solution of $\partial_y^2 - k^2$ on $[0,\infty)$ with Dirichlet boundary conditions.\\

To prove Theorem \ref{main_theorem}, we will estimate $\frac{d}{dt} E_k[\omega_k]$ and $\frac{d}{dt}E_k$ in the linear case in Section \ref{linear_section}. Then in Section \ref{nonlin}, we will combine the linear results with estimates on the nonlinear contributions to $\frac{d}{dt} \mathcal{E}$ to prove a bootstrap lemma, completing the proof of Theorem \ref{main_theorem}.

\subsection{Linearized Problem}

We now make the references to the linearized problem precise. Consider the following mode-by-mode linear system, arising from dropping the non-linear term from \eqref{couette_system} and taking the Fourier transform:
\begin{equation}\label{linearized_system}
    \begin{cases}
        \partial_t \omega_k + ik y \omega_k - \nu \Delta_k \omega_k = 0,\\
        \Delta_k \phi_k = \omega_k.
    \end{cases}
\end{equation}
The operator $\Delta_k$ is defined by $\Delta_k = - |k|^2 + \partial_y^2$ with associated gradient $\nabla_k = (ik, \partial_y)$. We note that $G_k$ is the fundamental solution of $\Delta_k$ on $\R$. In addition to the $k$-by-$k$ energy $E_k$ in \eqref{kEnergy}, we introduce the corresponding $k$-by-$k$ dissipation functionals,
\begin{equation}\label{k_by_k_dissipation}
    \begin{split}
        D_k[\omega_k]& \coloneqq \nu ||\nabla_k \omega_k||_2^2 + c_\alpha \alpha || \nabla_k \partial_y \omega_k||_2^2 + c_\beta \beta |k|^2 ||\omega_k||_2^2 + c_\tau  |k|^2 ||\nabla_k \phi_k||_2^2 + c_\tau c_\alpha \alpha  |k|^2 ||\nabla_k \partial_y \phi_k||_2^2\\
        &\eqqcolon D_{k,\gamma} + c_\alpha D_{k,\alpha} + c_\beta D_{k,\beta} + c_\tau D_{k,\tau} +  c_\tau c_\alpha D_{k,\tau \alpha},
    \end{split}
\end{equation}

The terms of $E_k$ involving $\mathfrak{J}_k$ will be shown (see Lemma \ref{disip_lin}) to extract the $L_t^2 \dot{H}^1_x L_y^2$ inviscid damping estimate:
$$\sum_{0 \leq j \leq 1} |k|^2 || (\alpha \partial_y)^j \nabla_k \phi_k ||_{L_t^2 L_y^2}^2 \lesssim E_k[ \omega_k(0)].$$

Before we state the primarily linear result, we introduce the shorthand $\lambda_k = \lambda^{pl}(\nu, k)$. Our main result at the linear level is:

\begin{proposition}\label{linear_proposition}
    There exist constants $c_\alpha$, $c_\beta$, and $c_\tau$, and constants $c_0 = c_0(c_\tau, c_\alpha, c_\beta)$, $c_1 = c_1(c_\tau, c_\alpha, c_\beta)$ which can be chosen independently of $\nu$ such that, for any $H^1$ solution $\omega$ to \eqref{linearized_system}, the following holds for any $k \neq 0$:
    $$\frac{d}{dt} E_k[\omega_k] \leq - c_1 D_k[\omega_k] - c_0\lambda_k E_k[\omega_k].$$
    In particular, this implies that for any $c>0$ sufficiently small
    $$\frac{d}{dt} E_k[\omega_k] \leq - 8c D_k[\omega_k] - 8c\lambda_k E_k[\omega_k],$$
    and the following inviscid damping  and enhanced dissipation estimate holds:
    $$e^{2c\lambda_k t} E_k [\omega_k(t)] + \frac{1}{4}c_\tau \int_0^t e^{2 c \lambda_k s} |k|^2 ||\nabla_k \phi_k(s)||_{2}^2 ds \leq E_k[\omega_k(0)], \; \; \forall t \geq 0.$$
\end{proposition}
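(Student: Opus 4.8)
The plan is to establish the differential inequality $\frac{d}{dt} E_k[\omega_k] \leq -c_1 D_k[\omega_k] - c_0\lambda_k E_k[\omega_k]$ by differentiating each of the five constituent terms of $E_k$ along solutions of \eqref{linearized_system}, then absorbing all ``bad'' cross-terms into the dissipation $D_k$ and the coercive term $\lambda_k E_k$ by choosing the constants $c_\alpha, c_\beta, c_\tau$ hierarchically (first fix $c_\tau$ small relative to the $\mathfrak{J}_k$ operator bound from Lemma \ref{boundedness_of_J_k}, then $c_\beta$ small relative to $c_\tau$, then $c_\alpha$ small relative to $c_\beta$). This is the standard hypocoercivity bookkeeping from \cites{bedrossiancotizelati2017, cotizelati2023}, augmented with the inviscid-damping terms from \cite{bedrossian2023stability}, but one must track how every estimate degrades as $|k|$ crosses the threshold $|k| = \nu$, since $\alpha, \beta, \lambda_k$ all change form there.

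\textbf{Step 1: the energy identities.} For $\|\omega_k\|_2^2$, multiplying \eqref{linearized_system} by $\bar\omega_k$ and taking real parts kills the transport term $iky\omega_k$ and yields $\frac12\frac{d}{dt}\|\omega_k\|_2^2 = -\nu\|\nabla_k\omega_k\|_2^2$, giving exactly $-D_{k,\gamma}$. For $\alpha\|\partial_y\omega_k\|_2^2$, differentiating \eqref{linearized_system} in $y$ produces the commutator term $ik\omega_k$, so $\frac12\frac{d}{dt}\|\partial_y\omega_k\|_2^2 = -\nu\|\nabla_k\partial_y\omega_k\|_2^2 - \mathrm{Re}\langle ik\omega_k, \partial_y\omega_k\rangle$; the last quantity is precisely what the $-c_\beta\beta\,\mathrm{Re}\langle ik\omega_k,\partial_y\omega_k\rangle$ term is designed to control via its own time derivative. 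Differentiating that mixed term gives a good coercive contribution $-c_\beta\beta|k|^2\|\omega_k\|_2^2 = -D_{k,\beta}$ (this is the classical hypocoercive mechanism: the transport term, which is conservative for $\|\omega_k\|_2^2$, becomes dissipative for the mixed term) plus error terms controlled by $\nu$-weighted quantities already present in $D_k$, using Cauchy-Schwarz and the choice of $\alpha, \beta$ (one checks $\beta|k|\lesssim\alpha^{1/2}$ etc. on both frequency regimes). For the $\mathfrak{J}_k$ terms, one uses that $\mathfrak{J}_k$ is a fixed bounded convolution operator commuting with $\partial_y$ and $\Delta_k$, and that $\mathrm{Re}\langle\mathfrak{J}_k(iky\omega_k),\omega_k\rangle$ generates the inviscid-damping gain $-c_\tau|k|^2\|\nabla_k\phi_k\|_2^2 = -D_{k,\tau}$ after integrating by parts and using the identity relating $\mathfrak{J}_k$ to $G_k = $ the fundamental solution of $\Delta_k$ (this is exactly the computation from \cite{bedrossian2023stability} adapted to $\R$); the viscous part contributes $-c_\tau\nu\langle\mathfrak{J}_k\nabla_k\omega_k,\nabla_k\omega_k\rangle$ which is $\lesssim c_\tau\nu\|\nabla_k\omega_k\|_2^2$ in absolute value and hence absorbed by $D_{k,\gamma}$ once $c_\tau$ is small.

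\textbf{Step 2: extracting $\lambda_k E_k$.} After Step 1 we have $\frac{d}{dt}E_k \leq -c_1 D_k$ for suitable $c_1$. To upgrade to $-c_1 D_k - c_0\lambda_k E_k$ we spend a fraction of $D_k$: on $|k|\geq\nu$ we have $\lambda_k = \nu^{1/3}|k|^{2/3}$, and $\lambda_k\|\omega_k\|_2^2 \lesssim \nu^{1/3}|k|^{2/3}\|\omega_k\|_2^2$, which we bound by interpolating $D_{k,\gamma}\gtrsim\nu|k|^2\|\omega_k\|_2^2$ against... rather, more directly, $\lambda_k\|\omega_k\|_2^2 \lesssim \nu\|\partial_y\omega_k\|_2^2 + \nu^{-1/3}|k|^{4/3}\|\omega_k\|_2^2$ — one shows the deficit is covered by $D_{k,\gamma} + D_{k,\beta}$ since $\beta|k|^2 = \nu^{1/3}|k|^{2/3} = \lambda_k$ exactly, so $D_{k,\beta} = c_\beta\lambda_k\|\omega_k\|_2^2$ already has the right form; similarly $\alpha D_{k,\alpha} \gtrsim \nu\alpha\|\nabla_k\partial_y\omega_k\|_2^2 \gtrsim \lambda_k\cdot\alpha\|\partial_y\omega_k\|_2^2$ after a Poincaré-type frequency estimate, and $D_{k,\tau}, D_{k,\tau\alpha}$ dominate $\lambda_k$ times the $\mathfrak{J}_k$-terms using boundedness of $\mathfrak{J}_k$. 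On $|k|<\nu$ one has $\lambda_k = |k|^2/\nu$ and $\beta = \nu^{-1}$, so again $D_{k,\beta} = c_\beta\nu^{-1}|k|^2\|\omega_k\|_2^2 = c_\beta\lambda_k\|\omega_k\|_2^2$ matches, and the other terms are handled identically. Combining, $D_k \gtrsim \lambda_k E_k$ up to the small cross-terms, so for $c_0$ small enough $\frac{d}{dt}E_k \leq -8cD_k - 8c\lambda_k E_k$, and then $\frac{d}{dt}(e^{2c\lambda_k t}E_k) \leq -8ce^{2c\lambda_k t}D_k \leq -c_\tau e^{2c\lambda_k t}|k|^2\|\nabla_k\phi_k\|_2^2$ (keeping only $D_{k,\tau}$, discarding the rest, and noting $8c \geq$ the needed constant once $c$ is small); integrating in time gives the stated estimate.

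\textbf{The main obstacle} will be verifying that the $\mathfrak{J}_k$ terms genuinely produce the inviscid-damping gain $-D_{k,\tau}$ with the correct sign and size \emph{uniformly across the frequency threshold} $|k| = \nu$ — in particular confirming that the relevant commutator identity $\mathrm{Re}\langle\mathfrak{J}_k(ky\cdot),\cdot\rangle \gtrsim |k|^2\|\nabla_k\phi_k\|_2^2$ survives on $\R$ (where, unlike the periodic channel, there is no spectral gap and $\phi_k$ is only controlled through $G_k$), and checking all the algebraic inequalities relating $\alpha, \beta, \lambda_k$ in both regimes so that every error term is strictly dominated. The low-frequency regime $|k|<\nu$, where $\alpha = 1$ and $\beta = \nu^{-1}$ are large, is the delicate one: one must confirm the mixed term $c_\beta\nu^{-1}\mathrm{Re}\langle ik\omega_k,\partial_y\omega_k\rangle$ stays a small perturbation of $\|\omega_k\|_2^2 + c_\alpha\|\partial_y\omega_k\|_2^2$ (needing $c_\beta\nu^{-1}|k| \lesssim c_\alpha^{1/2}$, which holds since $|k| < \nu$) so that $E_k$ remains coercive, i.e. $E_k \approx \|\omega_k\|_2^2 + \alpha\|\partial_y\omega_k\|_2^2$.
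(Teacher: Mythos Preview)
Your proposal is correct and follows essentially the same hypocoercive-plus-$\mathfrak{J}_k$ bookkeeping as the paper, with Lemmas \ref{basic_hypo_est} and \ref{disip_lin} supplying exactly the term-by-term identities you sketch in Step 1. Two small corrections: in Step 2, the bound $\lambda_k\alpha\|\partial_y\omega_k\|_2^2 \lesssim D_k$ comes from $D_{k,\gamma}$ via the exact algebraic relation $\alpha\lambda_k \leq \nu$ (equality when $|k|\geq\nu$), not from $D_{k,\alpha}$ --- your proposed route through $\nu\alpha\|\nabla_k\partial_y\omega_k\|_2^2$ would need $\nu|k|^2 \gtrsim \lambda_k$, which fails for $\nu\leq|k|\leq\nu^{-1/2}$; and the constant hierarchy must satisfy \emph{both} $c_\alpha \ll c_\beta$ and $c_\beta^2 \ll c_\alpha$ (cf.\ \eqref{coeff_conditions}), not a simple chain, since the $\beta$ cross-term $c_\beta D_{k,\gamma}^{1/2}D_{k,\alpha}^{1/2}$ must be absorbed into $c_\alpha D_{k,\alpha}$. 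Finally, your ``main obstacle'' is in fact a non-issue on $\R$: $\mathfrak{J}_k$ is the Fourier multiplier $\arctan(\eta/k)$, so the commutator identity producing $-\tfrac{|k|^2}{2}\|\nabla_k\phi_k\|_2^2$ is an exact pointwise computation with no threshold to cross.
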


Proposition \ref{linear_proposition} will be proven in Section \ref{linear_section}.

\subsection{Nonlinear Problem}

For the nonlinear problem, taking the Fourier transform of \eqref{couette_system} gives rise to the nonlinear $(u \cdot \nabla \omega)_k$ term, which combines multiple frequencies. Hence we cannot rely solely on the mode-by-mode functional. Recalling the shorthand $\lambda_k = \lambda^{pl}(\nu,k)$, we introduce the functionals $\mathcal{E}_1$, $\mathcal{E}_2$, and $\mathcal{E}$:
\begin{equation}\label{introduce_energy}
\begin{split}
     &\mathcal{E}_1 \coloneqq \int \frac{\langle c \lambda_k t \rangle^{2J}}{M_k(t)} \langle k \rangle^{2m} E_k[\omega_k] dk,\\
        &\mathcal{E}_2 \coloneqq \sup_{k \in \R}\left(||\omega_k||_2^2 + c_\tau \textrm{Re}\langle \mathfrak{J}_k \omega_k, \omega_k \rangle\right),\\
        &\mathcal{E} \coloneqq \mathcal{E}_1 + \mathcal{E}_2,
\end{split}
\end{equation}
with the (time-integrated) dissipations
\begin{equation}\label{introduce_dissipation}
\begin{split}
        &\mathcal{D}_1(t) \coloneqq \int_0^t \int_\R \frac{\langle c \lambda_k s \rangle^{2J}}{M_k(s)} \langle k \rangle^{2m}D_{k}[\omega_k] dk ds,\\
    &\D_2(t) \coloneqq \sup_{k \in \R}  \left(\int_0^t D_{k,\gamma} + c_\tau D_{k, \tau}ds\right ),\\
    &\D \coloneqq \D_1 + \D_2,
\end{split}
\end{equation}
where $M_k(t)$ is the solution to the ODE:
\begin{equation}\label{definition_of_Mk(t)}
    \begin{split}
        \dot{M}_k(t) &= c J^2 \lambda_k \frac{(c \lambda_k t)^2}{\langle c \lambda_k t \rangle^4} M_k(t)\\
        M_k(0) &= 1.
    \end{split}
\end{equation}
The constant $c$ in the preceding definitions is a positive constant which is sufficiently small, as determined by Proposition \ref{linear_proposition}. Importantly, the smallness condition on $c$ is independent of $m$, $J$, and $\nu$.\\

The time-dependent multiplier $M_k(t)$ is included in \eqref{introduce_energy} and \eqref{introduce_dissipation} to address terms in $\frac{d}{dt}\mathcal{E}_1$ which arise from the time-derivative falling on $\langle c \lambda_k t \rangle^{2J}$. Solving the ODE \eqref{definition_of_Mk(t)} explicitly, it is easy to see that $1 \leq M_k(t) \leq e^{4J^2 /3}$ for all $t \geq 0$ and all $k \in \R$. Thus $M_k(t)$ is uniformly bounded above and below in both $k$ and $t$ and $||M_k(t) f||_{L^2_k L^2_y} \approx ||f||_{L^2_k L^2_y}$ for any $f \in L^2_k L^2_y(\R \times \R)$. Similar so-called ``ghost multipliers" have been used in \cites{zillinger2014, bedrossian2020dynamic, bedrossian2022dynamicsabove, bedrossian2017sobolev, bedrossian2018}.\\

We additionally define, for $* \in \{\gamma, \tau, \alpha, \alpha\tau, \beta\}$, the quantities: 
\begin{equation}\label{different_dissipations_definition}
    \mathcal{D}_*(t) \coloneqq \int_0^t \int_\R \frac{\langle c \lambda_k s \rangle^{2J}}{M_k(s)}  \langle k \rangle^{2m}D_{k, *}[\omega_k] dk ds,
\end{equation}
as well as
\begin{equation}
    \D_{\gamma, 2}(t) \coloneqq \sup_{k \in \R} \int_0^t D_{k,\gamma} ds, \quad \D_{\tau,2}(t) = \sup_{k \in \R} \int_0^t D_{k, \tau} ds.
\end{equation}
Similarly to \cite{bedrossian2023stability}, the key estimate, which directly implies Theorem \ref{main_theorem}, is the following bootstrap lemma. Unlike \cite{bedrossian2023stability}, the presence of the supremum terms in the energy \ref{introduce_energy} prevents us from expressing the lemma in a purely differential form. Instead, we have the following:
\begin{lemma}[Bootstrap Lemma]\label{bootstrap}
There exists a constant $C >0$ depending only on $m$, $J$, and the choice of $c \in (0,1)$ from Proposition \ref{linear_proposition} such that for all $t \in [0,\infty)$:
\begin{equation}\label{derivative_of_energy}
    \mathcal{E}(t) \leq 2\mathcal{E}(0) - 4c \D(t) + \left(C \frac{(\ln(1/\nu)^{1/2} + 1)^2}{\nu}\sup_{s \in [0,t]}\mathcal{E}(s) \right)^{1/2}\D(t),
\end{equation}
for $\mathcal{E}$ as in \eqref{introduce_energy}, $\mathcal{D}$ as in \eqref{introduce_dissipation}, and $\omega$ an $H^1$ solution to \eqref{couette_system} such that $\mathcal{E}(0) < \infty$. In particular, \eqref{derivative_of_energy} holds for all times $t \geq 0$ such that $\mathcal{E}(t) < \infty$.
\end{lemma}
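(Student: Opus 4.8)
The plan is to decompose $\frac{d}{dt}\mathcal{E}(t)$ into a linear part, controlled by Proposition \ref{linear_proposition}, and a nonlinear part coming from $(u\cdot\nabla\omega)_k$, and to show the nonlinear part is bounded by $(C\nu^{-1}(\ln(1/\nu)^{1/2}+1)^2 \sup_{[0,t]}\mathcal{E})^{1/2}\mathcal{D}(t)$ after time integration; integrating the resulting differential inequality then yields \eqref{derivative_of_energy}. First I would write $\frac{d}{dt}\mathcal{E}_1$ by differentiating under the integral: the terms where $\partial_t$ hits $E_k[\omega_k]$ split into the linear contribution $\frac{d}{dt}E_k\big|_{\mathrm{lin}}\le -c_1 D_k - c_0\lambda_k E_k$ plus the nonlinear commutator terms $\langle \text{(multipliers of }E_k),\ (u\cdot\nabla\omega)_k\rangle$; the term where $\partial_t$ hits $\langle c\lambda_k t\rangle^{2J}$ produces $2Jc\lambda_k \frac{(c\lambda_k t)}{\langle c\lambda_k t\rangle^2}\times(\cdots)$, which by Young's inequality is absorbed into $c D_k$ at the expense of the $\dot M_k/M_k$ correction — this is exactly why $M_k(t)$ solves \eqref{definition_of_Mk(t)}, so that the $-\dot M_k/M_k^2$ factor cancels the bad sign and no net loss occurs. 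For $\mathcal{E}_2$ (and $\mathcal{D}_2$) I would differentiate the $k$-by-$k$ quantity $\|\omega_k\|_2^2 + c_\tau\mathrm{Re}\langle\mathfrak{J}_k\omega_k,\omega_k\rangle$, use the linear estimate restricted to those two terms (which gives $-c D_{k,\gamma}-c c_\tau D_{k,\tau}$ plus lower order, using Lemma \ref{boundedness_of_J_k} and Lemma \ref{disip_lin}), and again collect the nonlinear pieces; here the supremum over $k$ is handled pointwise, which is why the lemma is stated in integrated rather than differential form.

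The heart of the argument is the nonlinear estimate. Writing $(u\cdot\nabla\omega)_k = \sum$ (convolution in $k$) of $u_{k-k'}$ against $\nabla\omega_{k'}$, using $u=\nabla^\perp\Delta^{-1}\omega$ and the divergence-free structure $u\cdot\nabla = \nabla^\perp\phi\cdot\nabla = \partial_x\phi\,\partial_y - \partial_y\phi\,\partial_x$, I would pair the nonlinearity against $\langle c\lambda_k t\rangle^{2J}M_k^{-1}\langle k\rangle^{2m}\,\bar\omega_k$ (and its $\partial_y$, $\mathfrak{J}_k$ companions), integrate in $y$ and $k$, and split the $(k',k-k')$ frequency interactions into the regimes flagged in the introduction: low$\times$low, low$\times$intermediate, intermediate$\times$intermediate, (low or intermediate)$\times$high, and high$\times$high, plus the special near-diagonal region $|k-k'|\lesssim\nu$. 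In each region the strategy is: put the factor carrying the most derivatives/output frequency into an $L^2_y$ (or $L^2_{k,y}$) norm controlled by a dissipation term $D_{k,*}$; put the velocity factor $u_{k-k'}$ into a norm controlled either by inviscid damping ($|k-k'|^2\|\nabla_{k-k'}\phi_{k-k'}\|_2^2$, i.e. $D_{\tau}$/$D_{\tau\alpha}$) when $k-k'\neq 0$ effectively, or, for the near-diagonal and very-low-frequency pieces, by the $L^\infty_k L^2_y$ energy $\mathcal{E}_2$ together with $\|\omega_{in,k}\|_{L^\infty_k L^2_y}$-type control; and absorb the remaining factor using Cauchy–Schwarz in $k,k'$ together with the comparability of the Fourier multipliers $A,B,\alpha,\beta,\lambda_k$ on the relevant dyadic blocks. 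The weight-commutator errors $\langle k\rangle^{m}/\langle k'\rangle^m$, $\langle c\lambda_k t\rangle^J/\langle c\lambda_{k'}t\rangle^J$, and $A(k)/A(k')$ must be shown to be harmless: for $\langle k\rangle^m$ one uses $m<1$ and the usual low-high paradigm; for the $\lambda$ and $A$ ratios one uses subadditivity/quasi-subadditivity of $k\mapsto\lambda^{pl}(\nu,k)$ and monotonicity of $A$, so the ratio is bounded by $\langle k-k'\rangle^{O(1)}$ times a constant, which is reabsorbed into the velocity factor's regularity. The operator $\mathfrak{J}_k$ appears both in the energy and when commuting it past the nonlinearity; here I would invoke the uniform $L^2\to L^2$ bound of Lemma \ref{boundedness_of_J_k} and, where $\mathfrak{J}_k$ must be commuted with multiplication by $u$, the (planar analogue of the) commutator bounds adapting the $\mathbb{T}\times[-1,1]$ computations of \cite{bedrossian2023stability}.

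The main obstacle I expect is the intermediate-frequency regime $\nu\lesssim |k|\le 1$ and its interaction with the low-frequency regime $|k-k'|\lesssim\nu$. In this range the enhanced-dissipation gain $\lambda_k=\nu^{1/3}|k|^{2/3}$ is weak, $\alpha$ and $\beta$ are $k$-dependent but not yet saturated, the Poincaré inequality is unavailable (unlike in the channel case), and the inviscid damping operator $\mathfrak{J}_k$ does not give decay at the lowest frequencies — this is precisely the obstruction noted after the corollary that prevents treating near-Couette flow. Closing these terms is what forces the $\nu^{1/2}(1+\ln(1/\nu)^{1/2})^{-1}$ threshold: the logarithm arises from an $\int \frac{dk'}{|k'|}$-type integral over $\nu\lesssim|k'|\lesssim 1$ (a dyadic count of $\sim\ln(1/\nu)$ scales), and the $\nu^{1/2}$ from the worst-case ratio of multipliers when one frequency sits at the low/intermediate interface. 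I would handle the near-diagonal piece $|k-k'|\lesssim\nu$ separately using the $\mathcal{E}_2$/$\mathcal{D}_2$ pair: there $u_{k-k'}$ is estimated by $\|u_{k-k'}\|_{L^\infty_y}\lesssim |k-k'|^{-1}\|\omega_{k-k'}\|_{L^2_y}$ crudely, or better by $\sup_k\|\omega_k\|_{L^2_y}$, so the $L^1_{k'}$ convolution over a window of width $\sim\nu$ costs only $\nu^{1}$, which against the $\nu^{-1}$ in the final constant leaves room; combining this with Young's inequality in $k$ and absorbing one copy of dissipation gives the stated form. Once all regions are summed, the total nonlinear contribution is $\le (C\nu^{-1}(\ln(1/\nu)^{1/2}+1)^2\sup_{[0,t]}\mathcal{E})^{1/2}\mathcal{D}$; adding this to the linear bound $-4c\mathcal{D}$ (the remaining $-4c\mathcal{D}$ being kept as slack, having used the other half to absorb the $M_k$ and $\mathfrak{J}_k$ errors) and integrating from $0$ to $t$, using $1\le M_k\le e^{4J^2/3}$ and $\mathcal{E}(0)<\infty$, yields \eqref{derivative_of_energy}.
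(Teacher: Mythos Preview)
Your outline matches the paper's architecture: the reduction to a nonlinear estimate (the paper's Lemma~\ref{nonlinear_lemma}), the $M_k$ absorption via \eqref{definition_of_Mk(t)}, the pointwise-in-$k$ treatment of $\mathcal{E}_2$, the $LH$/$HL$ paraproduct decomposition, and the identification of the logarithm with an $\int_{\nu}^{1}|k|^{-1}\,dk$ integral are all exactly what the paper does. Two small corrections and one real gap.

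The small corrections: (i) the paper does not use subadditivity of $\lambda^{pl}$ to distribute the time weight $\langle c\lambda_k s\rangle^{2J}$; instead, the $LH$/$HL$ split guarantees either $|k|\approx|k'|$ or $|k|\lesssim|k-k'|$, and monotonicity of $\lambda^{pl}$ in $|k|$ lets one copy of $\langle c\lambda_k s\rangle^J$ pass to the dominant frequency while the third factor carries no time weight (subadditivity is reserved for the channel exponential weight $e^{c\lambda^{ch}t}$). (ii) On the plane there is no need to commute $\mathfrak{J}_k$ with multiplication: $\mathfrak{J}_k$ is a Fourier multiplier in $y$, commutes with $\partial_y$, and the paper simply invokes its uniform $L^2$ bound to drop $(I+c_\tau\mathfrak{J}_k)$ at the outset of each nonlinear estimate.

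The real gap is your treatment of the near-diagonal $y$-derivative pieces, i.e.\ the terms $\langle\omega_k,\partial_y\phi_{k-k'}\,ik'\omega_{k'}\rangle$ with $|k-k'|\lesssim\nu$. Estimating $\|\partial_y\phi_{k-k'}\|_{L^\infty_y}$ by $|k-k'|^{-1/2}\sup_\ell\|\omega_\ell\|_2$ and integrating over the window gives a factor $\nu^{1/2}\mathcal{E}_2^{1/2}$ in $L^\infty_t$; you are then left with $\|\omega_k\|_2$ and $\|k'\omega_{k'}\|_2$, only one of which can sit in $L^2_t$ dissipation, so the estimate closes as $\mathcal{E}\,\mathcal{D}^{1/2}$ rather than the required $\mathcal{E}^{1/2}\mathcal{D}$. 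The paper's fix (see the handling of $T^y_{\gamma,\tau,LH,L,\cdot,\cdot}$ around \eqref{gamma_LH_L_y} and of $T^y_{2,LH}$ around \eqref{T_2_LH_y}) is to write $\omega_{k'}=\partial_y^2\phi_{k'}-|k'|^2\phi_{k'}$ and integrate by parts in $y$, converting the output factor into $|k'|\|\nabla_{k'}\phi_{k'}\|_2$, which lands directly in $\mathcal{D}_\tau$ or $\mathcal{D}_{\tau,2}$ in $L^2_t$; this frees $\nabla_k\omega_k$ for $\mathcal{D}_\gamma$ and produces two dissipation factors against one energy factor, as needed.
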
Notice that Lemma \ref{bootstrap} implies that if $\mathcal{E}(0) \leq \frac{c^2 \nu}{4 C (\ln(1/\nu)^{1/2} + 1)^2}$, then a simple bootstrap argument yields
\begin{align}\sup_{t \in [0,\infty)} \left(\mathcal{E}(t)  +2c \mathcal{D}(t) \right)\leq 2\mathcal{E}(0),\end{align}
which implies that Theorem \ref{main_theorem} holds for times $t \in (0,\infty)$ since $1 \leq M_k(t) \leq e^{4J^2 /3}$, meaning that $\mathcal{E}^{1/2}$ is equivalent to the norm in the conclusion of the theorem, while $\mathcal{D}$ gives the velocity estimate.  The bulk of this paper is devoted to the proof of Lemma \ref{bootstrap}, with special attention payed toward the contribution of the non-linear terms. The main distinction between this paper and \cite{bedrossian2023stability} is the need to treat various combinations of high, low, and intermediate frequencies all interacting across continuous ranges, and this is the primary topic of Section \ref{nonlin}. Additionally, we have the added need to properly account for the time integrals.

\subsection{Properties of \texorpdfstring{$\mathfrak{J}_k$}{Jk}}
Before beginning the proofs of Proposition \ref{linear_proposition} and Lemma \ref{bootstrap}, we prove boundedness of $\mathfrak{J}_k$. The bound from $L^2 \to L^2$ follows immediately by noting that $\mathfrak{J}_k$ is the physical-in-$y$ representation of the Fourier multiplier $\arctan\left(\frac{\eta}{k}\right),$ where $\eta$ is the frequency conjugate to $y$. Then we see $|\arctan\left(\frac{\eta}{k}\right)| \leq \frac{\pi}{2}$, so $||\mathfrak{J}_k||_{L^2 \to L^2} \leq \frac{\pi}{2}$ uniformly in $k$. We are able to do even more, proving boundedness in (most) $L^p$ spaces.

\begin{lemma}\label{boundedness_of_J_k}
    For all $k \in \R \setminus \{0\}$, and for all $p \in (1,\infty)$, the operator $\mathfrak{J}_k$  extends to a bounded linear operator $L^p(\R) \to L^p(\R)$ satisfying the uniform bound   $$||\mathfrak{J}_k||_{L^p \to L^p} \lesssim 1,$$
    where importantly the bound is independent of $k$, but can depend on $p$.
\end{lemma}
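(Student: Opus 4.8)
The plan is to recognize $\mathfrak{J}_k$ as a Calderón–Zygmund singular integral operator with a kernel that is uniformly controlled in $k$, and then invoke the standard $L^p$ theory. Concretely, write $\mathfrak{J}_k[f](y) = \mathrm{p.v.}\int_{\R} K_k(y,y') f(y')\,dy'$, where, after unwinding the definitions, the kernel is
\[
K_k(y,y') = |k|\,\frac{k}{|k|}\,\frac{1}{2i(y-y')}\,G_k(y,y')\,\mathbf{1}_{[-1,1]}(y')
= -\frac{\sgn(k)}{2i}\,\frac{e^{-|k(y-y')|}}{y-y'}\,\mathbf{1}_{[-1,1]}(y').
\]
Actually, since we already know from the discussion preceding the lemma that $\mathfrak{J}_k$ is the Fourier multiplier $\arctan(\eta/k)$ in the $y$-variable, the cleanest route is to bypass the truncation and instead work with the (closely related) operator whose kernel is $-\frac{\sgn(k)}{2i}\,\frac{e^{-|k||y-y'|}}{y-y'}$ on all of $\R$ — this is exactly the convolution kernel whose Fourier transform is $\arctan(\eta/k)$, and it differs from $\mathfrak{J}_k$ only by composition with the (bounded, $L^p$-to-$L^p$ with norm $1$) multiplication/restriction operations, so $L^p$-boundedness of either yields it for the other with a $k$-independent constant. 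I would therefore first state precisely which kernel I am estimating, noting the reduction, and then prove the two Calderón–Zygmund conditions for $K(y,y') := \frac{e^{-|k||y-y'|}}{y-y'}$.

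Next, I would verify the size and smoothness (Hörmander) conditions with constants independent of $k$. For the size bound, $|K(y,y')| = \frac{e^{-|k||y-y'|}}{|y-y'|} \le \frac{1}{|y-y'|}$, which is the standard $1/|y-y'|$ bound with constant $1$, uniformly in $k$ — the exponential only helps. For the smoothness condition, differentiate: $\partial_y K(y,y') = -\frac{e^{-|k||y-y'|}}{(y-y')^2} - |k|\,\sgn(y-y')\,\frac{e^{-|k||y-y'|}}{y-y'}$, so $|\partial_y K(y,y')| \le \frac{e^{-|k||y-y'|}}{|y-y'|^2} + |k|\,\frac{e^{-|k||y-y'|}}{|y-y'|} \le \frac{C}{|y-y'|^2}$, using that $t e^{-t} \le C$ for $t = |k||y-y'| \ge 0$. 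This gives the gradient condition $|\partial_y K| + |\partial_{y'} K| \lesssim |y-y'|^{-2}$ with an absolute constant, hence the Hörmander condition, uniformly in $k$.

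Having established that $K$ is a Calderón–Zygmund kernel with constants independent of $k$, and that the associated operator is bounded on $L^2$ with norm $\le \pi/2$ (from the $\arctan$ multiplier representation), the Calderón–Zygmund theorem immediately gives boundedness $L^p \to L^p$ for all $p \in (1, 2]$ with a $k$-independent constant, and a duality argument (the kernel's adjoint $K(y',y) = -K(y,y')$ satisfies the same bounds) extends this to $p \in [2,\infty)$. Composing back with the restriction to $[-1,1]$ and the $\sgn(k)/(2i)$ factor (both norm-$1$ operations on every $L^p$) recovers the claim for $\mathfrak{J}_k$ itself.

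The only genuine subtlety — the "main obstacle," though it is more a bookkeeping point than a deep difficulty — is making sure all constants are truly uniform in $k$: the naive worry is that as $|k| \to 0$ the exponential weight degenerates and as $|k| \to \infty$ the $|k|$ factor in $\partial_y K$ blows up, but both are tamed by the elementary inequality $\sup_{t \ge 0} t e^{-t} < \infty$, which converts the dangerous $|k|$ into a harmless constant precisely at the scale $|y-y'| \sim 1/|k|$ where it would matter. One should also double-check that the principal value is well-defined and that the truncation to $[-1,1]$ does not spoil the CZ structure — it does not, since multiplying by $\mathbf 1_{[-1,1]}(y')$ is a bounded operation and commutes harmlessly with the singular integral bounds — but it is worth a sentence to record that $\mathfrak{J}_k$ as defined and the full-line convolution operator $\arctan(\partial_y/(ik))$ agree up to these bounded adjustments, so the $L^p$ bound transfers.
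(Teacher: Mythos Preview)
Your approach is essentially the same as the paper's: verify that the convolution kernel $\frac{\sgn(k)}{2i}\frac{e^{-|ky|}}{y}$ satisfies Calder\'on--Zygmund size and smoothness conditions with constants uniform in $k$, then invoke the standard $L^p$ theory. The paper additionally records the cancellation condition (the kernel is odd) and passes through the weak-$(1,1)$ bound before interpolating, whereas you anchor at the already-known $L^2$ bound and use duality; both routes are standard and equivalent here.

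One remark: your hedging about a ``truncation to $[-1,1]$'' and ``bounded restriction operations'' is unnecessary. The $\int_{-1}^1$ in the displayed definition is a typo --- as the paper itself makes clear immediately before the lemma (the operator is the Fourier multiplier $\arctan(\eta/k)$) and in its own proof (where it writes $\mathfrak{J}_k f = \mathrm{p.v.}(F*f)$ on all of $\R$), the integral is over $\R$ and $\mathfrak{J}_k$ is a genuine convolution operator. So you may drop the paragraph about compatibility with the truncation entirely.
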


\begin{proof}
    The proof of Lemma \ref{boundedness_of_J_k} follows by proving that $\mathfrak{I}_k$ is a convolution operator of Calder\'on-Zygmund type satisfying estimates uniform in $k$. See for instance Chapter 5 of Grafakos \cite{grafakos2014}.\\

    Let us define the convolution kernel $F(y) \coloneqq \frac{\sgn(k)}{2i} \frac{e^{-|ky|}}{y}$. Then for $f \in \mathcal{S}(\R)$, we have $\mathfrak{J}_k = \textrm{p.v}\left(F * f\right)$. We now proceed to check that $F$ satisfies sufficient hypotheses to apply the Calder\'on-Zygmund theory. Trivially we have the size condition,
    $$|F(y)| \leq \frac{1}{|y|}.$$
    Similarly, we check the H\"ormander condition:
        \begin{equation*}
        \begin{split}
            | \partial_y F(y) | & = | e^{-|ky|} \frac{|ky| + 1}{y^2} | \leq \frac{2}{y^2},
        \end{split}
    \end{equation*}
    Lastly, the cancellation condition is immediately satisfied since $F$ is an odd kernel. Thus, $\mathfrak{J}_k$ extends continuously to a bounded map $L^1 \to L^{1,\infty}$ (meaning weak $L^1$). By interpolation, $\mathfrak{J}_k$ extends continuously to all $L^p$ spaces for $p \in (1,\infty)$, with bound    $$||\mathfrak{J}_k||_{L^p \to L^{p}} \lesssim  \max(p, (p-1)^{-1}),$$
    where the implicit constant is independent of $k$.
\end{proof}

We additionally note that because $\mathfrak{J}_k$ is a convolution operator, it commutes with $\partial_y$ when acting on $H^1$ functions. This is in contrast to the case of the half-plane and the infinite channel, where $\mathfrak{J}_k$ and $\partial_y$ have a non-zero commutator (see Lemmas \ref{commutator_estimate_of_J_k_half} and \ref{commutator_estimate_of_J_k}).

\section{Linearized Estimates}\label{linear_section}

We now work toward the proof of Proposition \ref{linear_proposition}. Computing the time derivative of the energy \eqref{kEnergy}, in the linear case where $\omega$ solves \eqref{linearized_system}, we have:
\begin{equation}\label{derivative_of_linear_energy}
    \begin{split}
        \frac{1}{2}\frac{d}{dt} E_k[\omega_k] &= \frac{1}{2} \biggl( \frac{d}{dt}||\omega_k||_2^2 + c_\alpha \alpha \frac{d}{dt} ||\partial_y \omega_k||_2^2 + c_\beta \beta \frac{d}{dt} \textrm{Re} \langle i k \omega_k, \partial_y \omega_k\rangle\\
        &\hphantom{= \frac{1}{2} \biggl(}+c_\tau \frac{d}{dt} \textrm{Re}\langle \omega_k, \mathfrak{J}_k[\omega_k]\rangle + c_\tau c_\alpha \alpha \frac{d}{dt} \textrm{Re} \langle \partial_y \omega_k, \mathfrak{J}_k[\partial_y \omega_k]\rangle \biggr)\\
        &\eqqcolon L_\gamma + L_\alpha + L_\beta + L_\tau + L_{\tau \alpha}.
    \end{split}
\end{equation}
We begin by collecting relevant estimates for $L_\gamma$, $L_\alpha$, and $L_\beta$.

\begin{lemma}\label{basic_hypo_est}

Under the hypotheses of Proposition \ref{linear_proposition}, the following estimates hold:
\begin{subequations}\label{hypo_lin}
    \begin{equation}\label{gamma_lin_est}
    \frac{1}{2} \frac{d}{dt} ||\omega_k||_2^2 + \nu ||\nabla_k \omega_k||_2^2 \leq 0,
\end{equation}
\begin{equation}\label{alpha_lin_est}
    \frac{1}{2} \frac{d}{dt} \alpha ||\nabla_k\partial_y\omega_k||_2^2  + \nu \alpha ||\nabla_k \partial_y \omega_k||_2^2 \lesssim D_{k,\gamma} +  \lambda_k||\omega_k||_2^2,
\end{equation}
\begin{equation}\label{beta_lin_est}
    -\frac{d}{dt} \beta \mathrm{Re} \langle ik \omega_k, \partial_y \omega_k \rangle + \lambda_k ||\omega_k||_2^2 \lesssim D_{k,\gamma}^{1/2} D_{k,\alpha}^{1/2}.
\end{equation}
\end{subequations}

\end{lemma}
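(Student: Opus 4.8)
The plan is to compute each time-derivative directly using the linearized equation $\partial_t \omega_k = -iky\,\omega_k + \nu\Delta_k\omega_k$, integrate by parts, and carefully track how the transport term $iky\,\omega_k$ interacts with the $y$-derivatives. For \eqref{gamma_lin_est}, I would pair the equation with $\overline{\omega_k}$: the transport term $\mathrm{Re}\langle iky\,\omega_k,\omega_k\rangle = 0$ since $iky$ is purely imaginary, and $\mathrm{Re}\langle -\nu\Delta_k\omega_k,\omega_k\rangle = \nu\|\nabla_k\omega_k\|_2^2$ by integration by parts (the boundary term vanishes on $\R$, and on the channel/half-plane by the Dirichlet condition $\omega_k(\partial D)=0$). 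This gives \eqref{gamma_lin_est} with equality, in fact.

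For \eqref{alpha_lin_est}, I would differentiate the equation in $y$: $\partial_t(\partial_y\omega_k) = -iky\,\partial_y\omega_k - ik\,\omega_k + \nu\Delta_k\partial_y\omega_k$. Pairing with $\overline{\partial_y\omega_k}$ and taking real parts, the genuine transport piece again drops, the viscous piece yields $+\nu\alpha\|\nabla_k\partial_y\omega_k\|_2^2$ after multiplying by $c_\alpha\alpha$ (here $\alpha$ is a constant at fixed $k$), and the commutator term $-ik\,\omega_k$ produces $-c_\alpha\alpha\,\mathrm{Re}\langle ik\,\omega_k,\partial_y\omega_k\rangle$, which by Cauchy–Schwarz and Young is bounded by $\alpha|k|\,\|\omega_k\|_2\|\partial_y\omega_k\|_2$. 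The task is then to absorb this into $D_{k,\gamma} = \nu\|\nabla_k\omega_k\|_2^2 \gtrsim \nu|k|^2\|\omega_k\|_2^2 + \nu\|\partial_y\omega_k\|_2^2$ plus $\lambda_k\|\omega_k\|_2^2$; I expect this to work by splitting the $|k|\ge\nu$ and $|k|<\nu$ regimes, using $\alpha = \nu^{2/3}|k|^{-2/3}$, $\lambda_k = \nu^{1/3}|k|^{2/3}$ in the former and $\alpha = 1$, $\lambda_k = |k|^2/\nu$ in the latter, so that $\alpha|k|\,\|\partial_y\omega_k\|_2$ pairs against $\nu\|\partial_y\omega_k\|_2$ via a Young split with the remaining weight landing on $\lambda_k\|\omega_k\|_2^2$. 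A short dimensional check: for $|k|\ge\nu$, $\alpha|k| = \nu^{2/3}|k|^{1/3}$ and $(\alpha|k|)^2/\nu = \nu^{1/3}|k|^{2/3} = \lambda_k$, which is exactly what is needed.

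For \eqref{beta_lin_est}, I would expand $\frac{d}{dt}\mathrm{Re}\langle ik\,\omega_k,\partial_y\omega_k\rangle$ using the equation for both $\omega_k$ and $\partial_y\omega_k$. The key cross term is the one where the transport operators do \emph{not} cancel: $\mathrm{Re}\langle ik(-iky\,\omega_k),\partial_y\omega_k\rangle + \mathrm{Re}\langle ik\,\omega_k, -iky\,\partial_y\omega_k\rangle$ combines with the commutator $\mathrm{Re}\langle ik\,\omega_k, -ik\,\omega_k\rangle = -|k|^2\|\omega_k\|_2^2$ coming from $\partial_t\partial_y\omega_k$; after integrating by parts in $y$ the surviving piece is proportional to $-|k|^2\|\omega_k\|_2^2$, which after multiplying by $-\beta$ yields $+\beta|k|^2\|\omega_k\|_2^2$. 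The point of the definition of $\beta$ is precisely that $\beta|k|^2 = \lambda_k$ in both regimes ($\nu^{1/3}|k|^{-4/3}\cdot|k|^2 = \nu^{1/3}|k|^{2/3}$ for $|k|\ge\nu$, and $\nu^{-1}|k|^2$ for $|k|<\nu$), producing the desired $+\lambda_k\|\omega_k\|_2^2$ on the left. The remaining viscous error terms are of the form $\beta\nu|k|\,\|\nabla_k\omega_k\|_2\|\nabla_k\partial_y\omega_k\|_2$, which I would bound by $D_{k,\gamma}^{1/2}D_{k,\alpha}^{1/2}$ by checking that $\beta\nu|k| \lesssim \nu^{1/2}\cdot\alpha^{1/2}$ (equivalently $\beta^2\nu|k|^2 \lesssim \alpha$) in each regime — for $|k|\ge\nu$ this reads $\nu^{2/3}|k|^{-8/3}\cdot\nu|k|^2 = \nu^{5/3}|k|^{-2/3}$ versus $\alpha = \nu^{2/3}|k|^{-2/3}$, and since $\nu\le|k|$ implies... one checks the exponents line up with room to spare.

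The main obstacle I anticipate is the bookkeeping in \eqref{beta_lin_est}: the mixed term $\langle ik\,\omega_k,\partial_y\omega_k\rangle$ is not sign-definite, so one must be careful that the integration by parts in $y$ is legitimate (boundary terms vanish — trivial on $\R$, and on the other domains the Dirichlet condition on $\omega_k$ handles $\omega_k$ at $\partial D$, though one should note that $\partial_y\omega_k$ need not vanish there, so the relevant boundary terms must be the ones that carry a factor of $\omega_k$), and that every viscous cross term genuinely closes against $D_{k,\gamma}^{1/2}D_{k,\alpha}^{1/2}$ rather than leaving an uncontrolled residue. I would organize the computation by first writing $\frac{d}{dt}\mathrm{Re}\langle ik\,\omega_k,\partial_y\omega_k\rangle$ as a sum of a transport contribution and a viscous contribution, dispatch the transport contribution to the good term $\lambda_k\|\omega_k\|_2^2$ exactly, and then treat the viscous contribution purely by Cauchy–Schwarz and the scaling relations between $\alpha$, $\beta$, $\lambda_k$ recorded above, splitting into $|k|\ge\nu$ and $|k|<\nu$ throughout.
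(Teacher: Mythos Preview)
Your proposal is correct and follows exactly the paper's approach: direct computation from the linearized equation, with the transport commutator producing the good $\lambda_k\|\omega_k\|_2^2$ term in \eqref{beta_lin_est} and the viscous cross terms handled by Cauchy--Schwarz together with the scaling relations among $\alpha,\beta,\lambda_k$, split into the regimes $|k|\ge\nu$ and $|k|<\nu$. One minor slip: the condition needed for the viscous remainder in \eqref{beta_lin_est} is $\nu\beta|k|\lesssim \nu\,\alpha^{1/2}$ (equivalently $\beta^2|k|^2\lesssim\alpha$, which is in fact an equality for $|k|\ge\nu$), not $\nu\beta|k|\lesssim\nu^{1/2}\alpha^{1/2}$ as you wrote --- but your stronger inequality also holds since $\nu<1$, so the argument goes through unchanged.
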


\begin{proof}

The $L_\gamma$ estimate \eqref{gamma_lin_est} is proved in exactly the same manner as Lemma 4.1 in \cite{bedrossian2023stability}. Note that although \cite{bedrossian2023stability} poses the problem on $[-1,1]$ rather than $\R$, the formal computations remain the same. The proof of the $L_\alpha$ estimate \eqref{alpha_lin_est} at frequencies $|k| \geq \nu$ is the same as the proofs of Lemmas 4.2 in \cite{bedrossian2023stability}, and the proof in the $|k| \leq \nu$ case is almost identical if one makes the substitution $\nu^{2/3}|k|^{-2/3} \mapsto \alpha$. We demonstrate this principle in the following.

The $L_\beta$ estimate \eqref{beta_lin_est} requires slightly more modification from the proof exhibited for Lemma 4.3 of \cite{bedrossian2023stability}. Through integration by parts, we can directly compute
\begin{equation}\label{beta_comp}
    \begin{split}
        -\frac{d}{dt} \beta \textrm{Re} \langle ik \omega_k, \partial_y \omega_k \rangle =& - \beta |k|^2 || \omega_{k}||_2^2- \beta \textrm{Im}  k \langle \omega_k, \nu \partial_y \Delta_k \omega_k \rangle - \beta \textrm{Im}  k \langle \partial_y \omega_k, \nu \Delta_k \omega_k \rangle.
    \end{split}
\end{equation}
The first term on the right-hand side of \eqref{beta_comp} appears in the statement of the Lemma. The final two terms of \eqref{beta_comp} can be treated with integration by parts and Cauchy-Schwarz:
\begin{equation}
    \begin{split}
         |\beta \textrm{Im}  k \langle \omega_k, \nu \partial_y \Delta_k \omega_k \rangle + \beta \textrm{Im}  k \langle \partial_y \omega_k, \nu \Delta_k \omega_k \rangle| & \lesssim  \nu \beta |k| ||\nabla_k \omega_k||_2 ||\nabla_k \partial_y \omega_k||_2.
    \end{split}
\end{equation}
For $|k| \geq \nu$, we have
\begin{equation}
\begin{split}
        \nu \beta |k| ||\nabla_k \omega_k||_2 ||\nabla_k \partial_y \omega_k||_2 &= (\nu^{1/2}||\nabla_k \omega_k||_2) ( \nu^{1/2}|k|^{-1/3}||\nabla_k \nu^{1/3}\partial_y \omega_k||_2)\lesssim D_{k,\gamma}^{1/2} D_{k,\alpha}^{1/2}.
\end{split}
\end{equation}
On the other hand, when $|k| \leq \nu$, the estimate becomes
\begin{equation}
\begin{split}
\nu \beta |k| ||\nabla_k \omega_k||_2 ||\nabla_k \partial_y \omega_k||_2 &\leq \nu||\nabla_k \omega_k||_2 ||\nabla_k \partial_y \omega_k||_2\lesssim D_{k,\gamma}^{1/2} D_{k,\alpha}^{1/2}.
\end{split}
\end{equation}
\end{proof}

Next, we state the following estimates for the $L_\tau$ and $L_{\tau \alpha}$ terms. The proof of these estimates is simpler than the corresponding estimates in \cite{bedrossian2023stability}, primarily we are considering perturbations of true Couette flow, rather than perturbations of near-Couette flow. Our proof will not use the exact form of $G_k$, and hence will be applicable in the half-plane and infinite channel cases.

\begin{lemma}\label{disip_lin}
Under the hypotheses of Lemma \ref{linear_proposition},
\begin{subequations}
\begin{equation}\label{damping_of_phi}
    \frac{d}{dt} \mathrm{Re}\langle \omega_k, \mathfrak{J}_k \omega_k\rangle + \frac{1}{2}D_{k,\tau} \lesssim D_{k,\gamma},
\end{equation}
and
\begin{equation}\label{damping_of_dy_phi}
    \frac{d}{dt} \alpha \mathrm{Re}\langle \partial_y \omega_k, \mathfrak{J}_k \partial_y \omega_k\rangle + \frac{1}{2}D_{k,\tau\alpha} \lesssim D_{k,\alpha} + (\lambda_k ||\omega_k||_2^2)^{1/2}D_{k,\gamma}^{1/2}.
\end{equation}
\end{subequations}
\end{lemma}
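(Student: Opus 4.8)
\textbf{Proof proposal for Lemma \ref{disip_lin}.}

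The plan is to compute the time derivative of $\mathrm{Re}\langle \omega_k, \mathfrak{J}_k \omega_k\rangle$ directly using \eqref{linearized_system}, exploiting the Fourier-multiplier characterization of $\mathfrak{J}_k$ established right before the proof of Lemma \ref{boundedness_of_J_k}: on the plane $\mathfrak{J}_k$ is the multiplier $\arctan(\eta/k)$ in the $\eta$-variable (conjugate to $y$), so it is self-adjoint and commutes with $\partial_y$ and with $\Delta_k$. Writing $\partial_t \omega_k = -iky\,\omega_k + \nu\Delta_k\omega_k$, I split $\frac{d}{dt}\mathrm{Re}\langle \omega_k, \mathfrak{J}_k\omega_k\rangle$ into a transport contribution $-2\,\mathrm{Re}\langle iky\,\omega_k, \mathfrak{J}_k\omega_k\rangle$ and a dissipative contribution $2\nu\,\mathrm{Re}\langle \Delta_k\omega_k, \mathfrak{J}_k\omega_k\rangle$. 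The dissipative term is harmless: by self-adjointness and commutation it equals $-2\nu\|\nabla_k \sqrt{|\mathfrak{J}_k|}\,\omega_k\|_2^2$-type quantities bounded by $\nu\|\nabla_k\omega_k\|_2^2 = D_{k,\gamma}$ up to the uniform-in-$k$ operator norm of $\mathfrak{J}_k$ from Lemma \ref{boundedness_of_J_k}. The crux is the transport term, where the commutator of multiplication by $y$ with $\mathfrak{J}_k$ must produce the positive damping term $\tfrac12 D_{k,\tau} = \tfrac12 |k|^2\|\nabla_k\phi_k\|_2^2$: the design of $\mathfrak{J}_k$ (via $G_k$ the fundamental solution of $\Delta_k$) is precisely so that $[\,y,\mathfrak{J}_k\,]$ reproduces, modulo sign-definite error, the operator $|k|^2 \Delta_k^{-1}$ sandwiched appropriately, giving $\mathrm{Re}\langle \omega_k, |k|^2(-\Delta_k)^{-1}\omega_k\rangle = |k|^2\|\nabla_k\phi_k\|_2^2$ after integrating by parts with $\Delta_k\phi_k = \omega_k$. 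I would identify this commutator structure following \cite{bedrossian2023stability}, note that the computation there did not use the explicit channel form of $G_k$ (only that $G_k$ is the Green's function of $\Delta_k$), and thereby get \eqref{damping_of_phi}.

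For \eqref{damping_of_dy_phi}, the starting point is the same but applied to $\partial_y\omega_k$ in place of $\omega_k$, with the extra weight $\alpha$. On the plane $\mathfrak{J}_k$ commutes with $\partial_y$, so $\mathrm{Re}\langle \partial_y\omega_k, \mathfrak{J}_k\partial_y\omega_k\rangle$ is genuinely the quantity from \eqref{damping_of_phi} with $\omega_k \mapsto \partial_y\omega_k$; the only new issues are (i) the dissipative error is now $\nu\alpha\|\nabla_k\partial_y\omega_k\|_2^2 \lesssim D_{k,\alpha}$ after using the definition of $D_{k,\alpha}$ in \eqref{k_by_k_dissipation}, and (ii) the commutator $[y,\cdot]$ acting on $\partial_y\omega_k$ when differentiating the transport term $iky\,\omega_k$ generates, besides the good term $\tfrac12 D_{k,\tau\alpha} = \tfrac12\alpha|k|^2\|\nabla_k\partial_y\phi_k\|_2^2$, a cross term where one $\partial_y$ lands on the $y$ coefficient, producing a contribution of the schematic form $\alpha|k|\,\mathrm{Re}\langle \omega_k, \mathfrak{J}_k\text{-type}\,\nabla_k\phi_k\rangle$. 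This is the term responsible for the $(\lambda_k\|\omega_k\|_2^2)^{1/2} D_{k,\gamma}^{1/2}$ on the right side. I would bound it by Cauchy–Schwarz and the uniform $L^2$-boundedness of $\mathfrak{J}_k$, splitting the product so that one factor is $\alpha^{1/2}|k|\,\|\nabla_k\phi_k\|_2 \lesssim (\alpha|k|^2)^{1/2}\|\nabla_k\phi_k\|_2$; using $\alpha|k|^2 \lesssim \lambda_k$ (check: for $|k|\ge\nu$, $\alpha|k|^2 = \nu^{2/3}|k|^{4/3}$ versus $\lambda_k = \nu^{1/3}|k|^{2/3}$, so actually one needs the elliptic gain $\|\nabla_k\phi_k\|_2 \lesssim |k|^{-1}\|\omega_k\|_2$ to convert, giving $\alpha|k|^2\|\nabla_k\phi_k\|_2^2 \lesssim \alpha\|\omega_k\|_2^2$, and then $\alpha \lesssim \lambda_k/\nu$... ) — the precise bookkeeping here is where I expect to spend the most care, matching powers of $\nu$ and $|k|$ in both frequency regimes $|k|\gtrless\nu$ so that the leftover is exactly absorbable into $(\lambda_k\|\omega_k\|_2^2)^{1/2}D_{k,\gamma}^{1/2}$ or into $D_{k,\alpha}$.

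The main obstacle, then, is twofold. First, correctly isolating the commutator $[y,\mathfrak{J}_k]$ and verifying it reproduces $|k|^2(-\Delta_k)^{-1}$ up to sign-definite errors — this is the conceptual heart, and I would lean on the derivation in \cite{bedrossian2023stability}, checking only that their argument used nothing about $D$ beyond $G_k$ being the Green's function of $\Delta_k$, which the excerpt explicitly asserts. Second, the frequency bookkeeping in \eqref{damping_of_dy_phi}: one must track the weights $\alpha$ and $\beta$ and the elliptic estimates $\|\nabla_k\phi_k\|_2 \approx |k|^{-1}\|\omega_k\|_2$ (valid since $\Delta_k\phi_k = \omega_k$ on $\R$ with no boundary) carefully enough to land the error on the right-hand side exactly as stated, in both the $|k|\ge\nu$ and $|k|<\nu$ regimes, where $\alpha$, $\lambda_k$ have different forms. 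I would do the $|k|\ge\nu$ case first (matching \cite{bedrossian2023stability} modulo the substitution $\nu^{2/3}|k|^{-2/3}\mapsto\alpha$ as the authors suggest elsewhere) and then check the $|k|<\nu$ case where $\alpha = 1$ separately, which should be easier because the weights are frozen.
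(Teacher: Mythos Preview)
Your plan for \eqref{damping_of_phi} is correct and matches the paper's proof: self-adjointness reduces the time derivative to $2\,\mathrm{Re}\langle -iky\,\omega_k + \nu\Delta_k\omega_k, \mathfrak{J}_k\omega_k\rangle$; the dissipative piece is $\lesssim D_{k,\gamma}$ after integrating by parts and using commutation with $\partial_y$; and the transport piece, written using the kernel of $\mathfrak{J}_k$, cancels the $1/(y-y')$ against the factor $y-y'$ coming from the two copies of $y$, leaving exactly $\tfrac{|k|^2}{2}\langle \Delta_k^{-1}\omega_k,\omega_k\rangle = -\tfrac{|k|^2}{2}\|\nabla_k\phi_k\|_2^2$.

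For \eqref{damping_of_dy_phi} you have the right structure but you misidentify the cross term, and this is what derails your bookkeeping. Differentiating the equation gives $\partial_t(\partial_y\omega_k) = -iky\,\partial_y\omega_k - ik\,\omega_k + \nu\Delta_k\partial_y\omega_k$, so the extra term beyond the $\omega_k\mapsto\partial_y\omega_k$ substitution is $2\alpha\,\mathrm{Re}\langle -ik\,\omega_k,\mathfrak{J}_k\partial_y\omega_k\rangle$. By Cauchy--Schwarz and the uniform $L^2$ bound on $\mathfrak{J}_k$ this is $\lesssim \alpha|k|\,\|\omega_k\|_2\,\|\partial_y\omega_k\|_2$ --- no $\nabla_k\phi_k$ appears, and no elliptic estimate is needed. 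The remaining task is the one-line identity
\[
\alpha|k| \;=\; \lambda_k^{1/2}\,\nu^{1/2}
\]
in \emph{both} regimes (for $|k|\ge\nu$: $\nu^{2/3}|k|^{1/3} = (\nu^{1/3}|k|^{2/3})^{1/2}\nu^{1/2}$; for $|k|<\nu$: $|k| = (|k|^2/\nu)^{1/2}\nu^{1/2}$), which immediately gives
\[
\alpha|k|\,\|\omega_k\|_2\,\|\partial_y\omega_k\|_2 \;\lesssim\; (\lambda_k\|\omega_k\|_2^2)^{1/2}\,(\nu\|\partial_y\omega_k\|_2^2)^{1/2} \;\le\; (\lambda_k\|\omega_k\|_2^2)^{1/2} D_{k,\gamma}^{1/2}.
\]
Your attempt to route through $\|\nabla_k\phi_k\|_2$ and then undo it via $\|\nabla_k\phi_k\|_2\lesssim |k|^{-1}\|\omega_k\|_2$ was fighting a phantom; once you write the cross term correctly the estimate is immediate and uniform across the two frequency regimes.
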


We note that \eqref{damping_of_phi} gives the linearized inviscid damping estimate for both the Euler equations and the Navier Stokes equations. Indeed, by uniform boundedness of $\mathfrak{J}_k$ (Lemma \ref{boundedness_of_J_k}) and by taking $\nu \to 0$, we obtain the $L^2_t \dot{H}^1_x L_y^2$ linearized inviscid damping estimate:
$$\int_0^\infty |k|^2 ||\nabla_k \phi_k||_2^2 dt \approx \int_0^\infty D_{k,\tau} dt
 \lesssim ||\omega_k(0)||_2^2.$$
Additionally, we observe that while \eqref{damping_of_dy_phi} does not directly provide an inviscid damping estimate, it does produce the good term $D_{k,\tau\alpha}$, which will be useful in the nonlinear computations found in Section \ref{nonlin}

\begin{proof}
To prove Lemma \ref{disip_lin}, we first obtain the $D_\tau$ estimate by computing directly (with self-adjointness of $\mathfrak{J}_k$):
    \begin{equation}
    \begin{split}
        \frac{d}{dt} \textrm{Re} \langle \omega_k, \mathfrak{J}_k \omega_k \rangle &= \textrm{Re}\langle \frac{d}{dt} \omega_k, \mathfrak{J}_k \omega_k \rangle + \textrm{Re}\langle   \omega_k, \frac{d}{dt} \mathfrak{J}_k\omega_k \rangle\\
        &= \textrm{Re}\langle-iky \omega_k+ \nu \Delta_k \omega_k, 
 \mathfrak{J}_k \omega_k \rangle + \textrm{Re}\langle  \omega_k, \mathfrak{J}_k (-iky \omega_k + \nu \Delta_k \omega_k)\rangle\\
        &= 2 \textrm{Re} \langle - i k y \omega_k + \nu \Delta_k \omega_k, \mathfrak{J}_k \omega_k \rangle.
    \end{split}
\end{equation}
For the second term, we have by integration by parts and commutation between $\partial_y$ and $\mathfrak{J}_k:$
\begin{equation}
    \begin{split}
         |\langle \nu \Delta_k \omega_k, \mathfrak{J}_k \omega_k \rangle| = \nu |\langle \nabla_k \omega_k, \mathfrak{J}_k \nabla_k \omega_k\rangle | \lesssim D_{k,\gamma}.
    \end{split}
\end{equation}
For the first term, we use the definition of $\mathfrak{J}_k$ and the definition of $\phi_k$ as $\Delta_k \phi_k = \omega_k.$
\begin{equation}
\begin{split}
    \textrm{Re}\langle \mathfrak{J}_k (-iky \omega_k), \omega_k \rangle+\textrm{Re}\langle \mathfrak{J}_k  \omega_k, -iky \omega_k \rangle &= - \textrm{Re} \int_\R  |k|\textrm{p.v} \left(\int_\R \sgn(k)\frac{G_k(y,y')}{2(y-y')} k y' \omega_k(y') \overline{\omega_k}(y) dy'\right) dy\\
&\quad+\textrm{Re}\int_\R \left( |k|\textrm{p.v}\int_\R \sgn(k)\frac{G_k(y,y')}{2(y-y')}\omega_k(y') \overline{k y\omega_k}(y) dy' \right)dy\\
    &= \frac{|k|^2}{2} \int_\R  \textrm{p.v} \left(\int_{\R} \frac{G_k(y,y')}{y-y'}(y-y')\omega_k(y')  dy' \right)\overline{\omega_k(y)} dy\\
    &= \frac{|k|^2}{2} \langle \Delta_k^{-1} \omega_k, \omega_k \rangle = \frac{|k|^2}{2} \langle \phi_k, \Delta_k \phi_k\rangle\\
    &= - \frac{|k|^2}{2} \langle \nabla_k \phi_k, \nabla_k \phi_k\rangle.
\end{split}
\end{equation}
The $D_{\tau \alpha}$ estimate \eqref{damping_of_dy_phi} follows in a similar manner. We compute directly
    \begin{equation}
    \begin{split}
        \alpha \frac{d}{dt} \textrm{Re} \langle \partial_y \omega_k, \mathfrak{J}_k \partial_y \omega_k \rangle &= \alpha \textrm{Re}\langle \frac{d}{dt} \partial_y \omega_k, \mathfrak{J}_k \partial_y \omega_k \rangle + \textrm{Re}\langle   \partial_y \omega_k, \frac{d}{dt} \mathfrak{J}_k \partial_y \omega_k \rangle\\
        &= 2 \alpha\textrm{Re} \langle -i k y \partial_y \omega_k-ik \omega_k + \nu \Delta_k \partial_y\omega_k, \mathfrak{J}_k \partial_y \omega_k \rangle.
    \end{split}
\end{equation}
Then we recall that $\partial_y$ commutes with $\mathfrak{J}_k$. Hence, by the $D_\tau$ computations and boundedness of $\mathfrak{J}_k$,
\begin{equation}\label{almost_dy_Jk}
    \alpha  \frac{d}{dt} \textrm{Re} \langle \partial_y \omega_k, \mathfrak{J}_k \partial_y \omega_k \rangle + \frac{1}{2}D_{k,\tau \alpha} \lesssim \alpha |k| ||\omega_k||_2||\partial_y \omega_k||_2 + D_{k,\alpha}.
\end{equation}
The estimate \eqref{almost_dy_Jk} proves \eqref{damping_of_dy_phi} upon noticing that
$$\alpha|k| ||\omega_k||_2 ||\partial_y \omega_k||_2 \lesssim \left(\lambda_k ||\omega_k||_2^2\right)^{1/2}D_{k,\gamma}^{1/2}.$$
\end{proof}

We are now prepared to prove Proposition \ref{linear_proposition}.

\subsection{Proof of Linear case}

\begin{proof} Let $K_0$ denote twice the maximal implicit constant from Lemmas \ref{basic_hypo_est} and \ref{disip_lin}, and note that $K_0$ is independent of $\nu$. By Eq. \eqref{derivative_of_linear_energy}:
\begin{equation*}
    \begin{split}
        \frac{d}{dt}E_k + c_\beta\lambda_k &||\omega_k||_2^2 + 2 D_{k,\gamma} + c_{\alpha} 2 D_{k,\alpha} + \frac{c_\tau}{2} D_{k,\tau} + \frac{c_\alpha c_\tau}{2} D_{k,\tau \alpha}\\
        &\leq c_\alpha K_0 D_{k,\gamma} + c_\alpha K_0 \lambda_k ||\omega_k||_2^2 + c_\beta K_0 D_{k,\gamma}^{1/2} D_{k,\alpha}^{1/2}\\ &\quad+ K_0 c_\tau D_{k,\gamma} + K_0 c_{\tau}c_{\alpha} D_{k,\alpha} + K_0 c_{\tau} c_{\alpha} (\lambda_k ||\omega_k||_2^2)^{1/2} D_{k,\gamma}^{1/2}.
        \end{split}
\end{equation*}
Taking, without loss of generality, $K_0 \geq 32(1 + ||\mathfrak{J}_k||_{L^2 \to L^2})$, we set the conditions that
\begin{equation}\label{coeff_conditions}
    c_\tau < \frac{1}{32 K_0}, \; c_\alpha < \min \left(\frac{1}{8 K_0}, 1\right), \; \frac{c_\alpha}{c_\beta} < \frac{1}{25 K_0}, \; \frac{c_\beta^2}{2 c_\alpha} < \frac{1}{25K_0^2}.
\end{equation}
Similar conditions were set in \cite{bedrossian2023stability}. Then by Young's product inequality:
\begin{subequations}
    \begin{equation*}
        K_0 c_\beta D_{k,\gamma}^{1/2} D_{k,\alpha}^{1/2} \leq \frac{1}{10} D_{k,\gamma} + 
        \frac{1}{5}c_\alpha D_{k,\alpha},
    \end{equation*}
    and
    \begin{equation*}
        K_0 c_\tau c_\alpha (\lambda_k ||\omega_k||_2^2)^{1/2} D_{k,\lambda}^{1/2} \leq \frac{1}{10} c_\beta \lambda_k ||\omega_k||_2^2 + \frac{5}{2} \frac{c_\tau^2 c_\alpha^2 K_0^2}{c_\beta} D_{k,\gamma} \leq \frac{c_\beta}{10} \lambda_k ||\omega_k||_2^2 + \frac{1}{10} D_{k,\gamma}.
    \end{equation*}
\end{subequations}
Now using the conditions on the coefficients set in \eqref{coeff_conditions}, we have:
\begin{equation*}
    \begin{split}
        \frac{d}{dt}E_k &+ c_\beta\lambda_k ||\omega_k||_2^2 + 2 D_{k,\gamma} + c_{\alpha} 2 D_{k,\alpha} + \frac{c_\tau}{2} D_{k,\tau} + \frac{c_\alpha c_\tau}{2} D_{k,\tau \alpha}\\
        &\leq \frac{1}{8} D_{k,\gamma} + \frac{c_\beta}{8} \lambda_k ||\omega_k||_2^2 + \frac{1}{10} D_{k,\gamma} + 
        \frac{1}{5}c_\alpha D_{k,\alpha} + \frac{1}{32} D_{k,\gamma} + \frac{1}{32}c_{\alpha} D_{k,\alpha} + \frac{c_\beta}{10} \lambda_k ||\omega_k||_2^2 + \frac{1}{10} D_{k,\gamma},
        \end{split}
\end{equation*}
which simplifies to
\begin{equation*}
    \begin{split}
        \frac{d}{dt}E_k &+ c_\beta \lambda_k \left(1 -\frac{1}{8} - \frac{1}{10}\right)||\omega_k||_2^2 + \left(2 - \frac{1}{8} - \frac{1}{5}\right) D_{k,\gamma} + c_{\alpha}\left(2 - \frac{1}{5} - \frac{1}{32}\right) D_{k,\alpha}\\
        &+ c_\tau\left(\frac{1}{2} - \frac{1}{32} \right) D_{k,\tau} + c_\alpha c_\tau\frac{1}{2} D_{k,\tau \alpha} \leq 0.
        \end{split}
\end{equation*}

Note that $||\omega_k||_2 + \alpha ||\partial_y \omega_k||_2 \approx E_k$ so long as we additionally impose $c_\beta^{2} \leq \frac{1}{4}c_\alpha +\frac{1}{4}(1-c_\tau)$. Furthermore, we observe that $\alpha \lambda_k \leq \nu$, and so $D_{k,\gamma} \geq \nu||\partial_y\omega_k||_2^2 \geq \lambda_k \alpha ||\partial_y \omega_k||_2^2$. Thus, with the final assumption that $c_\beta \leq 1$, we obtain
\begin{equation}
    \begin{split}
        0 &\geq \frac{d}{dt}E_k + \frac{c_\beta}{2}\lambda_k\left(||\omega_k||_2^2 + \alpha ||\partial_y \omega_k||_2^2\right) + \frac{1}{4}D_{k,\gamma} + \frac{c_\alpha}{2}D_{k,\alpha} + \frac{c_\beta}{4}D_{k,\beta} + \frac{c_\tau}{4}D_{k,\tau} + \frac{c_\alpha c_\tau}{2} D_{k,\tau}\\
        &\geq \frac{d}{dt}E_k + c_0(c_\tau, c_\alpha, c_\beta) \lambda_k E_k + \frac{1}{4} D_{k},
    \end{split}
\end{equation}
which is the desired estimate. This is satisfied so long as $c_\alpha$, $c_\beta$, and $c_\tau$ meet all stated assumptions. The exact constant $c_0$ depends on the proportionality constants between $E_k$ and $||\omega_k||_2 + \alpha ||\partial_y \omega_k||_2$, which is in turn determined by the choice of coefficients in the energy. Note that none of the constants depend on $k$ or $\nu$. An example of explicit choices of constants satisfying all necessary assumptions is given in Section 4.5 of \cite{bedrossian2023stability}.
\end{proof}

\subsection{Discussion of the Linearized \texorpdfstring{$\mathfrak{b}$}{beta}-plane}\label{lin_beta_plane}
The linearization of \eqref{beta_system} is given by
\begin{equation}\label{beta_linearized_system}
    \begin{cases}
        \partial_t \omega_k + ik y \omega_k + \mathfrak{b} ik \phi_k - \nu \Delta_k \omega_k = 0,\\
        \Delta_k \phi_k = \omega_k.
    \end{cases}
\end{equation}
Now if $\omega_k$ solves \eqref{beta_linearized_system}, then the only new terms in $\frac{d}{dt}E_k$ are those arising from $ \mathfrak{b} \phi_k$. However, we observe that through integration by parts
\begin{equation}
\begin{split}
    &\mathrm{Re} \langle \omega_k, ik \phi_k \rangle =  -\mathrm{Re} \langle \nabla_k \phi_k, ik \nabla_k \phi_k \rangle = 0,\\
    &\mathrm{Re} \langle \partial_y \omega_k, ik \partial_y \phi_k \rangle =  -\mathrm{Re} \langle \nabla_k \partial_y \phi_k, ik \nabla_k \partial_y \phi_k \rangle = 0,\\
    &\mathrm{Re}\langle ik \omega_k, ik\partial_y\phi_k \rangle + \mathrm{Re}\langle (ik)^2 \phi_k, \partial_y \omega_k\rangle = \mathrm{Re}\langle (ik)^2 \nabla_k \phi_k, \nabla_k\partial_y \phi_k \rangle - \mathrm{Re}\langle (ik)^2 \nabla_k \phi_k, \nabla_k \partial_y \phi_k\rangle = 0,
\end{split}
\end{equation}
so that Lemma \ref{hypo_lin} is unchanged if $\omega_k$ solves \eqref{beta_linearized_system} rather than \eqref{linearized_system}. Similarly, Lemma \ref{disip_lin} remains unchanged, since by integration by parts:
\begin{equation}
    \begin{split}
        & \mathrm{Re}\langle \mathfrak{J}_k \omega_k, ik \phi_k \rangle + \mathrm{Re}\langle \mathfrak{J}_k ik \phi_k, \omega_k \rangle = -\mathrm{Re}\langle \mathfrak{J}_k  \nabla_k \phi_k , ik \nabla_k \phi_k\rangle + \mathrm{Re}\langle \mathfrak{J}_k  \nabla_k \phi_k, ik \nabla_k \phi_k \rangle = 0,
    \end{split}
\end{equation}
and similarly
$$\mathrm{Re}\langle \mathfrak{J}_k \partial_y \omega_k, ik \partial_y \phi_k \rangle + \mathrm{Re}\langle \mathfrak{J}_k ik \partial_y \phi_k, \partial_y \omega_k \rangle = 0.$$
Note that we have crucially used the fact that $\mathfrak{J}_k$ and $\partial_y$ commute in the planar case. Thus, the $\mathfrak{b}ik \phi_k$ term vanishes from all estimates at the linear level. Furthermore, the nonlinearity for \eqref{beta_system} is the standard Euler nonlinearity in \eqref{couette_system}. Thus, the nonlinear argument in Section \ref{nonlin} is entirely unaffected, and the same proof used to establish Theorem \ref{main_theorem} applies equally to Corollary \ref{beta_corollary}.

\section{Technical Lemmas}\label{technical_lemmas}

Here we collect a series of minor technical lemmas used throughout the proof of Lemma \ref{bootstrap} in Section \ref{nonlin}. The proofs rely on basic inequalities of analysis together with the definitions \eqref{introduce_energy} and \eqref{different_dissipations_definition}, and in several cases are simple extensions of lemmas found in Section 5.1 of \cite{bedrossian2023stability}. However, there are additional aspects relating to the control of low frequencies terms.

We begin with two lemmas which enable us to control integrals of terms involving $||k \phi_k||_\infty$ and $||\partial_y \phi_k||_\infty$ in terms of the dissipations $\mathcal{D}_\tau$ and $\mathcal{D}_{\tau \alpha}$, respectively.

\begin{lemma}\label{keyphiestimate}
    For any $\theta \in (0,1/2]$, the following holds:
    \begin{subequations}
    \begin{equation}\label{all_freqs}
        \int_{\R} \left(\int_0^t \langle  c \lambda_{k} s\rangle^{2J}||k^{1+\theta} \phi_k||_\infty^2 ds \right)^{1/2} dk \lesssim_\theta \mathcal{D}_\tau^{1/2}.
    \end{equation}
    In place of the $\theta = 0$ case, we have the following:
    \begin{equation}\label{some_freqs}
        \int_{|k| \gtrsim \nu} \left(\int_0^t \langle  c \lambda_{k} s\rangle^{2J}||k \phi_k||_\infty^2 ds\right)^{1/2} dk \lesssim (1+\ln(1/\nu)^{1/2})\mathcal{D}_\tau^{1/2}.
    \end{equation}
    Additionally,
\begin{equation}\label{variant_of_key_phi_estimate}
        \left(\int_0^t \int_{\R} \langle  c \lambda_{k} s\rangle^{2J} \langle k \rangle^{2m}|k|^3|| \phi_k||_\infty^2 dk ds \right)^{1/2} \lesssim \mathcal{D}_\tau^{1/2}.
    \end{equation}
    \end{subequations}
\end{lemma}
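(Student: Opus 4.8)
The plan is to reduce all three estimates to a single pointwise-in-$k$ inequality comparing $||k^{1+\theta}\phi_k||_\infty^2$ with $D_{k,\tau}$, and then to conclude by Cauchy--Schwarz in the $k$ variable. The only analytic input is the one-dimensional Agmon inequality $||g||_{L^\infty(\R)}^2 \lesssim ||g||_2\,||g'||_2$, applied to $g=\phi_k(\cdot)$, together with the elementary elliptic bounds $|k|\,||\phi_k||_2 \le ||\nabla_k\phi_k||_2$ and $||\partial_y\phi_k||_2 \le ||\nabla_k\phi_k||_2$, which follow at once from $\nabla_k=(ik,\partial_y)$. Combining these gives
\[
||k^{1+\theta}\phi_k||_\infty^2 \lesssim |k|^{2+2\theta}||\phi_k||_2\,||\partial_y\phi_k||_2 \lesssim |k|^{1+2\theta}||\nabla_k\phi_k||_2^2 = |k|^{2\theta-1}D_{k,\tau},
\]
where the last identity is $D_{k,\tau}=|k|^2||\nabla_k\phi_k||_2^2$ from \eqref{k_by_k_dissipation}. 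Since $1\le M_k(s)\le e^{4J^2/3}$, the dissipation $\mathcal{D}_\tau$ of \eqref{different_dissipations_definition} is comparable to $\int_\R\langle k\rangle^{2m}\int_0^t\langle c\lambda_k s\rangle^{2J}D_{k,\tau}\,ds\,dk$, and it is this form I will match against.

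For \eqref{all_freqs}, insert the pointwise bound and pull the $|k|$-power out of the time integral to get $\bigl(\int_0^t\langle c\lambda_k s\rangle^{2J}||k^{1+\theta}\phi_k||_\infty^2\,ds\bigr)^{1/2}\lesssim |k|^{\theta-1/2}\bigl(\int_0^t\langle c\lambda_k s\rangle^{2J}D_{k,\tau}\,ds\bigr)^{1/2}$. Integrating in $k$ and applying Cauchy--Schwarz with the splitting $|k|^{\theta-1/2}=\bigl(|k|^{\theta-1/2}\langle k\rangle^{-m}\bigr)\langle k\rangle^{m}$ produces
\[
\left(\int_\R |k|^{2\theta-1}\langle k\rangle^{-2m}\,dk\right)^{1/2}\left(\int_\R\langle k\rangle^{2m}\int_0^t\langle c\lambda_k s\rangle^{2J}D_{k,\tau}\,ds\,dk\right)^{1/2}\lesssim_\theta \mathcal{D}_\tau^{1/2};
\]
the first factor is finite exactly because $\theta>0$ makes $|k|^{2\theta-1}$ integrable near the origin, while $2\theta-1-2m<-1$ (valid since $\theta\le 1/2<m$) controls the tail, and this factor blows up as $\theta\to 0^+$, consistent with the claimed $\theta$-dependence. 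The same computation with $\theta=0$ fails only at the origin, so restricting to $|k|\gtrsim\nu$ replaces $\int_0^1|k|^{-1}\,dk$ by $\int_\nu^1|k|^{-1}\,dk=\ln(1/\nu)$, which yields the factor $(1+\ln(1/\nu)^{1/2})$ of \eqref{some_freqs}.

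Finally, \eqref{variant_of_key_phi_estimate} is the most direct of the three: by Agmon and the elliptic bounds, $|k|^3||\phi_k||_\infty^2\lesssim |k|^3||\phi_k||_2\,||\partial_y\phi_k||_2\lesssim |k|^2||\nabla_k\phi_k||_2^2 = D_{k,\tau}$, and now the powers of $|k|$ balance exactly, so no Cauchy--Schwarz in $k$ is needed — one integrates $\langle c\lambda_k s\rangle^{2J}\langle k\rangle^{2m}|k|^3||\phi_k||_\infty^2$ in $(k,s)$ and recognizes $\mathcal{D}_\tau$ up to the harmless $M_k$ factor. The only delicate point throughout is the low-frequency bookkeeping: one must verify that the interplay of the Agmon loss, the elliptic gain, and the weight $\langle k\rangle^{2m}$ leaves an integrable (or integrable-up-to-a-logarithm) singularity at $k=0$. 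This accounting is the genuinely new feature relative to \cite{bedrossian2023stability}, where the corresponding frequency sums start at $|k|\ge 1$ and such subtleties do not arise.
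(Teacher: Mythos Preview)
Your proof is correct and follows essentially the same approach as the paper: Agmon's inequality $\|\phi_k\|_\infty^2\lesssim\|\phi_k\|_2\|\partial_y\phi_k\|_2$ combined with the elliptic bound $|k|\,\|\phi_k\|_2\le\|\nabla_k\phi_k\|_2$ to reach $D_{k,\tau}$, then Cauchy--Schwarz in $k$ against a weight that is integrable thanks to $m>1/2$ at high frequencies and $\theta>0$ (or the cutoff $|k|\gtrsim\nu$) at low frequencies. The only cosmetic difference is that the paper splits the $k$-integral into $|k|\ge 1$ and $|k|<1$ and treats the high-frequency piece with $\theta$ replaced by its maximal value $1/2$, whereas you handle both ranges at once via the single weight $|k|^{2\theta-1}\langle k\rangle^{-2m}$; your version is slightly more streamlined but otherwise identical.
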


\begin{proof}

    First, we note that if we split the integral in \eqref{all_freqs} into $|k| \geq 1$ and $|k| \leq 1$, then the $|k| \geq 1$ terms obey (for any $\theta \in (0,1/2]$):
\begin{equation}\label{integrating_high}
        \begin{split}       
        \int_{|k| \geq 1} \left(\int_0^t \langle  c \lambda_{k} s\rangle^{2J}||k^{1+\theta} \phi_k||_\infty^2 ds \right)^{1/2} dk&\lesssim \int_{|k| \geq 1} \left(\int_0^t \langle  c \lambda_{k} s\rangle^{2J}||k^{3/2} \phi_k||_\infty^2 ds \right)^{1/2}dk\\
        &\lesssim \int_{|k| \geq 1} \left( \int_0^t \langle  c \lambda_{k} s\rangle^{2J}|k|^{3/2}||\partial_y\phi_k||_2||\phi_k||_2 ds\right)^{1/2} dk\\
        &\lesssim \int_{|k| \geq 1} \left( \int_0^t \langle  c \lambda_{k} s\rangle^{2J} |k|||\nabla_k \phi_k||_2^2 \right)^{1/2} dk\\
        &\lesssim \left(\int_{|k| \geq 1} \langle k \rangle^{-2m}dk\right)^{1/2} \left(\int_0^t \int_{|k| \geq 1}\langle  c \lambda_{k} s\rangle^{2J} \langle k\rangle^{2m} |k|^2 ||\nabla_k \phi_k||_2 dk\right)^{1/2}\\
        &\lesssim \mathcal{D}_\tau^{1/2},
        \end{split}
    \end{equation}
    where we have used Gagliardo-Nirenberg-Sobolev and that $m > 1/2$. Next, we integrate over frequencies $|k| \leq 1$:
    \begin{equation}\label{eta_eqn}
        \begin{split}       
        \int_{|k|\leq 1} \left(\int_0^t\langle  c \lambda_{k} s\rangle^{2J} |||k|^{1+\theta} \phi_k||_\infty^2 ds \right)^{1/2} dk& \lesssim \int_{|k|\leq 1} \left(\int_0^t \langle  c \lambda_{k} s\rangle^{2J}|k|^{3 + 2\theta - 1}||\partial_y\phi_k||_2||\phi_k||_2^{1/2} ds \right)^{1/2} dk\\
        &\lesssim \int_{|k| \leq 1} |k|^{\theta - 1/2} \left(\int_0^t\langle  c \lambda_{k} s\rangle^{2J}|k|^2||\nabla_k \phi_k||_2^2 ds\right)^{1/2} dk\\
        &\lesssim \left(\int_{|k| \leq 1} |k|^{2 \theta-1} dk\right)^{1/2} \left(\int_0^t\int_{|k| \leq 1}  \langle  c \lambda_{k} t\rangle^{2J}|k|^2||\nabla_k \phi_k||_2^2 dk ds \right)^{1/2}\\
        &\lesssim \left(\int_{|k| \leq 1} |k|^{2 \theta-1}dk \right)^{1/2} \mathcal{D}_\tau^{1/2}.
        \end{split}
    \end{equation}
    Integrating the final factor in \eqref{eta_eqn}, depending on $\theta$, completes the proof of \eqref{all_freqs}. For the $\theta = 0$ estimate \eqref{some_freqs}, we split into frequencies $|k| \geq 1$ and $\nu \lesssim |k| \leq 1$. The $|k| \geq 1$ estimates proceeds exactly as in \eqref{integrating_high}. Meanwhile while the $\nu \lesssim |k| \leq 1$ estimate is similar to \eqref{eta_eqn}, but only integrating over frequencies $\nu \lesssim |k| \leq 1$, giving rise to the factor of $\ln(1/\nu)^{1/2}.$ Lastly, we note that the the variant \eqref{variant_of_key_phi_estimate} is proved in a similar but simpler manner to the above, relying on Gagliardo-Nirenberg-Sobolev. There is no need to employ H\"older's inequality or introduce additional factors in this case.
\end{proof}

A similar proof to that of Lemma \ref{keyphiestimate} gives the related lemma:

\begin{lemma}\label{keydyphiestimate}
    For any $\theta \in (0,1/2]$, the following holds:
    \begin{equation}
        \int_{|k| \gtrsim \nu} \left(\int_0^t \langle  c \lambda_{k} s\rangle^{2J} ||k^{2/3+\theta} \partial_y\phi_k||_\infty^2ds\right)^{1/2} dk \lesssim_\theta \nu^{-1/3}\mathcal{D}_{\tau\alpha}^{1/2}.
    \end{equation}
    In place of the $\theta = 0$ case, we have the following:
    \begin{equation}
        \int_{|k| \gtrsim \nu} \left(\int_0^t \langle  c \lambda_{k} t\rangle^{2J} ||k^{2/3} \partial_y\phi_k||_\infty^2ds \right)^{1/2} dk \lesssim \nu^{-1/3}(1+\ln(1/\nu)^{1/2})\mathcal{D}_{\tau\alpha}^{1/2}.
    \end{equation}
    Additionally,
    \begin{equation}
        \left(\int_0^t \int_{|k| \gtrsim \nu} \langle  c \lambda_{k} t\rangle^{2J} \langle k \rangle^{2m} ||k^{2/3} \partial_y\phi_k||_\infty^2 dk ds\right)^{1/2} \lesssim \nu^{-1/3}\mathcal{D}_{\tau\alpha}^{1/2}.
    \end{equation}

\end{lemma}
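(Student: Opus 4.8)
The plan is to follow the proof of Lemma~\ref{keyphiestimate} almost verbatim, with $\phi_k$ replaced by $\partial_y\phi_k$ and the dissipation $\mathcal{D}_\tau$ replaced by $\mathcal{D}_{\tau\alpha}$. The only genuinely new ingredient is bookkeeping the weight $\alpha$, which on the relevant frequency range $|k|\gtrsim\nu$ equals $\nu^{2/3}|k|^{-2/3}$ and is the source of the $\nu^{-1/3}$ prefactor.

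First I would record a pointwise (in $t$ and $k$) bound. By one–dimensional Gagliardo–Nirenberg–Sobolev applied to $\partial_y\phi_k$ we have $\|\partial_y\phi_k\|_\infty^2\lesssim\|\partial_y\phi_k\|_2\|\partial_y^2\phi_k\|_2$, and since $|k|\,\|\partial_y\phi_k\|_2\le\|\nabla_k\partial_y\phi_k\|_2$ and $\|\partial_y^2\phi_k\|_2\le\|\nabla_k\partial_y\phi_k\|_2$, this yields $\|\partial_y\phi_k\|_\infty^2\lesssim|k|^{-1}\|\nabla_k\partial_y\phi_k\|_2^2$. On $|k|\ge\nu$ one has $D_{k,\tau\alpha}=\alpha|k|^2\|\nabla_k\partial_y\phi_k\|_2^2=\nu^{2/3}|k|^{4/3}\|\nabla_k\partial_y\phi_k\|_2^2$, hence $\|\nabla_k\partial_y\phi_k\|_2^2=\nu^{-2/3}|k|^{-4/3}D_{k,\tau\alpha}$, and therefore for $\theta\in[0,1/2]$
\[
\|k^{2/3+\theta}\partial_y\phi_k\|_\infty^2=|k|^{4/3+2\theta}\|\partial_y\phi_k\|_\infty^2\lesssim\nu^{-2/3}|k|^{2\theta-1}D_{k,\tau\alpha}.
\]

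With this in hand, the first estimate follows the scheme of \eqref{eta_eqn}: insert the pointwise bound, factor out $\nu^{-1/3}$, and estimate by Cauchy–Schwarz in $k$ after splitting into $|k|\ge1$ and $\nu\lesssim|k|\le1$. On $|k|\ge1$ one pairs $|k|^{\theta-1/2}\le1$ against $\langle k\rangle^{-2m}$, integrable since $m>1/2$, and recognizes the remaining $k$- and $t$-integral as $\lesssim\mathcal{D}_{\tau\alpha}$ (using $1\le M_k\le e^{4J^2/3}$ and $\langle k\rangle^{2m}\ge1$). On $\nu\lesssim|k|\le1$ one pulls out $\big(\int_{\nu\lesssim|k|\le1}|k|^{2\theta-1}\,dk\big)^{1/2}$; for $\theta\in(0,1/2]$ this is $\lesssim_\theta1$ uniformly in $\nu$, giving the first bound, whereas for $\theta=0$ it equals $\approx(\ln(1/\nu))^{1/2}\lesssim1+\ln(1/\nu)^{1/2}$, giving the $\theta=0$ variant. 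The last, unweighted-in-$k$ estimate is obtained even more directly: apply the pointwise bound, take the $k$- and $t$-integrals against the $\langle c\lambda_k s\rangle^{2J}\langle k\rangle^{2m}$ weight, and identify the result with $\nu^{-2/3}\mathcal{D}_{\tau\alpha}$; as for \eqref{variant_of_key_phi_estimate}, no Hölder step and no extra factor is needed.

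The main obstacle is entirely the bookkeeping in the intermediate regime $\nu\lesssim|k|\lesssim1$, where the Sobolev weight $\langle k\rangle^{2m}$ gives no control: one has to track the powers of $|k|$ and $\nu$ carefully to see that the $\alpha$-weight produces precisely $\nu^{-1/3}$, and that the borderline $\theta=0$ case is exactly where a logarithm in $1/\nu$ enters (through $\int_{\nu}^{1}|k|^{-1}\,dk$). Everything else is a routine combination of Gagliardo–Nirenberg–Sobolev, the elliptic bounds relating $\partial_y\phi_k$ to $\nabla_k\partial_y\phi_k$, and Cauchy–Schwarz, exactly as in Lemma~\ref{keyphiestimate}.
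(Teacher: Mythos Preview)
Your argument for the first two estimates is correct and matches the paper's approach exactly (the paper merely says the proof is similar to that of Lemma~\ref{keyphiestimate}, and what you wrote is precisely that adaptation).

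There is, however, a gap in your treatment of the third estimate. With the stated exponent $2/3$ (i.e., $\theta=0$ in your pointwise bound), you obtain $\|k^{2/3}\partial_y\phi_k\|_\infty^2\lesssim\nu^{-2/3}|k|^{-1}D_{k,\tau\alpha}$, and the leftover factor $|k|^{-1}$ cannot simply be absorbed into $\mathcal{D}_{\tau\alpha}$: near $|k|\sim\nu$ it contributes an additional $\nu^{-1}$, so the integral is not $\lesssim\nu^{-2/3}\mathcal{D}_{\tau\alpha}$. The correct analogue of \eqref{variant_of_key_phi_estimate} should carry the exponent $7/6$ rather than $2/3$ (that is, $\theta=1/2$ rather than $\theta=0$), matching the pattern of Lemma~\ref{keyphiestimate} where the $L^2_k$ variant \eqref{variant_of_key_phi_estimate} has exponent $3/2=1+1/2$. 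With exponent $7/6$ your pointwise bound reads $\|k^{7/6}\partial_y\phi_k\|_\infty^2\lesssim\nu^{-2/3}D_{k,\tau\alpha}$ exactly, and the claim follows directly with no extra factor; this is also the form in which the estimate is actually invoked throughout Section~\ref{nonlin} (see for instance the bounds on $T_{\gamma,\tau,HL,H,\cdot,H'}^y$, $T_{\alpha,\tau\alpha,HL,\cdot,H,\cdot}^y$, and $T_{\beta,HL,\cdot,M,\cdot}^{y,2}$, where the exponent $7/6$ appears explicitly).
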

Note that in principle, there exist variants of Lemma \ref{keydyphiestimate} where the domain of integration includes low frequencies, meaning $|k| \lesssim \nu$. However, we will not have occasion to use these estimates in this paper.\\

Recalling the definition of $A(k) = \sqrt{\alpha}$, we state the following lemma, which is a variant of Lemma 5.2 from \cite{bedrossian2023stability}.

\begin{lemma}\label{beta_and_gamma}

    There hold the following estimates:

    \begin{subequations}
        \begin{equation}\label{beta_gamma_1}
            \int_0^t \int_{|k| \gtrsim \nu}  \langle  c \lambda_{k} t\rangle^{2J} \langle k\rangle^{2m} |k| ||\omega_k||_2^2 dk ds\lesssim \nu^{-1/2} \mathcal{D}_\gamma^{1/2}\mathcal{D}_\beta^{1/2},
        \end{equation}
        and
        \begin{equation}\label{beta_gamma_2}
            \int_0^t \int_{\R} \langle  c \lambda_{k} t\rangle^{2J} \langle k\rangle^{2m} \left(A(k) |k|\right)^{2} ||\omega_k||_2^2 dk ds \lesssim \mathcal{D}_\gamma^{1/2} \mathcal{D}_\beta^{1/2} + \nu \mathcal{D}_\beta.
        \end{equation}
    \end{subequations}
\end{lemma}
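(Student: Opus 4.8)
Both inequalities are of ``pointwise rearrangement plus H\"older/Cauchy--Schwarz'' type, resting on two structural facts about the linearized dissipations and on careful bookkeeping of the two frequency regimes. Abbreviate the common weight by $w_k(s)=\langle c\lambda_k s\rangle^{2J}\langle k\rangle^{2m}$ (we read the temporal weight in \eqref{beta_gamma_1}--\eqref{beta_gamma_2} as $\langle c\lambda_k s\rangle^{2J}$, matching the definition of $\mathcal D_\gamma,\mathcal D_\beta$; the argument is insensitive to this). Since $1\le M_k(s)\le e^{4J^2/3}$, any weighted integral $\int_0^t\!\int w_k(s)\,D_{k,\gamma}\,dk\,ds$ is comparable to $\mathcal D_\gamma$ (and likewise for $\beta$), so the $M_k$ factor only affects constants. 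The two structural facts are: first, because $\beta|k|^2=\lambda_k$ in both frequency regimes,
\[
D_{k,\beta}=\beta|k|^2\|\omega_k\|_2^2=\lambda_k\|\omega_k\|_2^2;
\]
second, since $\nabla_k=(ik,\partial_y)$,
\[
D_{k,\gamma}=\nu\|\nabla_k\omega_k\|_2^2\ \ge\ \nu|k|^2\|\omega_k\|_2^2.
\]
For $|k|\ge\nu$ we also use $\lambda_k=\nu^{1/3}|k|^{2/3}$, so $\lambda_k^3=\nu|k|^2$ and $\lambda_k\ge\nu$, and $A(k)|k|=\sqrt{\alpha}\,|k|=\lambda_k$; for $|k|<\nu$, $A(k)=1$ and $D_{k,\beta}=\nu^{-1}|k|^2\|\omega_k\|_2^2$.

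\emph{Estimate \eqref{beta_gamma_2}.} Split $\R=\{|k|<\nu\}\cup\{|k|\ge\nu\}$. On $\{|k|<\nu\}$, $(A(k)|k|)^2\|\omega_k\|_2^2=|k|^2\|\omega_k\|_2^2=\nu D_{k,\beta}$, which integrates to the $\nu\mathcal D_\beta$ term. On $\{|k|\ge\nu\}$, $(A(k)|k|)^2\|\omega_k\|_2^2=\lambda_k^2\|\omega_k\|_2^2=\lambda_k D_{k,\beta}$, and
\[
\lambda_k D_{k,\beta}=D_{k,\beta}^{1/2}\bigl(\lambda_k^3\|\omega_k\|_2^2\bigr)^{1/2}=D_{k,\beta}^{1/2}\bigl(\nu|k|^2\|\omega_k\|_2^2\bigr)^{1/2}\le D_{k,\beta}^{1/2}D_{k,\gamma}^{1/2};
\]
Cauchy--Schwarz in $(k,s)$ against $w_k(s)$ gives $\int_0^t\!\int_{|k|\ge\nu}w_k(s)\,\lambda_k D_{k,\beta}\,dk\,ds\lesssim\mathcal D_\beta^{1/2}\mathcal D_\gamma^{1/2}$. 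Summing the two contributions yields \eqref{beta_gamma_2}.

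\emph{Estimate \eqref{beta_gamma_1}.} Split $\{|k|\gtrsim\nu\}=\{\nu\lesssim|k|\le\nu^{-1/2}\}\cup\{|k|>\nu^{-1/2}\}$. On the intermediate piece, using $D_{k,\beta}=\lambda_k\|\omega_k\|_2^2$,
\[
|k|\,\|\omega_k\|_2^2=\frac{|k|}{\lambda_k}\,D_{k,\beta}=\nu^{-1/3}|k|^{1/3}D_{k,\beta}\le\nu^{-1/2}D_{k,\beta},
\]
since $\nu^{-1/3}|k|^{1/3}\le\nu^{-1/3}(\nu^{-1/2})^{1/3}=\nu^{-1/2}$ there; integrating gives $\nu^{-1/2}\mathcal D_\beta$. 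On the piece $|k|>\nu^{-1/2}$, where $\lambda_k>1$, write
\[
|k|\,\|\omega_k\|_2^2=\bigl(\nu^{1/2}|k|^2\|\omega_k\|_2^2\bigr)^{1/2}\bigl(\nu^{-1/2}\|\omega_k\|_2^2\bigr)^{1/2},
\]
bound the first base by $\nu^{-1/2}D_{k,\gamma}$ (from $D_{k,\gamma}\ge\nu|k|^2\|\omega_k\|_2^2$) and the second by $\nu^{-1/2}\lambda_k\|\omega_k\|_2^2=\nu^{-1/2}D_{k,\beta}$ (using $\lambda_k>1$), and apply Cauchy--Schwarz in $(k,s)$ to obtain $\nu^{-1/2}\mathcal D_\gamma^{1/2}\mathcal D_\beta^{1/2}$. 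Altogether the left side of \eqref{beta_gamma_1} is $\lesssim\nu^{-1/2}\bigl(\mathcal D_\gamma^{1/2}\mathcal D_\beta^{1/2}+\mathcal D_\beta\bigr)$; the extra $\mathcal D_\beta$ (unavoidable with balanced exponents on the band $\nu\lesssim|k|\lesssim1$, since there is no Poincar\'e inequality on $\R$ relating $\|\omega_k\|_2$ and $\|\partial_y\omega_k\|_2$) enters the bootstrap only through $\mathcal D_\beta\le\mathcal D$, just as $\mathcal D_\gamma^{1/2}\mathcal D_\beta^{1/2}$ does, so it is harmless. If a single product is preferred, one may instead use the pointwise identity $|k|\|\omega_k\|_2^2=(\nu^{-1}|k|^2\|\omega_k\|_2^2)^{1/4}(\lambda_k\|\omega_k\|_2^2)^{3/4}$, valid for all $|k|\ge\nu$, with H\"older in $(k,s)$ of exponents $(4,\tfrac{4}{3})$, getting the clean bound $\nu^{-1/2}\mathcal D_\gamma^{1/4}\mathcal D_\beta^{3/4}$.

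\textbf{Main obstacle.} The genuine difficulty is confined to the intermediate band $\nu\lesssim|k|\lesssim1$, where $\lambda_k$ is small and the naive pointwise geometric mean of $D_{k,\gamma}$ and $D_{k,\beta}$ overshoots the target by a factor $\nu^{-1/2}$. Unlike the periodic-channel analysis of \cite{bedrossian2023stability}, one cannot cure this with Poincar\'e, so one accepts either an unbalanced split of the two dissipations or the frequency cut at $|k|\sim\nu^{-1/2}$ used above. The hypothesis $|k|\gtrsim\nu$ is what makes the scheme close, converting the powers $|k|^{\pm1/3}$ produced by the rearrangements into the stated power $\nu^{-1/2}$ of the viscosity; keeping straight which regime ($|k|<\nu$ or $|k|\ge\nu$) governs $\alpha$, $\beta$, $A(k)$, and $\lambda_k$ at each step is the only thing that requires care.
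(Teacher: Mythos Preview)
Your argument for \eqref{beta_gamma_2} is exactly the paper's: on $|k|\ge\nu$ you factor $\nu^{2/3}|k|^{4/3}\|\omega_k\|_2^2=(\nu|k|^2\|\omega_k\|_2^2)^{1/2}(\nu^{1/3}|k|^{2/3}\|\omega_k\|_2^2)^{1/2}$ and apply Cauchy--Schwarz in $(k,s)$, and the low-frequency piece gives $\nu\mathcal D_\beta$ directly from the definition.

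For \eqref{beta_gamma_1} you are in fact more careful than the paper. The paper only asserts that ``this same interpolation still works over the larger range $|k|\gtrsim\nu$,'' but as your analysis correctly identifies, the pointwise geometric mean $D_{k,\gamma}^{1/2}D_{k,\beta}^{1/2}$ gives only $\nu^{2/3}|k|^{4/3}\|\omega_k\|_2^2$, which fails to dominate $|k|\|\omega_k\|_2^2$ on the band $\nu\lesssim|k|\lesssim\nu^{-1/2}$; indeed a test with $\omega_k$ localized near $|k|\sim\nu$ and $\|\partial_y\omega_k\|_2$ small shows the balanced bound $\nu^{-1/2}\mathcal D_\gamma^{1/2}\mathcal D_\beta^{1/2}$ cannot hold as stated. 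Your alternative identity $|k|\|\omega_k\|_2^2=\nu^{-1/2}(\nu|k|^2\|\omega_k\|_2^2)^{1/4}(\lambda_k\|\omega_k\|_2^2)^{3/4}$, followed by H\"older with exponents $(4,\tfrac43)$, is precisely the ``simple interpolation trick'' the paper alludes to and yields $\nu^{-1/2}\mathcal D_\gamma^{1/4}\mathcal D_\beta^{3/4}$; this (or your equivalent split with the extra $\nu^{-1/2}\mathcal D_\beta$) is what the method actually gives, and it is all that is used downstream, since every application only feeds the result through the cruder bound $\lesssim\nu^{-1/2}\mathcal D$. So your proposal is correct and, on \eqref{beta_gamma_1}, sharper about the exponents than the paper's sketch.
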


\begin{proof}
    This lemma is an extension of Lemma 5.2 from \cite{bedrossian2023stability}, wherein the authors included (essentially) a proof of \eqref{beta_gamma_1} at frequencies $|k| \geq 1$ via a simple interpolation trick. This same interpolation still works over the larger range $|k| \gtrsim \nu$. We demonstrate this by using a similar trick in the proof of \eqref{beta_gamma_2}. For $|k| \gtrsim \nu$, $A(k) = \nu^{1/3} |k|^{-1/3}$, and so
    \begin{equation}
        \begin{split}
            \int_0^t\int_{|k| \gtrsim \nu}& \langle  c \lambda_{k} t\rangle^{2J} \langle k\rangle^{2m} \nu^{2/3} |k|^{4/3} ||\omega_k||_2^2 dk\\  &\lesssim \int_0^t \int_{|k| \gtrsim \nu} \left(\langle  c \lambda_{k} t\rangle^{J} \langle k\rangle^{m} \nu^{1/2}|k| ||\omega_k||_2 \right)\left(\langle  c \lambda_{k} t\rangle^{J} \langle k\rangle^{m} \nu^{1/6}|k|^{1/3} ||\omega_k||_2\right) dk ds\\
            &\lesssim \left(\int_0^t \int_{|k| \gtrsim \nu} \langle  c \lambda_{k} t\rangle^{2J} \langle k\rangle^{2m} \nu ||k\omega_k||_2^2 dk ds\right)^{1/2}\left(\int_0^t \int_{|k| \gtrsim \nu} \langle  c \lambda_{k} t\rangle^{2J} \langle k\rangle^{2m}  \nu^{1/3}|k|^{2/3} ||\omega_k||_2^2 dk ds\right)^{1/2}\\
            &\lesssim \mathcal{D}_\gamma^{1/2} \mathcal{D}_\beta^{1/2}.
        \end{split}
    \end{equation}
    Meanwhile, \eqref{beta_gamma_2} at frequencies $|k| \lesssim \nu$ follows immediately from the definition of $\mathcal{D}_\beta$ \eqref{different_dissipations_definition} in terms of $D_{\beta,k}$ \eqref{k_by_k_dissipation}.
\end{proof}

\begin{lemma}\label{dy_dy_phi_to_energy}

The following functional estimates hold.
$$ \int_{|k| \lesssim \nu} \langle  c \lambda_{k} t\rangle^{J}||\partial_y^2 \phi_k||_\infty dk \lesssim \nu^{1/2}\mathcal{E}^{1/2},$$
$$ \left(\int_0^t \left(\int_{\R} \langle  c \lambda_{k} t\rangle^{J} ||\partial_y^2 \phi_k||_\infty dk \right)^4 ds \right)^{1/4} \lesssim \nu^{-1/4}\D_\gamma^{1/4}\sup_{s \in [0,t]}\mathcal{E}^{1/4}.$$
\end{lemma}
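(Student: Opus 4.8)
The plan is to reduce both estimates to pointwise-in-$k$ bounds on $\|\partial_y^2\phi_k\|_\infty$ via the identity $\partial_y^2\phi_k = \Delta_k\phi_k + k^2\phi_k = \omega_k + k^2\phi_k$, which gives $\|\partial_y^2\phi_k\|_\infty \le \|\omega_k\|_\infty + k^2\|\phi_k\|_\infty$. For the first term I would use the one-dimensional Gagliardo--Nirenberg--Sobolev inequality $\|f\|_\infty^2 \lesssim \|f\|_2\|\partial_y f\|_2$. For the second term I would first record the elliptic bounds obtained by pairing $\Delta_k\phi_k = \omega_k$ against $\phi_k$ and integrating by parts: this yields $k^2\|\phi_k\|_2 \le \|\omega_k\|_2$ and $\|\partial_y\phi_k\|_2 \le \|\nabla_k\phi_k\|_2 \le |k|^{-1}\|\omega_k\|_2$, hence $\|\phi_k\|_\infty^2 \lesssim \|\phi_k\|_2\|\partial_y\phi_k\|_2 \lesssim |k|^{-3}\|\omega_k\|_2^2$, so that $k^2\|\phi_k\|_\infty \lesssim |k|^{1/2}\|\omega_k\|_2$.

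\emph{First estimate.} Restricting to $|k| \lesssim \nu$, where $\alpha = 1$, $M_k \approx 1$, and $\langle k\rangle \approx 1$, I have $E_k \approx \|\omega_k\|_2^2 + \|\partial_y\omega_k\|_2^2$, so that $\|\omega_k\|_\infty \lesssim \|\omega_k\|_2^{1/2}\|\partial_y\omega_k\|_2^{1/2} \lesssim E_k^{1/2}$ and $k^2\|\phi_k\|_\infty \lesssim |k|^{1/2}\|\omega_k\|_2 \le \nu^{1/2}\|\omega_k\|_2 \le E_k^{1/2}$ (using $\nu<1$). Thus $\|\partial_y^2\phi_k\|_\infty \lesssim E_k^{1/2} \approx (\langle k\rangle^{2m}E_k)^{1/2}$ on this range, and Cauchy--Schwarz in $k$ over the set $\{|k|\lesssim\nu\}$, which has measure $\lesssim\nu$, gives
\begin{align*}
\int_{|k|\lesssim\nu}\langle c\lambda_k t\rangle^J\|\partial_y^2\phi_k\|_\infty\,dk
&\lesssim \nu^{1/2}\Bigl(\int_{|k|\lesssim\nu}\langle c\lambda_k t\rangle^{2J}\langle k\rangle^{2m}E_k[\omega_k]\,dk\Bigr)^{1/2}\\
&\lesssim \nu^{1/2}\mathcal{E}_1^{1/2}\le\nu^{1/2}\mathcal{E}^{1/2}.
\end{align*}

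\emph{Second estimate.} For this one I need a bound valid at all frequencies, and I would split powers so that one factor $\nu^{-1/2}$ is drawn from the dissipation. From $D_{k,\gamma}=\nu\|\nabla_k\omega_k\|_2^2$ I get $\|\partial_y\omega_k\|_2 \le \nu^{-1/2}D_{k,\gamma}^{1/2}$ and $|k|\,\|\omega_k\|_2 \le \nu^{-1/2}D_{k,\gamma}^{1/2}$, while coercivity of $E_k$ gives $\|\omega_k\|_2 \lesssim E_k^{1/2}$. Inserting these into the pointwise bounds above yields, for every $k\ne 0$,
\[
\|\partial_y^2\phi_k\|_\infty \lesssim \|\omega_k\|_2^{1/2}\|\partial_y\omega_k\|_2^{1/2} + \bigl(|k|\,\|\omega_k\|_2\bigr)^{1/2}\|\omega_k\|_2^{1/2} \lesssim \nu^{-1/4}D_{k,\gamma}^{1/4}E_k^{1/4}.
\]
Then, writing $D_{k,\gamma}^{1/4}E_k^{1/4} = \langle k\rangle^{-m}\bigl(\langle k\rangle^{2m}D_{k,\gamma}\bigr)^{1/4}\bigl(\langle k\rangle^{2m}E_k\bigr)^{1/4}$, I would apply Cauchy--Schwarz in $k$ to peel off $\langle k\rangle^{-m}$ (which lies in $L^2_k$ since $m>1/2$), followed by a second Cauchy--Schwarz separating the $D$- and $E$-integrals; with $I(s):=\int_\R\langle c\lambda_k s\rangle^J\|\partial_y^2\phi_k(s)\|_\infty\,dk$ this produces
\[
I(s)^4 \lesssim_m \nu^{-1}\Bigl(\int_\R\langle c\lambda_k s\rangle^{2J}\langle k\rangle^{2m}D_{k,\gamma}\,dk\Bigr)\Bigl(\int_\R\langle c\lambda_k s\rangle^{2J}\langle k\rangle^{2m}E_k[\omega_k(s)]\,dk\Bigr).
\]
Since $1 \le M_k \le e^{4J^2/3}$, the second factor is $\lesssim_J \mathcal{E}_1(s) \le \sup_{r\in[0,t]}\mathcal{E}(r)$; pulling this supremum out and integrating the remaining factor in $s$ gives $\lesssim_J\D_\gamma$, whence $\int_0^t I(s)^4\,ds \lesssim \nu^{-1}\D_\gamma\sup_{r\in[0,t]}\mathcal{E}(r)$, and taking fourth roots finishes the proof.

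I expect the computation to be essentially routine. The only points that need care are the choice of the power split between $D_{k,\gamma}$ and $E_k$ --- any other split either costs an extra power of $\nu$ or produces a factor $\alpha^{-1/4}=\nu^{-1/6}|k|^{1/6}$ that fails to be summable in $k$ --- and the bookkeeping of the $\langle k\rangle^{2m}$ weights, which is precisely where the hypothesis $m>1/2$ is used. I do not anticipate a genuine obstacle.
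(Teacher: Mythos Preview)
Your proof is correct and follows essentially the same approach as the paper's: Cauchy--Schwarz in $k$ over $\{|k|\lesssim\nu\}$ for the first estimate, and the $\nu^{-1/4}D_{k,\gamma}^{1/4}E_k^{1/4}$ pointwise bound followed by H\"older with exponents $(2,4,4)$ (using $m>1/2$) for the second. The only cosmetic difference is that you first decompose $\partial_y^2\phi_k = \omega_k + k^2\phi_k$ and bound the pieces separately, whereas the paper applies Gagliardo--Nirenberg--Sobolev directly to $\partial_y^2\phi_k$ and invokes elliptic regularity on $\partial_y^3\phi_k$ and $\partial_y^2\phi_k$; both routes yield the same key inequality $\|\partial_y^2\phi_k\|_\infty \lesssim \|\nabla_k\omega_k\|_2^{1/2}\|\omega_k\|_2^{1/2}$.
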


\begin{proof}
    The first estimate is immediate by Gagliardo-Nirenberg-Sobolev, elliptic regularity, and H\"older:
    \begin{equation}
        \begin{split}
            \int_{|k| \lesssim \nu} \langle  c \lambda_{k} t\rangle^{J}||\partial_y^2 \phi_k||_\infty dk &\lesssim \nu^{1/2}\left(\int_{|k| \lesssim \nu} ||\partial_y^2 \phi_k||_\infty^2 dk\right)^{1/2} \\
           &\lesssim \nu^{1/2}\left(\int_{|k| \lesssim \nu} ||\partial_y \omega_k||_2^2 + ||\omega_k||_2^2 dk\right)^{1/2} \lesssim \nu^{1/2} \mathcal{E}^{1/2}.
        \end{split}
    \end{equation}
    The second estimate is a simple interpolation argument. By elliptic regularity, Gagliardo-Nirenberg-Sobolev, H\"older, and $m > 1/2$:
    \begin{equation}
        \begin{split}
            \left( \int_0^t \left(\int_{\R} \langle  c \lambda_{k} t\rangle^{J}||\partial_y^2 \phi_k||_\infty dk \right)^{4} ds \right)^{1/4} &\lesssim \left( \int_0^t \left(\int_{\R} \langle  c \lambda_{k} t\rangle^{J}||\partial_y \omega_k||_2^{1/2} ||\omega_k||_2^2 dk \right)^{4} ds \right)^{1/4} \\
            &\lesssim \left( \int_0^t \biggl(\int_{\R}  \langle k \rangle^{-m} \biggl( \langle  c \lambda_{k} t\rangle^{J/2} ||\langle k \rangle^m \nabla_k \omega_k||_2^{1/2}\right)\\
            &\quad \quad \left( \langle c \lambda_{k} t\rangle^{J/2} ||  \langle k \rangle^m \omega_k||_2^{1/2} \right)dk \biggr)^{4} ds \biggr)^{1/4}\\
            &\lesssim \nu^{-1/4} \D_\gamma^{1/4} \sup_{s \in [0,t]}\E^{1/4}(s).
        \end{split}
    \end{equation}
\end{proof}
Additionally, we have the following lemmas related to the control of the stream function at low frequencies.

\begin{lemma}\label{low_frequency_lemma_phi}

Fix $p \in [0,1/2]$. Then the following holds:
\begin{subequations}
\begin{equation}\label{low_freq_first}
    \int_{|k| \lesssim \nu} ||\partial_y \phi_k||_\infty dk \lesssim \nu^{1/2} \E^{1/2}.
\end{equation}
Additionally,
\begin{equation}\label{low_freq_second}
    \int_{|k| \lesssim \nu} \left(\int_0^t ||k \phi_k||_\infty^2 ds \right)^{1/2} dk \lesssim \nu^{1/2}\mathcal{D}_{\tau,2}^{1/2}.
\end{equation}
Lastly, we note the similar results
\begin{equation}\label{low_freq_third}
    \int_{|k| \leq 1} ||\partial_y \phi_k||_\infty dk \lesssim  \E_2^{1/2},  \quad \int_{|k| \leq 1} \left(\int_0^t ||k \phi_k||_\infty^2 ds \right)^{1/2} dk \lesssim \mathcal{D}_{\tau,2}^{1/2}.
\end{equation}
\end{subequations}
\end{lemma}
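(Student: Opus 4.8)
The plan is to reduce each bound to an application of the Gagliardo–Nirenberg–Sobolev inequality in $y$, elliptic regularity for $\Delta_k^{-1}$, and the Cauchy–Schwarz inequality in $k$ over the compact frequency range, exploiting that on $\{|k| \lesssim \nu\}$ one has $\alpha = 1$, $E_k \approx \|\omega_k\|_2^2 + \|\partial_y \omega_k\|_2^2$, and on $\{|k|\lesssim\nu\}$ the measure of the frequency set is $O(\nu)$. For \eqref{low_freq_first}, I would first write $\|\partial_y \phi_k\|_\infty^2 \lesssim \|\partial_y\phi_k\|_2 \|\partial_y^2\phi_k\|_2$ by GNS in one dimension, then use elliptic regularity $\|\partial_y^2\phi_k\|_2 + |k|^2\|\phi_k\|_2 \lesssim \|\omega_k\|_2$ and $\|\nabla_k\phi_k\|_2 \lesssim \|\phi_k\|_2^{1/2}\|\omega_k\|_2^{1/2}$ — or more simply $\|\partial_y\phi_k\|_2^2 \lesssim \|\nabla_k\phi_k\|_2^2 = -\langle \phi_k, \omega_k\rangle \le \|\phi_k\|_2\|\omega_k\|_2$ — to get $\|\partial_y\phi_k\|_\infty \lesssim \|\omega_k\|_2$ at frequencies $|k|\lesssim \nu$ (here one may even use a Poincaré-type control or the crude $\|\phi_k\|_2 \lesssim \|\omega_k\|_2$ valid once $|k|$ is bounded away from... actually for $|k|$ small this needs care; the honest route is $\|\partial_y\phi_k\|_\infty \lesssim \|\partial_y\phi_k\|_2^{1/2}\|\partial_y^2\phi_k\|_2^{1/2} \lesssim \|\nabla_k\phi_k\|_2^{1/2}\|\omega_k\|_2^{1/2}$, and then bound $\|\nabla_k\phi_k\|_2 \lesssim \|\omega_k\|_2$ using the Fourier representation $\|\nabla_k\phi_k\|_2^2 = \int \frac{k^2+\eta^2}{(k^2+\eta^2)^2}|\hat\omega_k|^2 \le \int \frac{|\hat\omega_k|^2}{k^2} $... ). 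In any case, once $\|\partial_y\phi_k\|_\infty \lesssim \|\omega_k\|_2^{1/2}\|\partial_y\phi_k\|_2^{1/2}$ is in hand, apply Cauchy–Schwarz in $k$:
\[
\int_{|k|\lesssim\nu} \|\partial_y\phi_k\|_\infty\, dk \lesssim \Big(\int_{|k|\lesssim\nu} dk\Big)^{1/2}\Big(\int_{|k|\lesssim\nu}\|\partial_y\phi_k\|_\infty^2\, dk\Big)^{1/2} \lesssim \nu^{1/2}\Big(\int_{|k|\lesssim\nu} E_k[\omega_k]\, dk\Big)^{1/2} \lesssim \nu^{1/2}\E^{1/2},
\]
where the last step uses $\langle k\rangle^{2m}\langle c\lambda_k t\rangle^{2J}/M_k(t) \approx 1$ on this frequency range together with the definition \eqref{introduce_energy} of $\mathcal{E}_1$.

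For \eqref{low_freq_second}, the same GNS bound gives $\|k\phi_k\|_\infty^2 \lesssim |k|^2 \|\phi_k\|_2\|\partial_y\phi_k\|_2 \lesssim |k|^2\|\nabla_k\phi_k\|_2^2 \lesssim \|\nabla_k\phi_k\|_2^2$ for $|k| \lesssim \nu \le 1$... but I need to recover $|k|^2\|\nabla_k\phi_k\|_2^2 = D_{k,\tau}$, which is exactly what appears in $\D_{\tau,2}$. So I would instead write $\|k\phi_k\|_\infty^2 \lesssim |k|\,\|k\phi_k\|_2\,\|k\partial_y\phi_k\|_2 \lesssim |k|\cdot|k|^2\|\nabla_k\phi_k\|_2^2 = |k|\, D_{k,\tau}$ (using $\|k\phi_k\|_2, \|k\partial_y\phi_k\|_2 \le \|k\nabla_k\phi_k\|_2$ and $\|\nabla_k(k\phi_k)\|_2 \le |k|\|\nabla_k\phi_k\|_2$); alternatively just use $\|k\phi_k\|_\infty^2 \lesssim \|k\phi_k\|_2\|\partial_y(k\phi_k)\|_2 \lesssim \|k\nabla_k\phi_k\|_2^2 = D_{k,\tau}$. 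Then, integrating in time inside the $L^2_t$ norm and applying Cauchy–Schwarz in $k$ over $\{|k|\lesssim\nu\}$:
\[
\int_{|k|\lesssim\nu}\Big(\int_0^t \|k\phi_k\|_\infty^2\, ds\Big)^{1/2} dk \lesssim \Big(\int_{|k|\lesssim\nu} dk\Big)^{1/2}\Big(\int_{|k|\lesssim\nu}\int_0^t D_{k,\tau}\, ds\, dk\Big)^{1/2} \lesssim \nu^{1/2}\,\D_{\tau,2}^{1/2},
\]
using that $\int_0^t D_{k,\tau}\,ds \le \sup_k \int_0^t D_{k,\tau}\,ds = \D_{\tau,2}$ for each fixed $k$, so $\int_{|k|\lesssim\nu}\int_0^t D_{k,\tau}\,ds\,dk \lesssim \nu\,\D_{\tau,2}$. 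For \eqref{low_freq_third}, the argument is identical but now integrating over $\{|k|\le 1\}$, where the controlling quantity is $\sup_k(\|\omega_k\|_2^2 + c_\tau\mathrm{Re}\langle\mathfrak{J}_k\omega_k,\omega_k\rangle) = \E_2$ (which is $\approx \sup_k \|\omega_k\|_2^2$ by boundedness of $\mathfrak{J}_k$, Lemma \ref{boundedness_of_J_k}) and $\D_{\tau,2}$ respectively; here the frequency range has measure $O(1)$, so no power of $\nu$ is gained, and for the first bound one uses $\int_{|k|\le 1}\|\partial_y\phi_k\|_\infty\,dk \lesssim (\int_{|k|\le1} \|\partial_y\phi_k\|_\infty^2 dk)^{1/2} \lesssim \sup_k\|\omega_k\|_2 \lesssim \E_2^{1/2}$ after passing through $\|\partial_y\phi_k\|_\infty \lesssim \|\nabla_k\phi_k\|_2^{1/2}\|\omega_k\|_2^{1/2} \lesssim \|\omega_k\|_2$.

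The only genuinely delicate point is controlling $\|\nabla_k\phi_k\|_2$ by $\|\omega_k\|_2$ uniformly down to $|k|\to 0$: the bound $\|\nabla_k\phi_k\|_2 \le |k|^{-1}\|\omega_k\|_2$ from the Fourier multiplier $\frac{|\xi|}{|\xi|^2}$ with $\xi = (k,\eta)$ degenerates as $k\to 0$, so one cannot simply say $\|\nabla_k\phi_k\|_2 \lesssim \|\omega_k\|_2$. This is exactly why the statement carries a low-frequency hypothesis in the planar case (the $\|\omega_{in,k}\|_{L^\infty_k L^2_y}$ control in Theorem \ref{main_theorem}) and why on the plane the dispersion is Taylor-type; for \emph{this} lemma, though, I expect one does not actually need $\|\nabla_k\phi_k\|_2 \lesssim \|\omega_k\|_2$ — rather one keeps $\|\nabla_k\phi_k\|_2$ (or $\|\partial_y\phi_k\|_2$) as is, noting that GNS gives $\|\partial_y\phi_k\|_\infty \lesssim \|\partial_y\phi_k\|_2^{1/2}\|\partial_y^2\phi_k\|_2^{1/2} \lesssim \|\partial_y\phi_k\|_2^{1/2}\|\omega_k\|_2^{1/2}$ and that $\|\partial_y\phi_k\|_2^2 \le \|\nabla_k\phi_k\|_2^2 \lesssim E_k[\omega_k]$ once we observe $\|\nabla_k\phi_k\|_2^2 = \langle -\Delta_k\phi_k, \phi_k\rangle = \langle \omega_k, -\phi_k\rangle = \langle -\Delta_k^{-1}\omega_k, \omega_k\rangle$ and, crucially, that $\langle -\Delta_k^{-1}\omega_k,\omega_k\rangle \lesssim \|\omega_k\|_2\|\partial_y\omega_k\|_2$ by an integration-by-parts / interpolation bound exploiting the homogeneous structure in $y$ (the same structure used for $\mathfrak{J}_k$), i.e. $\|\nabla_k\phi_k\|_2^2 \lesssim \|\omega_k\|_2 \|\partial_y\omega_k\|_2 \lesssim E_k[\omega_k]$ on $\{\alpha=1\}$. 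Thus the chain $\|\partial_y\phi_k\|_\infty \lesssim \|\omega_k\|_2^{3/4}\|\partial_y\omega_k\|_2^{1/4} \lesssim E_k^{1/2}$ closes without any loss as $k\to 0$. The remaining steps are routine Cauchy–Schwarz over the compact frequency windows and bookkeeping with the weights $\langle k\rangle^{2m}$, $\langle c\lambda_k t\rangle^{2J}$, $M_k(t)^{-1}$, all of which are $\approx 1$ on $\{|k|\lesssim \nu\}$ and on $\{|k|\le 1\}$ (for the latter, $\lambda_k \lesssim 1$ and $\langle k\rangle \lesssim 1$, and since these are $\sup$-based quantities the weights only help).
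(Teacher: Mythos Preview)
You correctly identify the delicate point: as $k\to 0$ there is no uniform bound $\|\nabla_k\phi_k\|_2\lesssim\|\omega_k\|_2$ on the plane. However, your proposed fix fails. The inequality $\|\nabla_k\phi_k\|_2^2\lesssim\|\omega_k\|_2\|\partial_y\omega_k\|_2$ is false: on the Fourier side it reads $\int\frac{|\hat\omega_k(\eta)|^2}{k^2+\eta^2}\,d\eta\lesssim\|\omega_k\|_2\|\eta\hat\omega_k\|_2$, and concentrating $\hat\omega_k$ near $\eta=0$ (say $\hat\omega_k=1_{[0,\epsilon]}$ with $\epsilon\ll k$) gives left side $\approx\epsilon k^{-2}$ and right side $\approx\epsilon^2$, which is violated for small $\epsilon$. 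Likewise your bound $\|k\phi_k\|_\infty^2\lesssim\|k\nabla_k\phi_k\|_2^2=D_{k,\tau}$ for \eqref{low_freq_second} implicitly uses $\|\phi_k\|_2\lesssim\|\nabla_k\phi_k\|_2$, i.e.\ a Poincar\'e inequality, which is unavailable on $\R$; the honest GNS bound is only $\|k\phi_k\|_\infty^2\lesssim|k|\,\|\nabla_k\phi_k\|_2^2=|k|^{-1}D_{k,\tau}$.

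The paper does not try to make $\|\partial_y\phi_k\|_\infty\lesssim E_k^{1/2}$ close pointwise in $k$. Instead it uses elliptic regularity in the form $\|\partial_y^2\phi_k\|_2\lesssim\|\omega_k\|_2$ and $|k|\,\|\partial_y\phi_k\|_2\lesssim\|\omega_k\|_2$ (both uniform in $k$), which via GNS gives $\|\partial_y\phi_k\|_\infty\lesssim|k|^{-1/2}\|\omega_k\|_2$. The singular weight $|k|^{-1/2}$ is integrable over $\{|k|\lesssim\nu\}$ (giving the $\nu^{1/2}$), and the remaining $\|\omega_k\|_2$ is placed in $L^\infty_k$ and controlled by $\mathcal{E}_2^{1/2}\le\mathcal{E}^{1/2}$. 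The same trick handles \eqref{low_freq_second}: $\|k\phi_k\|_\infty^2\lesssim|k|^{-1}D_{k,\tau}$, then $\int_{|k|\lesssim\nu}|k|^{-1/2}\bigl(\int_0^t D_{k,\tau}\,ds\bigr)^{1/2}dk\lesssim\bigl(\int_{|k|\lesssim\nu}|k|^{-1/2}dk\bigr)\mathcal{D}_{\tau,2}^{1/2}\lesssim\nu^{1/2}\mathcal{D}_{\tau,2}^{1/2}$. This is precisely why the $L^\infty_k$ quantities $\mathcal{E}_2$ and $\mathcal{D}_{\tau,2}$ are part of the energy/dissipation in the planar case; your Cauchy--Schwarz-in-$k$ route with the uniform measure cannot access them and leaves you one factor of $|k|^{1/2}$ short.
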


\begin{proof}
    Using Gagliardo-Nirenberg-Sobolev, introducing a factor $|k|^{-1/2} |k|^{1/2}$ and applying elliptic regularity, the integramd of \eqref{low_freq_first} satsifies:
    \[
            ||\partial_y \phi_k||_\infty \lesssim |k|^{-1/2} |k|^{1/2} ||\partial_y^2 \phi_k||_2^{1/2} ||\partial_y \phi_k||_2^{1/2} \lesssim |k|^{-1/2} ||\omega_k||_2 .\]
    Then \eqref{low_freq_first} follows by H\"older's inequality. The proof of \eqref{low_freq_second} is similarly straightforward. By Gagliardo-Nirenberg-Sobolev and H\"older:
    \begin{equation}
        \begin{split}
            \int_{|k| \lesssim \nu} \left(\int_0^t ||k \phi_k||_\infty^2 ds \right)^{1/2} dk &\lesssim \int_{|k| \lesssim \nu} |k|^{-1/2} |k| \left(\int_0^t ||\partial_y \phi_k||_2 ||k \phi_k||_2 \right)^{1/2} dk\\
            &\lesssim \left(\int_{|k|\lesssim \nu} |k|^{-1/2} dk\right) \sup_{|k| \lesssim \nu}\left(\int_0^t |k|^2 ||\nabla_k \phi_k||_2^2 ds\right)^{1/2}\\
            &\lesssim \nu^{1/2}\mathcal{D}_{\tau,2}^{1/2}.
        \end{split}
    \end{equation}
    The proof of \eqref{low_freq_third} follows the above arguments, differing only in the domain of integration, which removes the factors of $\nu$ from the result.
\end{proof}

\section{Non-Linear Estimates}\label{nonlin}

The proof of Lemma \ref{nonlin} consists of combining the linear results in Proposition \ref{linear_proposition} with additional bounds on nonlinear terms proved in this section.  To make this distinction clearer, for $\omega_k$ solving \eqref{couette_system}, we define
\begin{subequations}
    \begin{equation}\label{lin_notation}
        \mathbb{L}_k \coloneqq  -ik y \omega_k + \nu \Delta_k \omega_k,
    \end{equation}
    \begin{equation}\label{nonlin_notation}
        \mathbb{NL}_k \coloneqq -(\nabla^{\perp} \phi \cdot \nabla \omega)_k = -\int_{\R} \nabla_{k-k'}^{\perp} \phi_{k-k'} \cdot \nabla_{k'} \omega_{k'} dk',
    \end{equation}
\end{subequations}
so that $\partial_t \omega_k = \mathbb{L}_k + \mathbb{NL}_k$. By the fundamental theorem of calculus and the triangle inequality, for any $t > 0$,
\begin{equation}\label{fotc_energy}
    \begin{split}
        \E(t) + \frac{1}{2} \D_2(t) &\leq \E_1(0) + \int_0^t \frac{d}{ds} \E_1(s) ds + \sup_k\left(\langle (I + c_\tau \mathfrak{J}_k)\omega_k, \omega_k\rangle(0) + \int_0^t \frac{d}{ds}\langle (I + c_\tau \mathfrak{J}_k)\omega_k, \omega_k\rangle(s) ds \right)\\
        &\quad \quad +   \sup_{k} \left(\int_0^t 2 D_{k,\gamma}(s) + \frac{1}{2}c_\tau D_{k,\tau}(s) \right) ds\\
        &\leq \E_1(0) + \int_0^t \frac{d}{ds} \E_1(s) ds + 2\sup_k\biggl(E_k(0)  + \int_0^t  \frac{d}{ds}\langle (I + c_\tau \mathfrak{J}_k)\omega_k, \omega_k\rangle(s) ds\\
        &\quad \quad \quad \quad \quad + \int_0^t   2 D_{k,\gamma}(s) + \frac{1}{2}c_\tau D_{k,\tau}(s) ds \biggr)\\
        &\leq 2\E(0) + \int_0^t \frac{d}{ds} \E_1(s) ds\\
        &\quad \quad + 2\sup_k \left| \int_0^t \frac{d}{ds}\langle (I + c_\tau \mathfrak{J}_k)\omega_k, \omega_k\rangle(s) +  2 D_{k,\gamma}(s) + \frac{1}{2}c_\tau D_{k,\tau}(s)  ds\right|.
    \end{split}
\end{equation}
We begin by addressing the time derivative falling on $\E_1$. Equipped with the notation \eqref{lin_notation} and \eqref{nonlin_notation}, we split $\frac{d}{ds}\mathcal{E}_1(s)$ into linear and non-linear contributions, plus the factors arising when the time derivative falls on the time-dependent multipliers:
\begin{equation}\label{time_deriv_of_full_energy}
    \frac{d}{ds}\mathcal{E}_1(s) = \mathcal{L}_1(s) + \mathcal{NL}_1(s) + \int_{\R} J c \lambda_k \frac{2c\lambda_k s}{\langle c \lambda_k s \rangle^2}\frac{\langle c\lambda_k s \rangle^{2J}}{M_k(s)}\langle k \rangle^{2m} E_k[\omega_k] dk- \int_\R \frac{\dot{M}_k(t)}{M_k(s)}\frac{\langle c\lambda_k s \rangle^{2J}}{M_k(s)}\langle k \rangle^{2m} E_k[\omega_k] dk,
\end{equation}
where we denote
\begin{equation}
\begin{split}
        \mathcal{L}_1 \coloneqq & \int_{\R} \frac{\langle c \lambda_k s \rangle^{2J}}{M_k(s)}\langle k \rangle^{2m}\biggl(  2\textrm{Re} \langle \omega_k, (I+c_\tau \mathfrak{J}_k) \mathbb{L}_k \rangle \\
         &\quad+ 2 c_\alpha\alpha \textrm{Re}\langle \partial_y \omega_k, (I + c_\tau \mathfrak{J}_k) \mathbb{L}_k\rangle- c_\beta \beta \left(\textrm{Re} \langle ik \omega_k, \partial_y \mathbb{L}_k\rangle + \textrm{Re}\langle ik \mathbb{L}_k, \partial_y \omega_k\rangle\right) \biggr) dk,\\
        \mathcal{NL}_1
        \coloneqq& \int_{\R} \frac{\langle c \lambda_k s \rangle^{2J}}{M_k(s)}\langle k \rangle^{2m}\biggl(  2\textrm{Re} \langle \omega_k, (I+c_\tau \mathfrak{J}_k) \mathbb{NL}_k \rangle + 2 c_\alpha\alpha \textrm{Re}\langle \partial_y \omega_k, (I + c_\tau \mathfrak{J}_k) \mathbb{NL}_k\rangle\\
    &\quad- c_\beta \beta \left(\textrm{Re} \langle ik \omega_k, \partial_y \mathbb{NL}_k\rangle + \textrm{Re}\langle ik \mathbb{NL}_k, \partial_y \omega_k\rangle\right) \biggr) dk.
\end{split}
\end{equation}
We define for later use
\begin{equation}
    \begin{split}
                T_{\gamma, \tau} &\coloneqq \int_0^t \int_{\R} \frac{\langle c \lambda_k s \rangle^{2J}}{M_k(s)}\langle k \rangle^{2m}  2\textrm{Re} \langle \omega_k, (I+c_\tau \mathfrak{J}_k)\mathbb{NL}_k \rangle dk ds,\\
                T_{\alpha, \tau\alpha} &\coloneqq \int_0^t\int_{\R} \frac{\langle c \lambda_k s \rangle^{2J}}{M_k(s)}\langle k \rangle^{2m}  2 c_\alpha\alpha \textrm{Re}\langle \partial_y \omega_k, (I + c_\tau \mathfrak{J}_k) \mathbb{NL}_k\rangle dk ds, \\
                T_\beta &\coloneqq - \int_0^t \int_{\R} \frac{\langle c \lambda_k s \rangle^{2J}}{M_k(s)}\langle k \rangle^{2m}c_\beta \beta \left(\textrm{Re} \langle ik \omega_k, \partial_y \mathbb{L}_k\rangle + \textrm{Re}\langle ik \mathbb{NL}_k, \partial_y \omega_k\rangle\right)dk ds,
    \end{split}
\end{equation}
so that $\int_0^t \mathcal{NL}_1 ds = T_{\gamma, \tau} + T_{\alpha, \tau \alpha} + T_\beta$. By Proposition \ref{linear_proposition}, 
\begin{equation}\label{implication_from_prop}
    \int_0^t \mathcal{L}_1(s) dt \leq -8 c \mathcal{D}_1(t) -\int_0^t\int_\R 8 c \lambda_k\frac{\langle c \lambda_k s \rangle^{2J}}{M_k(s)} \langle k \rangle^{2m} E_k[\omega_k] dk ds.
\end{equation}
We also note that by Young's product inequality and the defining ODE \eqref{definition_of_Mk(t)} for $\dot{M}_k(t)$ :
\begin{equation}\label{usage_of_Mk(t)}
\begin{split}
         \int_{\R} J c \lambda_k \frac{2c\lambda_k s}{\langle c \lambda_k s \rangle^2}\frac{\langle c\lambda_k s \rangle^{2J}}{M_k(s)}\langle k \rangle^{2m} E_k[\omega_k] dk &\leq \int_\R c \lambda_k\frac{\langle c\lambda_k s \rangle^{2J}}{M_k(s)}\langle k \rangle^{2m} E_k[\omega_k] dk\\
         &\hphantom{\leq \int_\R}+ \int_\R  c \lambda_k J^2 \frac{(c\lambda_k s)^2}{\langle c \lambda_k s \rangle^4}\frac{\langle c\lambda_k s \rangle^{2J}}{M_k(s)}\langle k \rangle^{2m} E_k[\omega_k]dk\\
         &= \int_\R c \lambda_k\frac{\langle c\lambda_k s \rangle^{2J}}{M_k(s)}\langle k \rangle^{2m} E_k[\omega_k] dk\\
         &\hphantom{\leq \int_\R}+ \int_\R  \frac{\dot{M}_k(s)}{M_k(s)}\frac{\langle c\lambda_k s \rangle^{2J}}{M_k(s)}\langle k \rangle^{2m} E_k[\omega_k] dk.
\end{split}
\end{equation}
Combining \eqref{implication_from_prop}, \eqref{time_deriv_of_full_energy}, and \eqref{usage_of_Mk(t)}, we observe that
\begin{equation}\label{reduced_diffble_energy}
    \int_0^t \frac{d}{ds} \mathcal{E}_1(s) \leq - 4c \mathcal{D}_1(s) + \int_0^t \mathcal{NL}_1(s).
\end{equation}
Applying \eqref{reduced_diffble_energy} to \eqref{fotc_energy}, we find
\begin{equation}\label{pre_abs_val}
    \begin{split}
        \E(t) + \frac{1}{2}\D_2(t) &\leq \E(0)  - 4c \D_1(s) + \int_0^t \mathcal{NL}_1(s) ds\\
        &\quad \quad + 2\sup_k \left| \int_0^t \frac{d}{ds}\langle (I + c_\tau \mathfrak{J}_k)\omega_k, \omega_k\rangle(s) +  2 D_{k,\gamma}(s) + \frac{1}{2}c_\tau D_{k,\tau}(s)  ds\right|.
    \end{split}
\end{equation}
Next, recall \eqref{gamma_lin_est} and \eqref{damping_of_phi}, which alongside \eqref{coeff_conditions} and boundedness of $\mathfrak{J}_k$ give
\begin{equation}\label{abs_val_eqn}
\begin{split}
    \left|\frac{d}{ds}\langle (I + c_\tau \mathfrak{J}_k)\omega_k, \omega_k\rangle(s) + 2\left(D_{k,\gamma}(s) + \frac{1}{2}c_\tau D_{k,\tau}(s) \right)\right| \leq 2 || \mathfrak{J}_k||_{L^2 \to L^2} c_\tau D_{k,\gamma} +  2 |\textrm{Re} \langle \omega_k, (I+c_\tau \mathfrak{J}_k) \mathbb{NL}_k \rangle|.
\end{split}
\end{equation}
Define now
\begin{equation}
    \begin{split}
        T_2 \coloneqq \sup_k 4 \int_0^t\biggl|  \textrm{Re} \langle \omega_k, (I+c_\tau \mathfrak{J}_k) \mathbb{NL}_k \rangle \biggr| ds.
    \end{split}
\end{equation}
Then by applying the triangle inequality and \eqref{abs_val_eqn} to \eqref{pre_abs_val}, we find
\begin{equation}
    \E(t) + \frac{1}{2} \mathcal{D}_2(t) \leq 2 \E(0) - 4c \mathcal{D}_s(t) + 2||\mathfrak{J}_k||_{L^2 \to L^2} c_\tau \sup_k \int_0^t D_{k,\gamma}(s) ds + \int_0^t \mathcal{NL}_(s) ds + T_2(t).
\end{equation}
Now as $c_\tau < \frac{1}{32^2(1+ ||\mathfrak{J}_k||_{L^2 \to L^2})}$ by \eqref{coeff_conditions}, we finally have obtained
\begin{equation}\label{big_energy_ineq}
\begin{split}
    \E(t) &\leq 2\E(0) - 4c \D_1(t) - \frac{1}{4}\D_2(t) + \int_0^t \mathcal{NL}_1(s) ds + T_2(t)\\
    &= 2\E(0) - 4c \D_1(t) - \frac{1}{4}\D_2(t) + T_{\gamma, \tau}(t) + T_{\alpha, \alpha \tau}(t) + T_{\beta}(t) + T_2(t).
\end{split}
\end{equation}
Thus, the key terms to bound are $T_{\gamma, \tau}$, $T_{\alpha, \tau \alpha}$, $T_\beta$, and $T_2$. This motivates the statement of the following Lemma, which will complete the proof of Lemma \ref{bootstrap} and hence Theorem \ref{main_theorem}:
\begin{lemma}\label{nonlinear_lemma}
    There exists a universal constant $C > 0$ depending only on $m$, $J$, and the choice of $c$ such that
    $$|T_{\gamma,\tau}| + |T_{\alpha, \tau\alpha}| + |T_\beta| + |T_2| \leq \frac{C}{\nu^{1/2}}(1+\ln(1/\nu)^{1/2}) \sup_{s \in [0,t]}\mathcal{E}^{1/2}(s)\D(t).$$
\end{lemma}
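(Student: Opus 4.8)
### Proof Proposal for Lemma \ref{nonlinear_lemma}

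The plan is to estimate each of the four terms $T_{\gamma,\tau}$, $T_{\alpha,\tau\alpha}$, $T_\beta$, and $T_2$ by expanding the nonlinearity $\mathbb{NL}_k = -\int_\R \nabla_{k-k'}^\perp \phi_{k-k'}\cdot\nabla_{k'}\omega_{k'}\,dk'$ as a Fourier convolution and then splitting the $(k,k')$-integration domain according to the trichotomy high ($|k|\gtrsim 1$), intermediate ($\nu\lesssim |k|\lesssim 1$), and low ($|k|\lesssim\nu$) frequencies — applied separately to $k$, $k'$, and $k-k'$. The heuristic is always the same: the transport structure means one should integrate by parts in $y$ to move a derivative off $\omega_{k'}$ when the high-frequency factor is the stream function, and then use the elliptic-regularity/Gagliardo–Nirenberg–Sobolev estimates packaged in Section \ref{technical_lemmas} (Lemmas \ref{keyphiestimate}, \ref{keydyphiestimate}, \ref{dy_dy_phi_to_energy}, \ref{low_frequency_lemma_phi}) to convert the $L^\infty_y$ norms of $\phi$ into the dissipations $\mathcal{D}_\tau$, $\mathcal{D}_{\tau\alpha}$, and the $L^2_y$ norms into $\mathcal{D}_\gamma$ or $\mathcal{E}$. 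The uniform $L^p$-boundedness of $\mathfrak{J}_k$ (Lemma \ref{boundedness_of_J_k}) lets us absorb the $I+c_\tau\mathfrak{J}_k$ factor essentially for free, since $\mathfrak{J}_k$ commutes with $\partial_y$ on the plane. Throughout, one uses Young's convolution inequality in $k$ together with Cauchy–Schwarz in $s$, arranging the three output factors to be (i) a piece of $\sqrt{\mathcal{D}}$ carrying the full weight $\langle c\lambda_k s\rangle^J\langle k\rangle^m$, (ii) another piece of $\sqrt{\mathcal{D}}$, and (iii) an $L^\infty_k$-type piece controlled by $\sup_s\mathcal{E}^{1/2}$; the time-weight is distributed using the triangle-type inequality $\langle c\lambda_k s\rangle\lesssim\langle c\lambda_{k'}s\rangle\langle c\lambda_{k-k'}s\rangle$, which holds because $\lambda^{pl}$ is subadditive up to constants.

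The first main step is $T_{\gamma,\tau}$ and $T_2$, which are the terms with no $\alpha$ or $\beta$ weights and hence the most robust. Here I would integrate by parts to write the contribution as $\int \phi_{k-k'}$ (times a derivative) $\cdot\,\omega_{k'}\cdot\overline{\omega_k}$, pull out the worst factor in $L^\infty_y$ — typically $\|k\phi_{k-k'}\|_\infty$ or $\|\partial_y\phi_{k-k'}\|_\infty$ — and estimate it by \eqref{all_freqs}/\eqref{some_freqs} or \eqref{low_freq_first}–\eqref{low_freq_third} depending on whether the difference frequency is intermediate or low; the remaining $\|\omega_{k'}\|_2\|\omega_k\|_2$ goes into $\mathcal{D}_\gamma^{1/2}$ (using $D_{k,\gamma}\geq\nu|k|^2\|\omega_k\|_2^2$, i.e. the $\nu^{-1/2}$ loss when $|k|\lesssim 1$) and $\mathcal{E}^{1/2}$. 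The factor $(1+\ln(1/\nu)^{1/2})$ enters precisely from the $\theta=0$ estimate \eqref{some_freqs} when the difference frequency ranges over $\nu\lesssim|k|\lesssim 1$, matching the statement. The second step is $T_{\alpha,\tau\alpha}$: the extra $\alpha\partial_y$ weight forces one either to put $\partial_y$ on $\phi_{k-k'}$ — invoking Lemma \ref{keydyphiestimate} and accepting the $\nu^{-1/3}$ it produces, compensated by $\alpha\lesssim\nu^{2/3}|k|^{-2/3}$ — or onto $\omega_{k'}$ at the cost of $\sqrt{D_{k',\alpha}}$; one must also treat the commutator of $\partial_y$ with the convolution, which here vanishes since on the plane $\mathfrak{J}_k$ commutes with $\partial_y$ but $\alpha,\beta$ as multipliers in $k$ do not commute with the $k'$-convolution — this is where the bookkeeping between $\alpha(k)$, $\alpha(k')$, $\alpha(k-k')$ (all comparable to $A(\cdot)^2$) is used, together with $A(k)\lesssim A(k')A(k-k')$-type submultiplicativity. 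The third step, $T_\beta$, carries the $\beta$ weight with a bare derivative $\partial_y\mathbb{NL}_k$; here Lemma \ref{beta_and_gamma} is the main tool, converting the $\beta|k|\|\omega_k\|_2$-type factors into $\mathcal{D}_\beta^{1/2}\mathcal{D}_\gamma^{1/2}$, and the $\partial_y$ is handled by distributing it across the three factors and using $D_{k,\alpha}$ for any $\partial_y^2\omega$ that appears.

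I expect the main obstacle to be the genuinely new regime relative to \cite{bedrossian2023stability}: interactions where two of the three frequencies $k,k',k-k'$ are \emph{both} small (say $|k'|\lesssim\nu$ and $|k-k'|\lesssim\nu$, forcing $|k|\lesssim\nu$ as well, or one small and one intermediate) and where there is no spectral gap to exploit — the Poincaré inequality that rescues the channel is unavailable on the plane. In these terms the time weight $\langle c\lambda_k s\rangle^J$ is essentially trivial (since $\lambda_k\approx|k|^2/\nu$ is tiny), so decay must come entirely from the dissipation, and one is forced to use the low-frequency control $\|\omega_{in,k}\|_{L^\infty_k L^2_y}$ built into $\mathcal{E}_2$ to close the estimate when $|k-k'|\lesssim\nu$; the subtlety is that $\mathcal{E}_2$ is an $L^\infty_k$ quantity, so one must be careful that after the convolution in $k'$ the surviving $k$-integral is over a set of measure $O(\nu)$ and the $|k'|^{-1/2}$-type singularities from Gagliardo–Nirenberg are integrable — this is exactly what \eqref{low_freq_first}–\eqref{low_freq_third} and the measure-$\nu$ shrinking of the low-frequency band conspire to give, producing the clean $\nu^{-1/2}$ and never worse. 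A secondary technical annoyance is ensuring the $\mathfrak{J}_k$ terms in $T_2$, which live in the $L^\infty_k$ part of the energy and therefore cannot absorb a loss in $k$, are controlled using the $L^2\to L^2$ (not $L^1$-type) bound together with the $\mathcal{D}_{\gamma,2}$, $\mathcal{D}_{\tau,2}$ supremum dissipations; but since these involve no weights this is comparatively routine. Once all four terms obey the stated bound, combining with \eqref{big_energy_ineq} gives Lemma \ref{bootstrap}, and then Theorem \ref{main_theorem} by the bootstrap remark following it.
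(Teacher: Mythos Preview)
Your proposal is essentially correct and follows the paper's approach in spirit. The principal organizational difference is that the paper's first two cuts are not a direct high/intermediate/low trichotomy on $k,k',k-k'$: it first splits each $T_*$ into the two components $T_*^x$ and $T_*^y$ of $\nabla^\perp\phi\cdot\nabla\omega$, and then applies a paraproduct-style decomposition $|k-k'|<|k'|/2$ (``$LH$'') versus $|k-k'|\geq |k'|/2$ (``$HL$''), with the high/intermediate/low trichotomy applied only \emph{within} each $LH/HL$ piece. This organization is what actually distributes the weights $\langle c\lambda_k s\rangle^J\langle k\rangle^m$: in the $LH$ region $|k|\approx|k'|$ so the weight passes to the $\omega_{k'}$ factor, while in the $HL$ region $|k|\lesssim|k-k'|$ so it passes to the stream-function factor. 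The paper never invokes the subadditivity of $\lambda^{pl}$ you state (though it is true up to a constant); the $LH/HL$ comparability substitutes for it.

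Two smaller corrections. Your ``$A(k)\lesssim A(k')A(k-k')$ submultiplicativity'' is not what is used; the paper instead derives region-specific transfer inequalities such as \eqref{m>1/3} and \eqref{middle_freq_decomp_alpha} to move the $A$- and $B$-weights between factors, and it is precisely in the intermediate-$k$ $HL$ pieces of $T_{\alpha,\tau\alpha}$ (see \eqref{alpha_HL_M_H'_x}, \eqref{alpha_HL_M_y}) that one is forced to place the $k$-factor in $L^1_k$ and pick up the $\ln(1/\nu)^{1/2}$. And $T_2^y$ in the $LH$ region is not quite routine: the paper writes $\omega_{k'}=\Delta_{k'}\phi_{k'}$ and integrates by parts (see \eqref{T_2_LH_y}) to produce a factor $|k'|\|\nabla_{k'}\phi_{k'}\|_2$ that lands in $\mathcal{D}_{\tau,2}$, which is how the $L^\infty_k$ dissipation actually enters. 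With these adjustments your plan matches the paper's proof.
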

We will accomplish the proof of Lemma \ref{nonlinear_lemma} by considering each term separately. Our first splitting will be based on the expression $\nabla_{k-k'}^\perp \phi_{k-k'} \cdot \nabla_{k'} \omega_k' = \partial_y \phi_{k-k'} ik' \omega_{k'} - i(k-k') \phi_{k-k'} \partial_y \omega_{k'}$ inside of $\mathbb{NL}_k$. To that end we write
\begin{equation}
    \begin{split}
        T_{\gamma,\tau}^x &\coloneqq \int_0^t \int_{\R} \int_{\R} \frac{\langle c \lambda_k s \rangle^{2J}}{M_k(s)}\langle k \rangle^{2m}  2\textrm{Re} \langle \omega_k, (I+c_\tau \mathfrak{J}_k) i(k-k') \phi_{k-k'} \partial_y \omega_{k'} \rangle dk' dk ds,\\
        T_{\gamma,\tau}^y &\coloneqq - \int_0^t\int_{\R} \int_{\R} \frac{\langle c \lambda_k s \rangle^{2J}}{M_k(s)}\langle k \rangle^{2m}  2\textrm{Re} \langle \omega_k, (I+c_\tau \mathfrak{J}_k)\partial_y \phi_{k-k'} ik' \omega_{k'} \rangle dk' dk ds,
    \end{split}
\end{equation}
\noindent so that $T_{\gamma,\tau} = T_{\gamma, \tau}^x + T_{\gamma, \tau}^y$. We perform the same type of splitting to write $T_{\alpha, \tau \alpha} = T_{\alpha, \tau\alpha}^x + T_{\alpha, \tau \alpha}^y$, and $T_{\beta} = T_{\beta}^x + T_\beta^y$. For $T_2$, we additionally use the triangle inequality and define
\begin{equation}
\begin{split}
    |T_2| & \leq \sup_k \int_0^t | \int_{\R} 4\textrm{Re} \langle \omega_k, (I+c_\tau \mathfrak{J}_k) i(k-k') \phi_{k-k'} \partial_y \omega_{k'} \rangle dk' |ds\\
    & \quad + \sup_k \int_0^t |\int_{\R} 4\textrm{Re} \langle \omega_k, (I+c_\tau \mathfrak{J}_k)\partial_y \phi_{k-k'} ik' \omega_{k'} \rangle dk'| ds\\
    &\eqqcolon T_2^x + T_2^y.
\end{split}
\end{equation}
In general, the $x$-terms will be simpler to bound than the $y$-terms. At an abstract level, for $T_{\gamma, \tau}$, $T_{\alpha, \tau \alpha}$, and $T_\beta$ we seek to bound terms of a form similar to:
\begin{equation}\label{abstract_equation}
\int_0^t \int_\R \int_\R \frac{\langle c \lambda_k s\rangle^{2J}}{M_k(s)} \langle k \rangle^{2m} \textrm{Re}\langle S_1(k,\partial_y)\omega_k, S_2(k-k',\partial_y) \phi_{k-k'} S_3(k', \partial_y) \omega_{k'}\rangle dk' dk ds,
\end{equation}
when $S_1$, $S_2$, and $S_3$ are some differential operators, and we have ignored factors arising from $\mathfrak{J}_k$. We will split \eqref{abstract_equation} through various frequency decompositions. The exact decomposition will vary from term to term, with the number of decompositions serving as an indicator for the difficulty of each term.

In general, splitting will be based on various combinations of high, intermediate, and low frequencies, inter-mixed between the frequencies $k$, $k'$, and $k-k'$. High frequencies are those of magnitude $\geq 1$, intermediate frequencies lie between $\nu$ and $1$ in magnitude, and low frequencies have magnitude $\lesssim \nu$. For the $\gamma,\tau$, $\alpha, \tau\alpha$, and $\beta$ terms, expression where all frequencies are high (namely $|k|, |k'|, |k-k'| \geq 1$) will have bounds similar to the $T_{\neq \neq}$ expressions in \cite{bedrossian2023stability}.

We will denote different slices of frequency domain with various subscripts. One of the divisions we will use most frequently is a splitting between $|k-k'| < |k'|/2$ and $|k-k'| \geq |k'|/2$, which we term $LH$ and $HL$, respectively. These refer to how $|k-k'|$ and $|k'|$ are relatively ``low" or ``high" with respect to each other. There is some additional complexity in the frequency decompositions arising from the fact that sometimes it is important to distinguish high and intermediate frequencies, and sometimes it is not important. Our next class of subscripts refers to the size of $k-k'$, $k$, and $k'$, and consists of the symbols $H$, $M$, and $L$, which will correspond to ``high," intermediate or ``medium," and ``low" frequencies, respectively. We will also use the subscript $H'$  and $L'$. The subscript $H'$ refers to frequencies $\geq \nu$ and $L'$ refers to frequencies $<1$. Note that $H'$ and $L'$ are not complementary ranges of frequencies. Rather one can view $H'$ as ``expanded" high frequencies (treating medium frequencies as high), while $L'$ refers to ``expanded" low frequencies (treating medium frequencies as low).  For each term, we will use up to four of the preceding subscripts. They will correspond, in order, to  the $HL$ versus $LH$ decomposition, and then to the size of $k-k'$, $k$, and $k'$. For example $T_{\gamma, \tau, LH, H', M, H}^x$ would indicate: 
$$T_{\gamma, \tau, LH, H', M, H}^x = \int_0^t \int_{\nu \leq |k| < 1} \int_{|k'| \geq 1} 1_{|k'|/2 > |k-k'| \geq 1}\frac{\langle c \lambda_k s \rangle^{2J}}{M_k(s)}\langle k \rangle^{2m}  2\textrm{Re} \langle \omega_k, (I+c_\tau \mathfrak{J}_k) i(k-k') \phi_{k-k'} \partial_y \omega_{k'} \rangle dk' dk ds.$$
This exact decomposition will not be used in the $T_{\gamma, \tau}$ case, but it is an illustrative example.\\

After performing frequency decomposition and potentially integration by parts in $y$, we pass the absolute value signs into the $k$ and $k'$ integrals. Once this is done, the $\frac{1}{M_k(s)}$ can be absorbed into the constant, since $1 \leq M_k(s) \leq e^{4J^2/3}$. We then use Cauchy-Schwarz in $y$ to place two factors in $L_y^2$ and one factor in $L_y^\infty$. Next, we manipulate the frequency decomposition to pass $x$ derivatives between the three factors. We also will pass copies of the time decay term $\langle c \lambda_k s \rangle^J$. We then use Young's convolution inequality to place two factors in $L^2_k$ and one factor in $L^1_k$. At this point, we will apply H\"older's inequality in time, placing one factor in $L^\infty$ in time and the other factors in $L^2$ in time. After using Minkowski's inequality to interchange the time and frequency integrations, we apply H\"older's inequality on the $L^1_k$ factor and utilize specific structures (such as the multiplier $\langle k \rangle^m$ with $m > 1/2$) to move this expression into $L^2_k$ or $L^\infty_k$. Lastly, we use the technical lemmas in Section \ref{technical_lemmas}  and the definitions \eqref{kEnergy}, \eqref{k_by_k_dissipation}, \eqref{introduce_energy}, \eqref{introduce_dissipation}, and \eqref{different_dissipations_definition} to express the result in a form which is bounded by $\nu^{-1/2} (1+\ln(1/\nu)^{1/2}) \mathcal{D} \sup_{s \in [0,t]}\mathcal{E}^{1/2}$ up to a constant.\\

To bound $T_2$, we use a similar abstract framework, but due to the nature of the supremum norm in $k$, we have less freedom as to which $L^p_k$ space we place various terms into.  Additionally, we will need to take care as when exactly we apply H\"older's inequality in time. At the same time, there are fewer multipliers within the norm, which causes simplification.

\subsection{Bound on \texorpdfstring{$T_{\gamma, \tau}$}{Gamma and Tau terms}}\label{gamma_tau_section}

First we deal with $T_{\gamma,\tau}$:

$$ T_{\gamma, \tau} = - \int_0^t \int_\R \int_{\R} \frac{\langle  c \lambda_k s \rangle^{2J}}{M_k(s)} \langle k  \rangle^{2m} 2\textrm{Re}\langle \omega_k, (I+c_\tau  \mathfrak{J}_k)\nabla^\perp \phi_{k-k'} \cdot \nabla \omega_{k'} \rangle dk' dk ds.$$

We note that in this section, and in all of the following sections, we will consistently take absolute values, and so $M_k(s)^{-1}$ is harmless. We will therefore absorb it into the implicit constant, which is allowed to depend on $J$, without further remark. Furthermore, by Lemma \ref{boundedness_of_J_k}, $\mathfrak{J}_k$ is bounded from $L^2$ to $L^2$, and we will consistently apply this result in every computation without further mention.

\subsubsection{x derivatives}

We start by concerning ourselves with $T_{\gamma, \tau}^x$, and we introduce the following frequency decomposition:\\
\begin{equation}
\begin{split}
    T_{\gamma,\tau}^x & = \int_0^t\int_\R \int_{\R} \frac{\langle c \lambda_k s \rangle^{2J}}{M_k(s)}\langle k  \rangle^{2m} 2\textrm{Re}\langle \omega_k, (I + c_\tau \mathfrak{J}_k) i(k-k')\phi_{k-k'} \partial_y\omega_{k'} \rangle dk' dk ds\\
    &= \int_0^t \int_\R \int_{\R} \frac{\langle c \lambda_k s \rangle^{2J}}{M_k(s)}\langle k  \rangle^{2m}\left(1_{|k-k'| \leq |k'|/2} +  1_{|k-k'| > |k'|/2} 1_{|k-k'| \geq 1 } + 1_{|k-k'| > |k'|/2} 1_{|k-k'| < 1} \right) \\
    &\hphantom{= \int_\R \int_{\R}}2\textrm{Re}\langle \omega_k, (I + c_\tau \mathfrak{J}_k) i(k-k')\phi_{k-k'} \partial_y\omega_{k'} \rangle dk' dk ds\\
    &\eqqcolon T_{\gamma, \tau, LH}^x +  T_{\gamma, \tau, HL, H}^x + T_{\gamma, \tau, HL, L'}^x.
\end{split}
\end{equation}
Recall that the notation $LH$ refers to the $|k-k'|$ frequency being ``low" relative to the ``high" $|k'|$ frequency. The notation $HL$ denotes the opposite relationship, and the additional $H$ and $L' $ subscripts denote $|k-k'| \geq 1$ and $|k-k'| < 1$, respectively. We do not include any subscripts corresponding to $k$ and $k'$ for the sake of notational simplicity, as we place no direct restrictions on these frequencies.
We first concern ourselves with $T_{\gamma, \tau, LH}^x$. Using the fact that $|k-k'| \lesssim |k'|$ on the support of the integrand of $T_{\gamma, \tau, LH}^x$, we have by the triangle inequality and Cauchy-Schwarz in $y$,
\begin{equation}
    \begin{split}
        |T_{\gamma, \tau, LH}^x| \lesssim \int_0^t \int_\R \int_\R  1_{|k-k'| \leq |k'|/2}\langle c \lambda_k s\rangle^{J}\langle k  \rangle^{m} || \omega_k||_2  ||(k-k') \phi_{k-k'}||_\infty \langle c \lambda_{k'} s\rangle^J \langle k' \rangle^m ||\partial_y\omega_{k'}||_2 dk' dk ds.
    \end{split}
\end{equation}
We then use Young's convolution inequality to place the $k$ and $k'$ factors into $L^2_k$ and the $k-k'$ factors in $L^1_k$. Then we use H\"older's inequality in time and apply the definitions of $\mathcal{E}$ \eqref{introduce_energy} and $\mathcal{D}_\gamma$ \eqref{different_dissipations_definition}:
\begin{equation}
    \begin{split}
        |T_{\gamma, \tau, LH}^x| &\lesssim \int_0^t \left(\int_\R \langle c \lambda_k s\rangle^{2J}\langle k  \rangle^{2m} || \omega_k||_2^2 dk \right)^{1/2}  \left(\int_{\R }|| k\phi_{k}||_\infty dk\right) \left(\int_\R\langle c \lambda_{k} s\rangle^{2J} \langle k \rangle^{2m} ||\partial_y\omega_{k}||_2^2 dk \right)^{1/2} ds\\
        &\lesssim \sup_{s \in [0,t]}\mathcal{E}^{1/2}(s) \left(\int_0^t \left(\int_{\R }|| k\phi_{k}||_\infty dk\right)^{2} ds \right)^{1/2}\nu^{-1/2} \mathcal{D}_\gamma^{1/2}.
    \end{split}
\end{equation}
To estimate $\left(\int_0^t \left(\int_{\R }|| k\phi_{k}||_\infty dk\right)^{2} ds \right)^{1/2}$, we apply Minkowski's inequality to find
$$\left(\int_0^t \left(\int_{\R }|| k\phi_{k}||_\infty dk\right)^{2} ds \right)^{1/2} \lesssim  \int_{\R } \left(\int_0^t|| k\phi_{k}||_\infty^2 ds\right)^{1/2} ds $$
Then we use Lemma \ref{keyphiestimate} at frequencies $|k| \geq 1$ and Lemma \ref{low_frequency_lemma_phi} at frequencies $|k| < 1$ to obtain
\begin{equation}\label{gamma_LH_x}
    \begin{split}
        |T_{\gamma, \tau, LH}^x| \lesssim \sup_{s \in [0,t]}\mathcal{E}^{1/2}(s) \left(\mathcal{D}_{\tau,2}^{1/2} + \mathcal{D}_\tau^{1/2}\right) \nu^{-1/2} \mathcal{D}_\gamma^{1/2} \lesssim \nu^{-1/2} \sup_{s \in [0,t]}\mathcal{E}^{1/2}(s) \mathcal{D}.
    \end{split}
\end{equation}
We turn our attention now to the $HL$ terms. We similarly use the triangle inequality and Cauchy-Schwarz in $y$. At high $|k-k'|$ frequencies, we use $|k-k'| \geq \max(|k'|/2, 1)$ on the support of the integrand, together with Lemma \ref{keyphiestimate} (in particular the variant \eqref{variant_of_key_phi_estimate}), Young's inequality, $m> 1/2$ and H\"older's inequality:
\begin{equation}\label{gamma_HL_H_x}
    \begin{split}
        |T_{\gamma, \tau, HL, H}^x| & \lesssim \int_0^t \int_{\R} \int_{\R} 1_{|k-k'| \geq 1} 1_{|k-k'| > |k'|/2 }\langle c \lambda_k s\rangle^{J}\langle k  \rangle^{m} || \omega_k||_2 \langle c \lambda_{k-k'} s\rangle^J \langle k-k' \rangle^{m}\\
        &\; \; \; \;\;\; \; \; \; \; \; \; ||(k-k') \phi_{k-k'}||_\infty ||\partial_y \omega_{k'}||_2 dk' dk ds\\
        &\lesssim \int_0^t \left(\int_\R\langle c \lambda_k s\rangle^{2J}\langle k  \rangle^{2m} || \omega_k||_2^2 dk \right)^{1/2} \left( \int_{|k| \geq 1} \langle c \lambda_k s\rangle^{2J}\langle k  \rangle^{2m} || k \phi_k||_\infty^2 dk\right)^{1/2}\\
        &\quad \quad \left(\int_\R \frac{\langle k \rangle^m }{\langle k \rangle^m} ||\partial_y \omega_k||_2dk\right) ds\\
        &\lesssim \sup_{s \in [0,t]}\mathcal{E}^{1/2}(s) \left(\int_0^t \int_{|k| \geq 1} \langle c \lambda_k s\rangle^{2J}\langle k  \rangle^{2m} || k^{3/2} \phi_k||_\infty^2 dk ds \right)^{1/2} \left( \int_0^t\int_\R \langle k \rangle^{2m}||\partial_y \omega_k||_2^2 dk ds \right)^{1/2}\\
        &\lesssim \sup_{s \in [0,t]}\mathcal{E}^{1/2}(s) \mathcal{D}_\tau^{1/2} \nu^{-1/2} \mathcal{D}_\gamma^{1/2}.
        \end{split}
\end{equation}
Our last $x$-derivative term is $T_{\gamma, \tau, HL, L'}^x$. In this case, we have $|k'| \lesssim |k-k'|$, and so exploiting this fact gives
\begin{equation*}
    \begin{split}
        |T_{\gamma, \tau, HL, L'}^x| & \lesssim \int_0^t \int_{\R} \int_{\R}1_{|k'|/2 < |k-k'| < 1} \langle c \lambda_k s\rangle^{J} \langle k  \rangle^{m} ||\omega_k||_2 \langle c \lambda_k s\rangle^{J}\langle k  \rangle^{m} |(k-k') \phi_{k-k'}||_\infty ||\partial_y \omega_{k'}||_2 dk' dk ds\\
        & \lesssim \int_0^t \int_{\R} \int_{\R} 1_{|k'|/2 < |k-k'| < 1}\langle c \lambda_k s\rangle^{J} \langle k  \rangle^{m} ||\omega_k||_2\langle c \lambda_{k-k'} s\rangle^J\\
        &\; \; \; \;\;\; \; \; \; \; \; \; \langle k-k' \rangle^{m}||(k-k') \phi_{k-k'}||_\infty |k'|^{1/2 }|k'|^{-1/2} ||\partial_y \omega_{k'}||_2 dk' dk ds\\
        & \lesssim \int_0^t \int_{\R} \int_{\R} 1_{|k'|/2 < |k-k'| < 1}\langle c \lambda_k s\rangle^{J}\langle k  \rangle^{m} ||\omega_k||_2\langle c \lambda_{k-k'} s\rangle^J \\
        &\; \; \; \;\;\; \; \; \; \; \; \; \langle k-k' \rangle^{m}||(k-k')^{3/2} \phi_{k-k'}||_\infty |k'|^{-1/2} ||\partial_y \omega_{k'}||_2 dk' dk ds.
    \end{split}
\end{equation*}
Note that as $1 > |k-k'| > |k'|/2$, we have $|k'| \leq 2$. We now apply Young's inequality, H\"older's inequality, Minkowski's inequality, Lemma \ref{keyphiestimate}, and the definition of $\mathcal{D}_2$:
\begin{equation}\label{gamma_HL_L_x}
    \begin{split}
         |T_{\gamma, \tau, HL, L'}^x| & \lesssim \sup_{s \in [0,t]}\mathcal{E}^{1/2}(s)\left(\int_0^t \int_{|k| \leq 1}\langle c \lambda_{k} s\rangle^{2J} \langle k \rangle^{2m}|||k|^{3/2} \phi_{k}||_\infty^2 dk\right)^{1/2} \left(\int_0^t \left(\int_{|k| \leq 2} |k|^{-1/2} ||\partial_y \omega_k||_2 dk\right)^2 ds \right)^{1/2}\\
         &\lesssim \sup_{s \in [0,t]}\mathcal{E}^{1/2}(s)\mathcal{D}_\tau^{1/2} \left(\int_{|k| \leq 2} |k|^{-1/2} \left(\int_0^t ||\partial_y \omega_k||_2^2 ds \right)^{1/2} dk\right) \\
         &\lesssim \sup_{s \in [0,t]}\mathcal{E}^{1/2}(s)\mathcal{D}_\tau^{1/2} \left(\int_{|k| \leq 2} |k|^{-1/2} dk \right) \sup_k\left(\int_0^t ||\partial_y \omega_k||_2^2 ds \right)^{1/2}\\
         &\lesssim \sup_{s \in [0,t]}\mathcal{E}^{1/2}(s)\mathcal{D}_\tau^{1/2} \nu^{-1/2} \D_2^{1/2}.
    \end{split}
\end{equation}
Combining \eqref{gamma_LH_x}, \eqref{gamma_HL_H_x}, and \eqref{gamma_HL_L_x} gives
\begin{equation}
    |T_{\gamma, \tau}^x| \lesssim \nu^{-1/2} \sup_{s \in [0,t]}\mathcal{E}^{1/2}(s) \mathcal{D},
\end{equation}
which suffices for the purposes of Lemma \ref{nonlinear_lemma}.

\subsubsection{y derivatives}
We now turn our attention to $T_{\gamma, \tau}^y$:
$$T_{\gamma,\tau}^y = - \int_0^t \int_{\R} \int_{\R} \frac{\langle c \lambda_k s \rangle^{2J}}{M_k(s)}\langle k \rangle^{2m}  2\textrm{Re} \langle \omega_k, (I+c_\tau \mathfrak{J}_k)\partial_y \phi_{k-k'} ik' \omega_{k'} \rangle dk' dk ds.$$
In comparison with $T_{\gamma, \tau}^x$, this will require a larger number of frequency slices. Namely, we use the following decomposition:
\begin{equation}
    \begin{split}
        1 =& 1_{|k-k'| < |k'|/2}\left(1_{|k-k'| \geq \nu}1_{|k| \geq \nu} + 1_{|k-k'| \geq \nu}1_{|k| < \nu}+ 1_{|k-k'| < \nu}\right)\\
        &+  1_{|k-k'| \geq |k'|/2}\biggl(1_{|k-k'| \geq 1} 1_{|k'| \geq \nu} + 1_{\nu \leq |k-k'| < 1}1_{|k'| \geq \nu} + 1_{|k-k'| \geq \nu} 1_{|k| \geq \nu} 1_{|k'| \leq \nu}\\
        &\hphantom{1_{|k-k'| \geq |k'|/2}\biggl(}+ 1_{|k-k'| \geq \nu} 1_{|k| \leq \nu} 1_{|k'| \leq \nu} + 1_{|k-k'| < \nu}\biggr).
    \end{split}
\end{equation}
We write the respective breakdown of $T_{\gamma, \tau}^y$ as
\begin{equation}\label{gamma_y_decomposition}
\begin{split}
        T_{\gamma, \tau}^y &\eqqcolon \left(T_{\gamma, \tau, LH, H', H', \cdot}^y + T_{\gamma, \tau, LH, H', L, \cdot}^y+ T_{\gamma, \tau, LH, L, \cdot, \cdot}^y\right)\\
        &\hphantom{=}+ \left(T_{\gamma, \tau, HL, H, \cdot, H'}^y+ T_{\gamma, \tau, HL, M, \cdot, H'}^y  + T_{\gamma, \tau, HL, H', H', L}^y + T_{\gamma, \tau, HL, H', L, L}^y + T_{\gamma, \tau, HL, L, \cdot, \cdot}^y\right).
\end{split}
\end{equation}
The symbol $LH$ corresponds to $|k-k'| < |k'|/2$, while $HL$ corresponds to the opposite equality. The final three characters in the subscripts of \eqref{gamma_y_decomposition} refer to the $k-k'$, $k$, and $k'$ frequencies, respectively. The symbols $H$, $H'$, $M$, and $ L$ denote frequencies in magnitude $\geq 1$, $\geq \nu$, between $\nu$ and $1$, and $< \nu$, respectively, while $\cdot$ denotes no restriction.

We will begin with treating the $LH$ terms. In particular, we start with $T_{\gamma, \tau, LH, H', H', \cdot}^y$. Since $|k-k'| < |k'|/2$, we have $|k| \approx |k'|$. Furthermore, we note that $|k'| \gtrsim \nu$ on the domain of integration, since $|k'| \gtrsim |k-k'| \geq \nu$.  Hence by elliptic regularity, Gagliardo-Nirenberg-Sobolev, $m > 1/2$, H\"older's inequality, and Lemma \ref{beta_and_gamma},
\begin{align}
        |T_{\gamma, \tau, LH, H', H', \cdot}^y| &\lesssim \int_0^t \int_{|k| \geq \nu} \int_{\R} 1_{\nu \leq |k-k'| < |k'|/2} \langle  c \lambda_k s \rangle^J \langle k  \rangle^{m} ||\omega_k||_2 || \partial_y \phi_{k-k'}||_\infty \langle c \lambda_{k'} s\rangle^J\langle k'  \rangle^{m} ||  k' \omega_{k'}||_2 dk' dk ds\\
        &\lesssim \int_0^t \int_{|k| \geq \nu} \int_{\R} 1_{\nu \leq |k-k'| < |k'|/2} \langle  c \lambda_ks \rangle^J \langle k  \rangle^{m}||k^{1/3}\omega_k||_2 || \partial_y \phi_{k-k'}||_\infty \\
        &\quad \quad \quad \langle c \lambda_{k'} s\rangle^J \langle k'  \rangle^{m}||  |k'|^{2/3} \omega_{k'}||_2 dk' dk ds\\
        &\lesssim  \nu^{-1/6} \mathcal{D}_\beta^{1/2} \sup_{s \in [0,t]}\left(\int_{|k| \geq \nu} \left(1_{\nu \leq |k| \leq 1}|k|^{-1/2} + 1_{|k| > 1} \right)||\partial_y^2 \phi_k||_2^{1/2} || k \partial_y \phi_k||_2^{1/2} dk\right)\left(\nu^{-1/3} \mathcal{D}_\beta^{1/4} \mathcal{D}_\gamma^{1/4}\right)\\
        &\lesssim \nu^{-1/6} \mathcal{D}_\beta^{1/2} ( 1+ \ln(1/\nu)^{1/2}) \sup_{s \in [0,t]}\mathcal{E}^{1/2}(s) \nu^{-1/3} \mathcal{D}_\beta^{1/4} \mathcal{D}_\gamma^{1/4}.
\end{align}
\indent Instead of treating $T_{\gamma, \tau, LH, H', L, \cdot}^y$, we turn our attention of $T_{\gamma, \tau, LH, L, \cdot, \cdot}^y$. Since $|k-k| \leq \nu$ here, $k$ and $k'$ are not well-separated, leading to a loss of control from purely $L^2$ estimates. In the case of the infinite channel, we are able to control these terms using the Poincar\'e inequality, while in the planar case we employ the $L^\infty_k$ energy $\mathcal{E}_2$ and the $L^\infty_k$ dissipation $\mathcal{D}_2$. In the case of $T_{\gamma, \tau, LH, L, \cdot, \cdot}^y$, we use the fact that $\omega_k = \partial_y^2 \phi_k - k^2 \phi_k$, integration by parts in $y$, $|k'| \approx |k|$ to compute:
\begin{equation}\label{gamma_LH_L_y}
    \begin{split}
        |T_{\gamma, \tau, LH, L, \cdot, \cdot}^y| & \lesssim \int_0^t \int_{\R} \int_{ \R} 1_{|k-k'| \leq \nu, |k-k'| \leq |k'|/2}\langle c \lambda_k s\rangle^J\langle k  \rangle^{m} \langle c \lambda_{k'} s\rangle^J \langle k' \rangle^m | \langle  \omega_k, \partial_y \phi_{k-k'} k'\omega_{k'} \rangle| dk' dk ds\\
        &\lesssim \int_0^t \int_{\R} \int_{\R} 1_{|k-k'| \lesssim \nu, |k-k'| \leq |k'|/2} \langle c \lambda_k s\rangle^J\langle k  \rangle^{m} \langle c \lambda_{k'} s\rangle^J \langle k' \rangle^m \biggl(| \langle  \omega_k, \partial_y \phi_{k-k'} k' k'^2 \phi_{k'} \rangle |\\
        &\hphantom{\lesssim\int_{\R} \int_{\R}} +  |\langle  \partial_y \omega_k, \partial_y \phi_{k-k'} k' \partial_y \phi_{k'}\rangle | + | \langle  \omega_k, \partial_y^2 \phi_{k-k'} k' \partial_y \phi_{k'} \rangle |\biggr) dk' dk ds\\
        &\lesssim \int_0^t \int_{\R} \int_{\R} 1_{|k-k'| \lesssim \nu, |k-k'| \leq |k'|/2} \langle c \lambda_k s\rangle^J\langle k  \rangle^{m} \langle c \lambda_{k'} s\rangle^J \langle k' \rangle^m |k-k'|^{-1/2} |k-k'|^{1/2}\\
        &\hphantom{\lesssim\int_{\R} \int_{\R}}\biggl(||k \omega_k||_2 || \partial_y\phi_{k-k'}||_\infty |k'| || k' \phi_{k'}||_2 +  ||\partial_y \omega_k||_2 ||\partial_y \phi_{k-k'} ||_\infty |k'| || \partial_y \phi_{k'}||_2\\
        &\hphantom{\lesssim\int_{\R} \int_{\R}}+ ||\omega_k||_2 ||\partial_y^2 \phi_{k-k'}||_\infty |k'| ||\partial_y \phi_{k'}||_2\biggr) dk' dk ds.
    \end{split}
\end{equation}
The first two terms on the right-hand side of \eqref{gamma_LH_L_y} can be combined to give
\begin{equation}\label{gamma_LH_L_y_second}
\begin{split}
       |T_{\gamma, \tau, LH, L, \cdot, \cdot}^y|  &\lesssim \int_0^t\int_{\R} \int_{\R} 1_{|k-k'| \lesssim \nu, |k-k'| \leq |k'|/2} \langle c \lambda_k s\rangle^J\langle k  \rangle^{m} \langle c \lambda_{k'} s\rangle^J \langle k' \rangle^m |k-k'|^{-1/2} |k-k'|^{1/2}\\
        &\hphantom{\lesssim\int_{\R} \int_{\R}}\biggl(||\nabla_k \omega_k||_2 || \partial_y\phi_{k-k'}||_\infty |k'| || \nabla_{k'}\phi_{k'}||_2 + ||\omega_k||_2 ||\partial_y^2 \phi_{k-k'}||_\infty |k'| ||\partial_y \phi_{k'}||_2\biggr) dk' dk ds.
\end{split} 
\end{equation}
The first term in \eqref{gamma_LH_L_y_second} is handled using Young's inequality, H\"older's inequality, and Lemma \ref{low_frequency_lemma_phi}. Meanwhile for the second term, we use elliptic regularity, Gagliardo-Nirenberg-Sobolev, H\"older's inequality, and Minkowski's inequality to compute,
\begin{equation}
    \begin{split}
    \left( \int_0^t \left(\int_{|k| \leq \nu} ||\partial_y^2 \phi_k||_\infty dk\right)^{2}  ds\right)^{1/2}& \leq \int_{|k| \leq \nu} \left(\int_0^t ||\partial_y^2 \phi_k||_\infty^2 ds\right)^{1/2} dk\\
    &\leq \left(\int_{|k| \leq \nu} |k|^{-1/2} dk\right)\sup_{|k| \leq \nu}\left(\int_0^t ||\partial_y \omega_k||_2||k \omega_k||_2\right)^{1/2} \lesssim \mathcal{D}_{\gamma,2}^{1/2}.
    \end{split}
\end{equation}
Thus we obtain
\begin{equation}
    |T_{\gamma, \tau, LH, L, \cdot, \cdot}^y| \lesssim \nu^{-1/2} \mathcal{D}_\gamma^{1/2} \nu^{1/4} \sup_{s \in [0,t]}\mathcal{E}^{1/2}(s) \mathcal{D}_\tau^{1/2} + \sup_{s \in [0,t]}\mathcal{E}^{1/2}(s) \mathcal{D}_{\gamma,2}^{1/2} \mathcal{D}_{\tau}^{1/2}.
\end{equation}

For the bound on $T_{\gamma, \tau, LH, H', L, \cdot}$, we note that $|k| \approx |k'|$ on the domain of integration, which implies $|k'| \lesssim \nu$, which further implies $|k-k'| \lesssim \nu$. Thus we may employ the same argument as with $T_{\gamma, \tau, LH, L, \cdot, \cdot}^y$ and we arrive at
\begin{equation}
    \begin{split}
    |T_{\gamma, \tau, LH, H', L, \cdot}^y| &\lesssim \nu^{-1/2} \mathcal{D}_\gamma^{1/2} \nu^{1/4} \sup_{s \in [0,t]}\mathcal{E}^{1/2}(s) \mathcal{D}_\tau^{1/2} + \sup_{s \in [0,t]}\mathcal{E}^{1/2}(s) \mathcal{D}_{\gamma,2}^{1/2} \mathcal{D}_{\tau}^{1/2}.
    \end{split}
\end{equation}

With the $LH$ terms concluded, we now turn to bounding the $HL$ terms, starting with $T_{\gamma, \tau, HL, H, \cdot, H'}^y$. We have $|k-k'| \geq |k'|/2$, so $|k'| \lesssim |k-k'|$. Thus we may pass derivatives in terms of $|k'|$ into derivatives in terms of $|k-k'|$. Additionally, since $|k-k'| \geq 1$, we freely have on the domain of integration that $|k-k'|^{2/3} \leq |k-k'|^{7/6}$. Then by H\"older's inequality, $m > 1/2$, and Lemma \ref{keydyphiestimate}:
\begin{equation}\label{gamma_HL_H_H'_y}
    \begin{split}
       |T_{\gamma, \tau, HL, H, \cdot, H'}^y| &\lesssim \int_0^t \int_{\R} \int_{|k'| \geq \nu} 1_{|k-k'| \geq \max(|k'|/2,1)}\langle c \lambda_k s\rangle^{2J}\langle k  \rangle^{2m} ||\omega_k||_2 || \partial_y \phi_{k-k'}||_\infty  || k' \omega_{k'}||_2 dk' dk ds\\
       &\lesssim \int_0^t \int_{\R} \int_{|k'| \geq \nu} 1_{|k-k'| \geq \max(|k'|/2,1)}\langle c \lambda_k s\rangle^J\langle k  \rangle^{m} ||\omega_k||_2 \langle c \lambda_{k-k'} s\rangle^J\\
       &\hphantom{\lesssim \int_{\R} \int_{|k'| \geq \nu}}||\langle k-k' \rangle^{m} |k-k'|^{2/3}  \partial_y \phi_{k-k'}||_\infty || |k'|^{1/3} \omega_{k'}||_2 dk' dk ds\\
        &\lesssim \sup_{s \in [0,t]}\mathcal{E}^{1/2}(s) \left( \int_0^t \int_{|k| \geq 1} \langle c \lambda_k s\rangle^{2J}\langle k \rangle^{2m} \nu^{-2/3} \nu^{2/3}||\langle k \rangle^{m} |k|^{7/3}  \partial_y \phi_{k}||_\infty^2 dk ds\right)^{1/2} \nu^{-1/6} \mathcal{D}_\beta^{1/2}\\
        &\lesssim \sup_{s \in [0,t]}\mathcal{E}^{1/2}(s) \nu^{-1/3}\mathcal{D}_{\tau \alpha}^{1/2} \nu^{-1/6} \mathcal{D}_\beta^{1/2}.
    \end{split}
\end{equation}

The intermediate frequency term $T_{\gamma, \tau, HL, M, \cdot, H'}^y$ also relies on Lemma \ref{keydyphiestimate}, but here we place the $|k-k'|$ factors in $L^1_k$. This is required to gain the additional half of a derivative on the $|k-k'|$ factors at the cost of a $\ln(1/\nu)^{1/2}$ correction, as in Lemma \ref{keydyphiestimate}. Using also the fact that $\langle k -k'\rangle \approx 1$, we have
\begin{equation}\label{gamma_HL_M_H'_y}
    \begin{split}
       |T_{\gamma, \tau, HL, M, \cdot, H'}^y| &\lesssim \int_0^t \int_{\R} \int_{|k'| \geq \nu} 1_{1 > |k-k'| \geq \max(|k'|/2,\nu)}\langle c \lambda_k s\rangle^{2J}\langle k  \rangle^{2m} ||\omega_k||_2 || \partial_y \phi_{k-k'}||_\infty\\
       &\quad \quad \quad \quad \quad \quad|| k' \omega_{k'}||_2 dk' dk ds\\
       &\lesssim \int_0^t \int_{\R} \int_{|k'| \geq \nu} 1_{1 > |k-k'| \geq \max(|k'|/2,\nu)}\langle c \lambda_k s\rangle^J\langle k  \rangle^{m} ||\omega_k||_2 \langle c \lambda_{k-k'} s\rangle^J\\
       &\quad \quad \quad \quad \quad \quad|| |k-k'|^{2/3}  \partial_y \phi_{k-k'}||_\infty \langle k' \rangle^{m} || |k'|^{1/3} \omega_{k'}||_2 dk' dk ds\\
        &\lesssim \sup_{s \in [0,t]}\mathcal{E}^{1/2}(s) \left( \int_0^t \left( \int_{\nu < |k| < 1} \langle c \lambda_k s\rangle^{J} \nu^{-1/3} \nu^{1/3}||\langle k \rangle^{m} |k|^{2/3}  \partial_y \phi_{k}||_\infty dk \right)^2 ds\right)^{1/2}\\
        &\quad \quad \quad \quad \quad \quad\nu^{-1/6} \mathcal{D}_\beta^{1/2}\\
        &\lesssim \sup_{s \in [0,t]}\mathcal{E}^{1/2}(s) \ln(1/\nu)^{1/2}\nu^{-1/3}\mathcal{D}_{\tau \alpha}^{1/2} \nu^{-1/6} \mathcal{D}_\beta^{1/2}.
    \end{split}
\end{equation}

For $T_{\gamma, \tau, HL, H', H', L}^y$, we use the fact that $|k'| \lesssim |k|, |k-k'|$ to move derivatives off of the $k'$ factors:
\begin{equation}
    \begin{split}
       |T_{\gamma, \tau, HL, H', H', L}^y| &\lesssim \int_{|k|\geq \nu} \int_{|k'|<\nu} 1_{|k-k'| \geq \max(|k'|/2,\nu)}\langle c \lambda_k s\rangle^{2J}\langle k  \rangle^{2m} ||\omega_k||_2 || \partial_y \phi_{k-k'}||_\infty || k'\omega_{k'}||_2 dk' dk\\
       &\lesssim \int_{|k|\geq \nu} \int_{|k'|<\nu} 1_{|k-k'| \geq \max(|k'|/2,\nu)}\langle c \lambda_k s\rangle^J\langle k  \rangle^{m} ||k^{1/3}\omega_k||_2 \langle c \lambda_{k-k'} s\rangle^J\\
       &\; \; \; \;\;\; \; \; \; \; \;\langle k - k'\rangle^m|| |k-k'|^{2/3}  \partial_y \phi_{k-k'}||_\infty || \omega_{k'}||_2 dk' dk.
    \end{split}
\end{equation}
Then, we split the above into the portion where $|k-k'| \geq 1$ and the portion where $\nu \leq |k-k'|<1$ in order to apply Lemma \ref{keydyphiestimate} (similar to what we did explicitly in \eqref{gamma_HL_H_H'_y} and \eqref{gamma_HL_M_H'_y}). For $|k-k'| \geq 1$, we place the $|k-k'|$ factors in $L^2_k$, and for $ \nu \leq |k-k'| < 1$ we place them in $L^1_k$, thereby obtaining:
\begin{equation}
    \begin{split}
         |T_{\gamma, \tau, HL, H', H', L}^y| \lesssim \nu^{-1/6} \mathcal{D}_\beta^{1/2} \nu^{-1/3}(1 + \ln(1/\nu)^{1/2}) \mathcal{D}_{\tau \alpha}^{1/2} \sup_{s \in [0,t]}\mathcal{E}^{1/2}.
    \end{split}
\end{equation}

Note that for $T_{\gamma, \tau, HL, H', L, L}^y$ , the fact that $|k|, |k'| < \nu$ implies that $|k-k'| \lesssim \nu$. Meanwhile for $T_{\gamma, \tau, HL, L, \cdot, \cdot}^y$, we note that since $|k-k'| < \nu$ and $|k-k'| \geq |k'|/2$, we obtain $|k'| \lesssim \nu$, which further implies $|k| \lesssim \nu$. Hence we may estimate both expression simultaneously using Gagliardo-Nirenberg-Sobolev, H\"older's inequality, and Minkowski's inequality:
\begin{align}
        |T_{\gamma, \tau, HL, H', L, L}^y| + |T_{\gamma, \tau, HL, L, \cdot, \cdot}^y| & \lesssim \int_0^t \int_{\R} \int_{\R} 1_{|k'|/2 < |k-k'| \lesssim \nu}\langle c \lambda_k s\rangle^J\langle k  \rangle^{m} ||\omega_k||_2 \langle c \lambda_{k-k'} s\rangle^J \langle k-k' \rangle^m\\ &\hphantom{\lesssim \int_{\R} \int_{\R}}||\partial_y \phi_{k-k'}||_{2} ||k'\omega_{k'}||_\infty dk' dk ds\\
        &\lesssim \int_0^t \int_{\R} \int_{\R} 1_{|k'|/2 < |k-k'| \lesssim \nu}\langle c \lambda_k s\rangle^J\langle k  \rangle^{m} || \omega_k||_2 \langle c \lambda_{k-k'} s\rangle^J \langle k-k' \rangle^m \\
        &\hphantom{\lesssim \int_{\R} \int_{\R} }||(k-k')\partial_y \phi_{k-k'}||_{2} |k'|^{-1/2} ||k'\omega_{k'}||_2^{1/2} ||\partial_y \omega_{k'}||_2^{1/2} dk' dk ds\\
        & \lesssim \sup_{s \in [0,t]}\mathcal{E}^{1/2} \mathcal{D}_{\tau}^{1/2} \left(\int_0^t\left( \int_{|k| \lesssim \nu} |k|^{-1/2} ||\nabla_k \omega_k ||_2 dk\right)^{2} ds \right)^{1/2}\\
        &\lesssim \sup_{s \in [0,t]}\mathcal{E}^{1/2}(s) \mathcal{D}_{\tau}^{1/2} \int_{|k| \lesssim \nu} |k|^{-1/2}\left(\int_0^t || \nabla_k \omega_k||_2^2 ds\right)^{1/2} dk\\
        & \lesssim \sup_{s \in [0,t]}\mathcal{E}^{1/2}(s) \mathcal{D}_{\tau}^{1/2} \nu^{-1/2} \mathcal{D}_{\gamma, 2}^{1/2}.
\end{align}
Note than rather than place $\partial_y \phi_{k-k'}$ in $L^\infty_y$, we placed $\omega_{k'}$ in $L_y^\infty$. We have now completed the $T_{\gamma,\tau}$ portion of Lemma \ref{nonlinear_lemma}.

\subsection{Bound on \texorpdfstring{$T_{\alpha, \tau \alpha}$}{Alpha and Tau Alpha terms}}\label{alpha_tau_alpha_plane}

We have 

$$ T_{\alpha, \tau\alpha} = - \int_0^t \int_{\R} \int_{\R} \frac{\langle  c \lambda_{k} s \rangle^{2J}}{M_k(s)}\langle k  \rangle^{2m} 2c_\alpha A(k)^2 \textrm{Re}\langle (I+c_\tau \mathfrak{J}_k)\partial_y \omega_k , \partial_y (\nabla^\perp \phi_{k-k'} \cdot \nabla \omega_{k'}) \rangle dk' dk ds.$$

In both the $x$-derivative and $y$-derivative portion of the proof, we shall use integration by parts and self-adjointness of $\mathfrak{J}_k$ to obtain a term of the form $S(k)|| \partial_y( I + c_\tau \mathfrak{J}_k)\partial_y\omega_k||_2$, for some multiplier $S(k).$ We note that $\mathfrak{J}_k$ is uniformly bounded in $k$ by Lemma \ref{boundedness_of_J_k}, and in the planar case, $\mathfrak{J}_k$ commutes with $\partial_y$. Hence we will freely use that $||\partial_y(c_\alpha I + c_\tau \mathfrak{J}_k) \partial_y \omega_k||_2 \lesssim ||\partial_y^2 \omega_k||_2 \lesssim ||\nabla_k \partial_y \omega_k||_2$.
\subsubsection{x derivatives}

We start with an integration by parts in $y:$
\begin{equation}
    \begin{split}
        T_{\alpha, \tau\alpha}^x &= \int_0^t \int_{\R} \int_{\R} \frac{\langle  c \lambda_{k} s \rangle^{2J}}{M_k(s)}\langle k  \rangle^{2m} c_\alpha A(k)^2 2\textrm{Re}\langle (I+c_\tau \mathfrak{J}_k)\partial_y \omega_k , \partial_y (i(k-k')\phi_{k-k'} \partial_y \omega_{k'}) \rangle dk' dk ds\\
        &= - \int_0^t \int_{\R} \int_{\R} \frac{\langle  c \lambda_{k} s \rangle^{2J}}{M_k(s)}\langle k  \rangle^{2m} c_\alpha A(k)^2 2\textrm{Re}\langle \partial_y(I+c_\tau \mathfrak{J}_k)\partial_y \omega_k ,  (i(k-k')\phi_{k-k'} \partial_y \omega_{k'}) \rangle dk' dk ds.
    \end{split}
\end{equation}
We use the frequency decomposition
\begin{equation}
    \begin{split}
        1 =& 1_{|k-k'| < |k'|/2}\\
        &+ 1_{|k-k'| \geq |k'|/2}\left(1_{|k| \geq 1} + 1_{\nu < |k| < 1}1_{|k'| \geq \nu} + 1_{\nu < |k| < 1}1_{|k'| < \nu}   + 1_{|k| < \nu} 1_{|k'| > \nu} + 1_{|k| < \nu}1_{|k'| < \nu}\right),
    \end{split}
\end{equation}
and hence we write
\begin{equation}
    \begin{split}
        T_{\alpha, \tau\alpha}^x & \eqqcolon T_{\alpha, \tau\alpha, LH}^x\\
        &\hphantom{\coloneqq}+ \left(T_{\alpha, \tau\alpha, HL, \cdot, H, \cdot}^x + T_{\alpha, \tau\alpha, HL, \cdot, M, H'}^x + T_{\alpha, \tau\alpha, HL, \cdot, M, L}^x + T_{\alpha, \tau\alpha, HL, \cdot, L, H'}^x + T_{\alpha, \tau\alpha, HL, \cdot, L, L}^x\right).
    \end{split}
\end{equation}

We begin with the simpler $LH$ term. When $|k-k'| < |k'|/2$, we have $|k| \approx |k'|$, and so
\begin{equation}
    \begin{split}
        |T_{\alpha, \tau\alpha, LH}^x| &\lesssim \int_0^t \int_\R \int_\R 1_{|k-k'| < |k'|/2} \langle c\lambda_k s \rangle^{2J} \langle k \rangle^{2m} A(k)^2 ||\nabla_k \partial_y \omega_k||_2 || (k-k') \phi_{k-k'}||_\infty ||\partial_y \omega_k||_2 dk' dk ds\\
        &\lesssim  \int_0^t \int_\R \int_\R 1_{|k-k'| < |k'|/2} \langle c\lambda_k s \rangle^{J} \langle k \rangle^{m} A(k)||\nabla_k \partial_y \omega_k||_2 || (k-k') \phi_{k-k'}||_\infty\\
        &\hphantom{\int_\R \int_\R}\quad\langle c\lambda_{k'} s \rangle^{J} \langle k' \rangle^{m} A(k')||\partial_y \omega_k||_2 dk' dk ds.
    \end{split}
\end{equation}
Then we use Young's convolution inequality, H\"older's and Minkowskis inequality (as done in other sections), and Lemmas \ref{keyphiestimate} and \ref{low_frequency_lemma_phi} to conclude
\begin{equation}
    |T_{\alpha, \tau\alpha, LH}^x| \lesssim \nu^{-1/2} \mathcal{D}_{\alpha}^{1/2}\left(\mathcal{D}_{\tau}^{1/2} + \mathcal{D}_{\tau, 2}^{1/2}\right) \sup_{s \in [0,t]}\mathcal{E}^{1/2}(s).
\end{equation}

For the $HL$ terms, we begin with $|k| \geq 1$. On the domain of the integrand of $T_{\alpha, \tau\alpha, HL, \cdot, H, \cdot}$, we have $|k| \geq 1$ and $|k-k'| \geq |k'|/2$. This implies $|k| \lesssim |k-k'|$ and $|k-k'| \gtrsim 1 \geq \nu$, so using $m > 1/3$ we compute
\begin{equation}\label{m>1/3}
    \begin{split}
        \langle k \rangle^m \nu^{1/3}|k|^{-1/3} &\lesssim \nu^{1/3} |k-k'|^{m-1/3} \lesssim \langle k-k' \rangle^m \nu^{1/3}|k-k'|^{-1/3}\\
        &\lesssim \langle k -k'\rangle^m \left(1_{|k'| < \nu} + 1_{|k'| \geq \nu} \nu^{1/3} |k'|^{-1/3} \right) = \langle k-k' \rangle^m A(k').
    \end{split}
\end{equation}
Note also that as $|k-k'| \gtrsim 1$, we have $|k-k'| \lesssim |k-k'|^{3/2}.$ Thus, using \eqref{m>1/3}, Lemma \ref{keyphiestimate}, and $m > 1/2$:
\begin{equation}
    \begin{split}
        | T_{\alpha, \tau\alpha, HL, \cdot,  H, \cdot }^{x}| &\lesssim \int_0^t \int_{|k| \geq 1} \int_\R 1_{|k'|/2 \leq |k-k'|} \langle c\lambda_k s \rangle^{2J} \langle k \rangle^{2m} A(k)^2 ||\nabla_k \partial_y \omega_k||_2 || (k-k') \phi_{k-k'}||_\infty\\
        &\hphantom{\lesssim \int_{k \geq 1} \int_{\R}} \quad ||\partial_y \omega_k||_2 dk' dk ds\\
        & \lesssim \int_0^t \int_{k \geq 1} \int_{\R} 
             1_{|k'|/2 \leq |k-k'| }\langle  c \lambda_{k} s \rangle^{J}||\langle k  \rangle^{m} \nu^{1/3} |k|^{-1/3}  \nabla_k \partial_y \omega_k ||_2\\
        & \hphantom{\lesssim \int_{k \geq 1} \int_{\R}}\langle  c \lambda_{k-k'} s \rangle^{J}||\langle k-k'\rangle^m (k-k')^{3/2}\phi_{k-k'}||_\infty A(k')||  \partial_y \omega_{k'} ||_2 dk' dk ds \\
        &\lesssim \nu^{-1/2} \mathcal{D}^{1/2}_\alpha \mathcal{D}_\tau^{1/2} \sup_{s \in [0,t]}\mathcal{E}^{1/2}(s).
    \end{split}
\end{equation}

Next, we deal with the more difficult $T_{\alpha, \tau\alpha, HL, \cdot,  M, H'}^x$. When $|k-k'| \geq |k'|/2$, we also have $|k-k'| \gtrsim |k|$. Hence, $1 = |k|^{-1/6} |k|^{1/6}\lesssim |k|^{-1/6} |k-k'|^{1/6}$. Furthermore, $1 = |k'|^{-1/3}|k'|^{1/3} \lesssim |k'|^{-1/3}|k-k'|^{1/3}$ and $\langle k \rangle \sim 1$. Altogether, these estimates imply that on the domain of integration
\begin{equation}\label{middle_freq_decomp_alpha}
    \nu^{2/3} |k|^{-2/3} |k-k'| \lesssim |k|^{-1/2} \nu^{1/3} |k|^{-1/3} |k-k'|^{3/2} \nu^{1/3} |k'|^{-1/3}.
\end{equation}
Thus by Young's convolution inequality, Lemma \ref{keyphiestimate}, Cauchy-Schwarz, and \eqref{middle_freq_decomp_alpha}:
\begin{equation}\label{alpha_HL_M_H'_x}
    \begin{split}
         |T_{\alpha, \tau\alpha, HL, \cdot,  M, H' }^x| 
         &\lesssim \int_0^t \int_{\nu < |k| < 1} \int_{|k'| \geq \nu} 1_{ |k-k'| \geq  |k'|/2 }\langle  c \lambda_{k} s \rangle^{2J} \langle k  \rangle^{2m}\nu^{2/3}|k|^{-2/3}||\nabla_k\partial_y \omega_k ||_2\\
         &\hphantom{\lesssim \int_{\nu < |k| < 1} \int_{|k'| \geq \nu}}||(k-k')\phi_{k-k'}||_\infty ||\partial_y \omega_{k'} ||_2 dk' dk ds\\
         &\lesssim \int_0^t \int_{\nu < |k| < 1} \int_{|k'| \geq \nu} 1_{ |k-k'| \geq  |k'|/2 }|k|^{-1/2}\langle  c \lambda_{k} s \rangle^{J} \langle k \rangle^{m} \nu^{1/3} |k|^{1/3} ||\nabla_k\partial_y \omega_k ||_2\langle  c \lambda_{k-k'} s \rangle^{J}\\
        & \hphantom{\lesssim \int_{\nu < |k| < 1} \int_{|k'| \geq \nu}}\langle k-k'\rangle^m|||k-k'|^{3/2}\phi_{k-k'}||_\infty \nu^{1/3} |k'|^{-1/3}|| \partial_y \omega_{k'} ||_2 dk' dk ds\\
        &\lesssim \left(\int_0^t\left(\int_{\nu < |k| < 1}|k|^{-1/2}  \langle  c \lambda_{k} s \rangle^{J} \langle k \rangle^m\nu^{1/3}  |k|^{-1/3} ||\nabla_k\partial_y \omega_k ||_2 dk\right)^2 ds\right)^{1/2}\\
        &\quad \quad \quad \mathcal{D}_\tau^{1/2} \sup_{s \in [0,t]}\mathcal{E}^{1/2}(s)\\
        &\lesssim \left(\int_{\nu < |k| < 1} |k|^{-1/2} dk\right)^{1/2} \left(\int_0^t \int_{\nu < |k| < 1} \langle  c \lambda_{k} s \rangle^{2J} \langle k \rangle^{2m} \nu^{2/3} |k|^{-2/3} ||\nabla_k\partial_y \omega_k ||_2^2 dk ds\right)^{1/2}\\
        &\quad \quad \quad \mathcal{D}_\tau^{1/2} \sup_{s \in [0,t]}\mathcal{E}^{1/2}(s)\\
        &\lesssim \ln(1/\nu)^{1/2} \nu^{-1/2} \mathcal{D}_\alpha^{1/2}\mathcal{D}_\tau^{1/2} \sup_{s \in [0,t]}\mathcal{E}^{1/2}(s).
    \end{split}
\end{equation}
We note that in the above, we broke from our usual pattern, and we placed the $k$ factors into $L^1_k$. This enabled used to remove the extraneous $|k|^{-1/2}$ factor via Cauchy-Schwarz at the cost of a logarithmic correction. It is precisely the need to deal with extraneous inverse powers of $|k|$ that makes the intermediate $k$ frequencies complicated.

For $T_{\alpha, \tau\alpha, HL, \cdot,  M, L}^x$, we note that $\nu < |k| < 1$ and $|k'| < \nu$ implies $|k-k'| \lesssim 1$. Hence $\langle k-k'\rangle \sim 1$. Additionally, $|k-k'| \gtrsim |k| > \nu$. We therefore will place the $k-k'$ factors in $L^1_k$ and apply Lemma \ref{keyphiestimate} (after suitably using H\"older's and Minkowski's inequalities). Lastly, we use that $\nu^{1/3} |k|^{-1/3} \leq 1$, and compute:
\begin{equation}
    \begin{split}
        |T_{\alpha, \tau\alpha, HL, \cdot,  M, L }^x| 
         &\lesssim \int_0^t \int_{\nu < |k| < 1} \int_{|k'| < \nu} 1_{ |k-k'| \geq  |k'|/2 }\langle  c \lambda_{k} s \rangle^{2J} \langle k  \rangle^{2m}\nu^{2/3} |k|^{-2/3}||\nabla_k\partial_y \omega_k ||_2\\
         &\hphantom{\lesssim \int_{\nu < |k| < 1} \int_{|k'| < \nu}}||(k-k')\phi_{k-k'}||_\infty ||\partial_y \omega_{k'} ||_2 dk' dk ds\\
         &\lesssim \int_0^t \int_{\nu < |k| < 1} \int_{|k'| < \nu} 1_{ |k-k'| \geq  |k'|/2 }\langle  c \lambda_{k} s \rangle^{J} \langle k  \rangle^{m}\nu^{1/3} |k|^{-1/3}||\nabla_k\partial_y \omega_k ||_2\\
         &\hphantom{\lesssim \int_{\nu < |k| < 1} \int_{|k'| < \nu}}\langle  c \lambda_{k-k'} s \rangle^{J}||(k-k')\phi_{k-k'}||_\infty \langle k'  \rangle^{m}||\partial_y \omega_{k'} ||_2 dk' dk ds\\
         & \lesssim \nu^{-1/2} \mathcal{D}_\alpha^{1/2} (1+ \ln(1/\nu))^{1/2} \mathcal{D}_\tau^{1/2} \sup_{s \in [0,t]}\mathcal{E}^{1/2}(s).
    \end{split}
\end{equation}

Our next $T_{\alpha, \tau\alpha}^x$ term is $T_{\alpha, \tau\alpha, HL, \cdot, L, H'}$. On the domain of this integrand, $|k'| > \nu$ and so $|k-k'| \gtrsim \nu$. We also have $1 = \nu^{-1/2} \nu^{1/2 }|k'|^{-1/3} |k'|^{1/3} \lesssim \nu^{-1/2} |k-k'|^{1/2} \nu^{1/3} |k'|^{-1/3}$. Then using Lemma \ref{keyphiestimate}, using Young's inequality to place the low-frequency $k$-term in $L^1_k((-\nu, \nu))$, and using H\"older's inequality to gain a power of $\nu^{1/2}$, we arrive at
\begin{equation}
    \begin{split}
        |T_{\alpha, \tau\alpha, HL, \cdot,  L, H' }^x| 
         &\lesssim \int_0^t \int_{|k| < \nu} \int_{|k'| > \nu} 1_{ |k-k'| \geq  |k'|/2 }\langle  c \lambda_{k} s \rangle^{2J} \langle k  \rangle^{2m}||\nabla_k\partial_y \omega_k ||_2 ||(k-k')\phi_{k-k'}||_\infty ||\partial_y \omega_{k'} ||_2 dk' dk ds\\
         &\lesssim \int_0^t \int_{|k| < \nu} \int_{|k'| > \nu} 1_{ |k-k'| \geq  |k'|/2 }\langle  c \lambda_{k} s \rangle^{J}||\nabla_k\partial_y \omega_k ||_2 \langle  c \lambda_{k-k'} s \rangle^{J} \langle k-k'\rangle^m\\
         &\hphantom{\lesssim \int_0^t \int_{|k| < \nu} \int_{|k'| > \nu} } \nu^{-1/2}|||k-k'|^{3/2}\phi_{k-k'}||_\infty \langle k' \rangle^m \nu^{1/3}|k'|^{-1/3} ||\partial_y \omega_{k'} ||_2 dk' dk ds\\
        &\lesssim \left(\int_0^t\left(\int_{|k| < \nu}\langle  c \lambda_{k} s \rangle^{J}||\nabla_k\partial_y \omega_k ||_2 dk\right)^{2} ds\right)^{1/2} \nu^{-1/2} \mathcal{D}_\tau^{1/2} \sup_{s \in [0,t]}\mathcal{E}^{1/2}(s)\\
        &\lesssim \nu^{1/2 - 1/2} \mathcal{D}_\alpha^{1/2} \nu^{-1/2} \mathcal{D}_\tau^{1/2} \mathcal{E}^{1/2}.
    \end{split}
\end{equation}

The final $T_{\alpha, \tau\alpha}^x$ term is $T_{\alpha, \tau\alpha, HL, \cdot, L, L}.$. For this term, we use that $|k-k'| \gtrsim |k'|$ on the domain of integration, Gagliardo-Nirenberg-Sobolev, elliptic regularity, H\"older's inequality, and Minkowski's inequality:
\begin{equation}
    \begin{split}
        |T_{\alpha, \tau\alpha, HL, \cdot,  L, L }^x| 
         &\lesssim \int_0^t\int_{|k| < \nu} \int_{|k'| < \nu} 1_{ |k-k'| \geq  |k'|/2 }\langle  c \lambda_{k} s \rangle^{2J} \langle k  \rangle^{2m}||\nabla_k\partial_y \omega_k ||_2 ||(k-k')\phi_{k-k'}||_\infty\\
         &\hphantom{\int_0^t \lesssim \int_{|k| < \nu} \int_{|k'| > \nu} } ||\partial_y \omega_{k'} ||_2 dk' dk ds\\
         &\lesssim \int_0^t \int_{|k| < \nu} \int_{|k'| < \nu} 1_{ |k-k'| \geq  |k'|/2 }\langle k \rangle^m \langle  c \lambda_{k} s \rangle^{J}||\nabla_k\partial_y \omega_k ||_2 \langle  c \lambda_{k-k'} s \rangle^{J} \\
         &\hphantom{\int_0^t \lesssim \int_{|k| < \nu} \int_{|k'| > \nu} }||(k-k')^{3/2}\phi_{k-k'}||_\infty \langle k' \rangle^m |k'|^{-1/2} ||\partial_y \omega_{k'} ||_2 dk' dk ds\\
        &\lesssim \nu^{-1/2} \mathcal{D}_\alpha^{1/2} \sup_{s \in [0,t]}\left(\int_{\R}\langle  c \lambda_{k} s \rangle^{2J} \langle k \rangle^{2m} |k|^{3} ||\phi_k||_\infty^2 dk\right)^{1/2} \left(\int_0^t\left(\int_{|k| < \nu} |k|^{-1/2} ||\partial_y \omega_k||_2dk\right)^2 ds \right)^{1/2}\\
        &\lesssim \nu^{-1/2} \mathcal{D}_\alpha^{1/2} \sup_{s \in [0,t]}\left(\int_{\R}\langle  c \lambda_{k} s \rangle^{2J} \langle k \rangle^{2m}  ||k \partial_y \phi_k||_2 ||k^2 \phi_k||_2 dk\right)^{1/2} \\
        &\quad \quad \quad \left(\int_{|k| < \nu} |k|^{-1/2} dk \right) \sup_{k}\left(\int_0^t ||\partial_y \omega_k||_2^2 ds\right)^{1/2}\\
        &\lesssim\nu^{-1/2} \mathcal{D}_\alpha^{1/2} \sup_{s \in [0,t]}\mathcal{E}^{1/2}(s) \nu^{1/2} \mathcal{D}_{\gamma, 2}^{1/2},
    \end{split}
\end{equation}
thereby finishing the $x$ derivatives.

\subsubsection{y
derivatives}

After an initial integration by parts, we seek to bound
$$T_{\alpha, \tau\alpha}^y = \int_0^t \int_{\R} \int_{\R} \frac{\langle  c \lambda_{k} s \rangle^{2J}}{M_k(s)}\langle k  \rangle^{2m} 2c_\alpha A(k)^2 \textrm{Re} \langle \partial_y (I+c_\tau \mathfrak{J}_k)\partial_y \omega_k , \partial_y \phi_{k-k'} i k'\omega_{k'} \rangle dk' dk ds.$$
We use the frequency decomposition
\begin{equation}
    \begin{split}
        1 = & 1_{|k-k'| < |k'|/2} + 1_{|k-k'| \geq |k'| / 2}\left(1_{|k| \geq 1} + 1_{\nu < |k| < 1}+ 1_{|k| < \nu}1_{|k'| \geq \nu} + 1_{|k| < \nu}1_{|k'| < \nu}\right),
    \end{split}
\end{equation}
and we correspondingly write
\begin{equation}
    \begin{split}
        T_{\alpha, \tau\alpha}^y &\eqqcolon  T_{\alpha, \tau\alpha, LH}^y + \left(T_{\alpha, \tau\alpha, HL, \cdot, H, \cdot}^y + T_{\alpha, \tau\alpha, HL, \cdot, M, \cdot}^y +  T_{\alpha, \tau\alpha, HL, \cdot, L, H'}^y +  T_{\alpha, \tau\alpha, HL, \cdot, L, L}^y\right).
    \end{split}
\end{equation}
We begin with $T_{\alpha, \tau\alpha, LH}^y$. On the domain of integration, $|k| \approx |k'|$, and so $A(k) \approx A(k')$. Then by Gagliardo-Nirenberg-Sobolev, elliptic regularity, $m > 1/2$, H\"older's inequality, Minkowski's inequality, and Lemmas \ref{beta_and_gamma} and \ref{low_frequency_lemma_phi},
\begin{equation}
    \begin{split}
        |T_{\alpha, \tau\alpha, LH}^y| & \lesssim \int_0^t \int_\R \int_\R 1_{|k-k'| < |k'|2} \langle  c \lambda_{k} s \rangle^{2J} \langle k  \rangle^{2m} A(k)^2|| \nabla_k\partial_y \omega_{k'}||_2 ||\partial_y \phi_{k-k'}||_\infty ||k' \omega_{k'}||_2 dk' dk ds\\
        & \int_0^t \int_\R \int_\R 1_{|k-k'| < |k'|2} \langle  c \lambda_{k} s \rangle^{J} \langle k  \rangle^{m} A(k)|| \nabla_k\partial_y \omega_k||_2 ||\partial_y \phi_{k-k'}||_\infty \langle c\lambda_{k'} s \rangle^{J} \langle k'  \rangle^{m} A(k')||k' \omega_{k'}||_2 dk' dk ds\\
        &\lesssim \nu^{-1/2} \mathcal{D}_\alpha^{1/2} \sup_{s \in [0,t]}\left(\int_{\R} (|k|^{-1/2} 1_{|k| < 1} + 1_{|k| \geq 1})  |k| || \partial_y \phi_k||_\infty dk\right) \left(\mathcal{D}_\gamma^{1/4} \mathcal{D}_\beta^{1/4} + \nu^{1/2} \mathcal{D}_\beta^{1/2}\right)\\
        &\lesssim \nu^{-1/2} \mathcal{D} \sup_{s \in [0,t]} \E(s)^{1/2} \left(\mathcal{D}_\gamma^{1/4} \mathcal{D}_\beta^{1/4} + \nu^{1/2} \mathcal{D}_\beta^{1/2}\right).
    \end{split}
\end{equation}

Now we turn to the $HL$ cases, starting with $T_{\alpha, \tau\alpha, HL, \cdot, H, \cdot}^y$. In this case, $|k-k'| \geq |k'|/2$ and $|k-k'| \gtrsim |k| \geq 1$, so by Lemma \ref{keydyphiestimate}, H\"older's inequality, and $m > 1/2$,
\begin{equation}
    \begin{split}
        |T_{\alpha, \tau\alpha, HL, \cdot, H, \cdot}^y |&\lesssim \int_0^t \int_{|k| \geq 1} \int_{\R} 1_{|k'|/2\leq |k-k'|} \langle  c \lambda_{k} s \rangle^{2J}\langle k  \rangle^{2m}\nu^{2/3}|k|^{-2/3}|| \nabla_k \partial_y \omega_k||_2 ||\partial_y \phi_{k-k'}||_\infty  || k' \omega_{k'}||_2 dk'dk ds\\
        &\lesssim \int_0^t \int_{|k| \geq 1} \int_{\R} 1_{|k'|/2\leq |k-k'|} \langle  c \lambda_{k} s \rangle^{J}\langle k  \rangle^{m}\nu^{1/3}|k|^{-1/3}|| \nabla_k \partial_y \omega_k||_2 \langle  c \lambda_{k-k'} s \rangle^{J} \\
        &\hphantom{\lesssim \int_0^t \int_{|k| \geq 1} \int_{\R}}\langle k-k' \rangle^{m}\nu^{1/3}||(k-k')\partial_y \phi_{k-k'}||_\infty  || \omega_{k'}||_2 dk'dk ds\\
        &\lesssim \int_0^t \int_{|k| \geq 1} \int_{\R} 1_{|k'|/2\leq |k-k'|} \langle  c \lambda_{k} s \rangle^{J}\langle k  \rangle^{m}\nu^{1/3}|k|^{-1/3}|| \nabla_k \partial_y \omega_k||_2 \langle  c \lambda_{k-k'} s \rangle^{J} \\
        &\hphantom{\lesssim \int_{|k| \geq 1} \int_{\R}}\langle k-k' \rangle^{m}\nu^{1/3}||(k-k')^{7/6}\partial_y \phi_{k-k'}||_\infty  || \omega_{k'}||_2 dk'dk\\
        &\lesssim \nu^{-1/2} \mathcal{D}_\alpha^{1/2} \mathcal{D}_{\tau \alpha}^{1/2} \sup_{s \in [0,t]}\mathcal{E}^{1/2}(s).
    \end{split}
\end{equation}

For $T_{\alpha, \tau\alpha, HL, \cdot, M, \cdot}^y$, we use a somewhat similar argument. On this domain, we use $|k| \lesssim |k-k'|$ to write $1 = |k|^{-1/6} |k|^{1/6} \lesssim |k|^{-1/6} |k-k'|^{1/6}$. Then using $\langle k \rangle \sim 1$ and $|k-k'| \gtrsim |k'|$ on the domain of integration, we have by Lemma \ref{keydyphiestimate} and Cauchy-Schwarz,
\begin{equation}\label{alpha_HL_M_y}
    \begin{split}
        |T_{\alpha, \tau\alpha, HL, \cdot, M, \cdot}^y |&\lesssim \int_0^t \int_{\nu < |k| < 1} \int_{\R} 1_{|k'|/2\leq |k-k'|} \langle  c \lambda_{k} s \rangle^{2J}\langle k  \rangle^{2m}\nu^{2/3}|k|^{-2/3}|| \nabla_k \partial_y \omega_k||_2\\
        &\hphantom{\lesssim \int_0^t \int_{\nu < |k| < 1} \int_{\R}} ||\partial_y \phi_{k-k'}||_\infty  || k' \omega_{k'}||_2 dk'dk ds\\
        &\lesssim \int_0^t \int_{|k| \geq 1} \int_{\R} 1_{|k'|/2\leq |k-k'|} |k|^{-1/2}\langle  c \lambda_{k} s \rangle^{J}\langle k  \rangle^{m}\nu^{1/3}|k|^{-1/3}|| \nabla_k \partial_y \omega_k||_2 \langle  c \lambda_{k-k'} s \rangle^{J} \\
        &\hphantom{\lesssim \int_0^t \int_{\nu < |k| < 1} \int_{\R}}\langle k-k' \rangle^{m}\nu^{1/3}||(k-k')^{7/6}\partial_y \phi_{k-k'}||_\infty  || \omega_{k'}||_2 dk'dk ds\\
        &\lesssim \left(\int_0^t\left(\int_{\nu < |k| < 1} |k|^{-1/2}\langle  c \lambda_{k} s \rangle^{J} \langle k \rangle^{2m} \nu^{1/3} |k|^{-1/3} ||\nabla_k\partial_y \omega_k ||_2 dk\right)^2 ds\right)^{1/2}\\
        &\quad \quad \quad \mathcal{D}_{\tau\alpha}^{1/2} \sup_{s \in [0,t]}\mathcal{E}^{1/2}(s)\\
        &\lesssim  \left(\int_0^t\int_{\nu < |k| < 1} \langle  c \lambda_{k} s \rangle^{2J} \langle k \rangle^{2m} \nu^{2/3} |k|^{-2/3} ||\nabla_k\partial_y \omega_k ||_2^2 dk ds\right)^{1/2}\\
        &\quad \quad \quad \left(\int_{\nu < |k| < 1} |k|^{-1} dk\right)^{1/2}\mathcal{D}_{\tau\alpha}^{1/2} \sup_{s \in [0,t]}\mathcal{E}^{1/2}(s)\\
        &\lesssim \ln(1/\nu)^{1/2} \nu^{-1/2} \mathcal{D}_\alpha^{1/2}\mathcal{D}_{\tau \alpha}^{1/2} \sup_{s \in [0,t]}\mathcal{E}^{1/2}(s).
    \end{split}
\end{equation}
Similar to $T_{\alpha, \tau \alpha, HL, \cdot, M, H'}^x$ \eqref{alpha_HL_M_H'_x}, we have placed the $k$ factors of \eqref{alpha_HL_M_y} into $L^1((-1,\nu) \cup (\nu, 1))$ in order to apply Cauchy-Schwarz to deal with the $|k|^{-1/2}$ factor. We remark that the logarithmic correction in \eqref{alpha_HL_M_H'_x} and \eqref{alpha_HL_M_y} arising from intermediate $k$ frequencies appears fundamentally difficult to remove.

We turn now to the $HL$ terms with $|k| < \nu$. When $|k'| \geq \nu$ ($T_{\alpha, \tau\alpha, HL, \cdot, L, H'}^y$), we use that $|k-k'| \gtrsim \nu$, $\langle k \rangle \sim 1$, together with Lemma \ref{keydyphiestimate} and H\"older's inequality:

\begin{equation}
    \begin{split}
        |T_{\alpha, \tau\alpha, HL, \cdot, L, H'}^y| & \lesssim \int_{k < \nu} \int_{k' > \nu} 1_{|k-k'| \geq |k'|/2}\langle  c \lambda_{k} s \rangle^{2J} \langle k  \rangle^{2m} || \nabla_k\partial_y \omega_k||_2 ||\partial_y \phi_{k-k'}||_\infty ||k' \omega_{k'}||_2 dk' dk\\
        &\lesssim \int_{k < \nu} \int_{k' \geq \nu} 1_{|k-k'| \geq |k'|/2}\langle  c \lambda_{k} s \rangle^{J} || \nabla_k\partial_y \omega_k||_2 \langle  c \lambda_{k-k'} s \rangle^{J}\\
        & \hphantom{\lesssim \int_{k < \nu} \int_{k' \geq \nu}}\langle k-k'\rangle^m \nu^{1/6} \nu^{-1/6}||(k-k')\partial_y \phi_{k-k'}||_\infty \langle k \rangle^m||\omega_{k'}||_2 dk' dk\\
        &\lesssim \int_{k < \nu} \int_{k' \geq \nu} 1_{|k-k'| \geq |k'|/2}\langle  c \lambda_{k} s \rangle^{J} || \nabla_k\partial_y \omega_k||_2 \langle  c \lambda_{k-k'} s \rangle^{J}\\
        & \hphantom{\lesssim \int_{k < \nu} \int_{k' \geq \nu}}\langle k-k'\rangle^m \nu^{-1/6}||(k-k')^{7/6}\partial_y \phi_{k-k'}||_\infty \langle k' \rangle^m||\omega_{k'}||_2 dk' dk\\
        &\lesssim \left(\int_{|k| < \nu} \langle  c \lambda_{k} s \rangle^{J} || \nabla_k\partial_y \omega_k||_2\right) \nu^{-1/2}\mathcal{D}_{\tau \alpha}^{1/2} \sup_{s \in [0,t]}\mathcal{E}^{1/2}(s)\\
        &\lesssim \mathcal{D}_\alpha^{1/2} \nu^{-1/2} \mathcal{D}_{\tau \alpha}^{1/2} \sup_{s \in [0,t]}\mathcal{E}^{1/2}(s).
    \end{split}
\end{equation}

The final $\alpha, \tau\alpha$ term is $T_{\alpha, \tau\alpha, HL, \cdot, L, L}^y$. Here, we observe that $|k-k'| \lesssim \nu$ and $|k'| \lesssim |k-k'|$, and hence by Gagliardo-Nirenberg-Sobolev, elliptic regularity, H\"older's inequality, and Minkowski's inequality:
\begin{equation}
    \begin{split}
       |T_{\alpha, \tau\alpha, HL, \cdot, L, L}^y| &\lesssim \int_0^t \int_{k <  \nu} \int_{k' < \nu} 1_{|k'|/2 \leq |k-k'| \lesssim \nu} \langle  c \lambda_{k} s \rangle^{2J}\langle k \rangle^{2m}||\nabla_k \partial_y \omega_k||_2  ||\partial_y \phi_{k-k'}||_\infty\\
       &\hphantom{\lesssim \int_0^t \int_{k <  \nu} \int_{k' < \nu}}|| k' \omega_{k'}||_2 dk' dk ds\\
        &\lesssim \int_0^t \int_{k <  \nu} \int_{k' < \nu} 1_{|k'|/2 \leq |k-k'| \lesssim \nu} \langle  c \lambda_{k} s \rangle^{J}||\nabla_k \partial_y \omega_k||_2 \langle  c \lambda_{k-k'} s \rangle^{J}\\ &
        \hphantom{\lesssim \int_0^t \int_{k <  \nu} \int_{k' < \nu}}|k-k'|^{1/2}||\partial_y \phi_{k-k'}||_\infty |k'|^{-1/2} || k' \omega_{k'}||_2 dk' dk ds\\
        &\lesssim \nu^{-1/2} \mathcal{D}_\alpha^{1/2} \left(\int_0^t \int_{|k| \lesssim \nu} \langle  c \lambda_{k} s \rangle^{2J}\langle k \rangle^{2m}||\partial_y^2 \phi_k||_2 ||k \partial_y \phi_k||_2 dk ds\right)^{1/2}\\
        &\quad \quad \quad \int_{|k| \lesssim \nu} |k|^{-1/2}\left(  \int_0^t||\nabla_k \omega_k||_2^2 ds\right)^{1/2} dk \\
        &\lesssim \nu^{-1/2} \mathcal{D}_\alpha^{1/2} \sup_{s \in [0,t]}\mathcal{E}^{1/2}(s)\left(\int_{|k| \lesssim \nu} |k|^{-1/2}\right) \sup_{k}\left(\int_0^t||\nabla_k \omega_k||_2^2 ds\right)^{1/2}\\
        &\lesssim \nu^{-1/2} \mathcal{D}_\alpha^{1/2} \sup_{s \in [0,t]}\mathcal{E}^{1/2}(s) \mathcal{D}_{\gamma,2}^{1/2},
        \end{split}
\end{equation}
completing the bounds on the $T_{\alpha,\tau\alpha}$ terms for the sake of Lemma \ref{nonlinear_lemma}.

\subsection{Bound on \texorpdfstring{$T_{\beta}$}{Beta terms}}

Written in full, the $\beta$ contribution is:
\begin{equation}\label{full_beta}
\begin{split}
        T_\beta &= c_\beta \int_0^t \int_\R \frac{\langle  c \lambda_{k} s \rangle^{2J}}{M_k(s)} \langle k \rangle^{2m} B(k)^2 \left(\textrm{Re}\langle ik \omega_k, \partial_y(\nabla^\perp \phi \cdot \nabla \omega)_k \rangle + \textrm{Re}\langle ik(\nabla^\perp \phi \cdot \nabla \omega)_k, \partial_y \omega_{k}\rangle\right) dk ds\\
    &\eqqcolon T_{\beta ; 1} + T_{\beta; 2}.
\end{split}
\end{equation}
Just as in \cite{bedrossian2023stability}, we note that integration by parts implies that $T_{\beta ;1}$ and $T_{\beta ; 2}$ can be treated similarly. Hence we will only perform the calculations for $T_{\beta ; 1}$, and in doing so, we will abuse notation and suppress the $1$, only writing $T_{\beta}$ for $T_{\beta ; 1}$.
\subsubsection{x derivatives}
We have, through integration by parts in $y$,
\begin{equation}
\begin{split}
        T_\beta^x &=  -c_\beta \int_0^t \int_\R \int_\R \frac{\langle  c \lambda_{k} s \rangle^{2J}}{M_k(t)} \langle k \rangle^{2m} B(k)^2 \textrm{Re}\langle ik \omega_k, \partial_y(i(k-k')\phi_{k-k'} \partial_y \omega_{k'}) \rangle dk' dk ds\\
    &= c_\beta \int_0^t \int_\R \int_\R \frac{\langle  c \lambda_{k} s \rangle^{2J}}{M_k(t)} \langle k \rangle^{2m} B(k)^2 \textrm{Re}\langle ik \partial_y \omega_k, i(k-k')\phi_{k-k'} \partial_y \omega_{k'} \rangle dk' dk ds.
\end{split}
\end{equation}
Our decomposition will be kept simple, only splitting into $LH$ and $HL$ terms according to
$$1 = 1_{|k-k'| < |k'|/2} + 1_{|k-k'| \geq |k'|/2},$$
which results in
$$T_\beta^x \eqqcolon T_{\beta, LH}^x + T_{\beta, HL}^x.$$
In both portions, we will use the fact that $B(k)^2|k| \lesssim A(k)$. Treating the $LH$ term first, we have by Lemmas \ref{keyphiestimate} and \ref{low_frequency_lemma_phi} and H\"older's and Minkowski's inequalities,
\begin{equation}
    \begin{split}
        |T_{\beta, LH}^x| &\lesssim \int_0^t \int_{\R} \int_{\R}1_{|k-k'| < |k'|/2} \langle  c \lambda_{k} s \rangle^{2J}\langle k  \rangle^{2m}B(k)^2|| k \partial_y \omega_k||_2  ||(k-k') \phi_{k-k'}||_\infty ||\partial_y \omega_{k'}||_2 dk' dk\\
        &\lesssim \int_0^t\int_{\R} \int_{\R}1_{|k-k'| < |k'|/2} \langle  c \lambda_{k} s \rangle^{J}\langle k  \rangle^{m}A(k)|| \partial_y \omega_k||_2  ||(k-k') \phi_{k-k'}||_\infty \\
        &\hphantom{\lesssim \int_0^t \int_{\R} \int_{\R}1_{|k-k'| < |k'|/2} }\langle  c \lambda_{k'} s \rangle^{J}\langle k'  \rangle^{m}||\partial_y \omega_{k'}||_2 dk' dk ds\\
        &\lesssim \sup_{s \in [0,t]}\mathcal{E}^{1/2}(s)\left( \int_{\R} (|k|^{-1/2}1_{|k| < 1} + 1_{|k| \geq 1}) \left( \int_0^t ||k^{3/2} \phi_{k}||_\infty^2 ds \right)^{1/2} dk\right) \nu^{-1/2} \mathcal{D}_\gamma^{1/2}\\
        &\lesssim \sup_{s \in [0,t]}\mathcal{E}^{1/2}(s) ( \mathcal{D}_{\tau,2}^{1/2} + \mathcal{D}_\tau^{1/2}) \nu^{-1/2} \mathcal{D}_\gamma^{1/2}.
    \end{split}
\end{equation}

For the $HL$ case, we notice that on the domain of integration, $1 = |k'|^{-1/2} |k'|^{1/2} 1_{|k'| < 1 } + 1_{|k'| \geq 1} \lesssim |k-k'|^{1/2}\left(|k'|^{-1/2} 1_{|k'| < 1 } + 1_{|k'| \geq 1}\right)$. Then by Lemma \ref{keyphiestimate}, H\"older's inequality, and $m > 1/2$:
\begin{equation}
    \begin{split}
        |T_{\beta, HL}^x| &\lesssim \int_0^t \int_{\R} \int_{\R}1_{|k-k'| \geq |k'|/2} \langle  c \lambda_{k} s \rangle^{2J}\langle k  \rangle^{2m}B(k)^2|| k \partial_y \omega_k||_2  ||(k-k') \phi_{k-k'}||_\infty ||\partial_y \omega_{k'}||_2 dk' dk ds\\
        &\lesssim \int_0^t \int_{\R} \int_{\R}1_{|k-k'| < |k'|/2} \langle  c \lambda_{k} s \rangle^{J}\langle k  \rangle^{m}A(k)|| \partial_y \omega_k||_2 \langle  c \lambda_{k-k'} s \rangle^{J}\\
        & \hphantom{\lesssim \int_0^t \int_{\R} \int_{\R}} \langle k -k' \rangle^{m} ||(k-k') \phi_{k-k'}||_\infty \left(|k'|^{-1/2} |k'|^{1/2} 1_{|k'| < 1 } + 1_{|k'| \geq 1}\right) ||\partial_y \omega_{k'}||_2 dk' dk ds\\
        &\lesssim \int_0^t \int_{\R} \int_{\R}1_{|k-k'| < |k'|/2} \langle  c \lambda_{k} s \rangle^{J}\langle k  \rangle^{m}A(k)|| \partial_y \omega_k||_2 \langle  c \lambda_{k-k'} s \rangle^{J}\\
        & \hphantom{\lesssim \int_0^t \int_{\R} \int_{\R}} \langle k -k' \rangle^{m} ||(k-k')^{3/2} \phi_{k-k'}||_\infty \left(|k'|^{-1/2} 1_{|k'| < 1 } + 1_{|k'| \geq 1}\right) ||\partial_y \omega_{k'}||_2 dk' dk ds\\
        &\lesssim \sup_{s \in [0,t]}\mathcal{E}^{1/2}(s) \mathcal{D}_\tau^{1/2} \left(\int_{\R} (|k|^{-1/2}1_{|k| < 1} + \langle k \rangle^m \langle k \rangle^{-m}1_{|k| > 1}) \left( \int_0^t||\partial_y \omega_{k}||_2^2 ds\right)^{1/2} dk\right)\\
        &\lesssim \sup_{s \in [0,t]}\mathcal{E}^{1/2}(s) \mathcal{D}_\tau^{1/2} \left(\nu^{-1/2}\mathcal{D}_2^{1/2} + \nu^{-1/2} \mathcal{D}_\gamma^{1/2}\right).
    \end{split}
\end{equation}
This completes the $x$-derivative terms.

\subsubsection{y derivatives}\label{beta_y_derivs}
The $y$ derivatives are slightly more complicated, but we are able to present a simplification, compared to the analogous terms in \cite{bedrossian2023stability}, where a commutator structure was employed. Recalling the Fourier multiplier 
$$B(k) = \begin{cases} \nu^{1/6} |k|^{-2/3}, & |k| \gtrsim \nu,\\
\nu^{-1/2}, & |k| \lesssim \nu,\end{cases}$$
we write
\begin{equation}
    T_{\beta}^y = c_\beta \int_0^t \int_\R \int_{\R} \frac{\langle  c \lambda_{k} s \rangle^{2J}}{M_k(s)} \langle k \rangle^{2m} B(k)^2 \textrm{Re}\langle i k \omega_k, \partial_y \left(\partial_y \phi_{k-k'} ik' \omega_{k'} \right)\rangle dk' dk ds.
\end{equation}
We make our first splitting of $T_\beta^y$ by distributing $\partial_y$:
\begin{equation}
\begin{split}
        T_{\beta}^{y,1} + T_{\beta}^{y,2}  &\coloneqq c_\beta \int_0^t \int_\R \int_{\R} \frac{\langle  c \lambda_{k} s \rangle^{2J}}{M_k(s)}\langle k \rangle^{2m} B(k)^2 \textrm{Re}\langle  i k \omega_k, \partial_y^2 \phi_{k-k'} ik' \omega_{k'} \rangle dk' dk ds\\
    &\quad + c_\beta \int_0^t \int_\R \int_{\R} \frac{\langle  c \lambda_{k} s \rangle^{2J}}{M_k(s)} \langle k \rangle^{2m} B(k)^2 \textrm{Re}\langle  i k \omega_k, \partial_y \phi_{k-k'} ik' \partial_y \omega_{k'} \rangle dk' dk ds.
\end{split}
\end{equation}
Let us begin by estimating $T_\beta^{y,1}$. We divide $T_\beta^{y,1}$ according to the decomposition
$$1 = 1_{|k-k'| < |k'|/2}\left(1_{|k| \geq \nu} + 1_{|k| < \nu}\right) + 1_{|k-k'| \geq |k'|/2}\left(1_{|k| \geq 1}  + 1_{\nu \leq |k| < 1} + 1_{|k| < \nu}\right),$$
giving
$$T_{\beta}^{y,1} \eqqcolon T_{\beta, LH, \cdot, H, \cdot}^{y,1} + T_{\beta, LH, \cdot, L, \cdot}^{y,1} + T_{\beta, HL, \cdot, H, \cdot}^{y,1} + T_{\beta, HL, \cdot, M, \cdot}^{y,1}+ T_{\beta, HL, \cdot, L, \cdot}^{y,1}.$$

Beginning with $LH$, we note that on the support of $T_{\beta, LH, \cdot, H, \cdot}^{y,1}$, $|k| \approx |k'|$, and so $B(k) \approx B(k')$ and $|k'| \gtrsim \nu$. Then by Lemma \ref{dy_dy_phi_to_energy}, H\"older's inequality in time, and interpolation in the $k$ factor (essentially a modified form of Lemma \ref{beta_and_gamma}), we have
\begin{equation}
\begin{split}
    |T_{\beta, LH, \cdot, H, \cdot}^{y,1}| &\lesssim \int_0^t \int_{|k| \geq \nu} \int_{\R} 1_{|k-k'| < |k'|/2} \langle  c \lambda_{k} s \rangle^{2J}\langle k  \rangle^{2m} B(k)^2|| k \omega_k||_2 ||\partial_y^2 \phi_{k-k'}||_\infty ||k' \omega_{k'}||_2 dk' dk\\
    &\lesssim \int_0^t \int_{|k| \geq \nu} \int_{\R} 1_{|k-k'| < |k'|/2} \langle  c \lambda_{k} s \rangle^{J} \langle k  \rangle^{m} \nu^{1/6}|| |k|^{1/3} \omega_k||_2^{1/4} || k \omega_k||_2^{1/4} || \omega_k||_{2}^{1/2}  \\
    &\hphantom{\lesssim \int_0^t \int_{|k| \geq \nu}   \int_{\R}} ||\partial_y^2 \phi_{k-k'}||_\infty \langle c \lambda_{k'} s \rangle^{J} \langle k'  \rangle^{m} \nu^{1/6}|k'|^{1/3}|| \omega_{k'}||_2 dk' dk ds\\
    &\lesssim \nu^{1/6} \left(\int_0^t \left(\int_{|k| \geq \nu} \langle c \lambda_k s \rangle^{2J} \langle k \rangle^{2m} || |k|^{1/3} \omega_k||_2^{1/2} || k \omega_k||_2^{1/2} ||\omega_k||_2 dk\right)^2 ds\right)^{1/4}\\
    & \quad \quad \left( \int_0^t \left(\int_{\R} ||\partial_y^2 \phi_k||_\infty dk\right)^4 ds \right)^{1/4}\\
    &\quad \quad \left(\int_0^t \int_{|k| \gtrsim \nu} \langle c \lambda_k s \rangle^{2J} \langle k \rangle^{2m} \nu^{1/3} |k|^{2/3} ||\omega_k||_2^2 dk ds\right)^{1/2}\\
    &\lesssim \nu^{1/6} \nu^{-1/24} \D_\beta^{1/8} \nu^{-1/8} \D_\gamma^{1/8} \sup_{s \in [0,t]}\E^{1/4}(s) \nu^{-1/4} \D_\gamma^{1/4} \sup_{s \in [0,t]}\E^{1/4}(s) \mathcal{D}_\beta^{1/2}\\
    &\lesssim \nu^{-1/4} \D \sup_{s \in [0,t]}\E^{1/2}(s).
\end{split}
\end{equation}
To handle $T_{\beta, LH, \cdot, L, \cdot}^{y,1}$, we use that $|k| < \nu$ and $|k-k'| < |k'|/2$ implies $|k'| \lesssim \nu$ and hence $|k-k'| \lesssim \nu$, Thus we have even more simply by $|k| \approx |k'|$ and Lemma \ref{dy_dy_phi_to_energy},
\begin{equation}
\begin{split}
    |T_{\beta, LH, \cdot, L, \cdot}^{y,1}| &\lesssim \int_0^t \int_{|k| < \nu} \int_{\R} 1_{|k-k'| < |k'|/2} \langle  c \lambda_{k} s \rangle^{2J}\langle k  \rangle^{2m} B(k)^2|| k \omega_k||_2 ||\partial_y^2 \phi_{k-k'}||_\infty ||k' \omega_{k'}||_2 dk' dk ds\\
    &\lesssim \int_0^t \int_{|k| < \nu} \int_{\R} 1_{|k-k'| < |k'|/2} \langle  c \lambda_{k} s \rangle^{J} \langle k  \rangle^{m} B(k)|| k \omega_k||_2 ||\partial_y^2 \phi_{k-k'}||_\infty \\
    &\hphantom{\lesssim \int_0^t \int_{|k| < \nu}   \int_{\R}} \langle c \lambda_{k'} s \rangle^{J} \langle k'  \rangle^{m} B(k')||k' \omega_{k'}||_2 dk' dk ds\\
    &\lesssim \D_\beta^{1/2} \nu^{1/2} \sup_{s \in [0,t]}\E^{1/2}(s) \D_\beta^{1/2}.
\end{split}
\end{equation}

For the $HL$ terms, we first focus on the high $|k|$ frequency term, $T_{\beta, HL, \cdot, H, \cdot}^y$. Since $1 \leq |k| $ on the support of the integrand, we have
\begin{equation}\label{m>2/3_splitting}
    1_{|k| \geq 1}|k|^{-1/3} \lesssim 1_{|k| \geq 1} |k|^{1/6}.
\end{equation}
Now \eqref{m>2/3_splitting}, Lemma \ref{dy_dy_phi_to_energy}, interpolation in $k$, H\"older's inequality, and elliptic regularity allow us to compute
\begin{equation}\label{beta_high_high_HL_y}
\begin{split}
    |T_{\beta, HL, \cdot, H, \cdot}^{y,1}| &\lesssim \int_{|k|\geq 1} \int_{\R} 1_{ |k-k'| \geq |k'|/2} \langle  c \lambda_{k} s \rangle^{2J} \langle k  \rangle^{2m} \nu^{1/3} |k|^{-4/3} || k \omega_k||_2 ||\partial_y^2 \phi_{k-k'}||_\infty ||k' \omega_{k'}||_2 dk' dk\\
    &\lesssim \int_{|k|\geq 1} \int_{\R} 1_{|k-k'| \geq |k'|/2}\langle  c \lambda_{k} s \rangle^{J} \langle k  \rangle^{m}  \left(\nu^{1/6} || k^{1/3} \omega_k||_2\right)^{1/2} ||\omega_k||_2^{1/2} \langle  c \lambda_{k-k'} s \rangle^{J}\\
    &\hphantom{\lesssim \int_{|k|\geq 1} \int_{\R}}\langle k-k' \rangle^{m}  \nu^{1/4}|| \partial_y^2 \phi_{k-k'}||_\infty ||k' \omega_{k'}||_2 dk' dk\\
    &\lesssim \mathcal{D}_\beta^{1/4} \sup_{s \in [0,t]}\E^{1/4}(s) \D_\gamma^{1/4} \sup_{s \in [0,t]}\E^{1/4}(s) \nu^{-1/2} \D_\gamma^{1/2}.
\end{split}
\end{equation}

We now deal with $HL$ at $ \nu \lesssim |k| \leq 1$, meaning $T_{\beta, HL, \cdot, M, \cdot}^{y,1}$. For $T_{\beta, HL, \cdot, M, \cdot}^{y,1}$, we place $\omega_k$ in $L_y^\infty$, then place the $||\omega_k||_\infty$ factor in $L^1_k\left((-1,-\nu) \cup (\nu, 1)\right)$ in order to apply Cauchy-Schwarz and use the following:
\begin{equation}\label{L_infty_energy_medium}
    \begin{split}
        \sup_{s \in [0,t]}\int_{\nu \leq |k| < 1} |k|^{-2/3}||\omega_k||_\infty dk&\lesssim \sup_{s \in [0,t]}\int_{\nu \leq |k| < 1} \nu^{-1/6} |k|^{-1/2} ||\nu^{1/3} |k|^{-1/3} \partial_y \omega_k ||_2^{1/2} ||\omega_k||_2^{1/2} dk\\
        &\lesssim \nu^{-1/6} \ln(1/\nu)^{1/2} \sup_{s \in [0,t]}\mathcal{E}^{1/2}.
    \end{split}
\end{equation}
Equipped with \eqref{L_infty_energy_medium}, we estimate $T_{\beta, HL, \cdot, M, \cdot}^{y,1}$ using $|k-k'|^{-1/3} \lesssim |k|^{-1/3}$ and elliptic regularity:
\begin{equation}
\begin{split}
    |T_{\beta, HL, \cdot, M, \cdot}^{y,1}| &\lesssim \int_0^t \int_{\nu < |k| \leq 1} \int_{\R} 1_{|k-k'| \geq |k'|/2} \langle c \lambda_{k} s\rangle^{2J}\langle k  \rangle^{2m} \nu^{1/3} |k|^{-1/3} ||\omega_k||_\infty ||\partial_y^2 \phi_{k-k'}||_2||k' \omega_{k'}||_2 dk' dk ds\\
    &\lesssim \int_0^t \int_{\nu < |k| \leq 1} \int_{\R} 1_{|k-k'| \geq |k'|/2} \nu^{1/6}|k|^{-1/3} \langle c \lambda_{k} s\rangle^{J} ||\omega_k||_\infty \langle c \lambda_{k-k'}t\rangle^{J}\\
    & \hphantom{\lesssim \int_0^t \int_{\nu < |k| \leq 1} \int_{\R}} \langle k -k'  \rangle^{m}  \nu^{1/6}|k-k'|^{-1/3}|k-k'|^{1/3}||\partial_y^2 \phi_{k-k'}||_2\langle k' \rangle^{m} ||k' \omega_{k'}||_2 dk' dk ds\\
    &\lesssim \int_0^t \int_{\nu < |k| \leq 1} \int_{\R} 1_{|k-k'| \geq |k'|/2} \nu^{1/6}|k|^{-2/3} \langle c \lambda_{k} s\rangle^{J}||\omega_k||_\infty \langle c \lambda_{k-k'}t\rangle^{J}\\
    & \hphantom{\lesssim \int_0^t \int_{\nu < |k| \leq 1} \int_{\R}} \langle k -k'  \rangle^{m}  \nu^{1/6}|k-k'|^{1/3}||\partial_y^2 \phi_{k-k'}||_2\langle k'  \rangle^{m} ||k' \omega_{k'}||_2 dk' dk ds\\
    &\lesssim \ln(1/\nu)^{1/2} \sup_{s \in [0,t]}\mathcal{E}^{1/2}(s) \mathcal{D}_\beta^{1/2} \nu^{-1/2} \mathcal{D}_\gamma^{1/2}.
\end{split}
\end{equation}

For $T_{\beta, HL, \cdot, L, H'}^{y,1}$, we compute using a variant of Lemma \ref{dy_dy_phi_to_energy} (treating $||\omega_k||_\infty$ in \eqref{beta_y_HL_L_H'} similar to $||\partial_y^2 \phi_k||_\infty$ in Lemma \ref{dy_dy_phi_to_energy}), elliptic regularity, and Lemma \ref{beta_and_gamma}:
\begin{equation}\label{beta_y_HL_L_H'}
    \begin{split}
        |T_{\beta, HL, \cdot, L, H'}^{y,1}| &\lesssim \int_0^t \int_{|k|< \nu} \int_{|k'| \geq \nu} 1_{|k-k'| \geq |k'|/2} \langle c \lambda_{k} s\rangle^{2J}\langle k  \rangle^{2m} \nu^{-1} || k\omega_k||_\infty ||\partial_y^2 \phi_{k-k'}||_2||k' \omega_{k'}||_2 dk' dk ds\\
        &\lesssim \int_0^t \int_{|k|< \nu} \int_{|k'| \geq \nu} 1_{|k-k'| \geq |k'|/2} \langle c \lambda_{k} s\rangle^{J} ||\omega_k||_\infty \langle c \lambda_{k-k'} s\rangle^{J}\\
        &\hphantom{ \lesssim \int_0^t \int_{|k|< \nu} \int_{|k'| \geq \nu}} \langle k-k'\rangle^m |k-k'|^{1/3} ||\partial_y^2 \phi_{k-k'}||_2 \langle k' \rangle^m ||k'^{2/3} \omega_{k'}||_2 dk' dk ds\\
        &\lesssim \nu^{1/2} \sup_{s \in [0,t]}\mathcal{E}^{1/2}(s) \nu^{-1/6} \mathcal{D}_\beta^{1/2} \nu^{-1/3} \mathcal{D}_\gamma^{1/4} \mathcal{D}_\beta^{1/4}.
    \end{split}
\end{equation}

For the final $T_{\beta}^{y,1}$ term, namely $T_{\beta, HL, \cdot, L, L}^{y,1}$, we note that $|k|, |k'| \lesssim \nu$ implies $|k-k'| \lesssim \nu$, and so we have by Lemma \ref{dy_dy_phi_to_energy}:
\begin{equation}
    \begin{split}
        |T_{\beta, HL, \cdot, L, L}^{y,1}| &\lesssim \int_0^t \int_{|k|< \nu} \int_{|k'| < \nu} 1_{|k-k'| \geq |k'|/2} \langle c \lambda_{k} s\rangle^{2J}\langle k  \rangle^{2m} \nu^{-1} || k\omega_k||_2 ||\partial_y^2 \phi_{k-k'}||_\infty||k' \omega_{k'}||_2 dk' dk ds\\
        &\lesssim \int_0^t \int_{|k| < \nu} \int_{|k'| < \nu} 1_{|k-k'| \geq |k'|/2} \langle c \lambda_{k} s\rangle^{J} \langle k \rangle^m \nu^{-1/2}||k\omega_k||_2 \langle c \lambda_{k-k'} s\rangle^{J}\\
        &\hphantom{ \lesssim \int_{|k| < \nu} \int_{|k'| <\nu}} \langle k-k'\rangle^m  ||\partial_y^2 \phi_{k-k'}||_\infty \langle k' \rangle^m \nu^{-1/2}||k' \omega_{k'}||_2 dk' dk ds\\
        &\lesssim \mathcal{D}_\beta^{1/2} \nu^{1/2} \sup_{s \in [0,t]}\mathcal{E}^{1/2}(s) \mathcal{D}_\beta^{1/2}.
    \end{split}
\end{equation}

Now we turn to the $T_\beta^{y,2}$ term. We use the frequency decomposition
$$1 = 1_{|k-k'| < |k'|/2} + 1_{|k-k'| \geq |k'|/2}\left(1_{|k| \geq 1}  + 1_{\nu \leq |k| < 1} + 1_{|k| < \nu}1_{|k'| \geq \nu} + 1_{|k| < \nu}1_{|k'| < \nu}\right),$$
resulting in 
$$T_{\beta}^{y,2} \eqqcolon T_{\beta, LH}^{y,2} + \left(T_{\beta, HL, \cdot, H, \cdot}^{y,1} + T_{\beta, HL, \cdot, M, \cdot}^{y,2}+ T_{\beta, HL, \cdot, L, H'}^{y,2} + T_{\beta, HL, \cdot, L, L}^{y,2}\right).$$

For the $LH$ term, we have $|k-k'| < |k'|/2$, implying $|k| \approx |k'|$. Furthermore, we notice that $B(k)^2 k^2 \lesssim A(k) |k|$. Thus, by Lemmas \ref{beta_and_gamma} and \ref{low_frequency_lemma_phi}, together with Gagliardo-Nirenberg-Sobolev, elliptic regularity, and $m>1/2$:
\begin{equation}
\begin{split}
    |T_{\beta, LH}^{y,2}| &\lesssim \int_0^t \int_{\R} \int_{\R} 1_{|k-k'| < |k'|/2} \langle  c \lambda_{k} s \rangle^{2J}\langle k  \rangle^{2m} B(k)^2|| k \omega_k||_2 ||\partial_y \phi_{k-k'}||_\infty ||k' \partial_y \omega_{k'}||_2 dk' dk ds\\
    &\lesssim \int_0^t \int_{\R} \int_{\R} 1_{|k-k'| < |k'|/2} \langle  c \lambda_{k} s \rangle^{J} \langle k  \rangle^{m} B(k)^2|| k^2 \omega_k||_2 ||\partial_y \phi_{k-k'}||_\infty \\
    &\hphantom{\lesssim \int_{\R}   \int_{\R}} \langle c \lambda_{k'} s \rangle^{J} \langle k'  \rangle^{m} || \partial_y \omega_{k'}||_2 dk' dk ds\\
    &\lesssim \left(\int_0^t \int_\R\langle  c \lambda_{k} s \rangle^{J} \langle k  \rangle^{2m} (A(k) k)^2||\omega_k||_2^2 dk ds\right)^{1/2}\\
    &\quad \quad \quad \sup_{s \in [0,t]}\left(\int_{|k|< 1} ||\partial_y \phi_k||_\infty dk + \int_{|k| \geq 1} ||\partial_y \phi_k||_\infty dk\right) \nu^{-1/2}\mathcal{D}_\gamma^{1/2}\\
    &\lesssim \left(\mathcal{D}_\gamma^{1/4} \mathcal{D}_\beta^{1/4} + \nu^{1/2}\mathcal{D}_\beta^{1/2}\right) \sup_{s \in [0,t]}\left( \mathcal{E}_2^{1/2} + \left(\int_{|k| \geq 1} \langle k \rangle^{2m} ||\partial_y^2 \phi_k||_2 ||k \partial_y \phi_k||_2 dk \right)^{1/2} \right)\nu^{-1/2} \mathcal{D}_\gamma^{1/2}.\\
    &\lesssim \mathcal{D}^{1/2} \sup_{s \in [0,t]}\mathcal{E}^{1/2}(s) \nu^{-1/2} \mathcal{D}_\gamma^{1/2}.
\end{split}
\end{equation}

For the $HL$ terms, we have the by now familiar splitting into high, intermediate, and low $k$ frequencies. Beginning with the high frequency $T_{\beta, HL, \cdot, H, \cdot}^{y,2}$ term, we utilize \eqref{m>2/3_splitting} and $|k'| \lesssim |k-k'|$, alongside elliptic regularity and Gagliardo-Nirenberg-Sobolev to write
\begin{equation}\label{beta_high_high_HL_y_2}
\begin{split}
    |T_{\beta, HL, \cdot, H, \cdot}^{y,2}| &\lesssim \int_0^t \int_{|k|\geq 1} \int_{\R} 1_{ |k-k'| \geq |k'|/2} \langle  c \lambda_{k} s \rangle^{2J} \langle k  \rangle^{2m} \nu^{1/3} |k|^{-4/3} || k \omega_k||_2 ||\partial_y \phi_{k-k'}||_\infty ||k' \partial_y \omega_{k'}||_2 dk' dk ds\\
    &\lesssim \int_0^t \int_{|k|\geq 1} \int_{\R} 1_{|k-k'| \geq |k'|/2}\langle  c \lambda_{k} s \rangle^{J} \langle k  \rangle^{m}  \left((\nu^{1/6} || k^{1/3} \omega_k||_2\right)^{1/2} ||\omega_k||_2^{1/2} \langle  c \lambda_{k-k'} s \rangle^{J}\\
    &\hphantom{\lesssim \int_{|k|\geq 1} \int_{\R}}\langle k-k' \rangle^{m}  \nu^{1/4} || (k-k') \partial_y \phi_{k-k'}||_\infty ||\partial_y \omega_{k'}||_2 dk' dk ds.\\
    &\lesssim \int_0^t \int_{|k|\geq 1} \int_{\R} 1_{|k-k'| \geq |k'|/2}\langle  c \lambda_{k} s \rangle^{J} \langle k  \rangle^{m}  \left(\nu^{1/6} || k^{1/3} \omega_k||_2\right)^{1/2} ||\omega_k||_2^{1/2} \langle  c \lambda_{k-k'} s \rangle^{J}\\
    &\hphantom{\lesssim \int_{|k|\geq 1} \int_{\R}}\langle k-k' \rangle^{m}  \nu^{1/4} || \partial_y \omega_{k-k'}||_2^{1/2} ||\omega_{k-k'}||_2^{1/2} ||\partial_y \omega_{k'}||_2 dk' dk ds.\\
\end{split}
\end{equation}
Then interpolation in $k$ and in $k-k'$, alongside H\"older's inequality in time allows us to estimate \eqref{beta_high_high_HL_y_2}  as
\begin{equation}
    \begin{split}
        |T_{\beta, HL, \cdot, H, \cdot}^{y,1}| &\lesssim \D_\beta^{1/4} \sup_{s \in [0,t]}\E^{1/4}(s) \D_\gamma^{1/4} \sup_{s \in [0,t]}\E^{1/4}(s) \nu^{-1/2} \D_\gamma^{1/2}.
    \end{split}
\end{equation}

For the medium $k$ frequencies $T_{\beta, HL, \cdot, M, \cdot}^{y,2}$, we utilize that on the domain of integration $1 = |k|^{-1/6} |k|^{1/6} \lesssim |k|^{-1/6}|k-k'|^{1/6}$, alongside $|k-k'| \gtrsim |k'|$ and $\langle k \rangle \sim 1$, giving
\begin{equation}
\begin{split}
    |T_{\beta, HL, \cdot, M, \cdot}^{y,2}| &\lesssim \int_0^t \int_{\nu \leq |k| < 1} \int_{\R} 1_{ |k-k'| \geq |k'|/2} \langle  c \lambda_{k} s \rangle^{2J} \langle k  \rangle^{2m} \nu^{1/3} |k|^{-4/3} || k \omega_k||_2 ||\partial_y \phi_{k-k'}||_\infty ||k' \partial_y \omega_{k'}||_2 dk' dk ds\\
    &\lesssim \int_0^t \int_{\nu \leq |k| < 1} \int_{\R} 1_{|k-k'| \geq |k'|/2}|k|^{-1/2}\langle  c \lambda_{k} s \rangle^{J}  || \omega_k||_2 \langle  c \lambda_{k-k'} s \rangle^{J}\\
    &\hphantom{\lesssim \int_0^t \int_{\nu \leq |k| < 1} \int_{\R}}\langle k-k' \rangle^{m}  \nu^{1/3}|||k-k'|^{7/6} \partial_y \phi_{k-k'}||_\infty \langle k' \rangle^m|| \partial_y \omega_{k'}||_2 dk' dk ds.\\
\end{split}
\end{equation}
We then use Young's inequality to place the $k$ factors in $L^1$, while placing the $k-k'$ and $k'$ factors in $L^2$. Then by Lemma \ref{keydyphiestimate} and H\"older's inequality, we obtain
\begin{equation}
    |T_{\beta, HL, \cdot, M, \cdot}^{y,2}| \lesssim \ln(1/\nu)^{1/2}\sup_{s \in [0,t]}\mathcal{E}^{1/2}(s) \mathcal{D}_{\tau\alpha}^{1/2} \nu^{-1/2}\mathcal{D}_\gamma^{1/2}.
\end{equation}

Next we consider $T_{\beta, HL, \cdot, L, H'}^{y,2}$. Here we will also place the $k$ factors into $L^1$, this time in order to gain a factor of $\nu^{1/2}$ by H\"older. We also use $|k-k'| \gtrsim |k'| \geq \nu$ and Lemma \ref{keydyphiestimate}:
\begin{equation}
\begin{split}
    |T_{\beta, HL, \cdot, L, H'}^{y,2}| &\lesssim \int_0^t \int_{|k| < \nu} \int_{|k'| \geq \nu} 1_{ |k-k'| \geq |k'|/2} \langle  c \lambda_{k} s \rangle^{2J} \langle k  \rangle^{2m} \nu^{-1}|| k \omega_k||_2 ||\partial_y \phi_{k-k'}||_\infty ||k' \partial_y \omega_{k'}||_2 dk' dk ds\\
    &\lesssim \int_0^t \int_{|k| < \nu} \int_{|k'| \geq \nu} 1_{|k-k'| \geq |k'|/2}\langle  c \lambda_{k} s \rangle^{J}  || \omega_k||_2 \langle  c \lambda_{k-k'} s \rangle^{J}\\
    &\hphantom{\lesssim \int_0^t \int_{|k| < \nu} \int_{|k'| \geq \nu}}\langle k-k' \rangle^{m}  \nu^{-1/6}\nu^{1/6}|||k-k'| \partial_y \phi_{k-k'}||_\infty \langle k' \rangle^m|| \partial_y \omega_{k'}||_2 dk' dk ds.\\
    &\lesssim \int_0^t \int_{|k| < \nu} \int_{|k'| \geq \nu} 1_{|k-k'| \geq |k'|/2}\langle  c \lambda_{k} s \rangle^{J}  || \omega_k||_2 \langle  c \lambda_{k-k'} s \rangle^{J}\\
    &\hphantom{\lesssim \int_0^t \int_{|k| < \nu} \int_{|k'| \geq \nu}}\langle k-k' \rangle^{m} \nu^{-1/6} |||k-k'|^{7/6} \partial_y \phi_{k-k'}||_\infty \langle k' \rangle^m|| \partial_y \omega_{k'}||_2 dk' dk ds.\\
    &\lesssim \sup_{s \in [0,t]}\left(\int_{|k| < \nu }\langle  c \lambda_{k} s \rangle^{J}  || \omega_k||_2 dk\right) \nu^{-1/2} \mathcal{D}_{\tau \alpha}^{1/2} \nu^{-1/2}\mathcal{D}_\gamma^{1/2}\\
    &\lesssim \nu^{1/2} \sup_{s \in [0,t]}\mathcal{E}^{1/2}(s) \nu^{-1} \mathcal{D}.
\end{split}
\end{equation}
The final $\beta$ term is $T_{\beta, HL, \cdot, L, L}^{y,2}$. Here we use Lemma \ref{keydyphiestimate} and $|k|,|k'| < \nu$ to obtain
\begin{equation}
\begin{split}
    |T_{\beta, HL, \cdot, L, L}^{y,2}| &\lesssim \int_0^t \int_{|k| < \nu} \int_{|k'| < \nu} 1_{ |k-k'| \geq |k'|/2} \langle  c \lambda_{k} s \rangle^{2J} \langle k  \rangle^{2m} \nu^{-1}|| k \omega_k||_2 ||\partial_y \phi_{k-k'}||_\infty ||k' \partial_y \omega_{k'}||_2 dk' dk ds\\
    &\lesssim \int_0^t \int_{|k| < \nu} \int_{|k'| < \nu} 1_{|k-k'| \geq |k'|/2}\langle  c \lambda_{k} s \rangle^{J} \nu^{-1/2} ||  k \omega_k||_2 \langle  c \lambda_{k-k'} s \rangle^{J}\\
    &\hphantom{\lesssim \int_0^t \int_{|k| < \nu} \int_{|k'| < \nu}}\langle k-k' \rangle^{m}  |||k-k'|^{1/2} \partial_y \phi_{k-k'}||_\infty \langle k' \rangle^m|| \partial_y \omega_{k'}||_2 dk' dk ds.\\
    &\lesssim \mathcal{D}_\beta^{1/2} \sup_{s \in [0,t]}\mathcal{E}^{1/2}(s) \nu^{-1/2} \mathcal{D}_\gamma^{1/2}.
\end{split}
\end{equation}
This completes the $\beta$ terms.

\subsection{Bound on \texorpdfstring{$T_2$}{Supremum terms}}

We seek to bound
$$T_2 = \sup_{k} \int_0^t | \int_{\R} 4\textrm{Re}\langle \omega_k, (I+c_\tau \mathfrak{J}_k) \nabla^{\perp}_{k-k'} \phi_{k-k'} \cdot\nabla_{k'} \omega_{k'} \rangle dk'| ds.$$
When the $(k-k')$ derivative falls on $\phi_{k-k'}$, we compute using the triangle inequality, H\"older's inequality, Lemmas \ref{keyphiestimate} and \ref{low_frequency_lemma_phi}, and $m > 1/2$:
\begin{align}
        T_{2}^x &\lesssim \sup_{k} \int_0^t \int_{\R} ||\omega_k||_2 ||(k-k') \phi_{k-k'}||_\infty ||\partial_y \omega_k||_2 dk' ds\\
        &\lesssim \sup_{k} \int_{\R}  \sup_{s \in [0,t]}||\omega_k||_2(s) \left(\int_0^t||(k-k') \phi_{k-k'}||_\infty^2 ds \right)^{1/2} \left(\int_0^t||\partial_y \omega_k||_2^2 ds \right)^{1/2} dk'\\
        & \lesssim \sup_{k}\sup_{s \in [0,t]}(||\omega_k||_2)(s) \left(\int_\R \left(\int_0^t |k| ||\partial_y \phi_{k}||_2 ||k \phi_k||_2 ds \right)^{1/2} dk\right)\sup_{k} \left( \int_0^t||\nabla_k \omega_k||_2^2 ds \right)^{1/2}\\
        &\lesssim \sup_{s \in [0,t]}\mathcal{E}_2^{1/2}(s)\left(\int_\R\left(|k|^{-1/2} 1_{|k| \leq 1} + 1_{|k| > 1}\frac{\langle k \rangle^m}{\langle k \rangle^m}\right) \left( \int_0^t|k|^2  ||\nabla_k \phi_k||_2^2 ds \right)^{1/2} dk\right) \nu^{-1/2} \mathcal{D}_{\gamma,2}^{1/2}\\
        &\lesssim \sup_{s \in [0,t]}\mathcal{E}_2^{1/2}(s)\left(\mathcal{D}_{\tau,2}^{1/2} + \mathcal{D}_{\tau}^{1/2}\right) \nu^{-1/2} \mathcal{D}_{\gamma,2}^{1/2},
\end{align}
which handles the entire $T_2^x$ term without further decomposition. Note that we needed to treat the time integrals carefully in order to obtain the desired result. We now turn to when $\partial_y$ falls on $\phi_{k-k'}$, and we recall
\begin{equation}
    |T_2^y| \lesssim \sup_k \int_0^t |\int_\R \mathrm{Re}\langle (\omega_k, (I+c_\tau \mathfrak{J}_k) \partial_y \phi_{k-k'} ik' \omega_{k'}\rangle dk' | ds.
\end{equation}
We split into $T_{2}^y \eqqcolon T_{2, LH}^y + T_{2,HL}^y$, meaning we use the frequency decomposition
$$1 = 1_{|k-k'| < |k'|/2}+ 1_{|k-k'| \geq |k'|/2}.$$
In the $LH$ case (meaning $|k-k'| < |k'|/2$), we write $\omega_{k'} = \partial_y^2 \phi_{k'} - k'^2 \phi_{k'}$. Then we use the triangle inequality, integration by parts in $y$, and $|k| \approx |k'|$ to compute
\begin{equation}\label{T_2_LH_y}
    \begin{split}
        |T_{2,LH}^y| &\lesssim \sup_{k} \int_0^t \int_{\R} 1_{|k-k'| < |k'|/2} |\langle \omega_k, (I + c_\tau \mathfrak{J}_k)\partial_y \phi_{k-k'} k' \omega_k\rangle| dk' ds\\
        & \lesssim \sup_{k} \int_0^t \int_{\R} 1_{|k-k'| < |k'|/2} \biggl(|\langle \omega_k, (I + c_\tau \mathfrak{J}_k)\partial_y \phi_{k-k'} k'^3 \phi_{k'}\rangle|\\
        &\hphantom{\lesssim \sup_{k} \int_0^t \int_{k'}}+ |\langle \partial_y\omega_k, (I + c_\tau \mathfrak{J}_k)\partial_y \phi_{k-k'} k' \partial_y\phi_{k'}\rangle|\\
        &\hphantom{\lesssim \sup_{k} \int_0^t \int_{k'}} + |\langle \omega_k, (I + c_\tau \mathfrak{J}_k)\partial_y^2 \phi_{k-k'} k' \partial_y\phi_{k'}\rangle|\biggr) dk' ds\\
        &\lesssim \sup_{k} \int_0^t\int_{\R} 1_{|k-k'| < |k'|/2} \biggl( ||k\omega_k||_2 ||\partial_y \phi_{k-k'}||_\infty |k'|||k' \phi_{k'}||_2\\
        &\hphantom{\lesssim \sup_{k} \int_0^t\int_{k'}}+||\partial_y\omega_k||_2 ||\partial_y \phi_{k-k'}||_\infty |k'|||\partial_y\phi_{k'}||_2\\
        &\hphantom{\lesssim \sup_{k} \int_0^t\int_{k'}}+||\omega_k||_2 ||\partial_y^2 \phi_{k-k'}||_\infty |k'|||\partial_y\phi_{k'}||_2\biggr) dk'ds\\
        &\lesssim \sup_{k} \int_0^t\int_{\R} 1_{|k-k'| < |k'|/2} \biggl( ||\nabla_k\omega_k||_2 ||\partial_y \phi_{k-k'}||_\infty |k'|||\nabla_{k'} \phi_{k'}||_2\\
        &\hphantom{\lesssim \sup_{k} \int_0^t\int_{k'}}+||\omega_k||_2 ||\partial_y^2 \phi_{k-k'}||_\infty |k'|||\nabla_{k'}\phi_{k'}||_2\biggr) dk' ds.
    \end{split}
\end{equation}
We split the first term of \eqref{T_2_LH_y} into $|k-k'| \geq 1$ and $|k-k'| < 1$. When $|k-k'| \geq 1$, we have by Young's inequality, H\"older's inequality, Minkowski's inequality, Gagliardo-Nirenberg-Sobolev, and elliptic regularity:
\begin{equation}
\begin{split}
    \sup_{k} \int_0^t\int_{\R} &1_{|k-k'| < |k'|/2} 1_{|k-k'| > 1 }||\nabla_k\omega_k||_2 ||\partial_y \phi_{k-k'}||_\infty |k'|||\nabla_{k'} \phi_{k'}||_2 dk'ds \\
    &\lesssim \sup_{k} \left(\int_0^t ||\nabla_k\omega_k||_2^2 ds \right)^{1/2}\sup_k \left(\int_0^t \left(\int_{\R} || |k-k'| \partial_y \phi_{k-k'}||_\infty  |k'|||\nabla_{k'} \phi_{k'}||_2 dk'\right)^2 ds\right)^{1/2}\\
    &\lesssim \nu^{-1/2} \D_{\gamma, 2}^{1/2}  \left(\int_0^t \left(\sup_k \int_\R  || |k-k'| \partial_y \phi_{k-k'}||_\infty  |k'|||\nabla_{k'} \phi_{k'}||_2 dk'\right)^2 ds\right)^{1/2}\\
    &\lesssim \nu^{-1/2} \D_{\gamma, 2}^{1/2} \left( \int_0^t \left(\int_\R || \omega_k||_2^2 dk \right) \left( \int_\R |k|^2 || \nabla_k \phi_k||_2^2 dk\right)ds\right)^{1/2}\\
    &\lesssim \nu^{-1/2} \D_{\gamma, 2}^{1/2} \sup_{s \in [0,t]}\E^{1/2}(s) \D_\tau^{1/2}.
\end{split}
\end{equation}
Meanwhile, the frequencies with $|k-k'| < 1$ are treated with a variant of Lemma \ref{low_frequency_lemma_phi}, H\"older's inequalty, and Minkowski's inequality as follows:
\begin{equation}\label{left_side}
\begin{split}
    \sup_{k} \int_0^t\int_{\R} &1_{|k-k'| < |k'|/2} 1_{|k-k'| < 1}||\nabla_k\omega_k||_2 ||\partial_y \phi_{k-k'}||_\infty |k'|||\nabla_{k'} \phi_{k'}||_2 dk'ds\\
    &\lesssim \sup_{k} \left(\int_0^t ||\nabla_k\omega_k||_2^2 \right)^{1/2} \sup_k \int_{\R} 1_{|k-k'| < 1} \sup_{s \in [0,t]}||\partial_y \phi_{k-k'}||_\infty \left( \int_0^t |k'|^2||\nabla_{k'} \phi_{k'}||_2^2 ds\right)^{1/2} dk'
    \\
    &\lesssim \nu^{-1/2} \D_{\gamma, 2}^{1/2} \left(\int_{|k| < 1}\sup_{s \in [0,t]}||\partial_y \phi_{k}||_\infty dk\right) \sup_k\left( \int_0^t |k|^2||\nabla_{k} \phi_{k}||_2^2 ds\right)^{1/2}\\
    &\lesssim \nu^{-1/2} \D_{\gamma, 2}^{1/2}\left( \int_{|k| < 1} |k|^{-1/2} dk \right) \sup_k \sup_{s \in [0,t]}\left(|k|^{1/2} ||\partial_y \phi_k||_\infty\right) \D_{\tau,2}^{1/2}\\
    &\lesssim  \nu^{-1/2} \D_{\gamma, 2}^{1/2}  \sup_{s \in [0,t]} \E_2(s)^{1/2} \D_{\tau,2}^{1/2}.
\end{split}
\end{equation}
To treat the remaining portion of \eqref{T_2_LH_y} we start by employing H\"older's inequality, Young's inequality, and Gagliardo-Nirenberg-Sobolev to find 
\begin{equation}\label{right_side_start}
    \begin{split}
        \sup_{k} &\int_0^t \int_{\R} 1_{|k-k'| < |k'|/2}||\omega_k||_2 ||\partial_y^2 \phi_{k-k'}||_\infty |k'|||\nabla_{k'}\phi_{k'}||_2 dk' ds\\
        &\lesssim \sup_k \sup_{s\in[0,t]} ||\omega_k||_2 \sup_k \int_\R 1_{|k-k'| < |k'|/2} \left(\int_0^t ||\partial_y^2 \phi_{k-k'}||_\infty^2 ds\right)^{1/2}\left( \int_0^t |k'|^2 ||\nabla_{k'} \phi_{k'}||_2^2ds\right)^{1/2} dk'\\
        &\lesssim \sup_{s \in [0,t]}\mathcal{E}_2^{1/2}(s)\biggl(\int_\R \left(\int_0^t ||\partial_y^3 \phi_k||_2 || \partial_y^2\phi_k||_2 ds \right)^{1/2} dk \biggr)\mathcal{D}_{\tau,2}^{1/2}.
    \end{split}
\end{equation}
Then, we note that by elliptic regularity and interpolation (with $m > 1/2$):
\begin{equation}
    \begin{split}
                \int_\R \left(\int_0^t ||\partial_y^3 \phi_k||_2 || \partial_y^2\phi_k||_2 ds \right)^{1/2} dk&\lesssim\int_\R \left(\int_0^t ||\partial_y \omega_k||_2 || \omega_k||_2 ds \right)^{1/2} dk \\
                &\lesssim \int_\R (|k|^{-1/2}1_{|k| < 1} + 1_{|k| \geq 1}) \left(\int_0^t ||\nabla_k \omega_k||_2^2 ds \right)^{1/2} dk\\
                &\lesssim \nu^{-1/2}(\D_{\gamma, 2}^{1/2} + \D_\gamma^{1/2}),
    \end{split}
\end{equation}
so that
\begin{equation}\label{right_side}
    \begin{split}
        \sup_{k} &\int_0^t \int_{\R} 1_{|k-k'| < |k'|/2}||\omega_k||_2 ||\partial_y^2 \phi_{k-k'}||_\infty |k'|||\nabla_{k'}\phi_{k'}||_2 dk' ds\\
        &\lesssim \sup_{s \in [0,t]}\mathcal{E}_2^{1/2}(s) \nu^{-1/2}(\D_{\gamma,2}^{1/2} + \D_\gamma^{1/2})\D_{\tau,2}^{1/2}.
    \end{split}
\end{equation}
Combining \eqref{T_2_LH_y}, \eqref{left_side}, and \eqref{right_side} yields
\begin{equation}
    \begin{split}
        |T_{2,LH}^y|
        &\lesssim \nu^{-1/2} \mathcal{D}_{\gamma,2}^{1/2} \sup_{s \in [0,t]}\mathcal{E}^{1/2} (s)\mathcal{D}_{\tau,2}^{1/2} + \sup_{s \in [0,t]}\mathcal{E}_2^{1/2} \nu^{-1/2}(\D_{\gamma,2}^{1/2} + \D_\gamma^{1/2})\D_{\tau,2}^{1/2}.
    \end{split}
\end{equation}
Our final estimate is for $|k'|/2 \leq |k-k'|$. Since $|k'| \lesssim |k-k'|$ we have by H\"older's inequality, $m > 1/2$, Gagliardo-Nirenberg-Sobolev:
\begin{align}
        |T_{2,HL}^y| &\lesssim \sup_{k} \int_0^t \int_{\R} 1_{|k'|/2 \leq |k-k'|} ||\omega_k||_2 |||k-k'|\partial_y \phi_{k-k'}||_2||\omega_{k'}||_\infty dk'\\
        & \lesssim \sup_{k} \int_{\R} 1_{|k'|/2 \leq |k-k'| } \sup_{s \in [0,t]}||\omega_k||_2 \left(\int_0^t |||k-k'|\partial_y \phi_{k-k'}||_2^2 ds\right)^{1/2}\\
        &\quad \quad \quad \quad \left(\int_0^t|k'|^{-1}||\partial_y \omega_{k'}||_2||k' \omega_{k'}||_2 ds \right)^{1/2} dk' ds\\
        &\lesssim \sup_{s \in [0, t]}\mathcal{E}_2^{1/2}(s) \mathcal{D}_{\tau,2}^{1/2} \left(\int_{\R} (|k|^{-1/2}1_{|k| < 1} + 1_{|k| \geq 1}) \left(\int_0^t ||\nabla_k \omega_k||_2^2 ds \right)^{1/2} dk\right)\\
        &\lesssim \sup_{s \in [0, t]}\mathcal{E}_2^{1/2}(s) \mathcal{D}_{\tau,2}^{1/2}\nu^{-1/2}(\mathcal{D}_{\gamma, 2}^{1/2} + \mathcal{D}_{\gamma}^{1/2}).
\end{align}
This completes the estimates on $T_2$, and hence the proofs of Lemmas \ref{nonlinear_lemma} and \ref{bootstrap}.

\section{Modifications for the Half-Plane}\label{half_modifications}

The proof of Theorem \ref{half_theorem} is almost identical to that of Theorem \ref{main_theorem}. The decay rate remains $\lambda_{k} = \lambda^{pl}(\nu, k)$, and we retain the $L^\infty_k$ frequency control. The definitions of $E_k$, $D_k$, $\mathcal{E}$, and $\mathcal{D}$ remain unchanged from Section \ref{outline_section}, barring that the $y$-norms and inner products now have domain $D = [0,\infty)$. Correspondingly, the linearized system is now
\begin{equation}\label{linear_half_system}
    \begin{cases}
        \partial_t \omega_k + ik y \omega_k - \nu \Delta_k \omega_k = 0,\\
        -\Delta_k \phi_k = \omega_k,\\
        \phi_k (y = 0) = \omega_k (y = 0) = 0.
    \end{cases}
\end{equation}
The only true difference is the definition of $\mathfrak{J}_k$, which we now re-define as
$$\mathfrak{J}_k [ f](y) \coloneqq |k| \textrm{p.v.} \frac{k}{|k|} \int_{0}^\infty \frac{1}{2i(y-y')} G_k(y,y') f(y') dy',$$
with
\begin{equation}
    \begin{split}
        G_k(y,y') &\coloneqq -\frac{1}{k}\left(e^{-|k||y-y'|} - e^{-|k|(y+y')}\right)\\
        &=-\frac{1}{|k|} \begin{cases}
        e^{-|k|y'} \sinh(ky), & y \leq y',\\
        e^{-|k|y} \sinh(ky'), & y' \leq y.\end{cases}
    \end{split}
\end{equation}
Importantly, $\mathfrak{J}_k$ is no longer a Fourier multiplier, and is more similar to the operator $\mathfrak{J}_k$ on the channel. We shall prove that $\mathfrak{J}_k$ remains a bounded operator $L^2([0,\infty)) \to L^2([0,\infty))$ which is uniformly bounded in $k$. Additionally, we will show that $\mathfrak{J}_k$ maps $H^1$ functions to $H^1$ functions, and that although $\mathfrak{J}_k$ has a non-trivial commutator with $\partial_y$, the commutator obeys the simple bound $||[\partial_y, \mathfrak{J}_k]||_{L^2 \to L^2} \lesssim |k|$. These statements, and their proofs, parallel much of the work done in \cite{bedrossian2023stability} for $\mathfrak{J}_k$ on the finite channel. 

\begin{lemma}\label{boundedness_of_J_k_half}
    For all $k \in \R \setminus \{0\}$, the operator $\mathfrak{J}_k$ extends to a bounded linear operator $L^2([0,\infty)) \to L^2([0,\infty))$ satisfying the uniform bound
    $$||\mathfrak{J}_k||_{L^2 \to L^2} \lesssim 1,$$
    where the implicit constant is uniform in $k$.
\end{lemma}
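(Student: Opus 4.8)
The plan is to use the explicit image-method form of $G_k$ to split $\mathfrak{J}_k$ into a ``main'' piece that is a restriction of the planar operator treated in Lemma~\ref{boundedness_of_J_k}, plus a ``reflection'' piece which, once the exponential weights are pulled out, is a conjugated one-sided Hilbert transform. Since the kernel of $\mathfrak{J}_k$ depends on $k$ only through $|k|$ and an overall factor $\sgn(k)$, it suffices to treat $k>0$. Inserting the formula for $G_k$ (the prefactor $|k|\cdot(k/|k|)=k$ cancels the $1/k$ in $G_k$) gives, for $f\in\mathcal{S}([0,\infty))$ and up to an overall sign that is immaterial for the operator norm,
\[
\mathfrak{J}_k f(y)=\mathrm{p.v.}\int_0^\infty K_k(y,y')f(y')\,dy',\qquad K_k(y,y')=\frac{e^{-k|y-y'|}-e^{-k(y+y')}}{2i(y-y')}.
\]
Decompose $K_k=K_k^{(1)}+K_k^{(2)}$ with $K_k^{(1)}(y,y')=\dfrac{e^{-k|y-y'|}}{2i(y-y')}$ and $K_k^{(2)}(y,y')=\dfrac{-e^{-k(y+y')}}{2i(y-y')}$. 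Each numerator is smooth, so on Schwartz data each piece is an absolutely convergent principal value and the split of the integral is legitimate; density then carries everything to $L^2$. Write $T_1,T_2$ for the two operators, so $\mathfrak{J}_k=T_1+T_2$.

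For $T_1$: its kernel $K_k^{(1)}$ is, up to sign, the restriction to $[0,\infty)\times[0,\infty)$ of the kernel of the planar operator $\mathfrak{J}_k$ on $\R$, which is the Fourier multiplier $\arctan(\eta/k)$ of norm $\le\pi/2$ as recorded just before Lemma~\ref{boundedness_of_J_k}. Hence extending $f$ by zero to $\R$, applying the planar operator, and restricting back yields $\|T_1 f\|_{L^2([0,\infty))}\le\tfrac{\pi}{2}\|f\|_{L^2([0,\infty))}$, uniformly in $k$. For $T_2$: the key observation is that $e^{-k(y+y')}=e^{-ky}e^{-ky'}$ factors, so $T_2=-M_k H_0 M_k$, where $M_k$ is multiplication by $e^{-ky}$, a contraction on $L^2([0,\infty))$ since $0<e^{-ky}\le 1$ for $y\ge 0$, $k>0$, and $H_0 g(y)=\mathrm{p.v.}\int_0^\infty\frac{g(y')}{2i(y-y')}\,dy'$ is the one-sided Hilbert transform. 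The latter is bounded on $L^2([0,\infty))$ with norm $\le\pi/2$, obtained from $\|H\|_{L^2(\R)\to L^2(\R)}=1$ by composing with the zero-extension $L^2([0,\infty))\hookrightarrow L^2(\R)$ and restriction $L^2(\R)\to L^2([0,\infty))$. Therefore $\|T_2\|_{L^2\to L^2}\le\|M_k\|^2\|H_0\|\le\pi/2$, again uniformly in $k$, and adding the two estimates gives $\|\mathfrak{J}_k\|_{L^2([0,\infty))\to L^2([0,\infty))}\le\pi$ for all $k\neq 0$.

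The argument is short, and the only points requiring attention are bookkeeping rather than hard analysis: justifying that the principal value distributes over the splitting $K_k=K_k^{(1)}+K_k^{(2)}$, even though the image term $K_k^{(2)}$ retains the full diagonal singularity (handled by first working on Schwartz functions), and recognizing $T_2$ as a weighted one-sided Hilbert transform, which bypasses any need to verify a Calder\'on-Zygmund cancellation condition for the combined kernel $K_k$ (whose $\min(y,y')$-type dependence makes the direct kernel route clumsier). The analogous $H^1$ and commutator statements for this $\mathfrak{J}_k$, noted after the lemma, will be somewhat more delicate, since there $\partial_y$ no longer commutes with the reflection term, but for the $L^2$ bound stated here there is no real obstacle.
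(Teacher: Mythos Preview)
Your proof is correct and takes a genuinely different route from the paper. The paper follows the same template it uses for the channel: it decomposes $\tfrac{1}{|k|}\mathfrak{J}_k$ into a far-from-diagonal piece $\mathcal{T}_1$, a near-diagonal piece $\mathcal{T}_2$ with $G_k(y,y)$ frozen, and a near-diagonal remainder $\mathcal{T}_3$, then applies Schur's test to $\mathcal{T}_1,\mathcal{T}_3$ and the (truncated) Hilbert transform bound to $\mathcal{T}_2$. Your argument instead exploits the explicit image-method structure of the half-space Green's function: the ``free'' piece $K_k^{(1)}$ is the restriction of the planar $\mathfrak{J}_k$, already known to be the bounded Fourier multiplier $\arctan(\eta/k)$, while the reflection piece $K_k^{(2)}$ factors as $-M_k H_0 M_k$ thanks to the separability $e^{-k(y+y')}=e^{-ky}e^{-ky'}$, reducing everything to the contractivity of multiplication by $e^{-ky}$ and the $L^2$ boundedness of the one-sided Hilbert transform. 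Your approach is shorter and yields an explicit constant, but it relies on the clean image form of $G_k$ on the half-line; the paper's Schur-test decomposition is more laborious but is the same machinery that handles the channel case, where $G_k$ does not split so conveniently. One minor wording issue: neither $K_k^{(1)}$ nor $K_k^{(2)}$ gives an \emph{absolutely} convergent integral at the diagonal---both carry the full $1/(y-y')$ singularity---but what you actually need and use is that each principal value exists separately on Schwartz data, which is clear since each kernel is a smooth multiple of $1/(y-y')$.
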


\begin{proof}

    We decompose as in \cite{bedrossian2023stability}:
    \begin{align}
            \frac{1}{|k|}\mathfrak{J}_k f(y) &= \frac{k}{|k|}\int_{[0,\infty)\setminus (y-\frac{1}{|k|},y+\frac{1}{|k|})} G_k(y,y') \frac{f(y')}{2i(y-y')}dy'\\
            &\quad+ \frac{k}{|k|} G_k(y,y) \textrm{p.v.} \int_{\max(y- \frac{1}{|k|},0)}^{y+\frac{1}{|k|}} \frac{f(y')}{2i(y-y')}dy'\\
            &\quad+ \frac{k}{|k|} \textrm{p.v.} \int_{\max(y- \frac{1}{|k|},0)}^{y+\frac{1}{|k|}} \left(G_k(y,y') - G_k(y,y)\right)\frac{f(y')}{2i(y-y')}dy'\\
            &\eqqcolon \mathcal{T}_1 + \mathcal{T}_2 + \mathcal{T}_3.
        \end{align}
    Notice that there is no principal value in the $\mathcal{T}_1$ term due to the removal of the singularity from the domain of integration. This splitting is to enable us to apply Schur's test on $\mathcal{T}_1$ and $\mathcal{T}_3$, while the bound on $\mathcal{T}_2$ will be based on boundedness of the Hilbert transform.
    
    We focus first on $\mathcal{T}_1$. Splitting the integral into two parts, we define
    $$\mathcal{T}_{1, \leq} \coloneqq \frac{k}{|k|} \int_0^{y - \frac{1}{k}} G_k(y,y') \frac{f(y')}{2i(y-y')} dy', \; \; \;\mathcal{T}_{1, \geq} \coloneqq \frac{k}{|k|} \int_{y + \frac{1}{k}}^{\infty} G_k(y,y') \frac{f(y')}{2i(y-y')} dy'.$$ Define the kernels
    $$K_k^{(\leq)}(y,y') \coloneqq \frac{k}{|k|} \frac{G_k(y,y')}{2i(y-y')} 1_{0 \leq y' \leq y - \frac{1}{|k|}}(y,y'), \; \; \; K_k^{(\geq)}(y,y') \coloneqq \frac{k}{|k|} \frac{G_k(y,y')}{2i(y-y')} 1_{0 \leq y + \frac{1}{k} \leq y'}(y,y'),$$
    so that by Schur's test we have
    \begin{equation}
        \begin{split}
            ||\mathcal{T}_{1, \leq} f||_2 &\lesssim \left(||K_k^{(\leq)}||_{L^\infty_y L_{y'}^1} + ||K_k^{(\leq)}||_{L^\infty_{y'} L_{y}^\infty}\right)||f||_2,\\
            ||\mathcal{T}_{1, \geq} f||_2 &\lesssim \left(||K_k^{(\geq)}||_{L^\infty_y L_{y'}^1} + ||K_k^{(\geq)}||_{L^\infty_{y'} L_{y}^\infty}\right)||f||_2.
        \end{split}
    \end{equation}

    We tackle the ``$\leq$" case first. Fix a $y$ such that $y - \frac{1}{|k|} \geq 0$ (otherwise this case is trivial). Then we have using the change of variables $z = |k|(y'-y)$:
    \begin{equation}
        \begin{split}
            ||K_k^{(\leq)}(y, \cdot)||_{L^1_{y'}} & \lesssim \int_0^\infty \frac{1}{|k|} \frac{1}{|y-y'|} e^{-|k|y} \sinh(|k| y') 1_{0 \leq y' \leq y - \frac{1}{|k|}}(y,y') dy' \\
            &\lesssim \frac{1}{|k|} \int_0^{y-\frac{1}{|k|}} \frac{e^{-|k||y-y'|}}{|y-y'|} dy'\\
            &\leq \frac{1}{|k|} \int_{-\infty}^{y-\frac{1}{|k|}} \frac{e^{-|k||y-y'|}}{|y-y'|} dy'\\
            &\leq \frac{1}{|k|} \int_{-\infty}^{-1} \frac{e^{-|z|}}{z} dz \lesssim \frac{1}{|k|},
        \end{split}
    \end{equation}
    which implies that $||K_k^{(\leq)}||_{L^\infty_{y} L^1_{y'}} \lesssim \frac{1}{|k|}$. Now for a fixed $y' \geq 0$, we compute
    \begin{equation}
        \begin{split}
            ||K_k^{(\leq)}(\cdot,y')||_{L^1_{y}} & \lesssim \int_0^\infty \frac{1}{|k|} \frac{1}{|y-y'|} e^{-|k|y} \sinh(|k| y') 1_{0 \leq y' \leq y - \frac{1}{|k|}}(y,y') dy\\
            &\lesssim \frac{1}{|k|} \int_{y' + 1/k}^\infty \frac{e^{-|k||y-y'|}}{|y-y'|} dy\\
            & = \frac{1}{|k|} \int_{1}^\infty \frac{e^{-|z|}}{|z|} dz \lesssim \frac{1}{|k|},
        \end{split}
    \end{equation}
    implying that $||K_k^{(\leq)}||_{L_{y'}^{\infty}L_y^1} \lesssim \frac{1}{|k|}$, which gives the desired estimate on $||\mathcal{T}_{1, \leq}||_{L_2 \to L^2}$.The same type of argument, based on change of variables, gives $||K_k^{(\geq)}||_{L^\infty_y L_{y'}^1} + ||K_k^{(\geq)}||_{L^\infty_{y'} L_{y}^\infty} \lesssim \frac{1}{|k|}$, and hence $||\mathcal{T}_1|| \lesssim \frac{1}{|k|}$.\\

    We turn now to estimating $\mathcal{T}_2$. Note that $||G_k(y,y)||_{L^\infty_y} \lesssim \frac{1}{|k|}$. 
    Thus by boundedness of the Hilbert transform and uniform boundedness of the truncated Hilbert transform:
    \begin{equation}
        \begin{split}
            ||\mathcal{T}_2 f||_2 &\lesssim ||G_k(y,y)||_\infty || \textrm{p.v} \int_{\max(y- \frac{1}{|k|},0)}^{y+\frac{1}{|k|}} \frac{f(y')}{y-y'} dy' ||_2 \lesssim \frac{1}{|k|} ||f||_2.
        \end{split}
    \end{equation}

    The final step is to estimate $\mathcal{T}_3$. This proceeds via Schur's test:
    \begin{equation*}
        \begin{split}
            \int_{y-1/k}^{y+1/k} 1_{[0,\infty)} \frac{|G_k(y,y') - G_k(y,y)|}{|y-y'|}dy' &\lesssim \sup_{z,y' \in [0,\infty), |z-y'| \leq \frac{1}{|k|}} \left(\frac{|G_k(z,y') - G_k(z,z)|}{|y-y'|}\right)\int_{y- \frac{1}{|k|}}^{y + \frac{1}{|k|}} dy' \lesssim \frac{1}{|k|},\\
            \int_{y'-1/k}^{y'+1/k} 1_{[0,\infty)} \frac{|G_k(y,y') - G_k(y,y)|}{|y-y'|}dy &\sup_{y,z \in [0,\infty), |y-z| \leq \frac{1}{|k|}} \left(\frac{|G_k(y,z) - G_k(y,y)|}{|y-y'|}\right)\int_{y'- \frac{1}{|k|}}^{y' + \frac{1}{|k|}} dy \lesssim \frac{1}{|k|},\\
        \end{split}
    \end{equation*} where we have used the fact that
    $$\sup_{y,y' \in [0,\infty), |y-y'| \leq \frac{1}{|k|}} \left|\frac{|G_k(y,y') - G_k(y,y)|}{|y-y'|}\right| \lesssim \sup_{y,y' \in [0,\infty), |y-y'| \leq \frac{1}{|k|}} e^{|k||y-y'|} \lesssim 1.$$    This concludes the proof of Lemma \ref{boundedness_of_J_k_half}.
\end{proof}

Another important distinction between $\mathfrak{J}_k$ on the half-plane and $\mathfrak{J}_k$ on the full plane, is that on the half-plane we have a non-trivial commutator with $\partial_y$.

\begin{lemma}\label{commutator_estimate_of_J_k_half}
    If $f \in H^1$, then $\mathfrak{J}_k(f) \in H^1$. Furthermore, we have
    $$[\partial_y, \mathfrak{J}_k]f(y) = -|k| \mathrm{p.v} \int_{0}^\infty \frac{e^{-|k|(y+y')}}{i(y-y')} f(y') dy',$$
    which obeys the bound
    $$||[\partial_y, \mathfrak{J}_k]||_{L^2 \to L^2} \lesssim |k|.$$
\end{lemma}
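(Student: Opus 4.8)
The plan is to compute the commutator $[\partial_y, \mathfrak{J}_k]$ directly by differentiating the defining integral, isolate the boundary-dependent piece of the kernel, and then estimate the resulting operator. First I would write $\mathfrak{J}_k$ using the explicit Green's function on $[0,\infty)$, namely $G_k(y,y') = -\frac{1}{k}\left(e^{-|k||y-y'|} - e^{-|k|(y+y')}\right)$, so that
\begin{equation}
\mathfrak{J}_k[f](y) = |k|\,\mathrm{p.v.}\,\frac{k}{|k|}\int_0^\infty \frac{G_k(y,y')}{2i(y-y')} f(y')\,dy'.
\end{equation}
The key observation is that the full-line piece $-\frac{1}{k}e^{-|k||y-y'|}$ yields, after dividing by $y-y'$, exactly the convolution kernel of the \emph{planar} $\mathfrak{J}_k$, which commutes with $\partial_y$ (as noted after Lemma \ref{boundedness_of_J_k}); hence that contribution drops out of the commutator entirely. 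Only the image-charge term $\frac{1}{k}e^{-|k|(y+y')}$ survives. So I would split $\mathfrak{J}_k = \mathfrak{J}_k^{\mathrm{pl}} + \mathfrak{R}_k$, where $\mathfrak{R}_k[f](y) = \frac{k}{|k|}\,\mathrm{p.v.}\int_0^\infty \frac{|k|}{2i(y-y')}\cdot\frac{1}{k}e^{-|k|(y+y')} f(y')\,dy'$, note $[\partial_y,\mathfrak{J}_k^{\mathrm{pl}}]=0$, and reduce to computing $[\partial_y,\mathfrak{R}_k]$.

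Next I would differentiate $\mathfrak{R}_k[f]$ in $y$. Writing $\mathfrak{R}_k[f](y) = \frac{\sgn(k)}{2i}\,\mathrm{p.v.}\int_0^\infty \frac{e^{-|k|(y+y')}}{y-y'} f(y')\,dy'$, the factor $e^{-|k|y}$ pulls out, giving $\mathfrak{R}_k[f](y) = \frac{\sgn(k)}{2i}e^{-|k|y}\,\mathrm{p.v.}\int_0^\infty \frac{e^{-|k|y'}}{y-y'}f(y')\,dy'$. Differentiating: one term is $-|k|$ times $\mathfrak{R}_k[f](y)$ itself (the derivative hitting the prefactor $e^{-|k|y}$), which is exactly $\mathfrak{R}_k[\partial_y f]$ would \emph{not} equal — I must be careful and instead differentiate under the principal-value integral, handling the $y$-dependence inside $\frac{1}{y-y'}$ and the boundary-type contribution at $y'=y$. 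The cleanest route is to verify the claimed formula by a density/duality argument: for $f\in H^1$, write $\langle [\partial_y,\mathfrak{J}_k]f, g\rangle = -\langle \mathfrak{J}_k f, \partial_y g\rangle - \langle \mathfrak{J}_k \partial_y f, g\rangle$ for test $g$, integrate by parts inside the kernel representation, and collect the boundary terms at $y'=0$ and the distributional contribution from $\partial_y \frac{1}{y-y'} = -\partial_{y'}\frac{1}{y-y'}$; the off-diagonal smoothness of $e^{-|k|(y+y')}$ means no $\delta$-function survives, and the $y'=0$ boundary term produces precisely $-|k|\,\mathrm{p.v.}\int_0^\infty \frac{e^{-|k|(y+y')}}{i(y-y')}f(y')\,dy'$, matching the stated identity. (This mirrors the computation in \cite{bedrossian2023stability} for the channel $\mathfrak{J}_k$.)

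Finally, to prove the bound $\|[\partial_y,\mathfrak{J}_k]\|_{L^2\to L^2}\lesssim|k|$, I would estimate the operator $N_k[f](y) := \mathrm{p.v.}\int_0^\infty \frac{e^{-|k|(y+y')}}{i(y-y')}f(y')\,dy'$ and show $\|N_k\|_{L^2\to L^2}\lesssim 1$; then $[\partial_y,\mathfrak{J}_k] = -|k|\,N_k$ gives the claim. For $N_k$ I would again split off the near-diagonal region $|y-y'|\le 1/|k|$ and the far region, exactly as in the proof of Lemma \ref{boundedness_of_J_k_half}: on the far region Schur's test applies after the change of variables $z=|k|(y-y')$, using $e^{-|k|(y+y')}\le e^{-|k||y-y'|}$ to get kernel $L^1$-norms $\lesssim 1/|k|\cdot|k| = 1$ (the extra $|k|$ being already extracted); on the near-diagonal region I would write $e^{-|k|(y+y')} = e^{-2|k|y} + (e^{-|k|(y+y')}-e^{-2|k|y})$, bounding the first piece by the truncated Hilbert transform (uniformly bounded) times $\|e^{-2|k|y}\|_\infty = 1$, and the second piece by Schur using $|e^{-|k|(y+y')}-e^{-2|k|y}| \lesssim |k||y-y'|$ for $|y-y'|\le 1/|k|$, which cancels the singularity. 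The main obstacle is the careful justification of the commutator identity — making rigorous the interchange of $\partial_y$ with the principal-value integral and correctly accounting for the boundary contribution at $y'=0$ without spuriously generating a diagonal $\delta$-function; once the identity is in hand, the norm bound is a routine variant of the Schur-test / Hilbert-transform argument already carried out for Lemma \ref{boundedness_of_J_k_half}.
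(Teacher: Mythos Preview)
Your norm-bound argument is essentially the paper's: the same three-piece decomposition (far region by Schur, near region with frozen coefficient $e^{-2|k|y}$ by truncated Hilbert transform, near-region difference by Schur using $|e^{-|k|(y+y')}-e^{-2|k|y}|\lesssim |k||y-y'|$) appears verbatim in the paper's proof.

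The gap is in your derivation of the commutator identity. Your claim that the ``planar'' convolution piece commutes with $\partial_y$ on $[0,\infty)$ is false in general: a convolution kernel only commutes with $\partial_y$ on the full line, and on the half-line integration by parts produces a boundary term at $y'=0$ proportional to $f(0)\,e^{-|k|y}/y$, which is nonzero unless $f(0)=0$. Your later assertion that ``the $y'=0$ boundary term produces precisely'' the claimed integral formula is also confused --- a boundary term is a point evaluation, not an integral over $y'$. What actually happens is that the boundary contributions from the planar piece and the image-charge piece \emph{cancel} (since $G_k(y,0)=0$ means the two kernels agree at $y'=0$), and the commutator arises entirely from the bulk term $(\partial_y+\partial_{y'})e^{-|k|(y+y')}=-2|k|e^{-|k|(y+y')}$, i.e.\ from the \emph{non-translation-invariance} of the image charge, not from any boundary effect.

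The paper avoids this pitfall by not splitting at all: it introduces the regularization $\mathfrak{J}_k^\epsilon$ with kernel $\frac{G_k(y,y')(y-y')}{2i((y-y')^2+\epsilon^2)}$, integrates by parts in $y'$ (the boundary term vanishes directly since $G_k(y,0)=0$), and computes $-(\partial_y+\partial_{y'})G_k$, which kills the translation-invariant part of $G_k$ and leaves exactly $e^{-|k|(y+y')}$. Convergence as $\epsilon\to 0$ then simultaneously establishes $\mathfrak{J}_k f\in H^1$ and the commutator formula. Your split approach can be salvaged if you carefully track and cancel the two boundary terms, but as written the reasoning is incorrect.
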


\begin{proof}
    
Proceeding as in Lemma 3.3 of \cite{bedrossian2023stability}, we introduce the regularizations
$$\frac{1}{|k|}\mathfrak{J}_k^\epsilon \coloneqq \int_0^\infty \frac{G_k(y,y')(y-y')}{2i(y-y')^2 + \epsilon^2} f(y') dy'$$
for $\epsilon > 0$. Then through integration by parts
\begin{equation}
    \begin{split}
        \frac{1}{|k|} \left(\mathfrak{J}_k^\epsilon(\partial_y f) - \partial_y \mathfrak{J}_k^\epsilon(f)\right) &= \int_0^\infty \frac{(y-y')}{2i(y-y')^2 + \epsilon^2}\left(-\partial_{y'}G_k(y,y') - \partial_y G_k(y,y')\right) f(y') dy'\\
        &=-\int_0^\infty \frac{(y-y')}{2i(y-y')^2 + \epsilon^2} e^{-|k|(y+y')} f(y') dy'.
    \end{split}
\end{equation}
If $\partial_y f \in L^2$, then from the above, we see $\frac{1}{|k|} \left(\mathfrak{J}_k^\epsilon(\partial_y f) - \partial_y \mathfrak{J}_k^\epsilon(f)\right)$ converges in $L^2$ as $\epsilon \to 0$. But then we have
$$\partial_y \mathfrak{J}_k^\epsilon(f) = \mathfrak{J}_k^\epsilon(\partial_y f) -\left(\mathfrak{J}_k^\epsilon(\partial_y f) - \partial_y \mathfrak{J}_k^\epsilon(f)\right).$$
Since the right-hand side converges in $L^2$, we observe that $\partial_y \mathfrak{J}_k^\epsilon(f)$ converges in $L^2$, and hence $\mathfrak{J}_k(f) \in H^1$ for $f \in H^1$. We note that in the above, we obtained the formula
 $$[\partial_y, \mathfrak{J}_k] f(y)= -|k| \textrm{p.v} \int_{0}^\infty \frac{e^{-|k|(y+y')}}{i(y-y')} f(y') dy'.$$
 To show that $||[\partial_y, \mathfrak{J}_k]||_{L^2 \to L^2} \lesssim |k|,$ we perform the same type of decomposition as in the proof of Lemma \ref{boundedness_of_J_k_half}:
     \begin{equation}
        \begin{split}
            \frac{1}{|k|}[\partial_y,\mathfrak{J}_k ]f(y) &= -\frac{k}{|k|} \int_{[0,\infty)\setminus (y-\frac{1}{|k|},y+\frac{1}{|k|})} e^{-|k|(y+y')}\frac{f(y')}{i(y-y')}dy'\\
            &- \frac{k}{|k|} e^{-2|k|y} \textrm{p.v.} \int_{\max(y- \frac{1}{|k|},0)}^{y+\frac{1}{|k|}} \frac{f(y')}{2i(y-y')}dy'\\
            &- \frac{k}{|k|} \textrm{p.v.} \int_{\max(y- \frac{1}{|k|},0)}^{y+\frac{1}{|k|}} 2 e^{-|k|y}\left(e^{-|k|y'} - e^{-|k|y}\right)\frac{f(y')}{2i(y-y')}dy'\\
            &\eqqcolon \mathcal{T}_1 + \mathcal{T}_2 + \mathcal{T}_3.
        \end{split}
    \end{equation}

    We again split $\mathcal{T}_1 = \mathcal{T}_{1, \leq } + \mathcal{T}_{1, \geq}$. This time, we will only present the proof for $\mathcal{T}_{1,\geq}$. Defining the kernel $K_k^{(\geq)}(y,y') \coloneqq \frac{k}{|k|} \frac{e^{-|k||y+y'|}}{i(y-y')} 1_{0 \leq y + \frac{1}{k} \leq y'}(y,y')$, it suffices by Schur's test to prove estimates on $K_k^{(\geq)}$. For fixed $y \in [0,\infty)$, we have
    \begin{align}
            ||K_k^{(\geq)}(y, \cdot)||_{L_{y'}^1} &\lesssim \int \frac{e^{-|k||y+y'|}}{|y-y'|}1_{0 \leq y + \frac{1}{k} \leq y'}(y,y') dy'\\
            &= \int_{y+\frac{1}{|k|}}^{\infty} \frac{e^{-|k||y+y'|}}{|y-y'|} dy'\\
            &= \int_1^\infty e^{-2|k|y }\frac{e^{-z}}{z} dz \lesssim e^{-2|k|y }.
        \end{align}
    Taking the supremum in $y \in [0,\infty)$ gives $||K_k^{(\geq)}||_{L_y^\infty L_{y'}^1}$. Now if we fix a $y' \in [0,\infty)$ (such that $y' - \frac{1}{|k|} > 0$ to avoid triviality), we observe via the change of variable $z = |k|(y'-y)$,
        \begin{align}
            ||K_k^{(\geq)}(\cdot, y')||_{L_{y}^1} &\lesssim \int \frac{e^{-|k|(y+y')}}{|y-y'|}1_{0 \leq y + \frac{1}{k} \leq y'}(y,y') dy\\
            &= \int_{0}^{y' - \frac{1}{|k|}} \frac{e^{-|k||y+y'|}}{|y-y'|} dy\\
            &=\int_{0}^{y' - \frac{1}{|k|}} \frac{e^{-|k||y+y'|}}{|y-y'|} dy\\
            &\leq \int_{1}^{|k|y'} e^{-2|k|y'} \frac{e^{z}}{z} dz\\
            &\leq e^{-|k|y'}\int_{1}^{|k|y'} \frac{1}{z} dz = e^{-|ky'|} \ln(|ky'|),
        \end{align}
    which is uniformly bounded in $k$ and $y'$. This gives the desired estimate $||\mathcal{T}_1||_{L^2 \to L^2} \lesssim 1$.\\

    Noticing that $e^{-2|k|y} \leq 1$, the bound on $\mathcal{T}_2$ follows the same argument as the bound on $\mathcal{T}_2$ in Lemma \ref{boundedness_of_J_k_half}. Turning our attention to $\mathcal{T}_3$, in the same manner as the proof of Lemma \ref{boundedness_of_J_k_half}, we observe that
    $$\sup_{y,y' \in [0,\infty), |y-y'| \leq \frac{1}{|k|}} e^{-|k|y}\left| \frac{e^{-|k|y'} - e^{-|k|y}}{y-y'}\right| \lesssim \sup_{{y,y' \in [0,\infty), |y-y'| \leq \frac{1}{|k|}}} |k| e^{-|k|(y + y')} \lesssim |k|.$$
    Thus the Schur estimates yield
    \begin{equation*}
        \begin{split}
            \int_{y-1/k}^{y+1/k} 1_{[0,\infty)} e^{-|k|y} \frac{|e^{-|k|y'} - e^{-|k|y}|}{|y-y'|}dy' &\lesssim |k| \int_{y- \frac{1}{|k|}}^{y + \frac{1}{|k|}} dy'\lesssim 1,\\
            \int_{y'-1/k}^{y'+1/k} 1_{[0,\infty)}  e^{-|k|y} \frac{|e^{-|k|y'} - e^{-|k|y}|}{|y-y'|}dy &\lesssim |k| \int_{y'- \frac{1}{|k|}}^{y' + \frac{1}{|k|}} dy \lesssim 1,
        \end{split}
    \end{equation*}
    which gives $||\mathcal{T}_3||_{L^2 \to L^2} \lesssim 1$, and hence completes the proof.
\end{proof}

We note without detailed proof that $\mathfrak{J}_k$ is in fact self-adjoint. The contents of Lemmas \ref{boundedness_of_J_k_half} and \ref{commutator_estimate_of_J_k_half}, together with self-adjointness of $\mathfrak{J}_k$, are enough to translate all linear and non-linear estimates from the planar case to the half-plane case. The only true difference is in the commutator between $\mathfrak{J}_k$ and $\partial_y$, and this only results in minor changes. For example, when deriving the estimate \eqref{damping_of_phi}, we estimated
$$ |\langle \nu \Delta_k \omega_k, \mathfrak{J}_k \omega_k \rangle| = |\langle \nu \nabla_k \omega_k, \mathfrak{J}_k \nabla_k\omega_k \rangle| \lesssim \nu ||\nabla_k \omega_k||_2^2.$$
On the half-plane, due to the non-trivial commutator, this becomes
\begin{equation}
    \begin{split}
        |\langle \nu \Delta_k \omega_k, \mathfrak{J}_k \omega_k \rangle| &= |\langle \nu \nabla_k \omega_k,  \mathfrak{J}_k \nabla_k \omega_k \rangle + \langle \nu \partial_y\omega_k,  [\partial_y, \mathfrak{J}_k] \omega_k \rangle |\\
        &\lesssim \nu ||\nabla_k \omega_k||_2 \left(|| \partial_y \omega_k||_2 + ||k \omega_k||_2\right)\\
        & \lesssim D_\gamma,
    \end{split}
\end{equation}
which is the same ultimate estimate. Similar considerations are taken for the non-linear $T_{\alpha, \tau \alpha}$ estimate, where we compute
$$|| \partial_y(c_\alpha I + c_\tau \mathfrak{J}_k)\partial_y\omega_k||_2^2 = || c_\alpha \partial_y^2 \omega_k + c_\tau \mathfrak{J}_k(\partial_y^2 \omega_k) + c_\tau [\partial_y, \mathfrak{J}_k] (\partial_y\omega_k)||_2^2 \lesssim ||\nabla_k \partial_y\omega_k||_2^2.$$
Other than these minor changes, the proof of Theorem \ref{half_theorem} is identical to that of Theorem \ref{main_theorem}.

\section{Modifications for the Infinite Channel}\label{channel_modifications}

In this section, we sketch a proof of Theorem \ref{channel_theorem} by pointing out changes and simplifications as compared to the proof of Theorem \ref{main_theorem}. We shall re-use many of the same symbols, but now adapted to the case of the infinite channel where $D = [-1,1]$. For example, we set $\lambda_k = \lambda^{ch}(\nu,k)$. 

\subsection{Linear Estimates for the Infinite Channel}\label{linear_channel_estimates}

Our linearized system is now
\begin{equation}\label{linear_channel_system}
    \begin{cases}
        \partial_t \omega_k + ik y \omega_k - \nu \Delta_k \omega_k = 0,\\
        -\Delta_k \phi_k = \omega_k,\\
        \phi_k (y = \pm 1) = \omega_k (y = \pm 1) = 0,
    \end{cases}
\end{equation}
and we define the following $k$-by-$k$ energy adapted to the linear problem:
\begin{equation}\label{channel_kEnergy}
    E_k[\omega_k] \coloneqq \begin{cases}
        ||\omega_k||_2^2 + c_\alpha \alpha ||\partial_y \omega_k||_2^2 - c_\beta \beta \textrm{Re}\langle ik \omega_k, \partial_y \omega_k \rangle + c_\tau  \textrm{Re}\langle \mathfrak{J}_k \omega_k, \omega_k \rangle +  c_\tau c_\alpha \alpha\textrm{Re}\langle \mathfrak{J}_k \partial_y \omega_k, \partial_y \omega_k\rangle, & |k| \geq \nu,\\
        ||\omega_k||_2^2 + c_\alpha \alpha ||\partial_y \omega_k||_2^2 + c_\tau  \textrm{Re}\langle \mathfrak{J}_k \omega_k, \omega_k \rangle +  c_\tau c_\alpha \alpha\textrm{Re}\langle \mathfrak{J}_k \partial_y \omega_k, \partial_y \omega_k\rangle, & |k| < \nu,
    \end{cases}
\end{equation}
where $\alpha$ is the same as in the planar case, but now
\begin{equation}
    \beta \coloneqq \frac{\nu^{1/3}}{|k|^{4/3}}.
\end{equation}
Note that we drop the $\beta$ term from the energy functional at low frequencies, since the Poincar\'e inequality provides stronger estimates than the hypocoercive scheme. The singular integral operator $\mathfrak{J}_k$ is now defined as:
$$\mathfrak{J}_k [ f](y) \coloneqq |k| \textrm{p.v.} \frac{k}{|k|} \int_{-1}^1 \frac{1}{2i(y-y')} G_k(y,y') f(y') dy',$$
with
$$ G_k(y,y') \coloneqq -\frac{1}{k \sinh(2k)} \begin{cases}
        \sinh(k(1-y')) \sinh(k(1+y)), & y \leq y',\\
        \sinh(k(1-y)) \sinh(k(1+y')), & y' \leq y.\end{cases}$$
Just as in the planar case, $\mathfrak{J}_k$ is uniformly bounded in $k$ as a linear operator $L^2([-1,1]) \to L^2([-1,1])$ (see Lemma \ref{boundedness_of_J_k_channel}). Unlike in the planar case, $\mathfrak{J}_k$ no longer commutes with $\partial_y$ (see Lemma \ref{commutator_estimate_of_J_k}). We define the $k$-by-$k$ dissipation functional
\begin{equation}\label{channel_k_by_k_dissipation}
    \begin{split}
        D_k[\omega_k]&\coloneqq \nu ||\nabla_k \omega_k||_2^2 + c_\alpha \alpha || \nabla_k \partial_y \omega_k||_2^2 + c_\beta \lambda_k ||\omega_k||_2^2 + c_\tau  |k|^2 ||\nabla_k \phi_k||_2^2 + c_\tau c_\alpha \alpha  |k|^2 ||\nabla_k \partial_y \phi_k||_2^2\\
        &\eqqcolon D_{k,\gamma} + c_\alpha D_{k,\alpha} + c_\beta D_{k,\beta} + c_\tau D_{k,\tau} +  c_\tau c_\alpha D_{k,\tau \alpha}.
    \end{split}
\end{equation}

Although there is no $\beta$ term for frequencies $|k|< \nu$ in the energy \eqref{channel_kEnergy}, we do still have a $\beta$ term in the dissipation at frequencies $|k| < \nu$. We further note that $\lambda_k = \beta |k|^2$ for $|k| \geq \nu.$ The analogue of Proposition \ref{linear_proposition} for the infinite channel is entirely the same, if one only makes the substitution that $\omega_k$ is an $H^1$ solution of \eqref{linear_channel_system}. Before discussing the proof of this proposition, we state two lemmas for $\mathfrak{J}_k$. These statements about $\mathfrak{J}_k$ were proved for discrete frequencies $k \in \mathbb{Z} \setminus \{0\}$ in \cite{bedrossian2023stability}, and which we now describe for general wavenumbers.

\begin{lemma}\label{boundedness_of_J_k_channel}
    For all $k \in \R \setminus \{0\}$, the operator $\mathfrak{J}_k$ extends to a bounded linear operator $L^2([-1,1]) \to L^2([-1,1])$ satisfying the uniform bound \[||\mathfrak{J}_k||_{L^2 \to L^2} \lesssim 1,\] where the implicit constant is uniform in $k$.
\end{lemma}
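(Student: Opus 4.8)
The plan is to adapt the decomposition used in the proof of Lemma~\ref{boundedness_of_J_k_half}, which handled the half-plane kernel, to the channel kernel
$$G_k(y,y') = -\frac{1}{k\sinh(2k)}\begin{cases}\sinh(k(1-y'))\sinh(k(1+y)), & y\leq y',\\ \sinh(k(1-y))\sinh(k(1+y')), & y'\leq y.\end{cases}$$
First I would split $\frac{1}{|k|}\mathfrak{J}_k f(y)$ into the same three pieces $\mathcal{T}_1+\mathcal{T}_2+\mathcal{T}_3$: the ``far'' piece $\mathcal{T}_1$ where $|y-y'|>1/|k|$ (no principal value needed), the ``diagonal'' piece $\mathcal{T}_2 = \frac{k}{|k|}G_k(y,y)\,\mathrm{p.v.}\int_{\max(y-1/|k|,-1)}^{\min(y+1/|k|,1)}\frac{f(y')}{2i(y-y')}dy'$, and the ``correction'' piece $\mathcal{T}_3$ involving $(G_k(y,y')-G_k(y,y))/(y-y')$. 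The key algebraic fact to establish first is a pointwise bound on $G_k$ analogous to $|G_k(y,y')|\lesssim \frac{1}{|k|}e^{-|k||y-y'|}$ on the half-plane: using $\sinh(2k)=\sinh(k(1-y)+k(1+y))$ and the product-to-sum identities, one finds for $y'\leq y$ that $|G_k(y,y')|\lesssim \frac{1}{|k|}\big(e^{-|k||y-y'|}-e^{-|k|(2+y+y')}\big)\lesssim \frac{1}{|k|}e^{-|k||y-y'|}$, and $|G_k(y,y)|\lesssim\frac{1}{|k|}$, all uniformly in $k$ and in $y,y'\in[-1,1]$; this is the channel analogue of the half-plane decomposition $G_k = -\frac1k(e^{-|k||y-y'|}-e^{-|k|(y+y')})$, the extra $e^{-|k|(2+y+y')}$ reflecting the second Dirichlet boundary.

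With that bound in hand, $\mathcal{T}_1$ is controlled by Schur's test exactly as in Lemma~\ref{boundedness_of_J_k_half}: splitting into $\mathcal{T}_{1,\leq}$ and $\mathcal{T}_{1,\geq}$, one bounds $\sup_y \|K_k(y,\cdot)\|_{L^1_{y'}}$ and $\sup_{y'}\|K_k(\cdot,y')\|_{L^1_y}$ by $\int_1^\infty \frac{e^{-z}}{z}dz\lesssim 1$ after the change of variables $z=|k||y-y'|$, yielding $\|\mathcal{T}_1\|_{L^2\to L^2}\lesssim \frac{1}{|k|}$. For $\mathcal{T}_2$, since $\|G_k(y,y)\|_{L^\infty_y}\lesssim\frac1{|k|}$ and the truncated Hilbert transform is bounded uniformly on $L^2$ (here on the interval $[-1,1]$, which only helps), $\|\mathcal{T}_2\|_{L^2\to L^2}\lesssim\frac{1}{|k|}$. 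For $\mathcal{T}_3$, one needs the Lipschitz-type estimate
$$\sup_{\substack{y,y'\in[-1,1]\\|y-y'|\leq 1/|k|}}\frac{|G_k(y,y')-G_k(y,y)|}{|y-y'|}\lesssim 1,$$
which follows from the mean value theorem applied to $G_k$ in $y'$ together with the bound $|\partial_{y'}G_k|\lesssim e^{-|k||y-y'|}$ (again from the explicit $\sinh$ form); then Schur's test over the window $|y-y'|\leq 1/|k|$ gives $\|\mathcal{T}_3\|_{L^2\to L^2}\lesssim 1$. Summing the three bounds gives $\|\mathfrak{J}_k\|_{L^2\to L^2}=|k|\cdot O(1/|k|)+O(1)\lesssim 1$ uniformly in $k$.

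The main obstacle is purely computational rather than conceptual: verifying the uniform-in-$k$ pointwise estimates on $G_k$, $\partial_{y'}G_k$, and $G_k(y,y')-G_k(y,y)$ requires carefully handling the $\sinh(2k)$ denominator and the hyperbolic identities, and in particular checking the small-$|k|$ regime (where $\sinh(2k)\approx 2k$ and $G_k$ degenerates to the flat Green's function $-\frac{1}{2}\big((1-y')(1+y)\wedge(1-y)(1+y')\big)$ on $[-1,1]$, which is bounded) as well as the large-$|k|$ regime (where $G_k$ looks like the whole-line kernel $-\frac{1}{2|k|}e^{-|k||y-y'|}$ away from the boundary). Both limiting behaviors are benign, so the estimates hold with constants independent of $k$; since the argument is essentially a transcription of the half-plane proof, I would state the lemma, indicate the decomposition, record the pointwise kernel bounds, and refer to Lemma~\ref{boundedness_of_J_k_half} and to the corresponding discrete-frequency argument in \cite{bedrossian2023stability} for the repetitive Schur-test details.
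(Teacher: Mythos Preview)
Your proposal is correct and follows essentially the same approach as the paper, which simply refers to Lemma~3.1 of \cite{bedrossian2023stability} for $|k|>\tfrac12$ and invokes the structure of the half-plane proof (Lemma~\ref{boundedness_of_J_k_half}) for $|k|<\tfrac12$. The one observation the paper makes that you could incorporate is that for $|k|<\tfrac12$ the window $(y-\tfrac{1}{|k|},\,y+\tfrac{1}{|k|})$ already contains all of $[-1,1]$, so the far piece $\mathcal{T}_1$ is empty and only $\mathcal{T}_2$ and $\mathcal{T}_3$ survive---this is exactly the ``small-$|k|$ regime'' you flagged as needing care, and it is in fact a simplification rather than a complication.
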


\begin{proof}
    This is a very slight extension of Lemma 3.1 in \cite{bedrossian2023stability}. For $ |k| > \frac{1}{2}$, the proof is in fact identical. For $|k| < \frac{1}{2}$, one follows an even simpler argument. The general structure is the same as the proof of Lemma \ref{boundedness_of_J_k_half}, but the corresponding $\mathcal{T}_1$ term disappears, creating an even simpler proof. One can check that all of the important estimates still hold at these low $|k|$ frequencies.
\end{proof}

By Lemma 3.3 in \cite{bedrossian2023stability}, if $f \in H^1$, then $\mathfrak{J}_k[f] \in H^1$, and we can explicitly compute the commutator
$$[\partial_y, \mathfrak{J}_k] [f] = |k| \textrm{p.v} \int_{-1}^1 \frac{H_k(y,y')}{2i(y-y')} f(y') dy',$$
where
$$H_k(y,y) \coloneqq -\frac{\sinh(k(y+y'))}{\sin(2k)}.$$
Additionally, we have the following estimate:

\begin{lemma}\label{commutator_estimate_of_J_k}
    For all $k \in \R \setminus \{0\}$, $||[\partial_y, \mathfrak{J}_k] ||_2 \lesssim |k|$, where the implicit constant is uniform in $k$.
\end{lemma}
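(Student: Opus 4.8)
The plan is to reduce the lemma to a uniform bound on a single singular integral operator. By the displayed commutator formula we have $[\partial_y,\mathfrak{J}_k] = |k|\,\mathcal{K}_k$, where $\mathcal{K}_k$ is the operator on $L^2([-1,1])$ with kernel $\frac{1}{2i}\frac{H_k(y,y')}{y-y'}$ and $H_k(y,y') = -\sinh(k(y+y'))/\sinh(2k)$; it therefore suffices to prove $\|\mathcal{K}_k\|_{L^2([-1,1])\to L^2([-1,1])} \lesssim 1$ uniformly in $k$. Since $H_k$ is even in $k$, I may assume $k>0$, and I split into the ranges $k\ge \tfrac12$ and $0<k<\tfrac12$.

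For $k\ge\tfrac12$ I would follow the argument of Lemma 3.3 of \cite{bedrossian2023stability}, equivalently the half-plane argument of Lemma \ref{commutator_estimate_of_J_k_half}. The key identity is $H_k(y,y') = \frac{1}{1-e^{-4k}}\left( e^{-k((1+y)+(1+y'))} - e^{-k((1-y)+(1-y'))}\right)$, where for $k\ge\tfrac12$ the prefactor satisfies $1\le (1-e^{-4k})^{-1}\le (1-e^{-2})^{-1}$ and is thus harmless. Writing $\mathcal{K}_k$ as the sum of the two operators with kernels $\pm\frac{e^{-k((1\pm y)+(1\pm y'))}}{2i(1-e^{-4k})(y-y')}$ and applying the affine change of variables sending $[-1,1]$ onto $[0,2]$ via $z=1+y$ (resp. $w=1-y$, which merely flips the sign of $y-y'$), each becomes, up to a bounded constant, the restriction to $[0,2]$ of the half-plane commutator kernel $\frac{e^{-k(z+z')}}{2i(z-z')}$, whose operator norm is $\lesssim 1$ by Lemma \ref{commutator_estimate_of_J_k_half}; restricting to a bounded subinterval only improves the Schur and truncated-Hilbert-transform estimates carried out there. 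This gives $\|\mathcal{K}_k\|_{L^2\to L^2}\lesssim 1$ for $k\ge\tfrac12$, and I would not reproduce the Schur computations, which are verbatim those in the half-plane case.

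For $0<k<\tfrac12$ one has $\tfrac1k>2\ge|y-y'|$ for all $y,y'\in[-1,1]$, so the entire domain lies within one Calderón--Zygmund scale of the diagonal and only the near-diagonal pieces survive. Write $\mathcal{K}_k = \mathcal{K}_k^{(2)} + \mathcal{K}_k^{(3)}$ with kernels $\frac{H_k(y,y)}{2i(y-y')}$ and $\frac{H_k(y,y')-H_k(y,y)}{2i(y-y')}$. Since $\sinh$ is odd and increasing and $|y|\le1$, we get $|H_k(y,y)| = |\sinh(2ky)|/\sinh(2|k|)\le 1$, so $\|\mathcal{K}_k^{(2)}\|_{L^2\to L^2}$ is bounded by $\|H_k(y,y)\|_{L^\infty_y}$ times the $L^2([-1,1])$-operator norm of the finite Hilbert transform, both $\lesssim 1$. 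For $\mathcal{K}_k^{(3)}$ the mean value theorem gives $\left|\frac{H_k(y,y')-H_k(y,y)}{y-y'}\right| \le \sup_\xi\left|\partial_\xi H_k(y,\xi)\right| = \sup_\xi \frac{k|\cosh(k(y+\xi))|}{\sinh(2|k|)} \le \frac{k\cosh(2k)}{\sinh(2k)} = k\coth(2k)\lesssim 1$ for $k\le\tfrac12$, so Schur's test on the bounded domain $[-1,1]^2$ yields $\|\mathcal{K}_k^{(3)}\|_{L^2\to L^2}\lesssim 1$. Combining the two ranges, $\|[\partial_y,\mathfrak{J}_k]\|_{L^2\to L^2} = |k|\,\|\mathcal{K}_k\|_{L^2\to L^2}\lesssim|k|$.

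I expect the only genuine obstacle to be the regime $k\gtrsim 1$: there $H_k(y,y')$ is merely $O(1)$ pointwise and carries no decay, so a direct Schur estimate on $\frac{H_k}{y-y'}$ would lose a factor $\log k$ from $\int|y-y'|^{-1}$. The remedy is precisely the splitting $H_k = H_k^+ + H_k^-$ into constituents decaying exponentially toward the two components of $\{y=\pm1\}$ separately, each of which rescales to the already-treated half-plane kernel. Keeping the constants uniform in $k$ is what forces a fixed separation of scales (the threshold $k=\tfrac12$), so that $1-e^{-4k}$ stays bounded below; the complementary low-frequency band $k<\tfrac12$, lying within a single singular-integral scale of the diagonal, is then elementary.
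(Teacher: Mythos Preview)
Your proof is correct and, for $0<k<\tfrac12$, identical to the paper's: the same $\mathcal{T}_2+\mathcal{T}_3$ split, the same $\|H_k(y,y)\|_\infty\le 1$ bound for the Hilbert-transform piece, and the same mean-value/Schur argument for the difference piece.

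For $k\ge\tfrac12$ there is a minor packaging difference. The paper simply invokes the three-term decomposition $\mathcal{T}_1+\mathcal{T}_2+\mathcal{T}_3$ of \cite{bedrossian2023stability} applied directly to the channel kernel $H_k$. You instead write $H_k$ explicitly as a sum of two exponentially decaying pieces localized near $y=\mp 1$, change variables to map each onto the half-plane commutator kernel, and invoke Lemma~\ref{commutator_estimate_of_J_k_half}. Your identity for $H_k$ is correct (note the paper has a typo, $\sin(2k)$ for $\sinh(2k)$, in the display defining $H_k$), the prefactor $(1-e^{-4k})^{-1}$ is indeed uniformly bounded on $k\ge\tfrac12$, and restriction to $[0,2]$ only helps the Schur/Hilbert estimates of the half-plane lemma. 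This reduction is arguably more economical than redoing the $\mathcal{T}_1,\mathcal{T}_2,\mathcal{T}_3$ computation on the channel, since it recycles an estimate already proved in the paper; the paper's route has the advantage of being self-contained within the channel setting. Either way the underlying analysis is the same Calder\'on--Zygmund near/far splitting, and your final paragraph correctly identifies why the threshold $k=\tfrac12$ is needed.
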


\begin{proof}

    This is a restatement of Lemma 3.4 in \cite{bedrossian2023stability}. Just as in Lemma \ref{boundedness_of_J_k_channel}, the original proof decomposes $\frac{1}{|k|}[\partial_y, \mathfrak{J}_k] f$ into three terms $\mathcal{T}_i$, $i = 1,2,3$. This proof remains valid for $|k| \geq 1/2$. Hence we will only cover the $|k| < 1/2$ case, which simplifies to
    \begin{equation*}
        \begin{split}
            \frac{1}{|k|}[\partial_y,f] &= \frac{k}{|k|} H_k(y,y) \textrm{p.v.} \int_{y-\frac{1}{k}}^{y+\frac{1}{k}}1_{[-1,1]} \frac{f(y')}{2i(y-y')}dy'\\
            &\quad+\frac{k}{|k|} \textrm{p.v.} \int_{y-\frac{1}{k}}^{y+\frac{1}{k}} 1_{[-1,1]}\left(H_k(y,y') - H_k(y,y)\right)\frac{f(y')}{2i(y-y')}dy'\\
            &= \mathcal{T}_2 + \mathcal{T}_3.
        \end{split}
    \end{equation*}

    The bound on $\mathcal{T}_2$ proceeds directly as in Lemma \ref{boundedness_of_J_k_channel}, together with $||H_k(y,y)||_\infty \lesssim 1$. For $\mathcal{T}_3$, we use Schur's test, but even more simply than in the original proof:
        \begin{equation*}
        \begin{split}
            \int_{y-1/k}^{y+1/k} 1_{[-1,1]} |k|\frac{|H_k(y,y') - H_k(y,y)|}{k|y-y'|}dy' &\lesssim \sup_{y,y' \in [-1,1]} \left(\frac{|H_k(y,y') - H_k(y,y)|}{|y-y'|}\right)\int_{-1}^1 dy' \lesssim 1,\\
            \int_{y'-1/k}^{y'+1/k} 1_{[-1,1]} \frac{|H_k(y,y') - H_k(y,y)|}{|y-y'|}dy &\lesssim \sup_{y,y' \in [-1,1]} \left(\frac{|H_k(y,y') - H_k(y,y)|}{|y-y'|}\right) \int_{-1}^1 dy \lesssim 1,
        \end{split}
    \end{equation*}
    which completes the desired bounds.
\end{proof}

We further note that it was proved in \cite{bedrossian2023stability} that $\mathfrak{J}_k$ is self-adjoint, and we will not reproduce this here. The replacement for Lemmas \ref{basic_hypo_est} and \ref{disip_lin} is the following:

\begin{lemma}\label{channel_disip_lin}

Under the hypotheses of Proposition \ref{linear_proposition}, the following estimates hold, with implicit constants independent of $\nu$ and $k$. For any value of $k \neq 0$:

\begin{subequations}\label{channel_hypo_lin}
    \begin{equation}\label{channel_gamma_lin_est}
    \frac{1}{2} \frac{d}{dt} ||\omega_k||_2^2 + \nu ||\nabla_k \omega_k||_2^2 \leq 0,
\end{equation}
\begin{equation}\label{channel_alpha_lin_est}
    \frac{1}{2} \frac{d}{dt} \alpha ||\nabla_k\partial_y\omega_k||_2^2  + \nu \alpha ||\nabla_k \partial_y \omega_k||_2^2 \lesssim D_{k,\gamma} +  \lambda_k||\omega_k||_2^2,
\end{equation}
\begin{equation}\label{channel_damping_of_phi}
    \frac{d}{dt} \mathrm{Re}\langle \omega_k, \mathfrak{J}_k \omega_k\rangle + \frac{1}{2}D_{k,\tau} \lesssim D_{k,\gamma},
\end{equation}
\begin{equation}\label{channel_damping_of_dy_phi}
    \frac{d}{dt} \alpha \mathrm{Re}\langle \omega_k, \mathfrak{J}_k \omega_k\rangle + \frac{1}{2}D_{k,\tau\alpha} \lesssim D_{k,\alpha} + (\lambda_k ||\omega_k||_2^2)^{1/2}D_{k,\gamma}^{1/2}.
\end{equation}
\end{subequations}

Additionally, when $|k| \geq \nu$:
\begin{equation}\label{channel_beta_lin_est}
    -\frac{d}{dt} \beta \mathrm{Re}\langle ik \omega_k, \partial_y \omega_k \rangle + \lambda_k ||\omega_k||_2^2 \lesssim D_{k,\gamma}^{1/2} D_{k,\alpha}^{1/2}.
\end{equation}

\end{lemma}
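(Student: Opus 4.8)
The estimates in Lemma \ref{channel_disip_lin} are the channel analogues of Lemmas \ref{basic_hypo_est} and \ref{disip_lin}, so the strategy is to re-run those arguments, inserting the two new ingredients that distinguish the channel: the Poincar\'e inequality (available because of the Dirichlet conditions $\omega_k(\pm 1)=\phi_k(\pm 1)=0$) and the nontrivial commutator $[\partial_y,\mathfrak{J}_k]$ bounded by $|k|$ (Lemma \ref{commutator_estimate_of_J_k}). First I would dispatch \eqref{channel_gamma_lin_est}: multiply \eqref{linear_channel_system} by $\overline{\omega_k}$, integrate, take real parts; the transport term $iky\omega_k$ contributes nothing and $\nu\Delta_k$ gives $-\nu\|\nabla_k\omega_k\|_2^2$ after integration by parts (the boundary terms vanish by the Dirichlet condition on $\omega_k$). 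This is verbatim the planar proof.

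For \eqref{channel_alpha_lin_est} I would differentiate $\partial_y\omega_k$ in $y$; the only new term compared to the whole-line case is the commutator of $\partial_y$ with the transport operator, namely $ik\omega_k$, which is absorbed exactly as in \cite{bedrossian2023stability} Lemma 4.2, producing $\lambda_k\|\omega_k\|_2^2$ and $D_{k,\gamma}$ on the right; the substitution $\nu^{2/3}|k|^{-2/3}\mapsto\alpha$ covers the $|k|<\nu$ case just as noted for \eqref{alpha_lin_est}. For \eqref{channel_damping_of_phi}, I would reproduce the computation in the proof of Lemma \ref{disip_lin}: $\tfrac{d}{dt}\langle\omega_k,\mathfrak{J}_k\omega_k\rangle = 2\mathrm{Re}\langle -iky\omega_k+\nu\Delta_k\omega_k,\mathfrak{J}_k\omega_k\rangle$; the transport term produces exactly $-\tfrac12|k|^2\|\nabla_k\phi_k\|_2^2 = -\tfrac12 D_{k,\tau}$ via the algebraic identity $(y-y')\mapsto$ cancellation of the $1/(y-y')$ and the fact that $G_k$ is the Green's function of $\Delta_k$ on $[-1,1]$ with Dirichlet data (which is where one uses the \emph{channel} $G_k$, but the structure is identical). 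The viscous term now gives $\nu\langle\nabla_k\omega_k,\mathfrak{J}_k\nabla_k\omega_k\rangle + \nu\langle\partial_y\omega_k,[\partial_y,\mathfrak{J}_k]\omega_k\rangle$; by Lemma \ref{commutator_estimate_of_J_k} the second piece is $\lesssim \nu\|\partial_y\omega_k\|_2\,|k|\,\|\omega_k\|_2 \lesssim \nu\|\nabla_k\omega_k\|_2^2 = D_{k,\gamma}$, so the final bound is unchanged — this is exactly the correction already spelled out for the half-plane at the end of Section \ref{half_modifications}. Estimate \eqref{channel_damping_of_dy_phi} follows the same template, using $\partial_y$ commuting up to the $|k|$-bounded commutator and the already-established $D_\tau$ computation, plus the elementary $\alpha|k|\|\omega_k\|_2\|\partial_y\omega_k\|_2 \lesssim (\lambda_k\|\omega_k\|_2^2)^{1/2}D_{k,\gamma}^{1/2}$.

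The restriction to $|k|\geq\nu$ in \eqref{channel_beta_lin_est} is the crucial structural point: at low frequencies $\beta$ has been \emph{removed} from the energy \eqref{channel_kEnergy} because the Poincar\'e inequality already forces $D_{k,\gamma} = \nu\|\nabla_k\omega_k\|_2^2 \gtrsim \nu\|\omega_k\|_2^2 = \lambda_k\|\omega_k\|_2^2$ (since $\lambda_k=\nu$ there), so no hypocoercive cross-term is needed. For $|k|\geq\nu$, where $\beta=\nu^{1/3}|k|^{-4/3}$ and $\lambda_k=\beta|k|^2=\nu^{1/3}|k|^{2/3}$, I would integrate by parts exactly as in \eqref{beta_comp}: $-\tfrac{d}{dt}\beta\mathrm{Re}\langle ik\omega_k,\partial_y\omega_k\rangle$ splits into the diagonal term $-\beta|k|^2\|\omega_k\|_2^2 = -\lambda_k\|\omega_k\|_2^2$ (matching the statement) plus viscous cross-terms $\lesssim \nu\beta|k|\|\nabla_k\omega_k\|_2\|\nabla_k\partial_y\omega_k\|_2$, and the same arithmetic as in the $|k|\geq\nu$ branch of the proof of Lemma \ref{basic_hypo_est} gives $D_{k,\gamma}^{1/2}D_{k,\alpha}^{1/2}$. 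The main obstacle — really the only place genuine care is required — is tracking the commutator contributions in \eqref{channel_damping_of_phi} and \eqref{channel_damping_of_dy_phi} and verifying that the $|k|$ from Lemma \ref{commutator_estimate_of_J_k} is always soaked up by the available $\nabla_k = (ik,\partial_y)$ factors rather than producing an unpaired power of $|k|$; since every commutator term here comes multiplied by $\nu$ and at least one $y$-derivative, this works out, but it is the step to check line by line. Everything else is a transcription of the planar and half-plane computations with the channel Green's function.
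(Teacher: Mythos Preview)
Your proposal is correct and follows essentially the same approach as the paper: the paper's proof is a brief sketch stating that the arguments parallel Lemmas \ref{basic_hypo_est} and \ref{disip_lin}, with the only true modification being the nontrivial commutator $[\partial_y,\mathfrak{J}_k]$, whose $|k|$-bound (Lemma \ref{commutator_estimate_of_J_k}) is absorbed into $D_{k,\gamma}$ exactly as you describe. Your discussion of why the $\beta$ estimate is restricted to $|k|\geq\nu$ also matches the paper's remarks immediately following the lemma.
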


The proof of this lemma parallels the proofs of Lemmas \ref{basic_hypo_est} and \ref{disip_lin} as well as work in \cite{bedrossian2023stability}. In particular, versions of \eqref{channel_damping_of_phi} and \eqref{channel_damping_of_dy_phi} were proved to hold on a more general class of $\omega_k$, at least for discrete frequencies. However, because we are working with Couette flow, the proofs of \eqref{channel_damping_of_phi} and \eqref{channel_damping_of_dy_phi} are almost identical with the proof of Lemma \ref{disip_lin}. The only true modification is that $\partial_y$ and $\mathfrak{J}_k$ do not commute. However, the resulting factor of $|[\partial_y, \mathfrak{J}_k] \lesssim k$ can be absorbed by the $D_\gamma$ terms, and is therefore harmless.

Using Lemma \ref{channel_disip_lin}, one can prove the version of Proposition \ref{linear_proposition} at frequencies $|k| \geq \nu$, obtaining choices of $c_\alpha, c_\beta,$ and $c_\tau$ which produce the desired $c_0$ and $c_1$. The low frequency case is similar, once we note that $|k| \leq \nu \leq 1$, and that Eqs. \eqref{channel_gamma_lin_est} and \eqref{channel_alpha_lin_est} together with Poincar\'e imply:
\begin{equation}
\begin{split}
        \frac{1}{2} \frac{d}{dt} ||\omega_k||_2^2 + \frac{\nu}{3} ||\nabla_k \omega_k||_2^2 &+ \frac{\nu}{3} ||\omega_k||_2^2 + \frac{\nu}{3} ||\omega_k||_2^2\leq 0,\\
    \frac{1}{2} \frac{d}{dt} ||\nabla_k\partial_y\omega_k||_2^2  + \frac{\nu}{2} ||\nabla_k \partial_y \omega_k||_2^2 &+ \frac{\nu}{2}||\partial_y \omega_k||_2^2 \lesssim D_{k,\gamma} +  \lambda_k||\omega_k||_2^2,
\end{split}
\end{equation}
noting that the Poincar\'e constant on $[-1,1]$ is bounded above by $1$. Then the argument proceeds in a manner similar to the high frequency case. Potentially shrinking any constants as needed (for example, taking $c_\beta < 1/3$), we obtain 
$$\frac{d}{dt}E_k + c_0(c_\tau, c_\alpha, c_\beta) \lambda_k E_k + c_1(c_\tau, c_\alpha, c_\beta) D_{k} \leq 0$$
at low frequencies as well. Although there is no $\beta$ term in the energy at these low frequencies, there is still the dissipation term $D_{k,\beta}$.

\subsection{Non-Linear Estimates for the Infinite Channel}

We now introduce the non-linear energy as
\begin{equation}\label{channel_energy}
    \mathcal{E} \coloneqq \int_{\R} e^{2 c \lambda_k t} \langle k \rangle^{2m} E_k dk,
\end{equation}
where $c$ is a sufficiently small constant. We further introduce the dissipation
\begin{equation}\label{channel_dissipation}
    \mathcal{D} \coloneqq \int_{\R} e^{2 c \lambda_k t} \langle k \rangle^{2m} E_k dk.
\end{equation}
Importantly, we observe that as written here, $\D$ in the channel case is \textit{not} integrated in time, in contrast to $\D$ in the planar case. One can express the channel case using a time-integrated dissipation, but it is simplest to use the non-time-integrated dissipation.

Note that $\mathcal{E}^{1/2}(0)$, $\mathcal{E}^{1/2}$, and $\left( \int_0^\infty \mathcal{D} dt\right)^{1/2}$ are $\approx$ to the norms under consideration in Theorem \ref{channel_theorem}. We additionally define the quantities $\mathcal{D}_*$, $* \in \{\gamma, \tau, \alpha, \tau \alpha, \beta\}$ in an analogous manner to the planar case. We do not need a version of the multiplier $M_k(t)$, since we are working with exponential decay rather than polynomial. Furthermore, there is no need for the $L^\infty_k$ energy $\mathcal{E}_2$. This enables us to express the bootstrap in a purely differential form. The primary bootstrap lemma is:

\begin{lemma}[Bootstrap Lemma on the Channel]\label{channel_bootstrap}
There exists a constant $C >0$ depending only on $m$ and the choice of the sufficiently small constant $c \in (0,1)$ such that
\begin{equation}
    \frac{d}{dt} \mathcal{E} \leq - 4c\mathcal{D} + \left(C \frac{(\ln(1/\nu)^{1/2} + 1)^2}{\nu}\mathcal{E} \right)^{1/2}\mathcal{D},
\end{equation}
for $\mathcal{E}$ as in \eqref{channel_energy}, $\mathcal{D}$ as in \eqref{channel_dissipation}, and $\omega$ an $H^1$ solution to \eqref{couette_system} such that $\mathcal{E}(0)< \infty$.
\end{lemma}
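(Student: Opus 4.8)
\textbf{Proof plan for Lemma \ref{channel_bootstrap}.}

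The plan is to mirror the structure of the planar proof (Lemma \ref{bootstrap} and Lemma \ref{nonlinear_lemma}) but to exploit the two key simplifications afforded by the bounded $y$-domain: the Poincaré inequality, which replaces the $L^\infty_k$-control entirely, and the sub-additivity of $k \mapsto \lambda^{ch}(\nu,k)$, which lets us propagate the exponential weight $e^{2c\lambda_k t}$ through the nonlinearity. First I would compute $\frac{d}{dt}\mathcal{E}$ directly, splitting it, as in \eqref{time_deriv_of_full_energy}, into a linear contribution $\mathcal{L}$, a nonlinear contribution $\mathcal{NL}$, and the term where the time derivative falls on $e^{2c\lambda_k t}$. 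Because we are now in a purely differential (not time-integrated) setting, the term from differentiating $e^{2c\lambda_k t}$ is exactly $\int_{\R} 2c\lambda_k e^{2c\lambda_k t}\langle k\rangle^{2m} E_k\, dk$, and the channel version of Proposition \ref{linear_proposition} (established via Lemma \ref{channel_disip_lin} and the Poincaré-augmented low-frequency argument sketched in Section \ref{linear_channel_estimates}) gives $\mathcal{L} \leq -c_1\int \mathcal{D}_k - c_0 \int \lambda_k E_k$; choosing $c$ small enough that $2c < c_0$ and $8c < c_1$ absorbs the weight-derivative term and yields $\frac{d}{dt}\mathcal{E} \leq -4c\mathcal{D} + \mathcal{NL}$. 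So the entire content is the bound $|\mathcal{NL}| \leq \big(C\nu^{-1}(1+\ln(1/\nu)^{1/2})^2 \mathcal{E}\big)^{1/2}\mathcal{D}$.

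Next I would decompose $\mathcal{NL}$ into the analogues of $T_{\gamma,\tau}$, $T_{\alpha,\tau\alpha}$, $T_\beta$ (there is no $T_2$ since $\mathcal{E}_2$ is gone), and then into $x$- and $y$-derivative pieces, and then into the same $LH$/$HL$ and high/intermediate/low frequency slices used in Section \ref{nonlin}. The crucial structural point is that $\lambda^{ch}(\nu, \cdot)$ is sub-additive: $\lambda_k \leq \lambda_{k-k'} + \lambda_{k'}$ for all $k,k'$ (this is clear since $\lambda^{ch}$ is concave and $\lambda^{ch}(0)=0$ on each of the two regimes, and one checks the cross-regime cases using $\nu^{1/3}|k|^{2/3} \leq \nu$ for $|k|\leq\nu$). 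Hence $e^{c\lambda_k t} \leq e^{c\lambda_{k-k'}t}e^{c\lambda_{k'}t}$, so each copy of the exponential weight distributes across the three factors of the trilinear form exactly as the polynomial weight $\langle c\lambda_k s\rangle^J$ did in the planar argument (where one used $\langle c\lambda_k s\rangle^J \lesssim \langle c\lambda_{k-k'}s\rangle^J\langle c\lambda_{k'}s\rangle^J$). The estimates then run verbatim from Section \ref{nonlin}, with two changes: (i) everywhere the planar proof invoked $\mathcal{E}_2$, $\mathcal{D}_2$, $\mathcal{D}_{\gamma,2}$, $\mathcal{D}_{\tau,2}$ (the $L^\infty_k$ quantities, which arose precisely in the slices where $|k-k'|\lesssim\nu$ so that $k,k'$ are not $L^2$-separated), one instead uses the Poincaré inequality $\|\phi_k\|_2 \lesssim \|\partial_y\phi_k\|_2$ and $\|\partial_y\phi_k\|_2 \lesssim \|\partial_y^2\phi_k\|_2$ together with elliptic regularity on $[-1,1]$ to close the $L^2_k$ estimate over the shrinking frequency window $|k-k'|\lesssim\nu$, gaining the needed power of $\nu$ directly rather than through a supremum; (ii) the low-frequency Lemmas \ref{low_frequency_lemma_phi}, \ref{dy_dy_phi_to_energy} and the variants of \ref{keyphiestimate}, \ref{keydyphiestimate} should be restated with $[-1,1]$ replacing $\R$, which only makes them easier since all low-$|k|$ integrals $\int_{|k|\lesssim\nu}$ are over a set of measure $\lesssim\nu$ and the $|k|^{-1/2}$-type singularities are integrable. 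No commutator issue arises at the nonlinear level beyond what is already flagged: wherever $\partial_y(c_\alpha I + c_\tau\mathfrak{J}_k)\partial_y\omega_k$ appears one writes $= c_\alpha\partial_y^2\omega_k + c_\tau\mathfrak{J}_k\partial_y^2\omega_k + c_\tau[\partial_y,\mathfrak{J}_k]\partial_y\omega_k$ and absorbs the commutator via Lemma \ref{commutator_estimate_of_J_k}, $\|[\partial_y,\mathfrak{J}_k]\partial_y\omega_k\|_2 \lesssim |k|\|\partial_y\omega_k\|_2 \lesssim \|\nabla_k\partial_y\omega_k\|_2$, exactly as in the half-plane discussion in Section \ref{half_modifications}.

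I expect the main obstacle to be bookkeeping rather than conceptual: verifying that the sub-additivity $e^{c\lambda_k t}\leq e^{c\lambda_{k-k'}t}e^{c\lambda_{k'}t}$ is actually valid across all the regime combinations (in particular when one of $k-k', k'$ is below $\nu$ and the other above, or $k$ crosses the threshold $\nu$), and that the resulting products of $e^{c\lambda_\bullet t}$-weights land on the correct factors so that each term is bounded by a product of two $\mathcal{D}^{1/2}$-type quantities and one $\mathcal{E}^{1/2}$; and, second, confirming that every place where the planar proof leaned on $\mathcal{E}_2$/$\mathcal{D}_2$ can indeed be replaced by a Poincaré estimate without loss — this is plausible because those terms were always the $|k-k'|\lesssim\nu$ slices where the frequency window is tiny, but it must be checked slice by slice (the $T_{\gamma,\tau,LH,L,\cdot,\cdot}^y$, $T_{\gamma,\tau,HL,L,\cdot,\cdot}^y$, $T_{\alpha,\tau\alpha,HL,\cdot,L,L}$, and $T_{\beta}$ low-frequency terms). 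Once these are in place, summing all the pieces gives $|\mathcal{NL}| \lesssim \nu^{-1/2}(1+\ln(1/\nu)^{1/2})\,\mathcal{E}^{1/2}\mathcal{D}$, which is the claimed bound, completing the proof. (The logarithmic factor, as in the planar case, comes only from the intermediate-frequency slices $\nu\lesssim|k|\lesssim 1$ where one is forced to put a factor in $L^1_k$ and pay $\int_{\nu\lesssim|k|\lesssim1}|k|^{-1}dk \sim \ln(1/\nu)$.)
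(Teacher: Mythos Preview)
Your proposal is correct and follows essentially the same approach as the paper: reduce to $\frac{d}{dt}\mathcal{E} \leq -4c\mathcal{D} + \mathcal{NL}$ via the channel linear proposition, then bound $\mathcal{NL} = T_{\gamma,\tau} + T_{\alpha,\tau\alpha} + T_\beta$ by running the planar nonlinear estimates with the exponential weight distributed via sub-additivity of $\lambda^{ch}$ and with all $L^\infty_k$-based ingredients (the $\mathcal{E}_2$, $\mathcal{D}_2$, Lemma~\ref{low_frequency_lemma_phi} inputs) replaced by the $\dot H^1_0([-1,1])\hookrightarrow L^\infty$ embedding / Poincar\'e inequality. The paper in fact simplifies several slices further than you indicate (e.g.\ $T_{\gamma,\tau}^x$ is handled in a single step on the channel with no frequency decomposition at all, via the embedding), but your plan would also go through.
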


Just as Lemma \ref{bootstrap} implies Theorem \ref{main_theorem}, it is clear that proving Lemma \ref{channel_bootstrap} will suffice to prove Theorem \ref{channel_theorem}. By a similar (but simpler) argument to that in Section \ref{nonlin} and considering the linear estimates in Subsection \ref{linear_channel_estimates}, we see that one can show
$$\frac{d}{dt}\mathcal{E} \leq - 4c \mathcal{D} + \mathcal{NL},$$
where 
\begin{equation}
    \begin{split}
    \mathcal{NL} &\coloneqq \int_{\R} e^{ 2c\lambda_k t}\langle k \rangle^{2m}\biggl(  2\textrm{Re}\langle \omega_k, (I+c_\tau \mathfrak{J}_k) \mathbb{NL}_k \rangle\\
    &\quad\hphantom{\int_{\R}}+ 2 c_\alpha\alpha \textrm{Re}\langle \partial_y \omega_k, (I + c_\tau \mathfrak{J}_k) \mathbb{NL}_k\rangle\\
    &\quad\hphantom{\int_{\R}}- 1_{|k| \geq \nu}c_\beta \beta \left(\textrm{Re}\langle ik \omega_k, \partial_y \mathbb{NL}_k\rangle + \textrm{Re}\langle ik \mathbb{NL}_k, \partial_y \omega_k\rangle\right) \biggr)dk\\
    &\eqqcolon T_{\gamma, \tau}  +  T_{\alpha, \tau. \alpha} + T_\beta,
    \end{split}
\end{equation}
with $\mathbb{NL}_k \coloneqq -(\nabla^{\perp} \phi \cdot \nabla \omega)_k = -\int_{\R} \nabla_{k-k'}^{\perp} \phi_{k-k'} \cdot \nabla_{k'} \omega_{k'} dk'$. Then the proof reduces to showing the existence of a constant $C>0$ independent of $\nu$ so that
\begin{equation}
    |T_{\gamma, \tau}| + |T_{\gamma, \tau \alpha}| + |T_{\beta}| \leq \frac{C}{\nu^{1/2}}(1+\ln(1/\nu)^{1/2})\mathcal{E}^{1/2} \mathcal{D}.
\end{equation}

In the planar case, we accomplished this by breaking up into various frequency decompositions and employing lemmas from Section \ref{technical_lemmas}. Critically, we were able to estimate terms involving $\nabla^\perp_{k-k'}\phi_{k-k'}$ at low $|k-k'|$ frequencies using the $L^\infty_k$ estimates from Lemma \ref{low_frequency_lemma_phi}. The usage of such bounds was to introduce additional derivatives onto terms such as $||\partial_y \phi_{k-k'}||_\infty$ and $||(k-k') \phi_{k-k'}||_\infty$. However, in the infinite channel case, a similar effect can be achieved by applying Gagliardo-Nirenberg-Sobolev and then the Poincar\'e inequality thereby replacing Lemma \ref{low_frequency_lemma_phi} and the aspects of Lemma \ref{keyphiestimate} at low frequencies. Furthermore, there are no time integral estimates needed in the planar case. Hence the corresponding versions of the Lemmas from Section \ref{technical_lemmas} have the time integrals removed in the obvious manner. We demonstrate this by showing the proof for $T_{\gamma, \tau}$.

\subsection{Bound on \texorpdfstring{$T_{\gamma, \tau}$}{Gamma and Tau terms} for the Infinite Channel}\label{channel_gamma}

We recall the notation

$$T_{\gamma,\tau}^x \coloneqq -\int_\R \int_{\R}  e^{2c\lambda_k t}\langle k  \rangle^{2m} 2\textrm{Re}\langle \omega_k, (I + c_\tau \mathfrak{J}_k) i(k-k')\phi_{k-k'} \partial_y\omega_{k' }\rangle dk' dk.$$ Unlike in the plane, where we used a threefold frequency decomposition, we are able to estimate $T_{\gamma, \tau}^x$ on the infinite channel without any splitting. Using the embedding of $\dot{H}_0^1([-1,1])$ in $L^\infty([-1,1])$, 
\begin{align}
        T_{\gamma, \tau}^x & \lesssim \int_{\R} \int_{\R} e^{c\lambda_k t} \langle k  \rangle^{m} ||\omega_k||_2 \langle k  \rangle^{m}||(k-k') \phi_{k-k'}||_\infty  e^{c\lambda_{'}k t}||\partial_y \omega_{k'}||_2 dk' dk\\
        &\lesssim \int_{\R} \int_{\R} e^{2c\lambda_k t} \langle k  \rangle^{m} ||\omega_k||_2 \left(\langle k-k'\rangle^m + \langle k' \rangle^m\right)\\
        &\hphantom{\lesssim \int_{\R} \int_{\R}} e^{c \lambda_{k-k'}t}||(k-k') \phi_{k-k'}||_\infty  e^{c\lambda_{k'} t}||\partial_y \omega_{k'}||_2 dk' dk\\
        &\lesssim \int_{\R} \int_{\R} e^{c\lambda_k t} \langle k  \rangle^{m} ||\omega_k||_2 \left(\langle k-k'\rangle^m + \langle k' \rangle^m\right)\\
        &\hphantom{\lesssim \int_{\R} \int_{\R}} e^{c \lambda_{k-k'}t}(k-k')|| \partial_y \phi_{k-k'}||_2 e^{c\lambda_{k'} t}||\partial_y \omega_{k'}||_2 dk' dk\\
        &\lesssim \mathcal{E}^{1/2} \mathcal{D}_\tau^{1/2}\nu^{-1/2} \mathcal{D}_\gamma^{1/2}.
    \end{align}
 This embedding can be used in multiple sections of the proof to handle the $||(k-k') \phi_{k-k'}||_\infty$ term, and provides the general estimate \begin{equation}\label{phi_on_channel}
    \int_{\R} e^{c \lambda_{k} t} ||k \phi_k||_\infty dk \lesssim \mathcal{D}_\tau, \end{equation} which replaces Lemma \ref{low_frequency_lemma_phi} in several low frequency estimates. Note that we also exploited the subbadditivity of $k \mapsto \lambda_k$ to write 
    $e^{c \lambda_k t} \leq e^{c \lambda_{k-k'}t} e^{c \lambda_{k'} t},$ which is key to propagating the stronger exponential decay. We now turn to $T_{\gamma, \tau}^y$, which we recall is given by:
$$T_{\gamma,\tau}^y \coloneqq -\int_\R \int_{\R}  e^{2c\lambda_k t}\langle k  \rangle^{2m} 2\textrm{Re}\langle \omega_k, (I + c_\tau \mathfrak{J}_k) \partial_y\phi_k ik'\omega_k \rangle dk' dk.$$
In Section \ref{gamma_tau_section}, we used the following frequency decomposition of $T_{\gamma, \tau}^y$, which we use again on the infinite channel. We write the breakdown of $T_{\gamma, \tau}^y$ as 
\begin{equation}\label{y_decomp_T_2}
\begin{split}
        T_{\gamma, \tau}^y \eqqcolon& \left(T_{\gamma, \tau, LH, H', H', \cdot}^y + T_{\gamma, \tau, LH, H', L, \cdot}^y+ T_{\gamma, \tau, LH, L, \cdot, \cdot}^y\right)\\
        &+ \left(T_{\gamma, \tau, HL, H, \cdot, H'}^y+ T_{\gamma, \tau, HL, M, \cdot, H'}^y  + T_{\gamma, \tau, HL, H', H', L}^y + T_{\gamma, \tau, HL, H', L, L}^y + T_{\gamma, \tau, HL, L, \cdot, \cdot}^y\right).
        \end{split}
\end{equation}
All of the terms in the decomposition \eqref{y_decomp_T_2} have the same proof as in the planar case (modulo the handling of the time integration) except for $T_{\gamma, \tau, LH, L, \cdot, \cdot}^y$, $T_{\gamma, \tau, HL, H', L, L}^y$, and $T_{\gamma, \tau, HL, L, \cdot, \cdot}^y$. These are the terms where $|k-k'| \lesssim \nu$. Using the $\dot{H}_0^1([-1,1])$ in $L^\infty([-1,1])$ embedding and elliptic regularity, together with $|k-k'| \lesssim \nu$ on the domain of integration, H\"older's inequality, and Poincar\'e's inequality:
\begin{align}
\displaybreak[0]
        |T_{\gamma, \tau, LH, L, \cdot, \cdot}^y| + |T_{\gamma, \tau, HL, H', L, L}^y|& + |T_{\gamma, \tau, HL, L, \cdot, \cdot}^y|\\ & \lesssim \int_{\R} \int_{ \R} 1_{|k-k'| \lesssim \nu}e^{2c \lambda_k t}\langle k  \rangle^{2m}  ||\omega_k||_2 ||\partial_y \phi_{k-k'}||_\infty ||k'\omega_{k'}||_2 dk' dk\\
        &\lesssim \int_{\R} \int_{ \R} 1_{|k-k'| \lesssim \nu}e^{c \lambda_k t}\langle k  \rangle^{m}  ||\partial_y \omega_k||_2  e^{c\lambda_{k-k'} t} \langle k-k'\rangle^m\\
        &\hphantom{\lesssim \int_{\R} \int_{ \R}}||\partial_y^2 \phi_{k-k'}||_2 e^{c \lambda_{k'}t}\langle k' \rangle^m||k'\omega_{k'}||_2 dk' dk\\
        &\lesssim \nu^{-1/2} \mathcal{D}_\gamma^{1/2} \left(\int_{|k| \lesssim \nu} e^{c \lambda_k t}\langle k  \rangle^{m}  ||\omega_k||_2\right)\nu^{-1/2} \mathcal{D}_\gamma^{1/2}\\
        &\lesssim \nu^{-1/2} \mathcal{D}_\gamma^{1/2} \nu^{1/2}\mathcal{E}^{1/2} \nu^{-1/2} \mathcal{D}_\gamma^{1/2}.
    \end{align}
Indeed, we have the general estimate
    \begin{equation}\label{dyphi_to_energy_channel}
    \begin{split}
        \int_{\R} e^{c \lambda_k t} ||\partial_y \phi_k||_\infty dk \lesssim \mathcal{E}^{1/2},
    \end{split}
\end{equation}which replaces Lemma \ref{low_frequency_lemma_phi} in several instances. These bounds on $T_{\gamma, \tau}$ are representative of the type of estimates needed for $T_{\alpha, \tau\alpha}$ and $T_{\beta}$, and so we will not go into as much detail in the following sections.
\subsection{Bound on \texorpdfstring{$T_{\alpha, \tau \alpha}$}{Alpha and Tau Alpha terms} for the Infinite Channel}

 Just as in the $T_{\gamma, \tau}$ case, the arguments in the regions where $|k-k'| \gtrsim \nu$ are largely the same on the channel as on the plane, and so we do not go into detail, other than noting that we do not need to consider any subtleties related to integration in time.
 
 Meanwhile, the bounds in regions where $|k-k'| \lesssim \nu$ follow from $\dot{H}_0^1([-1,1])$ in $L^\infty([-1,1])$ embedding (or equivalently Poincar\'e and Gagliardo-Nirenberg-Sobolev), and elliptic regularity. This is similar in spirit to Section \ref{channel_gamma}, and so we do not go into detail. There is a slight difference in the preliminary stages of the proof however. We still need to deal with terms of the form $|| \partial_y( I + c_\tau \mathfrak{J}_k)\partial_y\omega_k||_2$, but now $\mathfrak{J}_k$ and $\partial_y$ do not commute. However, we note that by Lemmas \ref{boundedness_of_J_k_channel} and \ref{commutator_estimate_of_J_k}, we have
$$|| \partial_y(c_\alpha I + c_\tau \mathfrak{J}_k)\partial_y\omega_k||_2^2 = || c_\alpha \partial_y \omega_k + c_\tau \mathfrak{J}_k(\partial_y \omega_k) + c_\tau [\partial_y, \mathfrak{J}_k] (\partial_y\omega_k)||_2^2 \lesssim ||\nabla_k \partial_y\omega_k||_2^2.$$
Hence we freely use that $|| \partial_y(c_\alpha I + c_\tau \mathfrak{J}_k)\partial_y\omega_k||_2 \lesssim ||\nabla_k \partial_y\omega_k||_2$, which is the same estimate ultimately used in Subsection \ref{alpha_tau_alpha_plane}.
\subsection{Bound on \texorpdfstring{$T_{\beta}$}{Beta terms} for the Infinite Channel}
While the high $k$-frequency terms in $T_\beta$ are still the most complicated, they are dealt with on the channel in the same manner as on the plane, so long as $|k-k'| \gtrsim \nu$. We also note that $T_\beta$ has no component with $|k| \leq \nu$ on the channel, which slightly simplifies the computations. As a final note, recall that we split $T_{\beta} \coloneqq T_{\beta ;1} + T_{\beta ; 2}$, and saw that $T_{\beta ;1}$ and $T_{\beta ; 2}$ have similar treatments. We do the same here, and write $T_\beta$ for $T_{\beta;1}$.

For $T_{\beta}^x$, we can combine both of the cases from the planar equations into a single case by \eqref{phi_on_channel} and $B(k)^2|k| \lesssim A(k)$: \begin{align}
        |T_\beta^x| &\lesssim \int_{|k| \geq \nu} \int_\R e^{2c \lambda_k t} \langle k \rangle^{2m} B(k)^2 ||k \partial_y \omega_k||_2 ||(k-k') \phi_{k-k'}||_\infty ||\partial_y \omega_{k'}||_2 dk' dk\\
        &\lesssim \int_{|k| \geq \nu} \int_\R e^{c \lambda_k t} \langle k \rangle^{m}  A(k)|| \partial_y \omega_k||_2 \left(\langle k-k'\rangle^m + \langle k' \rangle^m\right) e^{c\lambda_{k-k'} t} ||(k-k') \phi_{k-k'}||_\infty e^{c\lambda_{k'} t} ||\partial_y \omega_{k'}||_2 dk' dk\\
        &\lesssim \mathcal{E}^{1/2} \mathcal{D}_\tau^{1/2} \nu^{-1/2} \mathcal{D}_\gamma^{1/2}.
    \end{align}

For $T_\beta^{y,1}$, we do not need to make any significant changes from the planar case, since the stream function term is of the form $\partial_y^2 \phi_{k-k'}$, removing the need for additional derivatives. For $T_\beta^{y,2}$, since $|k| \geq \nu$, all of the $HL$ terms with $|k-k'| \geq |k'|/2$ have $|k-k'| \gtrsim |k| \geq \nu$. Hence the $HL$ terms are the same as on the plane. Meanwhile the single $LH$ case follows from \eqref{dyphi_to_energy_channel}, $B(k)^2 k^2 \lesssim A(k)|k|$, Lemma \ref{beta_and_gamma}, and $|k| \approx |k'|$ on the domain of integration:\begin{align}
        |T_{\beta, LH}^{y,2}| &\lesssim \int_{|k| \geq \nu} \int_\R \int_\R e^{2c \lambda_k t} \langle k \rangle^{2m} B(k)^2 ||k \partial_y \omega_k||_2 ||\partial_y \phi_{k-k'}||_\infty ||k' \partial_y \omega_{k'}||_2 dk' dk\\
        &\lesssim \int_{|k| \geq \nu} \int_\R e^{c \lambda_k t} \langle k \rangle^{m} A(k) |k| || \partial_y \omega_k||_2 \left(\langle k-k'\rangle^m + \langle k' \rangle^m\right)e^{c \lambda_{k-k'} t} || \partial_y \phi_{k-k'}||_\infty e^{c\lambda_{k'} t} ||\partial_y \omega_{k'}||_2 dk' dk\\
        &\lesssim \mathcal{D}_\gamma^{1/4}\mathcal{D}_\beta^{1/4} \mathcal{E}^{1/2} \nu^{-1/2} \mathcal{D}_\gamma.
    \end{align}
This concludes the proof of Lemma \ref{channel_bootstrap}, and hence of Theorem \ref{channel_theorem}.

\addtocontents{toc}{\protect\setcounter{tocdepth}{0}}

\section*{Declarations}

\subsubsection*{Acknowledgements}This material is based upon work supported by the National Science Foundation Graduate Research Fellowship Program under Grant No. DGE-2034835. Any opinions, findings, and conclusions or recommendations expressed in this material are those of the authors and do not necessarily reflect the views of the National Science Foundation. J.B was supported by NSF Award DMS-2108633.

\subsubsection*{Conflicts of Interest}
The authors have no competing interests to declare that are relevant to the content of this article.

\subsubsection*{Data Availability}
Data sharing not applicable to this article as no datasets were generated or analyzed during the current study.

\addtocontents{toc}{\protect\setcounter{tocdepth}{1}}

\bibliographystyle{plain} 
\bibliography{citations} 

\end{document}